\documentclass[letterpaper, 12pt]{amsart}
\usepackage[left=0.8 in, right=0.8 in, top=1 in, bottom=1 in]{geometry}

\usepackage{amsmath,amssymb,amsthm,amscd,enumerate,setspace}

\usepackage[pagebackref]{hyperref}
\hypersetup{colorlinks=true, linkcolor=red, citecolor=blue}

\usepackage[all]{xy}
\usepackage{graphicx}

\usepackage{xcolor}
\usepackage{mdframed}
\newmdenv[backgroundcolor=yellow]{shaded}

\input xypic
\xyoption{all}

\long\def\symbolfootnote[#1]#2{\begingroup%
\def\thefootnote{\fnsymbol{footnote}}\footnote[#1]{#2}\endgroup}

\newtheorem{Theorem}{Theorem}[section]

\newtheorem{Corollary}[Theorem]{Corollary}
\newtheorem{Proposition}[Theorem]{Proposition}

\newtheorem{Conjecture}[Theorem]{Conjecture}

\newtheorem{Definition}[Theorem]{Definition}
\newtheorem{Question}[Theorem]{Question}

\newtheorem{Sticky Points}[Theorem]{Sticky Points}

\def\Ass{\mbox{\rm Ass}}

\def\cl{\overline}

\def\deg{\mbox{\rm deg}}
\def\Deg{\mbox{\rm Deg}}
\def\depth{\mbox{\rm depth}}
\def\ds{\displaystyle}

\def\edeg{\mbox{\rm embdeg}}
\def\edim{\mbox{\rm embdim}}

\def\Ext{\mbox{\rm Ext}}

\def\G{\mathcal{G}}

\def\h{\mbox{\rm ht}}
\def\hdeg{\mbox{\rm hdeg}}

\def\Jac{\mbox{\rm Jac}}

\def\lar{\longrightarrow}
\def\l{{\lambda}}

\def\rar{\rightarrow}

\def\Tor{\mbox{\rm Tor}}

\def\tratto{\mbox{\rule{3mm}{.2mm}$\;\!$}}

\def\NN{\mathbb{N}}
\def\QQ{\mathbb{Q}}

\def\ZZ{\mathbb{Z}}

\def\bfx{\mathbf{x}}

\def\clA{\mathcal{A}}

\def\clE{\mathcal{E}}
\def\clF{\mathcal{F}}

\def\clM{\mathcal{M}}
\def\clN{\mathcal{N}}

\def\clR{\mathcal{R}}

\def\fkm{\mathfrak{m}}
\def\fkn{\mathfrak{n}}
\def\fkp{\mathfrak{p}}
\def\fkq{\mathfrak{q}}

\def\fkP{\mathfrak{P}}

\def\rme{\mathrm{e}}
\def\rmh{\mathrm{h}}

\def\rmH{\mathrm{H}}
\def\rmP{\mathrm{P}}
\def\rmS{\mathrm{S}}

\def\gla{\alpha}

\begin{document}

\title{\sc Hilbert Coefficients and Sally Modules: A Survey of Vasconcelos' Contributions}

\thanks{
AMS 2020 {\em Mathematics Subject Classification}.
Primary 13-02;  Secondary 13D40, 13H10, 13H15.\\
{\bf  Key Words:}  Hilbert functions, Hilbert Polynomials, Hilbert Coefficients, Multiplicity, Chern Numbers, Sally modules}

\author{Jooyoun Hong}
\address{Department of Mathematics, Southern Connecticut State
University, 501 Crescent Street, New Haven, CT 06515-1533.}
\email{hongj2@southernct.edu}

\author{Susan Morey} \address{Department of Mathematics, Texas State University, 601 University Drive, San Marcos, TX 78666.}
 \email{morey@txstate.edu}

\maketitle

{\em \small This paper is written in loving memory of Wolmer Vasconcelos, a brilliant mathematician and most caring advisor whose contributions to the field of Commutative Algebra cannot be overstated.}

\begin{abstract} 
This paper surveys and summarizes Wolmer Vasconcelos' results surrounding multiplicities, Hilbert coefficients, and their extensions. We particularly focus on Vasconcelos' results regarding multiplicities and Chern coefficients, and other invariants which they bound. The Sally module is an important instrument introduced by Vasconcelos for this study, which naturally relates Hilbert coefficients to reduction numbers.
\end{abstract}

\section{Introduction}

Throughout his career, Wolmer Vasconcelos had an abiding interest in exploring how invariants of ideals and modules are interrelated. In particular, he was interested in the information that could be gleaned from information stemming from Hilbert polynomials.  The coefficients of the Hilbert polynomial are called the {\it Hilbert coefficients} of the ideal or module (satisfying certain conditions) and are denoted by $\rme_i$ (see Section~\ref{background}). The leading coefficient $\rme_{0}$ is called the {\em multiplicity}.  The constant ${\rme_{1}}$ is called the {\em Chern coefficient} and  is particularly useful in understanding  the structure of rings and modules. In this paper, we provide an overview of Vasconcelos' work related to Hilbert coefficients with a particular focus on the multiplicity and the Chern coefficient, and their interplay.

\medskip

Vasconcelos' interest in multiplicity was sparked by the question of how the multiplicity could be used to bound other invariants such as the number of generators of the ideal. He was also interested in the relationships between the multiplicity and the other Hilbert coefficients and in what information could be obtained from vanishing of these invariants. It is known that the Chern coefficient $\rme_{1}(Q, M)$ of a Cohen-Macaulay module $M$ relative to a parameter ideal  $Q$ vanishes. Vasconcelos conjectured that the converse holds true for an unmixed module (Conjecture~\ref{Ch1-C3-1}). Vasconcelos' exploration of such vanishings led to the definition of a {\it Vasconcelos  module}, which is a finitely generated module $M$ such that $\rme_{1}(Q, M)=0$ for some parameter ideal $Q$ for $M$ (Definition~\ref{Vasconcelos}).  Generally speaking, a module being Vasconcelos can be viewed as a generalization of Cohen-Macaulayness. Once the Vanishing Conjecture was settled (see Theorem~\ref{Ch5-3-1}), natural progressions to other possible cases of Chern coefficients were made by Vasconcelos and other researchers. For example, if a Chern coefficient of a unmixed module is constant (but not necessarily zero) and  independent of the choice of parameter ideal, then the module is Buchsbaum (Theorem~\ref{Ch5-5-4}).

\medskip

Another application of  Chern coefficients of interest to Vasconcelos was their ability to capture the computational complexity of  the integral closure. To this end, he determined a chain of modules on which $\rme_{1}$ is non-decreasing, allowing the use of $\rme_{1}$ to obtain numerical criteria for normality (See Theorem~\ref{closure chain}).

\medskip

One of his greatest contributions to the field of Hilbert coefficients was his introduction of {\it Sally modules} in \cite{V94} to give a quick access to Judith Sally's results connecting the depth properties of associated graded rings to the multiplicity. 
The impact of this heavily cited work continues to this day as Sally modules can be used  to understand higher Hilbert coefficients as well. Furthermore, the multiplicity of the Sally module is an important instrument used to find bounds on reduction numbers of ideals. The beautiful interweaving of invariants and reductions led to Vasconcelos' love of Sally modules that can be seen through their frequent appearance in his subsequent work and that of some of his doctoral students.

\medskip

Vasconcelos embarked on an ambitious project to extend the notion of Hilbert coefficients (which is restricted to $\fkm$-primary ideals) to more general classes of ideals and modules. Vasconcelos explored generalizations from parameter ideals to ideals generated by $d$-sequences. He also studied partial Euler characteristic functions that could be applied to modules that were not unmixed, yielding results similar to those using Chern coefficients in the unmixed case. When the ideal is not $\fkm$-primary or the focus is a parameter ideal generated by a partial system of parameters,  Vasconcelos studied non-traditional Hilbert coefficients, such as the $j$-coefficients. Using the associated graded ring and taking the $0^{th}$ local cohomology proved to be  useful when a full system of parameters was not available.  Later, Vasconcelos defined additional degree functions that could be applied to more general ideals and modules that generalized Hilbert coefficients. His ultimate goal was to determine one unified degree function that would apply to a very broad class of ideals and modules and when restricted to special classes, such as $\fkm$-primary ideals, would reduce to the known Hilbert coefficients.

\medskip

We now provide an overview of the structure of the paper. In Section~\ref{background}, we provide definitions and background material, as well as fix uniform notation for the paper. In Section~\ref{early}, we present some of Vasconcelos' early results on Hilbert coefficients, the Sally module and the relationship between $\rme_{0}(I)$ and $\rme_1(I)$ (See Theorem~\ref{one-dim}). It is particularly interesting to note that in Theorem~\ref{Sally-series}, which fine tunes the result on Hilbert series given in Theorem~\ref{RVV01-2-2}, that the multiplicity of the Sally module appears as a
degree of the numerator of a quotient that bounds the Hilbert series. 

\medskip

Section~\ref{mult} focuses on the multiplicity of an ideal or module. A number of Vasconcelos' results showing how the multiplicity can be used to bound the number of generators of an ideal and the reduction number of an ideal are presented. 
In Section~\ref{normal}, the connections of Chern coefficients to the complexity of computing the integral closure of an ideal are presented. The Brian\c{c}on-Skoda number is used in several of the theorems. The latter part of Section~\ref{normal} provides more general results that apply to a broad class of filtrations that includes $I$-adic filtration $\{I^n\}$ and normal filtration $\{\overline{I^n}\}$. These results show how a generalized version of Hilbert coefficients for filtrations can provide useful bounds (See Theorem~\ref{filtration}). Other invariants, such as embedding dimension and embedding degree, are also important measures of complexity of computations, as shown in Theorem~\ref{embedding}.

\medskip

Section~\ref{Vrings} focuses on Vasconcelos' Vanishing Conjecture~\ref{Ch1-C3-1} that led to the study of Vasconcelos modules. We summarize results showing when this conjecture is known to hold true before it was settled affirmatively in Theorem~\ref{Ch5-3-1}. In Section~\ref{Chern}, we discuss a natural extension of the vanishing of Chern coefficients to other possible sets of Chern coefficients. We also present Vasconcelos' Uniformity Conjecture~\ref{Ch1-Conj3} and Positivity Conjecture~\ref{Conj2}, and results addressing when these conjectures hold true.

\medskip

Section~\ref{Euler} contains results motivated by the question of whether there are other invariants, in addition to Chern coefficients, that can detect Cohen-Macaulayness and other important properties of rings and modules. It turns out a  partial Euler characteristic $\chi_1$ is has properties similar to those of Chern coefficients $\rme_{1}$ but can be used in the more general case of a module that need not be unmixed. A $d$-sequence plays a role in providing a direct relation between $\rme_1$ and $\chi_1$ as shown in Theorems~\ref{Ch7-3-7} and~\ref{Ch7-4-2}. To handle ideals that need not be $\fkm$-primary or partial systems of parameters, Vasconcelos and his coauthors used the $j$-coefficients and particularly $j_{1}$ to detect various properties of modules.

\medskip 

In Section~\ref{Variations}, we discuss how  $\rme_0(J)$ and $\rme_1(J)$ change when $J$ is enlarged. 
The variation $\rme_1(I) - \rme_1(Q)$, where $Q$ is a minimal reduction of $I$, can be used to describe the multiplicity of the Sally module and an upper bound for a reduction number in a non-Cohen-Macaulay ring (Theorem~\ref{min red})
Section~\ref{Sally} returns to the study of Sally modules and their multiplicities. These are related to the study of the special fiber cone and its multiplicity. Proposition~\ref{Sally mult prop} focuses on bounding the Sally multiplicity and Proposition~\ref{fiber cone mult} focuses on bounding the multiplicity of the special fiber, also called the fiber cone, of $I$. We conclude the paper by briefly introducing some of Vasconcelos' extended degree functions in Section~\ref{epilogue}.

\bigskip

\section{Preliminaries}\label{background}

In this section we collect basic definitions and results that will be used throughout the paper. We will also fix a set of notations that will be used. For additional background material, see \cite{BH}.

\medskip

Let $\clR=\bigoplus_{n \geq 0} R_{n}$ be a Noetherian nonnegatively graded ring with $R_{0}$ Artinian local and let
$\clM=\bigoplus_{i \in \ZZ} M_i$ be a finitely generated graded $\clR$--module. Then each $M_i$ has finite length as $R_0$--module. Let $\NN_{0}$ be the set of all nonnegative integers.  The {\em Hilbert function} of $\clM$ is the  function $\rmH_{\clM}: \ZZ \rar \NN_{0}$ defined by $\rmH_{\clM}(n)=\l_{R_0}(M_n)$, where $\l(\tratto)$ denotes the length of a module. The {\em Hilbert series} of $\clM$ is  the Laurent series $\rmh_{\clM}(t)=\sum_{n \in \ZZ} \rmH_{M}(n)t^n$. In particular, if $\clR=R_{0}[r_{1}, \ldots, r_{k}]$ with $\deg(r_{i})=1$ for each $i$ and $\dim(\clM) =d$, then there exists ${\ds f_{\clM}(t) \in \ZZ[t, t^{-1}] }$ with  $f_{\clM}(1) \neq 0$ such that 
\[  \rmh_{\clM}(t) = \frac{f_{\clM}(t)}{(1-t)^{d}}. \] 
Let ${\ds f_{\clM}(t) = c_{l}t^{l} + c_{l+1}t^{l+1} + \cdots + c_{s}t^{s}}$.  Then we obtain the following. 
\[ \begin{array}{rcl}
{\ds \frac{f_{\clM}(t)}{(1-t)^{d}} } &=& {\ds \Big( c_{l}t^{l} + c_{l+1}t^{l+1} + \cdots + c_{s}t^{s}  \Big) \sum_{m=0}^{\infty} {{m+d-1}\choose{d-1}} t^{m}   }  \\ && \\
&=& {\ds \mbox{lower terms} + \sum_{n=s}^{\infty} \rmP_{\clM}(n) t^{n},}
\end{array} \]
where 
\[ \rmP_{\clM}(n) = c_{l}{{n-l+d-1}\choose{d-1}} + c_{l+1}{{n-l+d-2}\choose{d-1}} + \cdots + c_{s}{{n-s+d-1}\choose{d-1}}.\]
Then ${\ds \rmP_{\clM}(X) \in \QQ[X]}$ is a polynomial of degree $d-1$ and  ${\ds \rmH_{\clM}(n) = \rmP_{\clM}(n)}$ for all $n \geq s$. Since the polynomials ${\ds  {{X+i}\choose{i}}}$ with $i \in \NN$ form a $\QQ$--basis of $\QQ[X]$, we can rewrite
\[ \rmP_{\clM}(X)= \sum_{i=0}^{d-1} a_{i} {{X+i}\choose{i}} \;\; \mbox{with} \; a_{i} \in \QQ. \]
Moreover, by induction on $i$ and by using the fact that $\rmP_{\clM}(n) \in \ZZ$, we can prove that $a_{i} \in \ZZ$. This polynomial $\rmP_{\clM}(n)$ is called the {\em Hilbert polynomial} of $M$. By changing the index, we write
\[ \rmP_{\clM}(X) = \sum_{i=0}^{d-1} (-1)^{i} \rme_{i}(\clM) {{X+d-1-i}\choose{d-1-i}  }, \]
where $\rme_{i}(\clM)$'s are called the {\em Hilbert coefficients} of $\clM$. The {\em multiplicity} of $\clM$ is defined to be
\[ \rme(\clM) = \left\{  \begin{array}{ll}  \rme_{0}(\clM) \quad &\mbox{if} \;\; d >0, \vspace{0.1 in} \\ \l(\clM) &\mbox{if} \;\; d=0.   \end{array}  \right. \]

\medskip

Let $(R, \fkm)$ be a Noetherian local ring and $I$ an $\fkm$-primary ideal. Let $\G=\G(I)= \bigoplus_{n \geq 0} I^{n}/I^{n+1}$ be the associated graded ring of $I$. Then we can define the Hilbert function, Hilbert series, and Hilbert polynomial of $\G$ as above. That is,
\[ \rmH_{\G}(n) = \l(I^{n}/I^{n+1}), \quad \mbox{and} \quad \rmP_{\G}(X) = \sum_{i=0}^{d-1} (-1)^{i} \rme_{i}(\G) {{X+d-1-i}\choose{d-1-i}  }. \]
In some of the literature, these are known as the Hilbert function and the Hilbert polynomial of an ideal $I$. 
However, in recent publications, slightly revised formats for the Hilbert function and the Hilbert polynomial of an ideal $I$ are used more commonly. Thus, in this article, we will adopt these revised formats, which we now define. 

\begin{Definition}{\rm
Let $(R, \fkm)$ be a Noetherian local ring and $I$ an $\fkm$-primary ideal.  The {\em Hilbert function} of $I$ is the  function ${\ds \rmH_{I}: \NN_{0} \rar \NN_{0}}$ defined by  ${\ds \rmH_{I}(n) = \l(R/ I^{n+1})}$. The {\em Hilbert series} of $I$ is  the Laurent series $\rmh_{I}(t)=\sum_{n \geq 0} \rmH_{I}(n)t^n$.
}\end{Definition}

To see that  this function ${\ds \rmH_{I}}$ eventually agrees with a polynomial and  all but one of the coefficients are as before, we may assume that there exists an integer $u$ such that for all $n \geq u$ we have ${\ds \rmH_{\G}(n)= \l(I^{n}/I^{n+1}) = \rmP_{\G}(n)}$. 
Let ${\gla= \sum_{j=0}^{u-1} \l(I^{j}/I^{j+1})}$.  Then for all $n \geq u$, we obtain the following.

\[ \begin{array}{cl}
& {\ds  \l(R/I^{n+1})} \vspace{0.1 in} \\
=& {\ds  \gla + \sum_{j=u}^{n} \rmP_{\G}(j)   } \vspace{0.1 in} \\
=& {\ds \gla + \rme_{0}(\G) \sum_{j=u}^{n} {{j+d-1}\choose{d-1}} - \rme_{1}(\G) \sum_{j=u}^{n} {{j+d-2}\choose{d-2}} + \cdots + (-1)^{d-1} \rme_{d-1}(\G) (n-u+1)   }  \vspace{0.1 in} \\ 
= & {\ds  \rme_{0}(\G) \sum_{j=0}^{n} {{j+d-1}\choose{d-1}} - \rme_{1}(\G) \sum_{j=0}^{n} {{j+d-2}\choose{d-2}} + \cdots + (-1)^{d-1} \rme_{d-1}(\G)(n+1) + (-1)^{d} \rme_{d}   } \vspace{0.1 in} \\
=& {\ds \rme_{0}(\G){{n+d}\choose{d}} - \rme_{1}(\G){{n+d-1}\choose{d-1}} + \cdots + (-1)^{d-1} \rme_{d-1}(\G)(n+1) + (-1)^{d} \rme_{d}}.
\end{array}\]
Note that $(-1)^{d} \rme_{d}$ is the constant combining $\gla$ and all the constants resulting from changing the index of summation from $i=u$ to $i=0$.
 
\begin{Definition}{\rm 
Let ${\ds (R, \fkm)}$ be a Noetherian local ring of dimension $d>0$ and $I$ an $\fkm$--primary ideal. 
The {\em Hilbert polynomial} of $I$ is the unique polynomial ${\ds \rmP_{I}(X) \in \QQ[X]}$ of degree $d$ such that ${\ds \rmP_{I}(n) = \l(R/I^{n+1})}$ for all sufficiently large $n$. More specifically, 
\[ \rmP_{I}(X) = \sum_{i=0}^{d} (-1)^{i} \rme_{i}(I){{X+d-i}\choose{d-i}},\]
where $\rme_{i}(I)$'s are integers.  In this case, $\rme_{i}(I)$'s are called the {\em Hilbert coefficients} of $I$. In particular,  ${\ds \rme_{0}(I)}$ is called the {\em multiplicity} of $I$.
}\end{Definition}

Note that if $d \geq 1$, then $\rme_{i}(I)= \rme_{i}(\G(I))$ for all $i=0, \ldots d-1$. Though $\rmP_{I}(X)$ and $\rmP_{\G(I)}(X)$ have different binomial coefficients,  as far as the Hilbert coefficients are concerned, there is no difference except the existence of $\rme_{d}(I)$. 

\medskip

Now we present the notion of a reduction of an ideal, which was first introduced and exploited by Northcott and Rees \cite{NR54}. Let $J \subseteq I$ be ideals. Then $J$ is called a {\em reduction} of $I$ if there exists a nonnegative integer $r$ such that  $I^{r+1} = J I^{r}$. The smallest such integer is called the {\em reduction number} of $I$ relative to $J$ and is denoted by ${r_{J}(I)}$. The reduction number of $I$ is denoted by $r(I)$ and is defined as 
\[ r(I) = \min \{ r_{J}(I) \,\mid\, \mbox{$J$ is a minimal reduction of $I$} \}, \]
where minimal reductions are reductions which are minimal with respect to inclusion. Numerous questions revolve around the relationship between the Hilbert coefficients of $I$ and the reduction number of $I$. 

\medskip

Vasconcelos introduced Sally modules in \cite[Definition 2.1]{V94} to give a quick access to some results of Judith Sally connecting the depth properties of associated graded rings to extremal values of the multiplicities $\rme_{0}(I)$.  Vasconcelos' paper \cite{V94} had a profound impact on the field with numerous authors having cited the paper. Starting in the 1990's and continuing until today, authors cite the paper for its use of Sally modules, results relating Hilbert coefficients, particularly $\rme_{0}(I)$ and $\rme_{1}(I)$, and its examination of the interplay among ideal invariants including reduction numbers.

\begin{Definition}{\rm
Let $R$ be a Noetherian ring and let $I$ be an ideal in $R$. The {\em Rees algebra} $\clR(I)$ of $I$ is a subalgebra of the polynomial ring $R[t]$ defined by
\[ \clR(I) = R[It] = \bigoplus_{n \geq 0} I^{n} t^{n}. \] 
Let $J$ be a reduction of $I$. The {\em Sally module} $S_{J}(I)$ of $I$ with respect to $J$ is defined by the following exact sequence of finitely generated $\clR(J)$-modules.
\[ 0 \lar I \clR(J) \lar I \clR(I) \lar S_{J}(I) = \bigoplus_{n=1}^{\infty} I^{n+1} /IJ^{n} \lar 0.\]
}\end{Definition}

The following summarizes some key results regarding the dimension of the Sally module.

\begin{Theorem} Let $(R, \fkm)$ be a Noetherian local ring of dimension $d$ with infinite residue field. Let $I$ be an $\fkm$-primary ideal with a minimal reduction $J$. 
\begin{enumerate}[{\rm (1)}]
\item Suppose that $R$ is Cohen-Macaulay and ${\ds S_{J}(I) \neq 0}$. Then the dimension of $S_{J}(I)$ as an $\clR(J)$-module is $d$ \cite[Proposition 2.2]{V94}. 
\item Suppose that $R$ is Buchsbaum and that $I$ contains ${\ds (x_{1}, \ldots, x_{d-1}): \fkm }$, where ${\ds x_{1}, \ldots, x_{d-1}}$ are general elements of $J$. Then ${\ds S_{J}(I)}$ has dimension $d$ or $0$ \cite[Proposition 2.10]{C09}.
\item Suppose that $I=\fkm$. Then $S_{J}(\fkm)$ has dimension $d$ if and only if ${\ds \rme_{1}(\fkm) -\rme_{0}(\fkm) - \rme_{1}(J) +1 > 0}$ \cite[Theorem 2.1]{C09}.
\end{enumerate}
\end{Theorem}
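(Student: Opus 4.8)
The plan is to treat all three parts within one framework, using that $S = S_J(I)$ is a finitely generated graded module over $\clR(J)$, a ring of dimension $d+1$. First I would establish the common upper bound $\dim S \le d$. Since $I$ is $\fkm$-primary, each graded piece $S_n = I^{n+1}/IJ^n$ has finite length over $R$, so $S$ is annihilated by a power of $\fkm\clR(J)$; hence $\supp_{\clR(J)} S \subseteq V(\fkm\clR(J))$. Because $J$ is a minimal reduction, its analytic spread is $d$, so $\clR(J)/\fkm\clR(J)$ is the polynomial ring $(R/\fkm)[T_1,\ldots,T_d]$ of dimension $d$, giving $\dim S \le d$. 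The remaining task in each part is to decide exactly when equality holds, which I would approach through the Hilbert function of $S$ computed from the sandwich $J^{n+1}\subseteq IJ^n \subseteq I^{n+1}$, so that $\lambda(S_n) = \lambda(I^{n+1}/J^{n+1}) - \lambda(IJ^n/J^{n+1})$.

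For part (3), where $I=\fkm$ and $R$ is an arbitrary local ring, this computation is decisive on its own. The term $\lambda(\fkm^{n+1}/J^{n+1}) = \lambda(R/J^{n+1}) - \lambda(R/\fkm^{n+1})$ has leading coefficient $\rme_1(\fkm) - \rme_1(J)$ (the multiplicities $\rme_0(J)$ and $\rme_0(\fkm)$ agree), while $\lambda(\fkm J^n/J^{n+1}) = \lambda(J^n/J^{n+1}) - \binom{n+d-1}{d-1}$, since the fiber $\gr_J(R)\otimes R/\fkm$ has Hilbert function $\binom{n+d-1}{d-1}$; its leading coefficient is $\rme_0(J)-1$. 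Subtracting, the leading coefficient of $\lambda(S_n)$ is $\rme_1(\fkm)-\rme_0(\fkm)-\rme_1(J)+1$. As the leading coefficient of a Hilbert polynomial it is $\ge 0$, and $\dim S = d$ holds exactly when it is $>0$, which is the asserted criterion.

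For part (1), where $R$ is Cohen--Macaulay and $I$ is arbitrary, the same computation simplifies: $\lambda(R/J^{n+1}) = \rme_0(J)\binom{n+d}{d}$ exactly, and $\gr_J(R) = (R/J)[T_1,\ldots,T_d]$ is free over $R/J$, so $\lambda(IJ^n/J^{n+1}) = \lambda(I/J)\binom{n+d-1}{d-1}$. The leading coefficient of $\lambda(S_n)$ is then $\rme_1(I) - \lambda(I/J) = \rme_1(I) - \rme_0(I) + \lambda(R/I)$, which is $\ge 0$ by Northcott's inequality. The assertion $S\neq 0\Rightarrow \dim S = d$ requires more than this: it demands ruling out the case in which $S\neq 0$ while the leading coefficient vanishes (which would only force $\dim S \le d-1$). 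Here Cohen--Macaulayness is essential, and I would prove $\Ass_{\clR(J)} S \subseteq \{\fkm\clR(J)\}$, so that a nonzero $S$ has $\fkm\clR(J)$ as its unique associated prime and hence dimension exactly $d$; equivalently, vanishing of the leading coefficient is equivalent to $I^2 = JI$, which forces $I^{n+1} = IJ^n$ for all $n$ and thus $S=0$. Establishing this dichotomy---that no intermediate dimension occurs---is the main obstacle of part (1).

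For part (2), where $R$ is Buchsbaum, I would generalize the computation above, but now $\lambda(R/J^{n+1})$ and $\lambda(IJ^n/J^{n+1})$ carry correction terms governed by the lengths of the local cohomology modules $H^i_\fkm(R)$, the Buchsbaum invariants. The hypothesis that $I$ contain $(x_1,\ldots,x_{d-1}):\fkm$ for general $x_i \in J$ is designed precisely to control these corrections. The dichotomy $\dim S \in \{0,d\}$ again reduces to showing that $S$ has no associated prime strictly between $\fkm\clR(J)$ and the irrelevant maximal ideal, and the uniformly bounded, finite-length local cohomology of a Buchsbaum ring is what excludes intermediate-dimensional components. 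I expect this last step---tracking the Buchsbaum correction terms while ruling out intermediate dimensions---to be the hardest part of the theorem.
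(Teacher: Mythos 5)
The survey states this theorem without proof, giving only the citations \cite[Proposition 2.2]{V94} and \cite[Proposition 2.10, Theorem 2.1]{C09}, so there is no in-paper argument to compare against; I am judging your proposal on its own terms. Your framework is the standard and correct one: $S=S_J(I)$ is finitely generated and killed by a power of $\fkm$, and $\clR(J)/\fkm\clR(J)\cong (R/\fkm)[T_1,\ldots,T_d]$ because $J$ is a minimal reduction, so $\dim S\le d$; the sandwich $J^{n+1}\subseteq IJ^n\subseteq I^{n+1}$ then converts $\lambda(S_n)$ into a difference of Hilbert polynomials. Your part (3) is essentially a complete proof: the normalized leading coefficient of $\lambda(S_n)$ comes out to $\rme_1(\fkm)-\rme_0(\fkm)-\rme_1(J)+1$, and since $\lambda(S_n)$ eventually agrees with a polynomial of degree $\dim S-1\le d-1$ taking nonnegative values, that coefficient is positive exactly when $\dim S=d$.

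Parts (1) and (2), however, stop at precisely the steps you yourself flag as the main obstacles, and those steps are the entire content of the cited results. In (1), the Hilbert-function computation only shows $\dim S\le d$ with equality iff $\rme_1(I)-\rme_0(I)+\lambda(R/I)>0$; it cannot exclude a nonzero $S$ of intermediate dimension. The claim $\Ass_{\clR(J)}(S)\subseteq\{\fkm\clR(J)\}$ is exactly what \cite[Proposition 2.2]{V94} proves, via a depth chase on the defining sequence $0\to I\clR(J)\to I\clR(I)\to S\to 0$ using that $\clR(J)$ is Cohen--Macaulay of dimension $d+1$ when $R$ is Cohen--Macaulay; you assert this but give no argument. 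Your alternative route, ``vanishing of the leading coefficient is equivalent to $I^2=JI$,'' is the Huneke--Ooishi theorem: it is true and would close the gap, but it is a substantial result in its own right that must be proved or cited, and you would need to make sure the proof you invoke does not itself pass through the dimension statement you are establishing.

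In (2) there is no argument at all, only a declaration that the Buchsbaum local cohomology ``controls the corrections'' and ``excludes intermediate-dimensional components.'' That does not engage with what the hypothesis $I\supseteq(x_1,\ldots,x_{d-1}):\fkm$ actually does. As Proposition~\ref{Sally mult prop}(2) of this survey records, the role of such a hypothesis is to force the equality $s_0(Q,I)=\rme_1(I)-\rme_0(I)-\rme_1(Q)+\lambda(R/I)$ rather than the mere inequality, and Corso's proof of the dichotomy $\dim S\in\{0,d\}$ rests on that identification combined with an associated-primes analysis in the spirit of part (1), not on a generic finiteness appeal to $H^i_{\fkm}(R)$. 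As it stands, parts (1) and (2) are proof plans with the decisive lemmas missing.
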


\medskip

The notion of Hilbert coefficients of an ideal can be extended to those of a module.

\begin{Definition}\label{hcmod}{\rm
Let $(R, \fkm)$ be a Noetherian local ring of dimension $d >0$. Let $I$ be an $\fkm$-primary ideal and $M$ a finitely generated $R$-module of dimension $s \geq 1$. 
\begin{enumerate}[(1)]
\item The {\em Hilbert function of $M$ relative to $I$} is ${\ds \rmH_{I,M}(n) = \l(M/I^{n+1}M)}$.
\item The {\em Hilbert polynomial of $M$ relative to $I$} is 
\[ \rmP_{I, M}(n) = \sum_{i=0}^{s} (-1)^{i} \rme_{i}(I, M) {{n+s-i}\choose{s-i}}. \]
\item The integers $\rme_{i}(I, M)$ are called the {\em Hilbert coefficients} of $M$ relative to $I$. In particular, if $M=R$, then ${\ds \rme_{i}(I, R) = \rme_{i}(I)}$ for each $i$.
\item If $I=\fkm$, we write ${\ds \rme_{i}(\fkm, M) = \rme_{i}(M)}$ for each $i$. In particular,  ${\ds \rme_{0}(M)}$ is called the {\em multiplicity} of $M$.
\end{enumerate}
}\end{Definition}

The Rees algebra $\clR(I)$ and the associated graded ring $\G(I)$ of an ideal $I$ are often referred to as blowup algebras of $I$ and are based on the $I$-adic filtration ${\ds \{ I^{n} \mid n \in \NN_{0} \} }$. Now we give more general definitions of filtrations and their blow-up algebras.

\begin{Definition}\label{I-good}{\rm
Let $R$ be a Noetherian ring. A set of ideals 
\[ \clE=\{ I_{n} \mid n \in \NN_{0}, \; I_{0}=R, \; I_{1} \neq R, \; I_{n+1} \subseteq I_{n}, \; I_{n}I_{m} \subseteq I_{n+m} \} \]
is called a {\em filtration} in $R$. Let $I$ be an ideal of $R$. We say a filtration $\clE$ is an {\em $I$-good filtration} if ${\ds I I_{n} \subseteq I_{n+1}}$ for all $n$ and there exists $k$ such that for all $n \geq k$, ${\ds II_{n} = I_{n+1}}$. 
}\end{Definition}

Given a filtration ${\ds \clE=\{ I_{n} \}_{n \in  \NN_{0}} }$, we can define the Rees algebra ${\ds \clR( \clE)}$ and the  associated graded ring ${\ds \G(\clE) }$ as the following.
\[ \clR( \clE) = \bigoplus_{n=0}^{\infty} I_{n} t^{n}, \hspace{0.3 in} \G(\clE) = \bigoplus_{n=0}^{\infty} I_{n}/I_{n+1}. \] In particular, $\clR(\clE)$  is a finitely generated $\clR(I)$-module if and only if  $\clE$ is a good $I$-filtration.  
If $(R, \fkm)$ is a Noetherian local ring, $I$ is an $\fkm$-primary ideal, and $\clE$ is a good $I$-filtration, then we can define the Hilbert function ${\ds \rmH_{\clE}(n)}$, Hilbert series ${\ds \rmh_{\clE}(t)}$, Hilbert polynomial ${\ds \rmP_{\clE}(X)}$, and Hilbert coefficients ${\ds \rme_{i}(\clE)}$ of the filtration $\clE$ by using the definitions of ${\ds \rmH_{\clM}(n), \rmh_{\clM}(t), \rmP_{\clM}(X), \rme_{i}(\clM) }$.  

\medskip

A particular $I$-good filtration involving integral closures has been of great interest. The {\em integral closure} $\cl{I}$ of $I$ is the set of all elements $y$ that are integral over $I$, that is, satisfy a polynomial equation of the form
\[ y^{m} + a_{1} y^{m-1} + \cdots+ a_{j} y^{m-j} + \cdots + a_{m} =0, \]
where ${\ds a_{j} \in I^{j}}$.  

\begin{Definition}\label{normalfilt}{\rm 
Let $R$ be a Noetherian ring and $I$ an ideal in $R$.
\begin{enumerate}[(1)]
\item The {\em normal filtration} of $I$ is ${\ds \clN = \{ \cl{I^{n}} \mid  n \in  \NN_{0}, \; \cl{I^{0}}=R \} }$.
\item The Rees algebra of the normal filtration is called the {\em normal Rees algebra} of $I$ and is denoted by ${\ds \cl{\clR}(I)}$. That is, \[  \cl{\clR}(I) =\bigoplus_{n=0}^{\infty} \cl{I^{n}}t^{n}.   \]
In particular,  ${\ds \cl{\clR}(I)}$ is the integral closure of $\clR(I)$ in the polynomial ring $R[t]$.
\item The associated graded ring of the normal filtration is called the {\em normal associated graded ring} of $I$ and is denoted by ${\ds \cl{\G}(I)}$. That is, \[  \cl{\G}(I) =\bigoplus_{n=0}^{\infty} \cl{I^{n}}/\cl{I^{n+1}}.   \]
\end{enumerate}
}\end{Definition}

Recall that a Noetherian local ring $(R, \fkm)$ is said to be {\em analytically unramified} if its $\fkm$-adic completion $\widehat{R}$ is reduced. If $R$ is analytically unramified, then the normal filtration $\clN$ of $I$ is an $I$-good filtration.

\begin{Definition}\label{normalHilb}{\rm 
Let $(R, \fkm)$ be an analytically unramified Noetherian local ring. Let $I$ be  an $\fkm$-primary  ideal in $R$.
The {\em normal Hilbert function} of $I$ is  the function ${\ds \cl{\rmH}_{I}: \NN_{0} \rar \NN_{0}}$ defined by  ${\ds \cl{\rmH_{I}}(n) = \l(R/ \cl{I^{n+1}})}$. The {\em normal Hilbert polynomial} of $I$ is the unique polynomial ${\ds \cl{\rmP}_{I}(X) \in \QQ[X]}$ of degree $d$ such that ${\ds \cl{\rmP}_{I}(n) = \l(R/\cl{I^{n+1}})}$ for all sufficiently large $n$. More specifically, 
\[ \cl{\rmP}_{I}(X) = \sum_{i=0}^{d} (-1)^{i} \cl{\rme}_{i}(I){{X+d-i}\choose{d-i}},\]
where $\cl{\rme}_{i}(I)$'s are integers.  In this case, $\cl{\rme}_{i}(I)$'s are called the {\em normal Hilbert coefficients} of $I$. 
}\end{Definition}

\medskip

In the subsequent sections, when we state the theorems, we will use the notation given in this section. Though the presentations may seem different from the original statements in the referenced papers, we believe that using consistent notation throughout this article may give the readers a clearer understanding.

\bigskip

\section{Early Results: Bounds Involving Hilbert Coefficients}\label{early}

In this section, we focus on several of Vasconcelos' most influential early works regarding Hilbert coefficients. One of his most influential and heavily cited papers was \cite{V94}. In this work, Vasconcelos extended techniques that had been developed for ideals generated by $d$-sequences (see (\ref{d-seq}) for the definition of $d$-sequences) to the arithmetical study of more general algebras through the use of a minimal reduction of an ideal that had well-behaved Koszul homology. He also studied analytic spread, Cohen-Macaulayness of the Rees algebra $\clR(I)$, the number of generators of certain prime ideals, and the behavior of the Hilbert polynomial of $I$ when $I$ is primary.  A key tool used in this analysis was the Sally module. In particular, the following result  shows that if $(R, \fkm)$ is a  Cohen-Macaulay local ring, then the Hilbert function of an $\fkm$-primary ideal can be written in terms of the multiplicity of the ideal and the length of the corresponding component of the Sally module.

\begin{Theorem}\label{V94-3-1}{\rm \cite[Proposition 3.1]{V94}}
Let $(R, \fkm)$ be a Cohen-Macaulay local ring of dimension $d$, with infinite residue field, and let $I$ be an $\fkm$-primary ideal. If $J$ is a minimal reduction of $I$, then
\[ \rmH_{I}(n) = \rme_{0}(I) {{n+d}\choose{d}} + \left(\l(R/I) - \rme_{0}(I)\right){{n+d-1}\choose{d-1}} - \l\left(I^{n+1}/IJ^{n}\right). \] 
\end{Theorem}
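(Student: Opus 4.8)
The plan is to compute $\rmH_{I}(n)=\l(R/I^{n+1})$ by comparing $I^{n+1}$ with the powers of the minimal reduction $J$, exploiting the fact that the Cohen--Macaulay hypothesis makes $J$ especially tractable. First I would record the structural facts about $J$. Because the residue field is infinite and $I$ is $\fkm$-primary, the minimal reduction $J$ is generated by a system of parameters $a_{1},\ldots,a_{d}$; since $R$ is Cohen--Macaulay these form a regular sequence, so the associated graded ring $\G(J)=\bigoplus_{n} J^{n}/J^{n+1}$ is a polynomial ring $(R/J)[T_{1},\ldots,T_{d}]$. In particular each $J^{n}/J^{n+1}$ is a free $R/J$-module of rank ${{n+d-1}\choose{d-1}}$, which gives $\l(R/J^{n+1})=\l(R/J){{n+d}\choose{d}}$ and $\rme_{0}(J)=\l(R/J)$. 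Since $J$ is a reduction of $I$, their multiplicities agree (Northcott--Rees), so $\rme_{0}(I)=\l(R/J)$; equivalently $\rme_{0}(I)-\l(R/I)=\l(I/J)$.

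The computational core is the chain of ideals $J^{n+1}\subseteq IJ^{n}\subseteq I^{n+1}\subseteq R$, all of finite colength since $I$ is $\fkm$-primary. Additivity of length yields
\[ \l(R/I^{n+1}) = \l(R/J^{n+1}) - \l(IJ^{n}/J^{n+1}) - \l(I^{n+1}/IJ^{n}). \]
The first and last terms are already in the desired shape, so everything reduces to evaluating $\l(IJ^{n}/J^{n+1})$. Here I would invoke the freeness from the first step: inside the free $R/J$-module $J^{n}/J^{n+1}$, the submodule $IJ^{n}/J^{n+1}$ is exactly $(I/J)\cdot(J^{n}/J^{n+1})$, since the action of $I$ on $J^{n}/J^{n+1}$ factors through $R/J$ (because $J$ annihilates $J^{n}/J^{n+1}$), and multiplying a free module of rank ${{n+d-1}\choose{d-1}}$ by the ideal $I/J$ produces ${{n+d-1}\choose{d-1}}$ copies of $I/J$. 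Hence $\l(IJ^{n}/J^{n+1})=\l(I/J){{n+d-1}\choose{d-1}}=(\rme_{0}(I)-\l(R/I)){{n+d-1}\choose{d-1}}$. Substituting this, together with $\l(R/J^{n+1})=\rme_{0}(I){{n+d}\choose{d}}$, into the length formula produces the claimed identity.

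The routine parts are the length additivity and the final substitution; the step demanding the most care is the evaluation of $\l(IJ^{n}/J^{n+1})$, which is where the Cohen--Macaulay hypothesis does its real work. One must verify both that $J^{n}/J^{n+1}$ is genuinely free over $R/J$, so that multiplication by $I/J$ behaves additively across the basis, and that $IJ^{n}/J^{n+1}$ coincides with $(I/J)(J^{n}/J^{n+1})$ rather than being merely contained in it; tracking the monomial basis of $\G(J)$ settles both points simultaneously. I expect no difficulty beyond this, as the finiteness of every length in sight is immediate from $I$ being $\fkm$-primary.
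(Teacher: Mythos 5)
Your proposal is correct and follows essentially the same route as the paper's proof: the chain $J^{n+1}\subseteq IJ^{n}\subseteq I^{n+1}$, additivity of length, and the fact that $J$ is generated by a regular sequence so that $\rme_{0}(I)=\rme_{0}(J)=\l(R/J)$ and $\l(IJ^{n}/J^{n+1})=\l(I/J)\binom{n+d-1}{d-1}$. The only difference is that you spell out the freeness of $J^{n}/J^{n+1}$ over $R/J$ to justify the middle length computation, a detail the paper leaves implicit.
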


\begin{proof} Since ${\ds J^{n+1} \subset IJ^{n} \subset I^{n+1}}$, we have 
\[ \l( R/I^{n+1}) = \l(R/J^{n+1}) - \l(IJ^{n}/J^{n+1}) - \l(I^{n+1}/IJ^{n}). \]
Since $J$ is a reduction of $I$, we have ${\ds \rme_{0}(I)=\rme_{0}(J)}$. Since $J$ is a minimal reduction of an $\fkm$-primary ideal in a Cohen-Macaulay ring, $J$ is generated by a regular sequence of length $d$. Then ${\ds \rme_{0}(J) = \l(R/J)}$ and 
\[ \l( R/J^{n+1}) = \rme_{0}(I) {{n+d}\choose{d}},\]
and
\[ \l( IJ^{n}/J^{n+1}) = \l( I/J)  {{n+d-1}\choose{d-1}} = ( \rme_{0}(I) - \l(R/I) ) {{n+d-1}\choose{d-1}} . \]
Then the assertion follows.
\end{proof}

This result has several interesting implications regarding the Hilbert function of the Sally module. The relation above can be viewed in reverse, giving information on the Hilbert polynomial of the Sally module in terms of the Hilbert polynomial of the ideal. Let $R, I$, and $J$ be as in Theorem~\ref{V94-3-1}. Then the 
Hilbert polynomial of ${\ds S_{J}(I)}$ is 
\[ \begin{array}{rcl}
{\ds \rmP_{S_{J}(I)}(n) } &=& {\ds \rme_0(I) {{n+d}\choose{d}} + \left(\l(R/I) - \rme_0(I)\right){{n+d-1}\choose{d-1}} - \rmP_{I}(n) } \vspace{0.12 in} \\
&=& {\ds  \rme_{0}(I) {{n+d}\choose{d}} + \left(\l(R/I) - \rme_{0}(I)\right){{n+d-1}\choose{d-1}} } \vspace{0.08 in} \\
&& {\ds - \rme_{0}(I) {{n+d}\choose{d}} + \rme_{1}(I) {{n+d-1} \choose{d-1}} - \rme_{2}(I) {{n+d-2}\choose{d-1}} + \cdots - (-1)^{d} \rme_{d}(I)} \vspace{0.12 in} \\
&=& {\ds \left( \l(R/I) - \rme_{0}(I) + \rme_{1}(I) \right){{n+d-1}\choose{d-1}} - \rme_{2}(I) {{n+d-2}\choose{d-1}} + \cdots - (-1)^{d} \rme_{d}(I)}
\end{array}  \]

In particular, the two Hilbert polynomials ${\ds \rmP_{I}(n)}$ and ${\ds \rmP_{S_{J}(I)}(n)}$  have similar growth, with a degree shift of one, and beyond the initial terms, the coefficients of the Hilbert polynomials will match up to a degree shift.  These results are summarized in the corollary below.

\begin{Corollary}{\rm \cite[Corollaries 3.2, 3.3, 3.4]{V94}}\label{Sally bounds}
Let $(R, \fkm)$ be a Cohen-Macaulay local ring of dimension $d>0$, $I$ an $\fkm$-primary ideal, and $J$ a minimal reduction of $I$.  Let $S_{n}$ denote the $n$-th component of $S_{J}(I)$. 
\begin{enumerate}[{\rm (1)}]
\item If  $S_{J}(I) \neq 0$, then the Hilbert function $\l(S_n)$ has the growth of a polynomial of degree $d-1$.
\item If $S_{J}(I) \neq 0$ and $s_0, s_1, \ldots, s_{d-1}$ are the coefficients of the Hilbert polynomial of $S_{J}(I)$, then 
\[ \rme_{1}(I) = \rme_{0}(I) - \l(R/I) + s_0, \quad \mbox{and} \quad  \rme_{i+1} = s_{i} \;\; \mbox{for} \;\; i \geq 1. \]
\item ${\ds \rme_{0}(I) - \rme_{1}(I) \leq \l(R/I)}$.
\item If ${\ds \rme_{0}(I) - \rme_{1}(I) = \l(R/I)}$, then ${\ds S_{J}(I)=0}$.
\end{enumerate}
\end{Corollary}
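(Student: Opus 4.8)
The plan is to read Theorem~\ref{V94-3-1} as an identity for the Hilbert function of the Sally module itself and then extract the four assertions by comparing Hilbert polynomials. Since the $n$-th component of $S_J(I)$ is $S_n = I^{n+1}/IJ^n$, we have $\l(S_n) = \l(I^{n+1}/IJ^n)$, and Theorem~\ref{V94-3-1} rearranges to
\[ \l(S_n) = \rme_0(I){{n+d}\choose{d}} + \big(\l(R/I) - \rme_0(I)\big){{n+d-1}\choose{d-1}} - \rmH_I(n) \]
for every $n \geq 0$. This single identity drives all of the subsequent bookkeeping, so I would record it first.

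For (1), I would note that for $n \gg 0$ we may replace $\rmH_I(n)$ by $\rmP_I(n)$, a polynomial of degree $d$ whose leading term is $\rme_0(I)\,n^d/d!$; this exactly cancels the leading term of $\rme_0(I){{n+d}\choose{d}}$, so $\l(S_n)$ agrees for large $n$ with a polynomial of degree at most $d-1$. To see the degree is exactly $d-1$, I would invoke the dimension statement quoted earlier (in the Cohen-Macaulay case, $S_J(I)\neq 0$ forces $\dim_{\clR(J)} S_J(I)=d$) together with the standard fact that a finitely generated graded module of dimension $d \geq 1$ over a standard graded ring has Hilbert polynomial of degree $d-1$.

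For (2), I would substitute $\rmP_I(n) = \sum_{i=0}^d (-1)^i \rme_i(I){{n+d-i}\choose{d-i}}$ into the displayed identity, cancel the two $\rme_0(I){{n+d}\choose{d}}$ terms, and read off coefficients. Writing $\rmP_{S_J(I)}(n) = \sum_{i=0}^{d-1}(-1)^i s_i {{n+d-1-i}\choose{d-1-i}}$, matching the coefficient of ${{n+d-1}\choose{d-1}}$ gives $s_0 = \l(R/I) - \rme_0(I) + \rme_1(I)$, equivalently $\rme_1(I) = \rme_0(I) - \l(R/I) + s_0$, while matching the coefficient of ${{n+d-1-i}\choose{d-1-i}}$ for $i \geq 1$ gives $s_i = \rme_{i+1}(I)$. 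This is precisely the computation displayed just before the corollary, now interpreted as a coefficient comparison.

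Finally, parts (3) and (4) both follow from the relation $s_0 = \l(R/I) - \rme_0(I) + \rme_1(I)$ once $s_0$ is identified with the multiplicity $\rme_0(S_J(I))$ of the Sally module. Since a multiplicity is non-negative, $s_0 \geq 0$ yields $\rme_0(I) - \rme_1(I) \leq \l(R/I)$, which is (3) (and the argument is valid even when $S_J(I)=0$, where $s_0 = 0$ and equality holds). For (4), equality $\rme_0(I) - \rme_1(I) = \l(R/I)$ forces $s_0 = 0$; but if $S_J(I)\neq 0$ then $\dim_{\clR(J)} S_J(I) = d \geq 1$, so its multiplicity $s_0$ is strictly positive, a contradiction, whence $S_J(I)=0$. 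The only point needing care — and the only place where the Cohen-Macaulay hypothesis enters beyond Theorem~\ref{V94-3-1} — is the equivalence ``$s_0 = 0 \iff S_J(I)=0$'', which rests on the quoted dimension result; everything else is a routine comparison of Hilbert polynomials.
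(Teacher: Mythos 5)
Your proposal is correct and follows essentially the same route as the paper, which derives the Hilbert polynomial of $S_{J}(I)$ from Theorem~\ref{V94-3-1} immediately before the corollary and reads off parts (1)--(4) by comparing coefficients. Your only addition is to make explicit the (correct) point that parts (3) and (4) rest on identifying $s_0$ with the multiplicity of $S_J(I)$ and on the quoted fact that a nonzero Sally module over a Cohen--Macaulay ring has dimension $d$, hence strictly positive leading coefficient.
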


The relationships in Corollary~\ref{Sally bounds} are part of Vasconcelos' broader study of Hilbert polynomials. The difference ${\ds \rme_{0}(I) - \rme_{1}(I)}$ in particular appears in multiple works. The result below is a start in this direction, giving a consequence of the inequality in Corollary~\ref{Sally bounds}-(3)  being one off from an equality.

\begin{Proposition}{\rm \cite[Proposition 3.5]{V94}}
Let $(R, \fkm)$ be a Cohen-Macaulay local ring with infinite residue field $k$. Let $I$ be an $\fkm$-primary ideal, and $J$ a minimal reduction of $I$. If \[ \rme_{0}(I)- \rme_{1}(I) = \l(R/I) -1,\]  then $S_{J}(I)$ is isomorphic to an ideal of $k[T_1, \ldots, T_d]$.
\end{Proposition}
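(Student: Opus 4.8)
The plan is to convert the numerical hypothesis into the single statement that $S := S_J(I)$ is nonzero of Hilbert multiplicity exactly $1$, and then to identify $S$ with a rank-one torsion-free graded module over the fiber cone $\clB := \clR(J)\otimes_R k$. Note first that since $J$ is a minimal reduction of an $\fkm$-primary ideal in a Cohen--Macaulay ring, it is generated by a regular sequence, hence by analytically independent elements, so $\clB = \clR(J)/\fkm\clR(J)$ is a polynomial ring $k[T_1,\ldots,T_d]$. To see $S\neq 0$: if $S=0$, then Theorem~\ref{V94-3-1} gives $\rmH_I(n)=\rme_0(I)\binom{n+d}{d}+(\l(R/I)-\rme_0(I))\binom{n+d-1}{d-1}$, and matching the coefficient of $\binom{n+d-1}{d-1}$ against the definition of $\rmP_I$ forces $\rme_0(I)-\rme_1(I)=\l(R/I)$, contradicting the hypothesis. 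With $S\neq 0$, Corollary~\ref{Sally bounds}(1)--(2) apply, so the Hilbert polynomial of $S$ has degree $d-1$ and, writing $s_0$ for its leading coefficient, $\rme_1(I)=\rme_0(I)-\l(R/I)+s_0$; the hypothesis $\rme_0(I)-\rme_1(I)=\l(R/I)-1$ then yields $s_0=1$.

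Next I would locate the support of $S$ as an $\clR(J)$-module. Each graded piece $S_n=I^{n+1}/IJ^n$ has finite length over $R$ because $I$ is $\fkm$-primary, so every homogeneous element of $S$ is annihilated by a power of $\fkm$, giving $\Supp_{\clR(J)}S\subseteq V(\fkm\clR(J))$. Since $\dim S=d$ (as $R$ is Cohen--Macaulay and $S\neq 0$, by \cite[Proposition 2.2]{V94}) and $\clR(J)/\fkm\clR(J)=\clB$ is a domain of dimension $d$, the prime $\fkm\clR(J)$ is the unique top-dimensional prime of $\Supp S$. The additivity formula for multiplicities then reads $\rme(S)=\l_{\clR(J)_{\fkm\clR(J)}}\!\big(S_{\fkm\clR(J)}\big)\cdot\rme(\clB)$, and since $\rme(\clB)=1$ and $\rme(S)=s_0=1$, the localization $S_{\fkm\clR(J)}$ has length $1$. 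In particular it is killed by the maximal ideal of $\clR(J)_{\fkm\clR(J)}$, so $(\fkm S)_{\fkm\clR(J)}=0$, i.e. $\fkm S$ is supported in dimension $\le d-1$.

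Then I would invoke the depth property of the Sally module, namely that $\fkm\clR(J)$ is the only associated prime of $S$ (the structural content underlying \cite[Proposition 2.2]{V94}), so that $S$ has no nonzero submodule of dimension $<d$. Together with the previous paragraph this forces $\fkm S=0$, so $S$ is a finitely generated graded $\clB$-module, and the absence of embedded primes means $S$ is torsion-free over the domain $\clB$ with $\rank_\clB S=\rme(S)=1$. Finally, a finitely generated graded torsion-free module of rank one over the polynomial ring $\clB$ embeds into $S\otimes_\clB\Quot(\clB)\cong\Quot(\clB)$ as a graded $\clB$-submodule; multiplying by a single homogeneous element to clear denominators identifies it, up to a degree shift, with a homogeneous ideal of $\clB=k[T_1,\ldots,T_d]$, which is exactly the assertion.

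The step I expect to be the main obstacle is the passage from $s_0=1$ to $\fkm S=0$ together with torsion-freeness. The multiplicity computation only controls $S$ at the generic point $\fkm\clR(J)$ of its support; ruling out lower-dimensional (embedded) associated primes is what actually upgrades ``length one at the generic point'' to ``torsion-free rank one over $\clB$,'' and this genuinely rests on the structural depth information for Sally modules rather than on the numerical hypothesis alone.
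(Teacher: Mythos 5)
Your proof is correct, and it is essentially the argument of \cite{V94} itself (the survey states this proposition without proof): the hypothesis translates into $s_{0}=1$ via Corollary~\ref{Sally bounds}, and the structural fact that $\fkm\clR(J)$ is the \emph{unique associated prime} of a nonzero Sally module over a Cohen--Macaulay ring --- which is indeed part of \cite[Proposition 2.2]{V94}, even though the survey quotes only the resulting dimension statement --- is exactly what upgrades ``length one at the generic point'' to $\fkm S_{J}(I)=0$ and torsion-free rank one over $k[T_{1},\ldots,T_{d}]$. You correctly identified this as the load-bearing step; there are no gaps.
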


As a consequence of this result, Vasconcelos recovered a result of Sally showing that the difference ${\ds \rme_0(I) - \rme_{1}(I)}$ is closely related to the reduction number of $I$.

\begin{Theorem}{\rm \cite[Theorem 3.6]{V94}}
Let $(R, \fkm)$ be a Cohen-Macaulay local ring with infinite residue field and let $I$ be an $\fkm$-primary ideal. If
${\ds \rme_{0}(I)- \rme_{1}(I) = \l(R/I) -1}$ and  ${\ds \rme_{2}(I) \neq 0}$, then $I$ has reduction number $2$.
\end{Theorem}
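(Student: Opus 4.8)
The plan is to read the reduction number off the module structure of the Sally module $S_{J}(I)$, using the hypotheses to pin that structure down completely. First I would translate the hypotheses into the language of Corollary~\ref{Sally bounds}. Let $s_{0},\ldots,s_{d-1}$ denote the coefficients of the Hilbert polynomial of $S_{J}(I)$. By Corollary~\ref{Sally bounds}(2), $s_{0}=\rme_{1}(I)-\rme_{0}(I)+\l(R/I)$, so the hypothesis $\rme_{0}(I)-\rme_{1}(I)=\l(R/I)-1$ is precisely $s_{0}=1$, while $\rme_{2}(I)=s_{1}$, so $\rme_{2}(I)\neq 0$ says $s_{1}\neq 0$. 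In particular $s_{0}\neq 0$, so $S_{J}(I)\neq 0$ (equivalently, equality fails in Corollary~\ref{Sally bounds}(3)); hence $S_{1}=I^{2}/IJ\neq 0$, that is $I^{2}\neq JI$, and $r_{J}(I)\geq 2$. The remaining task is the upper bound $r_{J}(I)\leq 2$.

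Next I would establish the dictionary between $r_{J}(I)$ and the generators of $S_{J}(I)$. Since $R$ is Cohen--Macaulay and $J$ is a minimal reduction of an $\fkm$-primary ideal, $J$ is generated by a regular sequence $a_{1},\ldots,a_{d}$; multiplying by the $a_{i}t$ gives $\clR(J)_{1}S_{n}=JI^{n+1}/IJ^{n+1}$ (using $IJ^{n+1}\subseteq JI^{n+1}$, which follows from $J^{n+1}\subseteq I^{n}J$), whence $S_{n+1}/\clR(J)_{1}S_{n}\cong I^{n+2}/JI^{n+1}$. Thus $S_{n+1}=\clR(J)_{1}S_{n}$ exactly when $I^{n+2}=JI^{n+1}$, so for $S_{J}(I)\neq 0$ the number $r_{J}(I)$ is one more than the top degree of a minimal generator of $S_{J}(I)$. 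It therefore suffices to show that $S_{J}(I)$ is generated in degree $1$.

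The key step, and the main obstacle, is to prove that $S_{J}(I)$ is cyclic, generated in degree $1$; this is where $\rme_{2}(I)\neq 0$ is used. By the preceding proposition, the condition $s_{0}=1$ provides a graded isomorphism of $S_{J}(I)$ onto an ideal $\clA$ of $B=k[T_{1},\ldots,T_{d}]$, which I may take to begin in degree $1$, so that $\clA$ contains a nonzero linear form $\ell$. Because $B$ is a unique factorization domain, the reflexive hull of $\clA$ is the principal ideal $(g)$, where $g$ is the greatest common divisor of a generating set of $\clA$, and the inclusion $\clA\subseteq(g)$ has cokernel supported in codimension at least $2$; comparing the top two Hilbert coefficients of $\clA$ and of $(g)\cong B(-\deg g)$ gives $s_{1}=\deg g$, i.e. $\deg g=\rme_{2}(I)$. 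Since $g$ divides $\ell$ we have $\deg g\leq 1$, so $\rme_{2}(I)\neq 0$ forces $\deg g=1$ and $g=\ell$ up to a scalar. Then $(\ell)\subseteq\clA\subseteq(g)=(\ell)$, so $\clA=(\ell)\cong B(-1)$ is cyclic and generated in degree $1$; hence so is $S_{J}(I)$, and by the dictionary $r_{J}(I)\leq 2$.

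Combining the two bounds yields $r_{J}(I)=2$, and since $\rme_{0}(I),\rme_{1}(I),\rme_{2}(I)$ and $\l(R/I)$ are independent of the choice of minimal reduction, the same holds for every minimal reduction $J$, whence $r(I)=2$. The crux is the cyclicity step: one must upgrade the purely numerical data $s_{0}=1$, $s_{1}\neq 0$ to the rigid isomorphism $S_{J}(I)\cong B(-1)$, and it is the unique factorization property of $B$ together with the reflexive-hull computation that lets $\rme_{2}(I)\neq 0$ force $\deg g=1$. By contrast, when $\rme_{2}(I)=0$ the ideal $\clA$ has height at least $2$ and may require generators in degrees larger than $1$, so the reduction number can exceed $2$; this explains why the hypothesis $\rme_{2}(I)\neq 0$ is needed.
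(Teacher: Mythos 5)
The survey gives no proof of this theorem; it only indicates (in the sentence preceding it) that the result is a consequence of the proposition that $\rme_{0}(I)-\rme_{1}(I)=\l(R/I)-1$ forces $S_{J}(I)$ to be isomorphic to an ideal of $B=k[T_{1},\ldots,T_{d}]$, and your proposal follows exactly that intended route. Your numerical translation ($s_{0}=1$, $s_{1}=\rme_{2}(I)$), the observation that $S_{J}(I)\neq 0$ hence $I^{2}\neq JI$, the dictionary $S_{n+1}/\clR(J)_{1}S_{n}\cong I^{n+2}/JI^{n+1}$ identifying $r_{J}(I)$ with one plus the top generating degree of the Sally module, and the gcd/reflexive-hull computation over the UFD $B$ are all correct and are the right skeleton.

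There is, however, one genuine gap, and it sits precisely at the point you call the crux: the clause ``which I may take to begin in degree $1$.'' The proposition only gives an isomorphism of $S_{J}(I)$ onto an ideal of $B$ \emph{up to a shift of grading}: a finitely generated torsion-free rank-one graded $B$-module whose first nonzero component sits in degree $1$ need \emph{not} be degree-preservingly isomorphic to an ideal. For instance $\clA=(T_{1}^{2},T_{2}^{3})$, regraded so that it starts in degree $1$, is such a module; it is not generated in its initial degree, and its second Hilbert coefficient is $-1\neq 0$. If $S_{J}(I)\cong\clA(c)$ with shift $c$, your computation actually gives $s_{1}=\deg g - c$, not $s_{1}=\deg g$; dividing a minimal-degree element of $\clA$ only yields $\deg g\leq 1+c$, hence $s_{1}\leq 1$, and nothing in your argument rules out $s_{1}=\rme_{2}(I)<0$, in which case the hypothesis $\rme_{2}(I)\neq 0$ holds but the conclusion is not reached. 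The missing input is the non-negativity $\rme_{2}(I)\geq 0$ for an $\fkm$-primary ideal of a Cohen--Macaulay local ring (Narita's theorem). With that in hand, $0\leq s_{1}\leq 1$ and $s_{1}\neq 0$ force $s_{1}=1$, hence $\deg g=1+c=$ the initial degree of $\clA$, so $\clA=(g)$ and $S_{J}(I)\cong B(-1)$, and the rest of your argument goes through verbatim. So the proof is salvageable by one citation, but as written the degree normalization is asserted rather than available, and it is exactly equivalent to the inequality you would need to prove.
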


The preceding results focus on the coefficients of the Hilbert polynomial of an $\fkm$-primary ideal and the related coefficients of the Hilbert polynomial of the Sally module. The components of the Sally module are quotients focused on a (minimal) reduction. Another module that encodes properties of an ideal through quotients is the associated graded ring
$\G=\G(I)= \bigoplus_{n \geq 0} I^{n}/I^{n+1}$.  
As discussed in Section~\ref{background}, $\rme_{0}(I)= \rme_{0}(\G)$ and for $i < d$, ${\ds \rme_{i}(I)=\rme_{i}(\G)}$. This yields another tool for studying multiplicities of ideals through Hilbert coefficients. In \cite{RVV01}, Rossi, Valla, and Vasconcelos used these tools to study which ideals have maximal Hilbert coefficients and the interplay between $\rme_{0}(I)$ and $\rme_{1}(I)$.  

\begin{Theorem}\label{RVV01-2-2}{\rm \cite[Theorem 2.2, Remark 2.3]{RVV01}}
Let $(R, \fkm)$ be a Noetherian local ring of dimension $d \geq 1$ and let $I$ be an $\fkm$-primary ideal. 
Let ${\ds \rmh_{\G}(t)}$ denote the Hilbert series of the associated graded ring $\G=\G(I)$ of $I$. 
If $J$ is an ideal generated by a system of parameters in $I$, then 
\[ \rmh_{\G}(t) \leq \frac{\l(R/I) + \l(I/J) t}{(1-t)^d}.\]
If the equality holds, then $\G(I)$ is Cohen-Macaulay, $R$ is Cohen-Macaulay, and $\rme_{0}(I)=\l(R/J)$.  
\end{Theorem}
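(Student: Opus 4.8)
The inequality between formal power series is coefficientwise, so, reading off the coefficient of $t^n$ on each side, it is equivalent to the family of length inequalities
\[ \l(I^n/I^{n+1}) \le \l(R/I)\binom{n+d-1}{d-1} + \l(I/J)\binom{n+d-2}{d-1} \qquad (n \ge 0). \]
The plan is to prove these by induction on $d = \dim R$, peeling off one generator of the system of parameters generating $J$ at each step. The case $d=1$ will fall out of the inductive engine below, since there the right-hand side equals $\l(R/I)$ for $n=0$ and $\l(R/J)$ for $n\ge 1$, and cutting by a single parameter lands in an Artinian ring of length $\l(R/J)$.

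For the inductive step write $J=(a_1,\dots,a_d)$ and set $\overline{R}=R/(a_1)$, $\overline{I}=I\overline{R}$, and $\overline{J}=J\overline{R}=(\overline{a_2},\dots,\overline{a_d})$. The engine is the exact sequence
\[ 0 \lar R/(I^{n+1}:a_1) \stackrel{a_1}{\lar} R/I^{n+1} \lar \overline{R}/\overline{I}^{\,n+1} \lar 0, \]
which, combined with the trivial containment $(I^{n+1}:a_1)\supseteq I^n$ (valid because $a_1\in I$), gives
\[ \l(I^n/I^{n+1}) = \l(R/I^{n+1}) - \l(R/I^n) \le \l(R/I^{n+1}) - \l\big(R/(I^{n+1}:a_1)\big) = \l(\overline{R}/\overline{I}^{\,n+1}). \]
Since $a_1\in J\subseteq I$, one checks directly that $\l(\overline{R}/\overline{I})=\l(R/I)$ and $\l(\overline{I}/\overline{J})=\l(I/J)$, and that $\overline{J}$ is a $(d-1)$-element system of parameters contained in $\overline{I}$ in the $(d-1)$-dimensional ring $\overline{R}$. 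Applying the inductive hypothesis to $(\overline{R},\overline{I},\overline{J})$ and summing the per-degree bounds over $m=0,\dots,n$ (a hockey-stick identity turns $\sum_m\binom{m+d-2}{d-2}$ into $\binom{n+d-1}{d-1}$ and $\sum_m\binom{m+d-3}{d-2}$ into $\binom{n+d-2}{d-1}$) yields $\l(\overline{R}/\overline{I}^{\,n+1}) \le \l(R/I)\binom{n+d-1}{d-1}+\l(I/J)\binom{n+d-2}{d-1}$, which is exactly the desired bound for $\l(I^n/I^{n+1})$. Notably this argument is characteristic-free and needs neither an infinite residue field nor that $J$ be a reduction of $I$.

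For the equality statement, suppose the Hilbert series equals the bound, so every inequality above is an equality for every $n$. Equality in the middle step forces $(I^{n+1}:a_1)=I^n$ for all $n$; this implies that the initial form $a_1^*\in I/I^2$ is a nonzerodivisor on $\G(I)$, that $a_1$ is a nonzerodivisor on $R$, and that there is an isomorphism $\G(I)/a_1^*\G(I)\cong \G(\overline{I})$. Equality in the inductive step forces the bound for $(\overline{R},\overline{I},\overline{J})$ to be sharp, so by induction $\overline{R}$ and $\G(\overline{I})$ are Cohen--Macaulay. Hence $\depth\G(I) = 1 + \depth\G(\overline{I}) = 1+(d-1) = d = \dim\G(I)$, so $\G(I)$ is Cohen--Macaulay; $R$ is Cohen--Macaulay because $a_1$ is a nonzerodivisor and $\overline{R}$ is Cohen--Macaulay; and evaluating the numerator at $t=1$ gives $\rme_0(I)=\l(R/I)+\l(I/J)=\l(R/J)$.

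The inequality itself is clean, so the step demanding genuine care is the equality analysis. The crux is the two homological translations of the vanishing of the length defects, namely that $(I^{n+1}:a_1)=I^n$ for all $n$ makes $a_1^*$ a nonzerodivisor on $\G(I)$ and yields the lifting isomorphism $\G(I)/(a_1^*)\cong\G(\overline{I})$ (a Valabrega--Valla type statement), together with verifying that the global equality hypothesis descends correctly to $(\overline{R},\overline{I},\overline{J})$ so that the inductive hypothesis delivers the structural Cohen--Macaulay conclusions and not merely the numerical identity $\rme_0(I)=\l(R/J)$.
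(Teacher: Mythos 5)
Your proof is correct. Note that the survey states this result without proof, citing \cite[Theorem 2.2, Remark 2.3]{RVV01}; your argument --- the exact sequence $0 \rar R/(I^{n+1}:a_1) \rar R/I^{n+1} \rar \overline{R}/\overline{I}^{\,n+1} \rar 0$ together with $I^n \subseteq (I^{n+1}:a_1)$, induction on $d$, and the Valabrega--Valla translation of $(I^{n+1}:a_1)=I^n$ into regularity of the initial form for the equality case --- is essentially the argument of the original reference, so there is nothing to flag.
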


\medskip

A direct relationship between $\rme_{0}(I)$ and $\rme_{1}(I)$ can be found in a $1$-dimensional Cohen-Macaulay ring.

\begin{Theorem}{\rm \cite[Proposition 3.1]{RVV01}}\label{one-dim}
Let $(R, \fkm)$ be a $1$-dimensional Cohen-Macaulay local ring and let $I$ be an $\fkm$-primary ideal. If ${\ds \rme_{0}(I) \neq \rme_{0}(\fkm)} $, then 
\[ \rme_{1}(I) \leq {{\rme_{0}(I) -2}\choose{2}}.\]
\end{Theorem}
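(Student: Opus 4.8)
The plan is to convert the statement into a numerical estimate for the Hilbert function of the associated graded ring $\G = \G(I)$, and then to feed in the hypothesis $\rme_0(I)\neq\rme_0(\fkm)$ through the length $\l(R/I)$.

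First I would pass to $R[X]_{\fkm R[X]}$ in order to assume the residue field is infinite; this alters neither the dimension, the Cohen--Macaulay property, nor any Hilbert coefficient, and it guarantees that $I$ has a minimal reduction $J=(a)$ generated by a single element $a$, which is a nonzerodivisor since $R$ is Cohen--Macaulay of dimension one. Writing $h_n=\l(I^n/I^{n+1})$ for the Hilbert function of $\G$, I would extract the clean identity
\[ \rme_1(I)=\sum_{n\geq 0}\l\!\left(I^{n+1}/aI^n\right)=\sum_{n\geq 1} n\,(h_n-h_{n-1}). \]
The left equality comes either by specializing Theorem~\ref{V94-3-1} to $d=1$, or directly from the exact sequence $0\to I^{n+1}/aI^n\to I^n/aI^n\to I^n/I^{n+1}\to 0$ together with $\l(I^n/aI^n)=\l(R/aR)=\rme_0(I)$, which gives $\l(I^{n+1}/aI^n)=\rme_0(I)-h_n$; summing yields $\rme_1(I)$, and rewriting in terms of the first differences gives the right-hand expression. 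Thus everything is governed by the $h$-vector $(g_0,g_1,\dots)$ with $g_n=h_n-h_{n-1}$, $g_0=\l(R/I)$, and $\sum_n g_n=\rme_0(I)$.

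Next I would invoke the structural constraints valid in the one-dimensional Cohen--Macaulay case: the Hilbert function $h_n$ is non-decreasing (so $g_n\geq 0$ and $h_n$ climbs monotonically from $\l(R/I)$ to its eventual value $\rme_0(I)$), and the positive first differences $g_1\geq g_2\geq\cdots$ form a sequence of decreasing type. Granting these, bounding $\sum_{n\geq 1} n\,g_n$ subject to $\sum_{n\geq 1} g_n=\rme_0(I)-\l(R/I)$ becomes a finite combinatorial optimization whose extreme configuration spreads the mass as $g_1=g_2=\cdots=1$, producing a triangular-number bound of the shape $\binom{\rme_0(I)-\l(R/I)+1}{2}$.

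It then remains to pin down $\l(R/I)$ from the hypothesis. Here I would use that $\rme_0(I)\neq\rme_0(\fkm)$ means, by Rees' multiplicity theorem, that $I$ is not a reduction of $\fkm$, i.e. $\cl{I}\subsetneq\fkm$; in particular $I\neq\fkm$, so $\l(R/I)\geq 2$, and I would try to push this to the sharper lower bound on $\l(R/I)$ needed to land exactly on $\binom{\rme_0(I)-2}{2}$. I expect this final quantitative step to be the main obstacle: the identity for $\rme_1(I)$ and its reduction to an $h$-vector optimization are routine, but reconciling the decreasing-type constraint on $(g_n)$ with the precise numerical strength of the hypothesis — so as to obtain the exact binomial constant rather than a slightly weaker estimate such as $\binom{\rme_0(I)-1}{2}$ — is the delicate part of the argument and is exactly where the assumption $\rme_0(I)\neq\rme_0(\fkm)$ must be exploited to its fullest.
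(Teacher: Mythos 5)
The survey itself gives no proof of this theorem (only the citation to \cite[Proposition 3.1]{RVV01}), so your proposal can only be judged on its own terms, and it has two genuine gaps. Your opening reduction is correct and standard: after enlarging the residue field one gets a principal minimal reduction $J=(a)$ with $a$ a nonzerodivisor, $\l(I^{n+1}/aI^n)=\rme_0(I)-h_n\ge 0$, and $\rme_1(I)=\sum_{n\ge 0}(\rme_0(I)-h_n)$. The problem is the ``structural constraints'' you then invoke. The Hilbert function of a one-dimensional Cohen--Macaulay local ring is \emph{not} non-decreasing in general: this is a well-known failure (Herzog--Waldi and Orecchia constructed one-dimensional semigroup rings whose Hilbert function drops), and the ``decreasing type'' condition on the first differences $g_n$ is likewise not a theorem. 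Your optimization step needs $g_n\ge 0$ together with the support of $(g_n)_{n\ge 1}$ being an initial segment on which $g_n\ge 1$, and none of this is available; the argument therefore breaks before you reach any binomial coefficient. Whatever the correct proof is, it must control the summands $\l(I^{n+1}/aI^n)$ by some mechanism other than monotonicity of $h_n$.

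The second gap is the one you flag yourself, and it is worse than you fear. The hypothesis $\rme_0(I)\ne\rme_0(\fkm)$ gives only $\cl{I}\subsetneq\fkm$, hence $\l(R/I)\ge 2$, and even granting all your structural claims that yields $\binom{\rme_0(I)-1}{2}$, not $\binom{\rme_0(I)-2}{2}$. In fact the statement as printed in the survey appears to admit a counterexample, which is why no amount of care will close this step: take $R=k[[t^3,t^4,t^5]]$ and $I=(t^4,t^5,t^6)$. Then $R$ is a one-dimensional Cohen--Macaulay domain with $\rme_0(\fkm)=3$, one checks $I^{n}=\{f\in R \mid \mathrm{ord}_t(f)\ge 4n\}$ for $n\ge 1$, so $\l(R/I^{n+1})=4n+2$ and therefore $\rme_0(I)=4\ne\rme_0(\fkm)$ while $\rme_1(I)=2>1=\binom{\rme_0(I)-2}{2}$. (The triangular bound your optimization actually targets, namely $\binom{\rme_0(I)-\l(R/I)+1}{2}=3$, does hold in this example.) Before refining the argument you should consult the original statement of \cite[Proposition 3.1]{RVV01}; it must carry either a stronger hypothesis or a different bound than the version transcribed here, and the proof has to be calibrated to that.
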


In dimension one, using superficial elements allows one to bound the degree of the numerator of the Hilbert series in terms of $\rme_{0}(I)$ and $\rme_{1}(I)$ (see \cite[Proposition 3.2]{RVV01}). Generalizing to higher dimensional rings yields a more general, albeit slightly weaker, bound. 

\medskip

Recall that if $(R, \fkm)$ is Cohen-Macaulay and the Sally module $S_{J}(I)$ is not trivial, then the multiplicity of $S_{J}(I)$ is ${\ds  \rme_{1}(I) - \rme_{0}(I) + \l(R/I)}$ (Corollary~\ref{Sally bounds}). 

\begin{Theorem}{\rm \cite[Theorem 3.3]{RVV01}}\label{Sally-series}
Let $(R, \fkm)$ be a Cohen-Macaulay ring of dimension $d \geq 1$ and let $I$ be an $\fkm$-primary ideal. 
Let ${\ds \rmh_{I}(t)}$ denote the Hilbert series of $I$. Let ${\ds s= \rme_{1}(I) - \rme_{0}(I) + \l(R/I)}$. Then
\[ \rmh_{I}(t) \leq \frac{ \l(R/I) + \big( \rme_{0}(I) - \l(R/I) -1   \big)t + t^{s+1}}{(1-t)^{d+1}}. \]
\end{Theorem}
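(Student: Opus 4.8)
The plan is to read the Hilbert series of $I$ off the Sally module via Theorem~\ref{V94-3-1} and then induct on $d$. Fix a minimal reduction $J$, write $S=S_{J}(I)$, and recall from Corollary~\ref{Sally bounds} that $s=\rme_{1}(I)-\rme_{0}(I)+\l(R/I)$ is the multiplicity of $S$. Since $\l(I^{n+1}/IJ^{n})=\l(S_{n})$, summing the formula in Theorem~\ref{V94-3-1} over $n$ gives
\[ \rmh_{I}(t)=\frac{\l(R/I)+\big(\rme_{0}(I)-\l(R/I)\big)t}{(1-t)^{d+1}}-\rmh_{S}(t),\qquad \rmh_{S}(t)=\sum_{n\geq 1}\l(S_{n})t^{n}. \]
Subtracting the target right-hand side and using $t-t^{s+1}=t(1-t)(1+t+\cdots+t^{s-1})$, one finds that the claimed inequality is \emph{equivalent}, coefficient by coefficient, to the single lower bound
\[ \rmh_{S}(t)\ \geq\ \frac{t+t^{2}+\cdots+t^{s}}{(1-t)^{d}},\qquad\text{that is,}\qquad \l(S_{n})\geq\sum_{j=1}^{\min(n,s)}\binom{n-j+d-1}{d-1}\ \text{ for all }n. \]
The right-hand series is the Hilbert series of a $d$-dimensional module of multiplicity $s$ generated in degree one with the slowest possible growth, so the content is that the Sally module grows at least this fast.

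For the base case $d=1$ I would take $J=(x)$ with $x$ a nonzerodivisor. Multiplication by $xt\in\clR(J)_{1}$ sends $S_{n}=I^{n+1}/Ix^{n}$ to $S_{n+1}$, and since $x$ is a nonzerodivisor these maps are injective (if $xy\in Ix^{n+1}$ then $y\in Ix^{n}$); hence $\l(S_{n})$ is nondecreasing and $S':=S/xtS$ has $n$-th piece $S'_{n}\cong I^{n+1}/xI^{n}$ with $\l(S_{n})=\sum_{j=1}^{n}\l(S'_{j})$ and $\sum_{n}\l(S'_{n})=s$. The crux is that $S'$ has no gaps: if $I^{n}=xI^{n-1}$ then $I^{n+1}=I\cdot I^{n}=xI^{n}$, so $S'_{n-1}=0$ forces $S'_{n}=0$; together with $S'_{1}=I^{2}/xI\neq 0$ (as $S\neq 0$) this makes the support of $S'$ an initial segment $\{1,\ldots,r\}$ on which each $\l(S'_{n})\geq 1$, with $r\leq s$. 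Therefore $\l(S_{n})\geq\min(n,s)$, which is exactly the required bound in dimension one.

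For the inductive step $d\geq 2$ I would choose $x\in I$ superficial for $I$ (possible since the residue field is infinite) and pass to the Cohen-Macaulay ring $R'=R/xR$ of dimension $d-1$ and $I'=IR'$. Because $x\in I$ we get $R'/I'\cong R/I$, and superficiality keeps the lower coefficients fixed: $\rme_{0}(I')=\rme_{0}(I)$ and $\rme_{1}(I')=\rme_{1}(I)$, so $s$ and the whole numerator of the bound are unchanged. A length count using that $x$ is a nonzerodivisor and $I^{n}\subseteq(I^{n+1}:x)$ gives, for every $n$,
\[ \l\big(R'/(I')^{n+1}\big)=\l(R/I^{n+1})-\l\big(R/(I^{n+1}:x)\big)\geq \l(R/I^{n+1})-\l(R/I^{n}), \]
that is, $(1-t)\rmh_{I}(t)\leq \rmh_{I'}(t)$ coefficientwise. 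Combining this with the induction hypothesis $\rmh_{I'}(t)\leq\frac{\l(R/I)+(\rme_{0}(I)-\l(R/I)-1)t+t^{s+1}}{(1-t)^{d}}$ and then multiplying by $1/(1-t)=1+t+t^{2}+\cdots$, which has nonnegative coefficients and so preserves coefficientwise inequalities, yields $\rmh_{I}(t)$ bounded by the same expression over $(1-t)^{d+1}$.

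The hard part will be the base case, and specifically the no-gaps property that upgrades $\l(S'_{n})\geq 0$ to $\l(S'_{n})\geq 1$ on an initial segment; without it the partial sums $\l(S_{n})$ could dip below $\min(n,s)$ and the bound would fail. A second point demanding care is that every comparison must stay coefficientwise: dividing by $(1-t)$ is legitimate while multiplying by it is not, which is precisely why the induction is organized to divide, never multiply, by $(1-t)$. Finally, the induction relies on the standard stability of $\rme_{0}$ and $\rme_{1}$ under a superficial element, which must be invoked to keep the numerator of the bound fixed.
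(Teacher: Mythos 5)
The survey states this theorem without proof, merely citing \cite[Theorem 3.3]{RVV01}, so there is no in-paper argument to compare against; your proposal is correct and follows essentially the route of the original reference: translate the inequality into the coefficientwise lower bound $\rmh_{S}(t)\geq (t+t^{2}+\cdots+t^{s})/(1-t)^{d}$ for the Sally module, settle $d=1$ by the no-gaps argument on the quotients $I^{n+1}/xI^{n}$, and induct on $d$ via a superficial element using $(1-t)\rmh_{I}(t)\leq \rmh_{I'}(t)$ together with the positivity of the coefficients of $1/(1-t)$. The only step worth making explicit is the standard reduction to an infinite residue field (replace $R$ by $R[X]_{\fkm R[X]}$), which you tacitly use both for the existence of the superficial element and for the principal minimal reduction $J=(x)$ in the base case.
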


It is worth noting that \cite[Theorem 3.3]{RVV01} also shows that, if the Hilbert series ${\ds \rmh_{I}(t)}$ of $I$  has precisely the maximal form, then the associated grade $\G(I)$ has depth at least $d-1$.  That is,  $\G(I)$ is either Cohen-Macaulay or almost Cohen-Macaulay.

\bigskip

\section{Multiplicities}\label{mult}

A large part of Vasconcelos' study of Hilbert functions focused on what he referred to as degree functions, particularly multiplicities, which will be the focus of this section. In studying multiplicities and Hilbert coefficients, Vasconcelos used the interplay of multiple graded algebras, including the Rees algebra, the associated ring, the Sally module, the conormal module, the fiber cone, and the extended Rees algebra, to obtain information about the reduction number of an ideal and to generalize interesting results beyond the Cohen-Macaulay case. 

\medskip

Let $(R, \fkm)$ be a Noetherian local ring of dimension $d >0$ and let $M$ be a finitely generated $R$-module of dimension $s \geq 1$. The definition of the multiplicity $\rme_{0}(M)$ is given in Definition~\ref{hcmod}.  In many Vasconcelos' papers, the multiplicity of $M$ is also denoted by $\deg(M)$. If $\dim(M)=d$, then the multiplicity can also be written as
\[ \rme_{0}(M) = \rme_{0}(\fkm, M) =  \lim_{n \rar \infty} \frac{d!}{n^d}\l(M/\fkm^{n+1} M). \]
If $I$ is an $\fkm$-primary ideal, then
\[ \rme_{0}(I) = \rme_{0}(I, R) = \lim_{n \rar \infty} \frac{d!}{n^d}\l(R/I^{n+1}). \]

\medskip

When working with ideals of positive dimension, one technique is to use a filtration where ideally the factors are sufficiently well-behaved, for instance Cohen-Macaulay. This will lead to additional factorial terms but allows a study of multiplicities and how they relate to other ideal invariants. 

\medskip

By a theorem of Lech \cite{L60}, if $I$ is $\fkm$-primary, then
\[ \rme_0(I) \leq d! \, \rme_0(R) \,  \l(R/\cl{I}), \]
where $\cl{I}$ is the integral closure of $I$. Vasconcelos extended this to the more general case through the use of a reduction of $I$. Recall that an ideal $I$ is said to be {\em equimultiple} if its analytic spread (See Definition~\ref{spfiber})  equals the height of $I$. 
 
\begin{Theorem}{\rm \cite[Proposition 2.3]{V03-1}}
Let $(R, \fkm)$ be a Cohen-Macaulay local ring with infinite residue field and let $I$ be an equimultiple ideal of height $g$. If $J$ is a minimal reduction of $I$, then
\[ \rme_0(R/J) \leq g! \, \rme_0 (R/\cl{I}) \, \rme_0(R).\] 
\end{Theorem}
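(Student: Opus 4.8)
The plan is to reduce the statement to Lech's inequality in dimension $g$ by cutting $R$ down with a generic regular sequence. Since $R$ is Cohen-Macaulay with infinite residue field and $I$ is equimultiple of height $g$, a minimal reduction $J$ is generated by a regular sequence $x_{1}, \ldots, x_{g}$, so $R/J$ is Cohen-Macaulay of dimension $d-g$. First I would choose general elements $z_{1}, \ldots, z_{d-g}$ of $\fkm$; using prime avoidance over the infinite residue field, I may take them to form a regular sequence on $R$ whose images simultaneously give a minimal reduction of the maximal ideal in each of $R/J$ and $R/\cl{I}$ and a system of parameters modulo $I$. Setting $\bar R = R/(\underline z)$, the ring $\bar R$ is Cohen-Macaulay local of dimension $g$ with infinite residue field, $\bar I = I\bar R$ is $\fkm\bar R$-primary, and $\bar J = J\bar R$ is a minimal reduction of $\bar I$ which, being generated by $g$ parameters in a $g$-dimensional Cohen-Macaulay ring, is a parameter ideal.

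Next I would identify the three quantities after reduction. Because the $z_{i}$ are superficial for $\fkm$, modding them out preserves multiplicity, so $\rme_{0}(R)=\rme_{0}(\bar R)$. Because $R/J$ is Cohen-Macaulay and the images of the $z_{i}$ form a minimal reduction of its maximal ideal, its multiplicity equals the colength, $\rme_{0}(R/J)=\l(\bar R/\bar J)$. Finally, since $\bar J$ is a parameter ideal in the Cohen-Macaulay ring $\bar R$ and a reduction of $\bar I$, we have $\l(\bar R/\bar J)=\rme_{0}(\bar J)=\rme_{0}(\bar I)$. Thus $\rme_{0}(R/J)=\rme_{0}(\bar I)$, which is exactly the $\fkm\bar R$-primary multiplicity to which Lech's inequality applies. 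Applying the displayed inequality preceding the theorem to $\bar I$ in the $g$-dimensional ring $\bar R$ gives
\[ \rme_{0}(\bar I) \le g!\,\rme_{0}(\bar R)\,\l\big(\bar R/\cl{\bar I}\big), \]
so, using $\rme_{0}(\bar R)=\rme_{0}(R)$, it remains only to prove the comparison $\l(\bar R/\cl{\bar I}) \le \rme_{0}(R/\cl{I})$.

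I expect this last comparison to be the main obstacle. The naive containment $\cl{I}\,\bar R \subseteq \cl{I\bar R}=\cl{\bar I}$ only yields $\l(\bar R/\cl{\bar I}) \le \l\big(\bar R/\cl{I}\,\bar R\big)=\l\big((R/\cl{I})/(\underline z)\big)$, and for a non-Cohen-Macaulay $R/\cl{I}$ this colength is \emph{strictly larger} than $\rme_{0}(R/\cl{I})$ (a general system of parameters always satisfies $\rme_{0}(\fkm,M)\le \l(M/(\underline z)M)$, with equality only in the Cohen-Macaulay case). Hence one cannot argue by lengths alone: the decisive point is that passing to a general hyperplane section enlarges the integral closure by exactly enough to absorb this defect. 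Concretely, I would compute $\l(\bar R/\cl{\bar I})$ through the associativity formula for multiplicity. Localizing at each minimal prime $\fkp$ of $\cl{I}$ of height $g$, integral closure commutes with localization, and the generality of the $z_{i}$ should force the local colength of $\cl{\bar I}$ at the corresponding prime of $\bar R$ to match $\l_{R_{\fkp}}\big(R_{\fkp}/\cl{I}R_{\fkp}\big)$ weighted by $\rme_{0}(\fkm,R/\fkp)$. Summing these contributions recovers precisely
\[ \sum_{\fkp}\l_{R_{\fkp}}\big(R_{\fkp}/\cl{I}R_{\fkp}\big)\,\rme_{0}(\fkm,R/\fkp)=\rme_{0}(\fkm,R/\cl{I})=\rme_{0}(R/\cl{I}), \]
which gives the required bound (indeed an equality $\l(\bar R/\cl{\bar I})=\rme_{0}(R/\cl{I})$).

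The technical heart is therefore the behaviour of integral closure under general hyperplane sections: justifying that $\cl{I\bar R}$ localizes to $\cl{I_{\fkp}}$ at the relevant height-$g$ primes, so that the colength of the integral closure in the general section is controlled by the multiplicity of $R/\cl{I}$ rather than by the larger colength of $\cl{I}\,\bar R$. Once this is in place, chaining the three identities of the second paragraph with Lech's inequality and the comparison of the third paragraph yields $\rme_{0}(R/J)=\rme_{0}(\bar I)\le g!\,\rme_{0}(R)\,\rme_{0}(R/\cl{I})$, as claimed.
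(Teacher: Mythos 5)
Your reduction of $\rme_0(R/J)$ to $\rme_0(\bar I)$ in the $g$-dimensional section $\bar R$ is sound, but the argument has a genuine gap exactly where you place it: the comparison $\l(\bar R/\cl{\bar I})\le \rme_0(R/\cl I)$ is never established, and the sketch you offer for it does not work. After cutting by $z_1,\dots,z_{d-g}$ the ideal $\bar I$ is $\fkm\bar R$-primary, so $\bar R/\cl{\bar I}$ is Artinian and has no nonmaximal primes in its support: the associativity formula you invoke in $\bar R$ is vacuous, and ``the corresponding prime of $\bar R$'' to a height-$g$ minimal prime $\fkp$ of $I$ is just $\fkm\bar R$ itself. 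The identity $\l(\bar R/\cl{\bar I})=\sum_\fkp \l(R_\fkp/\cl I R_\fkp)\,\rme_0(\fkm,R/\fkp)$ is therefore an assertion, not a computation, and there is no evident mechanism forcing $\cl{I\bar R}$ to exceed $\cl I\,\bar R$ by exactly the Cohen--Macaulay defect $\l\big((R/\cl I)/(\underline z)\big)-\rme_0(R/\cl I)$. Indeed, if $\cl{I\bar R}$ coincided with $\cl I\,\bar R$ (as happens in typical specialization results for integral closure), you would get $\l(\bar R/\cl{\bar I})=\l\big((R/\cl I)/(\underline z)\big)$, which is \emph{strictly larger} than $\rme_0(R/\cl I)$ whenever $R/\cl I$ is not Cohen--Macaulay; and while $R/\cl I$ is unmixed here, it need not be Cohen--Macaulay once $d-g\ge 2$. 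As written, your chain only proves the weaker bound $\rme_0(R/J)\le g!\,\rme_0(R)\,\l\big((R/\cl I)/(\underline z)\big)$.

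The proof of \cite[Proposition 2.3]{V03-1} localizes instead of cutting down, and that is precisely what makes the problematic step disappear. Since $J$ is a reduction of $I$ one has $\sqrt J=\sqrt I$, and since $I$ is equimultiple and $R$ is Cohen--Macaulay every minimal prime $\fkp$ of $I$ has height $g$ and dimension $d-g$; as $R/J$ is Cohen--Macaulay, the associativity formula gives $\rme_0(R/J)=\sum_\fkp \l(R_\fkp/J_\fkp)\,\rme_0(\fkm,R/\fkp)$. For each such $\fkp$ the ring $R_\fkp$ is Cohen--Macaulay of dimension $g$, $J_\fkp$ is a parameter ideal and a reduction of $I_\fkp$, so $\l(R_\fkp/J_\fkp)=\rme_0(J_\fkp)=\rme_0(I_\fkp)\le g!\,\rme_0(R_\fkp)\,\l\big(R_\fkp/\cl{I_\fkp}\big)$ by Lech's inequality in $R_\fkp$. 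Here $\cl{I_\fkp}=(\cl I)_\fkp$ because integral closure commutes with localization (an exact equality, in contrast with hyperplane sections), and $\rme_0(R_\fkp)\le\rme_0(R)$ because a Cohen--Macaulay local ring is quasi-unmixed. Summing and applying the associativity formula to $R/\cl I$, whose relevant primes are again exactly these $\fkp$, yields $\rme_0(R/J)\le g!\,\rme_0(R)\,\rme_0(R/\cl I)$. If you wish to salvage the hyperplane-section route, you must actually prove $\l(\bar R/\cl{I\bar R})\le\rme_0(R/\cl I)$, a substantive claim about integral closures of general sections that I do not believe holds in general.
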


One application of studying multiplicities is that one then has tools that can be used to study other invariants. For instance, the multiplicity can be used to provide a bound on the number of generators of $I$.

\begin{Theorem}{\rm \cite[Proposition 2.1]{V03-2}}
Let $(R, \fkm)$ be a Cohen-Macaulay local ring and let $I$ be a Cohen-Macaulay ideal of height $g$. Then
\[ \nu(I) \leq \rme_0(R) + (g-1) \rme_0(R/I). \]
\end{Theorem}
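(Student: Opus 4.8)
The plan is to reduce to the case where $I$ is $\fkm$-primary and then to induct on the dimension. Write $d=\dim R$, so that $R/I$ is Cohen--Macaulay of dimension $d-g$. After the faithfully flat base change $R\to R[t]_{\fkm R[t]}$ (which changes none of $\nu(I)$, $\rme_0(R)$, $\rme_0(R/I)$, nor the Cohen--Macaulay hypotheses) I may assume the residue field is infinite. I would then choose a \emph{general} sequence $a_1,\dots,a_{d-g}$ in $\fkm$ that is simultaneously a regular sequence on $R$, a system of parameters for the Cohen--Macaulay module $R/I$ (hence a regular sequence on $R/I$), and a superficial sequence for $\fkm$ on both $R$ and $R/I$; such a sequence exists by prime avoidance. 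Setting $\cl R=R/(a_1,\dots,a_{d-g})$ and $\cl I=(I+(a_1,\dots,a_{d-g}))/(a_1,\dots,a_{d-g})$, the ring $\cl R$ is Cohen--Macaulay of dimension $g$ and $\cl I$ is $\cl{\fkm}$-primary. Because the sequence is regular on both $R$ and $R/I$, tensoring a minimal free resolution of $R/I$ with $\cl R$ remains minimal and exact, so all Betti numbers are preserved; in particular $\nu(\cl I)=\nu(I)$. Superficiality gives $\rme_0(\cl R)=\rme_0(R)$, while $\l(\cl R/\cl I)=\rme_0(R/I)$ since length equals multiplicity for a parameter ideal on a Cohen--Macaulay module and a minimal reduction of $\fkm$ realizes $\rme_0(\fkm,R/I)$. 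It thus suffices to prove the statement for an $\fkm$-primary ideal in a Cohen--Macaulay ring of dimension $g$.

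For the base case $g=1$ I would pick $x$ a minimal reduction of $\fkm$, i.e.\ a single superficial nonzerodivisor with $\l(R/xR)=\rme_0(\fkm)=\rme_0(R)$. Since $x\in\fkm$ we have $xI\subseteq\fkm I$, so $\nu(I)=\l(I/\fkm I)\le \l(I/xI)$, and $\l(I/xI)=\l(R/xR)$ because $x$ is a nonzerodivisor and $R/I$ has finite length. This already gives $\nu(I)\le\rme_0(R)$, the desired bound when $g=1$.

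For the inductive step ($g\ge 2$) I would choose $x\in\fkm$ superficial for $\fkm$ and a nonzerodivisor on $R$, and set $\cl R=R/xR$ and $\cl I=(I+xR)/xR$. Then $\cl R$ is Cohen--Macaulay of dimension $g-1$ with $\rme_0(\cl R)=\rme_0(R)$, and the inductive hypothesis yields $\nu(\cl I)\le \rme_0(R)+(g-2)\l(\cl R/\cl I)$. The crux is the generator estimate
\[ \nu(I)\le \nu(\cl I)+\l(\cl R/\cl I). \]
To prove this I would identify the kernel of the natural surjection $I/\fkm I\twoheadrightarrow \cl I/\cl{\fkm}\,\cl I=(I+xR)/(\fkm I+xR)$ with $(xR\cap I)/\big((xR\cap I)\cap \fkm I\big)$; since $xI\subseteq (xR\cap I)\cap\fkm I$, this kernel has length at most $\l\big((xR\cap I)/xI\big)=\l\big((I:x)/I\big)=\l(\cl R/\cl I)$, the last equality because multiplication by $x$ on the finite-length module $R/I$ has kernel and cokernel of equal length. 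Combining these with the monotonicity $\l(\cl R/\cl I)=\l(R/(I+xR))\le \l(R/I)$ gives $\nu(I)\le \rme_0(R)+(g-1)\l(\cl R/\cl I)\le \rme_0(R)+(g-1)\l(R/I)$, closing the induction.

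The main obstacle is getting $\rme_0(R)=\rme_0(\fkm)$ rather than $\rme_0(I)$ into the bound. The naive route—estimating $\nu(I)\le \l(I/JI)=\l(I/J)+\l(J/JI)$ for a minimal reduction $J$ of $I$ with $\l(J/JI)\le g\,\l(R/I)$—only produces $\nu(I)\le \rme_0(I)+(g-1)\l(R/I)$, which is strictly weaker because $\rme_0(I)\ge \rme_0(\fkm)$. I expect to overcome this precisely by inducting with superficial elements for $\fkm$ (these preserve $\rme_0(\fkm)$, not $\rme_0(I)$) and by extracting the sharp coefficient $g-1$ from the length estimate $\nu(I)-\nu(\cl I)\le \l(\cl R/\cl I)$ together with $\l(\cl R/\cl I)\le \l(R/I)$; verifying that the cutting sequence can be chosen to respect all three roles (regular on $R$, regular on $R/I$, superficial for $\fkm$ on both) is the one place demanding care.
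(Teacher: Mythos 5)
The survey states this theorem with only a citation to \cite[Proposition 2.1]{V03-2} and gives no proof, so there is no in-paper argument to compare against; judged on its own, your proof is correct. The reduction to the $\fkm$-primary case is sound: once the residue field is infinite, a general sequence of length $d-g$ is simultaneously regular on $R$ and on $R/I$ and superficial for $\fkm$ on both (finitely many open conditions), the resulting vanishing of $\Tor_i^R(R/I,R/(\bfa))$ for $i>0$ preserves the Betti numbers and hence $\nu(I)$, and superficiality converts $\rme_0(R)$ and $\rme_0(R/I)$ into $\rme_0(\cl{R})$ and $\l(\cl{R}/\cl{I})$ respectively. The heart of the argument --- the estimate $\nu(I)\le\nu(\cl{I})+\l(\cl{R}/\cl{I})$ obtained by identifying the kernel of $I/\fkm I\twoheadrightarrow \cl{I}/\cl{\fkm}\,\cl{I}$ with a quotient of $(xR\cap I)/xI\cong (I:x)/I$, whose length equals $\l(R/(I+xR))$ because kernel and cokernel of an endomorphism of a finite-length module have equal length --- is exactly right, and you correctly diagnose why the naive minimal-reduction estimate would only produce $\rme_0(I)$ in place of $\rme_0(R)$. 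This superficial-element induction is the classical technique for generator bounds and is in the same spirit as, but not identical to, the route the survey sketches a few results later, where Vasconcelos instead cuts by a regular sequence $J_0$ of length $g-1$ inside $I$ and uses that $I/J_0$ is a maximal Cohen--Macaulay module of rank one over $R/J_0$ to get the companion bound $\nu(I)\le\rme_0(R/J_0)+g-1$. The only caveat is the degenerate case $g=0$, for which your induction (and indeed the inequality as written) is vacuous; the statement should be read with $g\ge 1$.
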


When $I$ is not necessarily Cohen-Macaulay, then using any of the generalized degree functions, a ``correction term" can be used to generalize this bound (see \cite{DGV}). Additional bounds on the number of generators of $I$ in terms of the multiplicity of the ring include those in the following theorem.

\begin{Theorem}{\rm (\cite[Theorem 2.2]{V03-2}, \cite{S76})}
Let $(R, \fkm)$ be a Cohen-Macaulay local ring of dimension $d>0.$
\begin{enumerate}[{\rm (1)}]
\item If $I$ is an $\fkm$-primary ideal of nilpotency index $t$, then
\[ \nu(I) \leq t^{d-1} \rme_{0}(R) +d-1. \]
\item If $I$ is a Cohen-Macaulay ideal of height $g>0$, then
\[ \nu(I) \leq \rme_{0}(R/I)^{g-1}\rme_{0}(R) +g-1.\]
\end{enumerate}
\end{Theorem}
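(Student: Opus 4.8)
The plan is to prove both statements by passing to an Artinian quotient in which the multiplicity $\rme_0(R)$ appears as a length, and to deduce (2) from (1). Throughout I may assume the residue field is infinite, since the faithfully flat extension $R \to R[X]_{\fkm R[X]}$ preserves the Cohen--Macaulay property, the dimension, the height of $I$, the Cohen--Macaulayness of $R/I$, the nilpotency index, all the multiplicities, and $\nu(I)$. The heart of the matter is (1); (2) will follow by cutting $R/I$ down to dimension zero.

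First I would settle (1) in dimension one, which is the seed of the argument. If $\dim R = 1$, pick $x \in \fkm$ generating a minimal reduction of $\fkm$; then $x$ is a nonzerodivisor and $\l(R/xR) = \rme_0(R)$. Since $x \in \fkm$ we have $xI \subseteq \fkm I$, and since $x$ is a nonzerodivisor the map $R/I \to xR/xI$ is an isomorphism, so $\l(I/xI) = \l(R/xR)$; hence $\nu(I) = \l(I/\fkm I) \le \l(I/xI) = \rme_0(R) = t^{0}\rme_0(R)$, as claimed (the nilpotency index does not even enter, consistent with $t^{d-1}=1$). For $d \ge 2$ I would pass to the Artinian ring $A = R/J$, where $J = (x_1,\dots,x_d)$ is a minimal reduction of $\fkm$ generated by a regular sequence, so $\l(A) = \rme_0(R)$. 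Writing $IA = (I+J)/J$ and using the modular law gives the bookkeeping identity
\[ \nu(I) = \l(I/\fkm I) = \nu_A(IA) + \l\big((I\cap J)/(\fkm I \cap J)\big), \]
where $\nu_A(IA) \le \l(\fkm_A) = \rme_0(R)-1$ accounts for the generators visible in $A$, and the second term counts the generators forced by the regular sequence $J$. When $I=\fkm$ (so $t=1$) the correction term is at most $\nu(J)=d$ and one recovers Abhyankar's inequality $\nu(\fkm)\le \rme_0(R)+d-1$; the content of the general case is that the containment $\fkm^t\subseteq I$ inflates the correction by exactly the factor $t^{d-1}$.

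The hard part will be the sharp estimate of this correction term. The naive containment $J\fkm^{t}\subseteq \fkm I \cap J$ (from $\fkm^t\subseteq I$ and $J\subseteq\fkm$) only yields a bound of order $t^{d}$, which is off by a factor of $t$; extracting the correct order $t^{d-1}$ forces one to use the Cohen--Macaulay structure in an essential way, namely that $J$ is a regular sequence and that the Hilbert function of $\fkm$ relative to $J$ is governed by $\rme_0(R)$, together with the precise placement of $I$ in the $\fkm$-adic filtration between $\fkm^t$ and $\fkm$. Equivalently, one can induct on $d$ by factoring out a single superficial nonzerodivisor $x$ (so that $\rme_0(R/xR)=\rme_0(R)$ and the nilpotency index does not increase), reducing to $\nu(I) = \nu(\bar I) + \l\big((I:x)/(\fkm I : x)\big)$ and invoking the $(d-1)$-dimensional case; the obstacle then reappears as the need to bound $\l\big((I:x)/(\fkm I:x)\big)$ by $(t^{d-1}-t^{d-2})\rme_0(R)+1$. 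Either way, this delicate length count is where the factor $t^{d-1}\rme_0(R)$ is produced.

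Finally I would derive (2) from (1). Since $R/I$ is Cohen--Macaulay of dimension $d-g$, choose a generic sequence $\underline{y}=y_1,\dots,y_{d-g} \in \fkm$ that is simultaneously a regular sequence on $R$ and on $R/I$, is superficial for $\fkm$, and restricts to a minimal reduction of the maximal ideal of $R/I$. Put $\bar R = R/(\underline{y})$ and $\bar I = I\bar R$; then $\bar R$ is Cohen--Macaulay of dimension $g$ and $\bar I$ is $\bar\fkm$-primary of height $g$. Because $\underline{y}$ is a regular sequence on $R/I$ one has $\Tor_1^R(R/(\underline{y}),R/I)=0$, that is $(\underline{y})\cap I = (\underline{y})I \subseteq \fkm I$, so the correction term vanishes and $\nu_{\bar R}(\bar I)=\nu_R(I)$. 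Moreover $\rme_0(\bar R)=\rme_0(R)$ and $\l(\bar R/\bar I)=\rme_0(R/I)$, and the nilpotency index $\bar t$ of $\bar I$ satisfies $\bar t \le \l(\bar R/\bar I)=\rme_0(R/I)$, since the chain $\bar R \supsetneq \bar I+\bar\fkm \supsetneq \cdots \supsetneq \bar I$ is strictly decreasing by Nakayama. Applying part (1) in the $g$-dimensional ring $\bar R$ then yields
\[ \nu(I)=\nu_{\bar R}(\bar I) \le \bar t^{\,g-1}\rme_0(\bar R)+g-1 \le \rme_0(R/I)^{g-1}\rme_0(R)+g-1, \]
which is the desired bound.
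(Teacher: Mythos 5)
Your proof of part (2) as a reduction to part (1) is sound (the generic choice of $y_1,\dots,y_{d-g}$, the Tor-vanishing argument giving $\nu_{\bar R}(\bar I)=\nu(I)$, and the Nakayama bound $\bar t\le \l(\bar R/\bar I)=\rme_0(R/I)$ are all correct), and so is your dimension-one case of (1). But part (1) for $d\ge 2$ --- which you yourself identify as ``the heart of the matter'' --- is not proved: you set up a bookkeeping identity over $A=R/J$ with $J$ a full minimal reduction of $\fkm$, observe that the naive estimate of the correction term gives order $t^{d}$ rather than $t^{d-1}$, and then leave the ``delicate length count'' unperformed. That count is precisely the content of the theorem, so as written the argument has a genuine gap. (The survey states this result with citations to Vasconcelos and Sally and gives no proof, so there is no in-paper argument to compare against; the proposal must stand on its own, and it does not.)

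The missing idea, which makes the correction-term analysis unnecessary, is to use the hypothesis $\fkm^{t}\subseteq I$ to manufacture a regular sequence \emph{inside} $I$ rather than to reduce modulo a full reduction of $\fkm$. With infinite residue field choose $x_1,\dots,x_d$ generating a minimal reduction of $\fkm$, so $\l(R/(x_1,\dots,x_d))=\rme_0(R)$. Then $x_1^{t},\dots,x_{d-1}^{t}\in\fkm^{t}\subseteq I$ form a regular sequence; setting $K=(x_1^{t},\dots,x_{d-1}^{t})$, the ring $R/K$ is one-dimensional Cohen--Macaulay and $\nu(I)\le \nu_{R/K}(I/K)+(d-1)$. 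Your own dimension-one case gives $\nu_{R/K}(I/K)\le \rme_0(R/K)$, and since $x_d$ is a parameter for $R/K$,
\[
\rme_0(R/K)\ \le\ \l\bigl(R/(x_1^{t},\dots,x_{d-1}^{t},x_d)\bigr)\ =\ t^{d-1}\,\l\bigl(R/(x_1,\dots,x_d)\bigr)\ =\ t^{d-1}\rme_0(R),
\]
the middle equality being the standard length formula for powers of a regular sequence in a Cohen--Macaulay ring. This yields $\nu(I)\le t^{d-1}\rme_0(R)+d-1$ directly, with no induction on $d$ and no estimate of $(I\cap J)/(\fkm I\cap J)$. If you splice this step in, your part (2) then goes through verbatim.
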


When some control over $I$ is known, these results can be improved. For example, if the reduction number of $I$ is small, or $R/I$ is Cohen-Macaulay, then one has the bounds below.

\begin{Theorem}{\rm \cite[Theorem 2.4, Propositions 2.5 and 3.1]{V03-2}}
Let $(R, \fkm)$ be a Cohen-Macaulay local ring of dimension $d$. 
\begin{enumerate}[{\rm (1)}]
\item If $I$ is a Cohen-Macaulay ideal of height $2$ and the type of $R/I$ is $r$, then
\[ \nu(I) \leq (r+1) \rme_{0}(R). \]
\item If $R$ has type $r$ and $I$ is an equimultiple Cohen-Macaulay ideal of height $g \geq 1$, then
\[ \nu(I) \leq  (r+1) \rme_{0}(R/I) +g-1. \]
\item If $I$ is an $\fkm$-primary ideal, then
\[ \nu(I) \leq \rme_{0}(I) +d-1. \]
\item If $I$ is an $\fkm$-primary ideal and $I \subset \fkm^{2}$, then
\[ \nu(I) \leq \rme_{0}(I) \leq d! \, \l(R/\cl{I}) \, \rme_{0}(R).\]
\end{enumerate}
\end{Theorem}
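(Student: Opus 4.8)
The plan is to dispatch the $\fkm$-primary cases (3) and (4) directly from a minimal reduction, and to reduce the equimultiple cases (1) and (2) to a maximal Cohen--Macaulay computation over the ring obtained by killing a reduction. In all cases I first pass to $R(x)=R[x]_{\fkm R[x]}$, a faithfully flat extension with infinite residue field that preserves $\dim$, the Cohen--Macaulay property, the type, $\nu(I)$, every multiplicity appearing, the integral closure, and the hypotheses $I\subseteq\fkm^2$ and equimultiplicity; so I may assume the residue field is infinite.

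For (3), choose a minimal reduction $J=(a_1,\dots,a_d)$ of $I$. Since $R$ is Cohen--Macaulay and $J$ is generated by a system of parameters, the $a_i$ form a regular sequence, so $\nu(J)=d$ and $\rme_{0}(I)=\rme_{0}(J)=\l(R/J)$, exactly as in the proof of Theorem~\ref{V94-3-1}. As $\fkm J\subseteq\fkm I$, the surjection $I/\fkm J\twoheadrightarrow I/\fkm I$ yields
\[ \nu(I)=\l(I/\fkm I)\le \l(I/\fkm J)=\l(J/\fkm J)+\l(I/J)=d+\bigl(\rme_{0}(I)-\l(R/I)\bigr). \]
Since $I\subseteq\fkm$ forces $\l(R/I)\ge 1$, this gives $\nu(I)\le\rme_{0}(I)+d-1$. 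For (4), if in addition $I\subseteq\fkm^2$ then $R/I\twoheadrightarrow R/\fkm^2$ gives $\l(R/I)\ge\l(R/\fkm^2)=1+\edim(R)\ge 1+d$; substituting $\l(R/I)\ge d+1$ into the displayed line gives $\nu(I)\le\rme_{0}(I)-1\le\rme_{0}(I)$, and the second inequality $\rme_{0}(I)\le d!\,\l(R/\cl{I})\,\rme_{0}(R)$ is precisely Lech's theorem recalled above.

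For (1) and (2), equimultiplicity ($\ell(I)=\h(I)=g$) provides a minimal reduction $J=(a_1,\dots,a_g)$ generated by a regular sequence of length $g$. Set $\bar R=R/J$, a Cohen--Macaulay ring of dimension $m=d-g$ with the same type as $R$, and $\bar I=I/J$. Because $J$ is a reduction, $I\subseteq\cl{J}$, so $\bar I$ lies in the nilradical of $\bar R$ and in particular $\bar I\subseteq\fkm\bar R$; moreover $\bar R/\bar I=R/I$ is Cohen--Macaulay of dimension $m=\dim\bar R$, so the depth lemma applied to $0\to\bar I\to\bar R\to R/I\to 0$ forces $\depth\bar I\ge m$, i.e. $\bar I$ is maximal Cohen--Macaulay. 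Counting generators along the same sequence gives $\nu(I)\le g+\nu_{\bar R}(\bar I)$, and the elementary bound $\nu(N)\le\rme_{0}(\fkm,N)$ for a maximal Cohen--Macaulay module $N$ (take a minimal reduction $(c_1,\dots,c_m)$ of $\fkm\bar R$; it is $N$-regular, so $\rme_{0}(\fkm,N)=\l(N/(c)N)\ge\l(N/\fkm N)=\nu(N)$) together with additivity of multiplicity gives $\nu_{\bar R}(\bar I)\le\rme_{0}(\fkm,\bar R)-\rme_{0}(R/I)$.

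The remaining, and genuinely harder, step is to feed the type $r$ into this estimate in order to replace the unwieldy $\rme_{0}(\fkm,\bar R)$ by a bounded multiple of $\rme_{0}(R/I)$ and recover the sharp constant $(r+1)$. Dualizing $0\to\bar I\to\bar R\to R/I\to0$ into the canonical module $\omega_{\bar R}$ (exact on these maximal Cohen--Macaulay modules) identifies $\Hom_{\bar R}(\bar I,\omega_{\bar R})$ with $\omega_{\bar R}/\omega_{R/I}$, whose number of generators is at most $\nu(\omega_{\bar R})=r$; the number-of-generators/type duality for maximal Cohen--Macaulay modules then converts this into control of $\nu(\bar I)$, and tracking the filtration of $\bar I$ by pieces governed by $\omega_{R/I}$ is what produces the $(r+1)$ and the $g-1$ correction in (2). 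For the height-two case (1), $I$ is a grade-$2$ perfect ideal, so Hilbert--Burch gives $\nu(I)=r+1$ (with $r$ the type of $R/I$) exactly when $R$ is regular; the factor $\rme_{0}(R)$ then enters by choosing a Noether normalization $A\hookrightarrow R$ over which $R$ is free of rank $\rme_{0}(R)$ and transferring the regular-ring equality. In both (1) and (2) this quantitative coupling of the type with the multiplicity is the main obstacle: the reductions above are routine, whereas extracting the exact constant $(r+1)$ is where the canonical-module duality, respectively the Hilbert--Burch structure, must be used with care.
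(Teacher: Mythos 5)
The survey states this theorem as a citation to \cite{V03-2} and gives no proof, so there is nothing internal to compare against; judging your argument on its own, parts (3) and (4) are complete and correct. The chain $\nu(I)=\l(I/\fkm I)\le\l(I/\fkm J)=d+\rme_0(I)-\l(R/I)$ for a minimal reduction $J$ generated by a regular sequence, followed by $\l(R/I)\ge 1$ in general and $\l(R/I)\ge\l(R/\fkm^2)=1+\edim(R)\ge d+1$ when $I\subseteq\fkm^2$, together with Lech's inequality as recalled in Section~\ref{mult}, settles those two items cleanly.

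Parts (1) and (2), however, are not proved. The reduction to a maximal Cohen--Macaulay module $\bar I=I/J$ over $\bar R=R/J$ and the estimate $\nu(I)\le g+\nu_{\bar R}(\bar I)$ are fine, but the step that actually produces the constant $r+1$ is only gestured at, and the gesture points the wrong way. Canonical duality for maximal Cohen--Macaulay modules interchanges minimal number of generators and type: from $\Hom_{\bar R}(\bar I,\omega_{\bar R})\cong\omega_{\bar R}/\omega_{R/I}$ and $\nu\bigl(\omega_{\bar R}/\omega_{R/I}\bigr)\le\nu(\omega_{\bar R})=r$ you may conclude only that the \emph{type} of $\bar I$ is at most $r$; to bound $\nu(\bar I)$ you would instead need to bound the type of $\omega_{\bar R}/\omega_{R/I}$, which is not controlled by $r$ in any evident way. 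So the displayed duality does not ``convert into control of $\nu(\bar I)$'' as claimed, and no filtration yielding the bound $(r+1)\rme_0(R/I)$ is actually exhibited. Similarly, in (1) the Hilbert--Burch equality $\nu(I)=r+1$ is available for a grade-two perfect ideal of a Gorenstein (in particular regular) ring, but $I$ is not an ideal of the Noether normalization $A$, so there is no Hilbert--Burch matrix over $A$ to which that equality could be transferred, and the mechanism by which the factor $\rme_0(R)$ is supposed to enter is never made precise. Since these quantitative couplings of the type with the multiplicity are exactly what the theorem asserts, the proposal as written establishes (3) and (4) but leaves (1) and (2) open.
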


\medskip

Another approach  is to focus on minimal reductions of $I$. Starting first with a minimal reduction of an equimultiple Cohen-Macaulay ideal, the following theorem provides bounds on the number of generators of $I$ in terms of a minimal reduction and the radical of $I$.

\begin{Theorem}{\rm \cite[Proposition 4.1]{V03-2}}
Let $(R, \fkm)$ be a Cohen-Macaulay local ring and  $I$ an equimultiple Cohen-Macaulay ideal of height at least one. Let $J$ be a  minimal reduction of $I$. 
\begin{enumerate}[{\rm (1)}]
\item If $\sqrt{I}$ is a Cohen-Macaulay ideal, then
\[ \nu(I) \leq \rme_{0}(R/J) + (g-1) \rme_{0}(R/\sqrt{I}).\]
\item If in addition ${\ds I \subseteq \left(\sqrt{I}\right)^2}$, then
\[ \nu(I) \leq \rme_{0}(R/J) - \rme_{0}(R/\sqrt{I}).\]
\end{enumerate}
\end{Theorem}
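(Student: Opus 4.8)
The plan is to reduce to the Artinian (dimension $g$) case and then run a single elementary length count, with the two hypotheses on $\sqrt{I}$ entering only at the very last step. Throughout put $P:=\sqrt{I}$, $d:=\dim R$, $t:=d-g$. I would first make the standard simplifications: replacing $R$ by $R[X]_{\fkm R[X]}$ I may assume the residue field is infinite (this changes none of $\nu(I)$, $\rme_{0}(R/J)$, $\rme_{0}(P)$), so minimal reductions and general elements are available. Since $I$ is equimultiple with $\h(I)=g$ and $R$ is Cohen-Macaulay, the minimal reduction $J$ is generated by $g$ elements forming a regular sequence, whence $R/J$ is Cohen-Macaulay of dimension $t$; moreover $J\subseteq I\subseteq P$ and $R/P$ is Cohen-Macaulay of dimension $t$ by hypothesis.

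The technical heart is to pass to dimension $g$. I would choose one sequence $x_{1},\dots,x_{t}\in\fkm$ of general elements that simultaneously (i) is a regular sequence on $R$ and on $R/I$, and (ii) generates a minimal reduction of $\fkm$ modulo $J$ and modulo $P$. With infinite residue field each requirement is a dense condition on general linear combinations of a fixed generating set of $\fkm$, so a common choice exists. Writing $\tilde{R}=R/(\underline{x})$, $\tilde{I}=(I+(\underline{x}))/(\underline{x})$, and likewise $\tilde{J},\tilde{P}$, regularity of the $x_{i}$ on the successive quotients of $R/I$ yields $I\cap(\underline{x})=(\underline{x})I$ at each stage and hence $\nu(I)=\nu(\tilde{I})$, while the minimal-reduction property on the Cohen-Macaulay rings $R/J$ and $R/P$ turns the two multiplicities into colengths, $\rme_{0}(R/J)=\l(\tilde{R}/\tilde{J})$ and $\rme_{0}(R/\sqrt{I})=\l(\tilde{R}/\tilde{P})$. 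After this reduction $\tilde{I},\tilde{J},\tilde{P}$ are all $\tilde{\fkm}$-primary, $\tilde{J}\subseteq\tilde{I}\subseteq\tilde{P}$ with $\nu(\tilde{J})\le g$, and in case (2) also $\tilde{I}\subseteq\tilde{P}^{2}$.

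Now comes the core inequality. The presentation $\tilde{J}\to\tilde{I}\to\tilde{I}/\tilde{J}\to 0$ together with $\nu(\tilde{I}/\tilde{J})\le\l(\tilde{I}/\tilde{J})$ gives
\[ \nu(I)=\nu(\tilde{I})\le \nu(\tilde{J})+\l(\tilde{I}/\tilde{J})\le g+\l(\tilde{R}/\tilde{J})-\l(\tilde{R}/\tilde{I}). \]
For (1) I would use only $\tilde{I}\subseteq\tilde{P}$, so that $\l(\tilde{R}/\tilde{I})\ge c:=\l(\tilde{R}/\tilde{P})\ge 1$; then $g-\l(\tilde{R}/\tilde{I})\le g-c\le (g-1)c$ because $g(c-1)\ge 0$, which is precisely $\nu(I)\le\rme_{0}(R/J)+(g-1)\rme_{0}(R/\sqrt{I})$. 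For (2) the extra hypothesis becomes $\tilde{I}\subseteq\tilde{P}^{2}$, so $\l(\tilde{R}/\tilde{I})=\l(\tilde{R}/\tilde{P})+\l(\tilde{P}/\tilde{I})$ with $\l(\tilde{P}/\tilde{I})\ge\l(\tilde{P}/\tilde{P}^{2})\ge\nu(\tilde{P})\ge g$, the last inequality since a $\tilde{\fkm}$-primary ideal of the $g$-dimensional ring $\tilde{R}$ needs at least $g$ generators. Hence $g-\l(\tilde{R}/\tilde{I})\le -\l(\tilde{R}/\tilde{P})$, giving $\nu(I)\le\rme_{0}(R/J)-\rme_{0}(R/\sqrt{I})$.

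The main obstacle is the middle step: one must verify that a single general sequence can be chosen to preserve $\nu(I)$ (through $I\cap(\underline{x})=(\underline{x})I$) while at the same time converting both $\rme_{0}(R/J)$ and $\rme_{0}(R/\sqrt{I})$ into the colengths $\l(\tilde{R}/\tilde{J})$ and $\l(\tilde{R}/\tilde{P})$. Once this dictionary is established, both bounds drop out of the one length estimate above, and the only difference between (1) and (2) is whether one invokes $\tilde{I}\subseteq\tilde{P}$ or the sharper $\tilde{I}\subseteq\tilde{P}^{2}$ to control $\l(\tilde{R}/\tilde{I})$.
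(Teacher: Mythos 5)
Your argument is correct, and the comparison here is necessarily one-sided: the survey states this result only as a citation of \cite[Proposition 4.1]{V03-2} and does not reproduce a proof, so there is no in-paper argument to check you against line by line. The closest the paper comes is the discussion immediately preceding the theorem, which bounds $\nu(I)$ by $\nu(J_{0})+\nu(I/J_{0})$ and converts multiplicities of Cohen-Macaulay quotients into colengths modulo parameter reductions; your proof is a concrete, slightly more hands-on instance of exactly that technique. The specialization step you flag as the main obstacle is fine: each of your requirements (regular sequence on $R$ and on $R/I$, minimal reduction of the maximal ideal of $R/J$ and of $R/\sqrt{I}$) holds on a dense open set of $t$-tuples of linear combinations of a generating set of $\fkm$, so a common choice exists once the residue field is infinite; $R/J$ is Cohen-Macaulay because $J$ is generated by a regular sequence of length $g$, and $R/\sqrt{I}$ is Cohen-Macaulay by hypothesis, so both multiplicities do become the colengths you claim, while regularity of the sequence on $R/I$ preserves $\nu(I)$. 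The final length count and the two elementary inequalities $g-c\leq (g-1)c$ and $\l(\tilde{P}/\tilde{I})\geq \nu(\tilde{P})\geq g$ are correct. Two small points to make explicit: after the base change $R\to R[X]_{\fkm R[X]}$ you should note that $J$ may still be taken to be generated by $g=\h(I)$ elements (this is where equimultiplicity and the infinite residue field enter; the survey's statement tacitly assumes an infinite residue field, as the original source does), and in your first paragraph $\rme_{0}(P)$ should read $\rme_{0}(R/\sqrt{I})$. Finally, the same computation can be phrased without specializing at all, in the spirit of the survey's surrounding text: $I/J$ is a Cohen-Macaulay module of dimension $d-g$ (or zero), so $\nu(I/J)\leq \rme_{0}(I/J)=\rme_{0}(R/J)-\rme_{0}(R/I)$, and the associativity formula gives $\rme_{0}(R/I)\geq \rme_{0}(R/\sqrt{I})$ in case (1) and $\rme_{0}(R/I)\geq (g+1)\,\rme_{0}(R/\sqrt{I})$ in case (2); both versions yield the stated bounds.
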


When one assumes more control over the minimal reduction, the results can be fine-tuned. Suppose $J_{0} \subset I$ is generated by a regular sequence of $g-1$ elements and is part of a minimal reduction of $I$.  Then using that $I/J_{0}$ is maximal Cohen-Macaulay of rank 1 over $R/J_{0}$, which is also Cohen-Macaulay, by \cite{V03-1},  one has that
\[ \nu(I) \leq \nu(I/J_{0}) + \nu(J_{0}) \leq \rme_{0}(R/J_{0}) + g -1.\]
As an application of this technique, consider the case where $I$ is equimultiple and $J=(J_{0},x)$ is a minimal reduction of $I$. If $R$ is Gorenstein, then $\rme_{0}(R/J_{0}) \leq \rme_{0}(R/J)$ and by \cite{V03-1}, we have
\[ \nu(I) \leq \rme_{0}(R/J) + g -1.\]
Continuing in this setting using the standard short exact sequence (and noting that since $R$ is Gorenstein and $(J:I)/J$ is the canonical module of $R/I$) yields
\[ \rme_{0}(R/J) = \rme_{0}(R/I) + \rme_{0}(R/(J:I)). \]
Extending these two methods to powers of ideals results in an additional need for binomial coefficients \cite{V03-1}.
\[ \nu(I^n) \leq \rme_{0}(R/J_{0}^n) + \nu(J_{0}^n) \leq \rme_{0}(R/J_{0}) {{n+g-1}\choose{g-1}}+{{n+g-2}\choose{g-2}}.\]
These bounds lead to related results involving additional invariants of ideals. Recall that $r(I)$ denotes the reduction number of an ideal. 

\begin{Theorem}{\rm \cite[Theorem 3.1]{V03-1}}
Let $(R, \fkm)$ be a Cohen-Macaulay local ring with infinite  residue field and let $I$ be an ideal of height $g > 0$. If $I$ is normally Cohen-Macaulay, then
\[ r(I) \leq g \cdot g! \, \rme_{0}(R/\cl{I}) \, \rme_{0}(R) -2g+1.\]
\end{Theorem}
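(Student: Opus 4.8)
The plan is to read off the reduction number from an Artinian reduction of the fiber cone and then bound the length of that Artinian algebra by a multiplicity to which the Lech-type inequalities already recorded apply. Write $F(I)=\bigoplus_{n\ge 0} I^{n}/\fkm I^{n}$ for the fiber cone, a standard graded algebra over $k=R/\fkm$ whose Krull dimension is the analytic spread $\ell(I)$. First I would reduce to the equimultiple case $\ell(I)=g$, which is where the hypotheses place us: a minimal reduction $J$ is then minimally generated by $g$ elements $a_{1},\dots,a_{g}$, whose images I denote $J^{*}\subseteq F(I)_{1}$. The basic identity I would establish is
\[ r_{J}(I)=\max\{\,n : [F(I)/J^{*}F(I)]_{n}\neq 0\,\}, \]
which follows because $I^{n+1}=JI^{n}$ is equivalent, by Nakayama over $R$, to $I^{n+1}=JI^{n}+\fkm I^{n+1}$, i.e. to the vanishing of $[F(I)/J^{*}F(I)]_{n+1}$. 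Since $\ell(I)=g$, the forms $J^{*}$ are a homogeneous system of parameters of $F(I)$, so $A:=F(I)/J^{*}F(I)$ is Artinian, its top nonzero degree equals $r_{J}(I)$, and $r(I)\le r_{J}(I)$.

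Next I would bound this top degree by the fiber multiplicity $\rme_{0}(F(I))$. Here the normally Cohen--Macaulay hypothesis enters: it forces $\G(I)$ to be Cohen--Macaulay, and a dimension count ($\dim \G(I)/J^{*}\G(I)=\dim R/I=d-g$) shows the initial forms $a_{i}^{*}$ are part of a homogeneous system of parameters, hence a regular sequence in $\G(I)$; this is the Cohen--Macaulay input needed so that lengths of the relevant quotients may be replaced by multiplicities. To pass from $\rme_{0}(F(I))$ to the stated right-hand side I would combine two ingredients already in hand: the generator count $\nu(I^{n})=\dim_{k}F(I)_{n}\le \rme_{0}(R/J)\binom{n+g-1}{g-1}+\binom{n+g-2}{g-2}$, whose leading coefficient gives $\rme_{0}(F(I))\le \rme_{0}(R/J)$; and the extended Lech inequality recorded above, namely $\rme_{0}(R/J)\le g!\,\rme_{0}(R/\cl{I})\,\rme_{0}(R)$ for a minimal reduction of an equimultiple ideal. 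Chaining these yields $\rme_{0}(F(I))\le g!\,\rme_{0}(R/\cl{I})\,\rme_{0}(R)$.

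The hard part will be the final numerical step: converting ``top degree of the $g$-dimensional Artinian-reduced fiber'' into the explicit bound $r(I)\le g\cdot g!\,\rme_{0}(R/\cl{I})\,\rme_{0}(R)-2g+1$. The decisive estimate is a bound for the reduction number of a standard graded $k$-algebra of dimension $g$ in terms of its multiplicity $\rme_{0}$, of the shape $r\le g\,\rme_{0}-2g+1=g(\rme_{0}-1)-(g-1)$; combined with $\rme_{0}(F(I))\le g!\,\rme_{0}(R/\cl{I})\,\rme_{0}(R)$ this produces the theorem. I expect the obstacle to be precisely this linear-in-multiplicity bound with slope $g$ and intercept $-2g+1$: when the fiber cone is Cohen--Macaulay one obtains the much stronger $r\le \rme_{0}(F(I))-1$, so the factor $g$ reflects the loss incurred when Cohen--Macaulayness of the fiber is unavailable, and the delicate point is to verify that the normally Cohen--Macaulay hypothesis (through the Cohen--Macaulayness of $\G(I)$ and the parameter/regular-sequence properties of $J^{*}$) supplies exactly enough regularity that the losses across the $g$ dimensions assemble into the additive constant $-2g+1$ rather than something weaker.
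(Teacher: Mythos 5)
The survey states this theorem without proof, so I am comparing your plan against the route indicated by the material the survey places immediately before it (the generator bounds $\nu(I^n)\leq \rme_{0}(R/J_0^n)+\nu(J_0^n)$ and the Lech-type inequality $\rme_0(R/J)\leq g!\,\rme_0(R/\cl{I})\,\rme_0(R)$) and against the argument of \cite{V03-1}. Your opening moves are sound: normal Cohen--Macaulayness does force $\ell(I)=g$ (Burch's inequality applied to $\depth(R/I^n)=d-g$), the identification of $r_J(I)$ with the top degree of $F(I)/J^{*}F(I)$ is correct, and bounding $\deg F(I)$ by a Lech-type quantity is part of the real proof. But one structural claim is false and the decisive estimate is missing. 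The false claim: ``the normally Cohen--Macaulay hypothesis forces $\G(I)$ to be Cohen--Macaulay.'' Normal Cohen--Macaulayness says each $R/I^n$ is a Cohen--Macaulay $R$-module; this makes each graded component $I^n/I^{n+1}$ Cohen--Macaulay of dimension $d-g$ over $R$, but says nothing about depth of $\G(I)$ with respect to its irrelevant ideal. Indeed for an $\fkm$-primary ideal the hypothesis is vacuous (every $R/I^n$ is Artinian), yet $\G(I)$ is very often not Cohen--Macaulay; so your subsequent assertion that the initial forms $a_i^{*}$ are a regular sequence on $\G(I)$ is unavailable.

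The missing estimate is the one you yourself flag as ``the hard part'': that a standard graded $k$-algebra of dimension $g$ and multiplicity $e$ has reduction number at most $ge-2g+1$. This is not a general fact about standard graded algebras --- the multiplicity alone does not control the reduction number outside the Cohen--Macaulay setting, which is precisely the failure that motivates the extended degree functions recalled in Section~\ref{epilogue}; if such a bound held unconditionally, the normally Cohen--Macaulay hypothesis of the theorem would be essentially superfluous. Since your proposal supplies no proof of this estimate and the structure you propose to feed into it (Cohen--Macaulayness of $\G(I)$) does not hold, the argument does not close. In \cite{V03-1} the hypothesis is spent elsewhere: for a subideal $J_0\subset J$ generated by $g-1$ elements of a minimal reduction, the modules $I^n/J_0^n$ are maximal Cohen--Macaulay over $R/J_0^n$ for \emph{all} $n$ (this uses $R/I^n$ Cohen--Macaulay for all $n$, not just $n=1$), which produces the termwise Hilbert-function bound $\nu(I^n)\leq \rme_0(R/J_0)\binom{n+g-1}{g-1}+\binom{n+g-2}{g-2}$ displayed just before the theorem and, more importantly, equips the relevant graded algebra with Cohen--Macaulay filtration factors; the bound $g(e-2)+1=ge-2g+1$ is then assembled by induction on $g$ through hyperplane sections, the base case contributing $e-1$ and each dimension step at most $e-2$. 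Note also a smaller mismatch you gloss over: the displayed generator bound involves $\rme_0(R/J_0)$ rather than $\rme_0(R/J)$, and the survey points out that comparing these two already requires extra hypotheses, so the first link of your chain $\rme_0(F(I))\leq\rme_0(R/J)\leq g!\,\rme_0(R/\cl{I})\,\rme_0(R)$ needs justification as well.
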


Under somewhat stronger assumptions, the bound on the reduction number can be simplified.

\begin{Theorem}{\rm \cite[Theorem 4.10]{V03-1}}
Let $(R, \fkm)$ be a Gorenstein local ring of dimension $d$ and let $I$ be a perfect Gorenstein ideal of dimension $2$ with $R/I$ normal. Suppose that the residue field $R/\fkm$ has characteristic zero. Then
\[ r(I) \leq (d-1)\, \rme_{0}(\G(I)) -4d +5.\]
\end{Theorem}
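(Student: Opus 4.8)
The plan is to reduce the inequality to a low-dimensional base case by cutting $I$ with generic hyperplane sections, and then to bound the reduction number of the resulting ideal by the multiplicity of its associated graded ring. The characteristic-zero and normality hypotheses are precisely what make the generic sections well behaved, so I expect them to be consumed during the reduction rather than in the base case.

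First I would unwind the structural hypotheses. Because $I$ is a perfect Gorenstein ideal of dimension $2$, the quotient $R/I$ is a two-dimensional Cohen-Macaulay Gorenstein ring of codimension $g=d-2$ whose minimal free resolution over $R$ is self-dual, and the assumption that $R/I$ is normal forces Serre's conditions $R_{1}$ and $S_{2}$, so the singular locus of $R/I$ has codimension at least two. After the harmless faithfully flat extension to a ring with infinite residue field, which changes neither $r(I)$ nor $\rme_{0}(\G(I))$, I would select a generic element $x_{1}\in\fkm$ that is simultaneously superficial for $I$ and a parameter on $R/I$.

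Second, I would descend in dimension. By the characteristic-zero Bertini theorem the section $R/(I,x_{1})$ remains normal, hence, being one-dimensional, regular, while superficiality of $x_{1}$ preserves both $r(I)$ and the leading Hilbert data, in particular $\rme_{0}(\G(I))$; this lands us in the one-dimensional Cohen-Macaulay setting governed by Theorem~\ref{one-dim}. A second superficial cut $x_{2}$ then reaches an $\fkm''$-primary ideal $I''$ in the Gorenstein ring $R''=R/(x_{1},x_{2})$ of dimension $g$, with $r(I'')=r(I)$ and $\rme_{0}(\G(I''))=\rme_{0}(\G(I))$. In this $\fkm''$-primary base case I would bound $r(I'')$ by the Hilbert coefficients of $I''$, using the comparison between the reduction number and the degree of the $h$-polynomial of $\G(I'')$, whose value at $t=1$ is exactly $\rme_{0}(\G(I''))$, together with the Sally-module relations of Corollary~\ref{Sally bounds}; the Gorenstein symmetry of the $h$-vector and the nonnegativity coming from the inherited depth are what turn this into a clean linear estimate.

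The remaining and genuinely delicate work is to pin down the exact constants $d-1$ and $-4d+5$. The coefficient $d-1$ should emerge from combining the $g$ generators of a minimal reduction $J$ with the dimension of $\G(I)$, and the additive correction from a layer-by-layer accounting of the contributions of $I^{n}/JI^{n-1}$ sharpened by a Brian\c{c}on-Skoda comparison of $I^{n}$ with $\cl{I^{n}}$, valid in the strong form available in characteristic zero. I expect this bookkeeping to be the main obstacle: any cruder argument reproduces the shape $r(I)\le C\,\rme_{0}(\G(I))-D$ but with a larger leading constant $C$ or a weaker correction $D$, and matching the stated values forces one to use the self-duality of the Gorenstein resolution together with the normality of $R/I$, rather than relying on any single structural input.
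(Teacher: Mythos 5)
The survey states this result only as a citation of \cite[Theorem 4.10]{V03-1} and does not reproduce Vasconcelos' argument, so your proposal has to stand on its own; as written it does not. The decisive error is in your second step: you claim that a generic section $R/(I,x_1)$ of the two-dimensional normal local ring $R/I$ is again normal, hence (being one-dimensional) regular. Bertini theorems for normality concern general members of linear systems on quasi-projective varieties and say nothing about cutting a \emph{local} ring by a general element of the maximal ideal: the closed point, where the singularity of $R/I$ may be concentrated, survives in the section, so Serre's condition $(R_1)$ is lost. Worse, if $R/(I,x_1)$ were a discrete valuation ring then $\edim(R/I)\leq 2=\dim(R/I)$ and $R/I$ would itself be regular; your argument would therefore prove that every two-dimensional normal local ring in equicharacteristic zero is regular, which the $A_1$-singularity $k[[x,y,z]]/(xy-z^{2})$ (itself an admissible $R/I$ for this theorem) refutes. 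The appeal to Theorem~\ref{one-dim} is also misplaced: after one cut the ambient ring has dimension $d-1$, not $1$, and the image of $I$ is still not $\fkm$-primary.

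Beyond this, the transfers you rely on are unproved. The equalities $r(I'')=r(I)$ and $\rme_{0}(\G(I''))=\rme_{0}(\G(I))$ after two general cuts are precisely the kind of statement that requires the images of $x_1,x_2$ to be filter-regular on $\G(I)$ together with depth hypotheses (Huckaba--Marley type descent); a reduction of the Artinian quotient $I''$ with reduction number $r$ does not automatically yield $I^{r+1}=JI^{r}$ upstairs, and nothing in your setup supplies the needed depth. The Gorenstein symmetry of the $h$-vector of $\G(I'')$ that you invoke would require $\G(I'')$ to be Gorenstein, which is not known. Finally, you concede that you cannot produce the constants $(d-1)$ and $-4d+5$, so even the shape of the conclusion is not reached. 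Vasconcelos' actual route is different in kind: he bounds the number of generators $\nu(I^{n})$ of the powers by multiplicities of quotients by part of a minimal reduction, as in the inequality $\nu(I^{n})\leq \rme_{0}(R/J_{0})\binom{n+g-1}{g-1}+\binom{n+g-2}{g-2}$ quoted in Section~\ref{mult}, and converts such polynomial bounds into reduction-number bounds via the Eakin--Sathaye theorem, with normality of $R/I$ and characteristic zero used to control the relevant multiplicities; no passage to an $\fkm$-primary specialization is involved.
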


\bigskip

\section{Normalization of Ideals and Rings}\label{normal}

Vasconcelos used a variety of filtrations when studying Hilbert polynomials. This section focuses on his use of good filtrations, such as normal filtrations. 
 An ideal $I$ in a Noetherian ring $R$ is said to be {\em normal} if  ${\ds \cl{I^n} = I^n}$ for all $n \geq 1$. Equivalently, $I$ is normal if and only if ${\ds \cl{\clR}(I) = \clR(I)}$ (See Definition~\ref{normalfilt}).  Using $\cl{\clR}(I)$ as the {\em normalization} of $I$ leads to interesting results regarding the Hilbert coefficients. 

\medskip

Let $(R, \fkm)$ be an analytically unramified Noetherian local ring and $I$ an $\fkm$-primary ideal. 
In \cite{PUV05}, Polini, Ulrich, and Vasconcelos used a sequence of modules connecting $\clR(I)$ and $\cl{\clR}(I)$ to deduce information about Hilbert coefficients.  In particular, they focused on bounding $\cl{\rme}_1(I)$. One connection of this work to that of the prior section is through the minimal reductions of $I$. Recall that $J \subseteq I$ is a reduction of $I$ if and only if 
$J$ and $I$ have the same integral closure.  The {\em Brian\c{c}on-Skoda}  number $b(I)$ of $I$ is the smallest integer $b$ such that ${\ds \cl{I^{n+b}} \subseteq J^{n}}$ for every $n$ and every reduction $J$ of $I$.

\medskip

One goal in the study of the normalizations is to find effective bounds on $\cl{\rme}_1(I)$, which necessarily yield bounds on $\rme_{1}(I)$ as well. A key technique is to pass information through a chain of modules as mentioned in the preceding paragraph. Since $\rme_{1}(\tratto)$ is non-decreasing on such a chain, one can use $\rme_{1}(\tratto)$ to obtain numerical criteria for normality. An example of the applications of this technique is contained in the following theorem and its corollaries.

\begin{Theorem}\label{closure chain}{\rm \cite[Theorem 2.2]{PUV05}}
Let $(R, \fkm)$ be an analytically unramified local Cohen-Macaulay ring of positive dimension with infinite residue field and let $I$ be an $\fkm$-primary ideal. Let $A$ and $B$ be distinct  graded $R$-subalgebras of $R[t]$ with 
\[ \clR(I) \subseteq A \subsetneq B \subseteq \cl{\clR}(I).\] 
Suppose that  $A$ satisfies the condition $(\rmS_{2})$ of Serre.  Then
\[ 0 \leq \rme_{1}(I) \leq \rme_{1}(A) < \rme_{1}(B) \leq \cl{\rme}_1(I) \leq b(I)\rme_{0}(I). \]
\end{Theorem}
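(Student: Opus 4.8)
The plan is to read each of $\rme_{1}(A)$ and $\rme_{1}(B)$ as the first Hilbert coefficient of the good $I$-filtration obtained by writing $A=\bigoplus_{n} A_n t^n$ and $B=\bigoplus_{n} B_n t^n$, where $I^n\subseteq A_n\subseteq B_n\subseteq \cl{I^n}$. Because $A_n$ and $B_n$ are trapped between $I^n$ and $\cl{I^n}$, and passing to the integral closure does not change the multiplicity, all four filtrations $\{I^n\},\{A_n\},\{B_n\},\{\cl{I^n}\}$ share the same leading coefficient $\rme_{0}(I)$. The leftmost inequality $0\le \rme_{1}(I)$ is then immediate from earlier results: since $J\subseteq I$ gives $\rme_{0}(I)=\l(R/J)\ge \l(R/I)$, Corollary~\ref{Sally bounds}(3) yields $\rme_{1}(I)\ge \rme_{0}(I)-\l(R/I)\ge 0$. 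For the two weak monotonicity inequalities I would record a general comparison: if $\clE=\{E_n\}\subseteq \clF=\{F_n\}$ are $I$-good filtrations with the same $\rme_{0}$, then $\rme_{1}(\clE)\le \rme_{1}(\clF)$, because $\l(F_{n+1}/E_{n+1})=\rmP_{\clE}(n)-\rmP_{\clF}(n)\ge 0$ and, the top binomial terms having cancelled, the sign of the leading surviving term forces $\rme_{1}(\clF)\ge \rme_{1}(\clE)$. Applying this to $\{I^n\}\subseteq\{A_n\}$ and to $\{B_n\}\subseteq\{\cl{I^n}\}$ gives $\rme_{1}(I)\le \rme_{1}(A)$ and $\rme_{1}(B)\le \cl{\rme}_1(I)$.

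The crux is the strict inequality $\rme_{1}(A)<\rme_{1}(B)$. By the comparison above applied to $\{A_n\}\subseteq\{B_n\}$, the difference $\l(B_{n+1}/A_{n+1})$ is eventually a polynomial of degree at most $d-1$, and $\rme_{1}(A)<\rme_{1}(B)$ holds precisely when this degree equals $d-1$, that is, when the graded module $C=B/A=\bigoplus_n (B_n/A_n)t^n$ has dimension $d$ over $\clR(I)$. Here I would use the $(\rmS_2)$ hypothesis decisively. Since $R$ is analytically unramified it is reduced, so $A\subseteq B\subseteq R[t]$ are reduced rings with a common total quotient ring (inverting $t$ identifies their fraction rings with that of $R[t]$), and $B$ is module-finite and birational over $A$; in particular $C$ is a torsion $A$-module, so $\dim C\le d$. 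A reduced $(\rmS_2)$ ring is recovered as the intersection of its localizations at height-one primes, $A=\bigcap_{\h\fkp=1}A_{\fkp}$. If $\dim C$ were strictly less than $d$, then no height-one prime of $A$ would lie in $\supp C$, so $A_{\fkp}=B_{\fkp}$ for every height-one $\fkp$, whence $B\subseteq \bigcap_{\h\fkp=1}B_{\fkp}=\bigcap_{\h\fkp=1}A_{\fkp}=A$, contradicting $A\subsetneq B$. Therefore $\dim C=d$ and $\rme_{1}(A)<\rme_{1}(B)$. That the Hilbert function of $C$ has degree exactly $\dim C-1$ is legitimate because each $B_n/A_n$ has finite length, which forces $\supp C\subseteq V(\fkm\clR(I))$ and makes $C$ a module over a standard graded algebra over the Artinian ring $R/\fkm^{k}$.

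Finally, for $\cl{\rme}_1(I)\le b(I)\rme_{0}(I)$, I would reduce to $\dim R=1$ by cutting down with a general minimal reduction, which preserves $\rme_{0}(I)$, $\cl{\rme}_1(I)$, and the Brian\c{c}on--Skoda inequality. With $J=(a)$ a principal reduction and $a$ a nonzerodivisor, a telescoping computation (using $a\,\cl{I^{n}}/a^{\,n+1}R\cong \cl{I^{n}}/a^{\,n}R$) gives $\cl{\rme}_1(I)=\sum_{n\ge 0}\l(\cl{I^{n+1}}/a\,\cl{I^{n}})=\l(\cl{I^{N}}/J^{N})$ for all large $N$. The definition of the Brian\c{c}on--Skoda number yields $\cl{I^{N}}\subseteq J^{N-b}$ for $N\ge b=b(I)$, so $\l(\cl{I^{N}}/J^{N})\le \l(J^{N-b}/J^{N})=\l(R/J^{N})-\l(R/J^{N-b})=b\,\rme_{0}(I)$, which is the asserted bound.

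I expect the strict inequality to be the main obstacle. The monotonicity step and the closing Brian\c{c}on--Skoda estimate are essentially formal once the correct length identities are in place, but extracting strictness requires knowing that the failure of $A$ to equal $B$ is supported in \emph{pure} codimension one; this is exactly the point at which the $(\rmS_2)$ hypothesis, the birationality of $A\subseteq B$, and the intersection description $A=\bigcap_{\h\fkp=1}A_{\fkp}$ of reduced $(\rmS_2)$ rings must be combined.
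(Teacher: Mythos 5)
The survey states this theorem without proof, citing \cite{PUV05}, and your proposal is a correct reconstruction along essentially the same lines as the original source: monotonicity of $\rme_{1}$ along inclusions of $I$-good filtrations sharing the multiplicity $\rme_{0}(I)$, strictness via the $(\rmS_2)$ hypothesis (through $A=\bigcap_{\h\fkp=1}A_{\fkp}$ for the reduced $(\rmS_2)$ ring $A$ and the finiteness/birationality of $A\subseteq B$) forcing $B/A$ to be supported at a height-one prime and hence to have dimension $d$, and the Brian\c{c}on--Skoda bound after superficial reduction to dimension one. The only step you should make explicit is that ``a height-one prime in $\supp(B/A)$ forces $\dim(B/A)=d$'' rests on $A$ being equidimensional and catenary of dimension $d+1$, which holds because $R$ is Cohen--Macaulay (hence universally catenary and equidimensional) and $A$ is finite and birational over $\clR(I)$.
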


As an immediate consequence of Theorem~\ref{closure chain}, for such rings, $\cl{\rme}_1(I)$ is the maximal length of a chain of modules satisfying $(\rmS_{2})$ between $\clR(I) $ and $\cl{\clR}(I)$. Also, it follows that  $I$ is normal if and only if $\clR(I)$ satisfies $(\rmS_{2})$ and $\cl{\rme}_{1}(I)= \rme_{1}(I)$  (See \cite[Corollary 2.4, Corollary 2.5]{PUV05}).
Using the Brian\c{c}on-Skoda number produces an improved bound on $\cl{\rme}_{1}(I)$ in terms of $\rme_{0}(I)$.

\begin{Theorem}{\rm \cite[Theorem 3.2, Corollary 3.4]{PUV05}}
Let $(R, \fkm)$ be a  reduced local  analytically unramified Cohen-Macaulay ring of dimension $d>0$ with infinite residue field. Let $I$ be an $\fkm$-primary ideal. Then
\[ \cl{\rme}_{1}(I) \leq b(I) \min \left\{ \frac{t}{t+1} \rme_{0}(I),\;  \rme_{0}(I) - \l(R/\cl{I}) \right\}.\]
If in addition $R$ is regular, then
\[ \rme_{1}(I) \leq \cl{\rme}_{1}(I) \leq (d-1) \min \left\{ \frac{ \rme_{0}(I)}{2}, \;  \rme_{0}(I) - \l (R/\cl{I})\right\}.\]
\end{Theorem}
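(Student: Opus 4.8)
The plan is to reduce to dimension one and there to write $\cl{\rme}_1(I)$ as an explicit sum of lengths of graded pieces of the normal filtration, after which the Brian\c{c}on--Skoda number controls that sum. Note first that the left inequality $\rme_1(I) \le \cl{\rme}_1(I)$ in each display, together with the crude bound $\cl{\rme}_1(I) \le b(I)\rme_0(I)$, is already furnished by Theorem~\ref{closure chain} (applied to the chain $\clR(I) \subseteq \cl{\clR}(I)$); the content here is to replace the factor $\rme_0(I)$ by the smaller quantities in the minimum. Since the residue field is infinite, I would choose general elements $a_1, \dots, a_{d-1}$ that are part of a minimal reduction $J$ of $I$ and that are superficial for the normal filtration $\{\cl{I^n}\}$, and pass to $\bar R = R/(a_1,\dots,a_{d-1})$, a one--dimensional Cohen--Macaulay ring in which $\rme_0(I)$, $\cl{\rme}_1(I)$, $b(I)$ and $\l(R/\cl I)$ are all preserved under the standard identifications.

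In $\bar R$ the remaining reduction is principal, say $J\bar R = (a)$ with $a$ a nonzerodivisor, and I would establish the formula
\[ \cl{\rme}_1(I) = \sum_{n \ge 1} \l\!\left(\cl{I^n}\big/ a\,\cl{I^{n-1}}\right). \]
This follows by telescoping the chain $a^{n+1}R \subseteq a^n\cl{I} \subseteq \cdots \subseteq \cl{I^{n+1}}$ and using that multiplication by $a$ is injective to identify the successive quotients with $\cl{I^k}/a\,\cl{I^{k-1}}$; passing to the limit recovers $\cl{\rme}_1(I) = \lim_n \l(\cl{I^{n+1}}/a^{n+1}R)$, the defining expression for the first normal Hilbert coefficient in a one--dimensional Cohen--Macaulay ring.

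To obtain the bound $\rme_0(I) - \l(R/\cl I)$, I would use the identity $\rme_0(I) - \l(R/\cl I) = \l(R/J) - \l(R/\cl I) = \l(\cl I/J)$, valid because $J \subseteq \cl I$ and $\rme_0(I) = \l(R/J)$. The first summand above is exactly $\l(\cl I/J)$, and a standard monotonicity argument shows that the sequence $\l(\cl{I^n}/a\,\cl{I^{n-1}})$ is nonincreasing, so each summand is at most $\l(\cl I/J)$. The Brian\c{c}on--Skoda containments $\cl{I^{n+b}} \subseteq J^n$, with $b = b(I)$, are what limit the effective range of summation and bound the number of nontrivial summands by $b(I)$, yielding $\cl{\rme}_1(I) \le b(I)\,\l(\cl I/J)$. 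For the companion bound $\tfrac{t}{t+1}\rme_0(I)$, I would instead estimate the partial sums $\sum_{n \le t}\l(\cl{I^n}/a\,\cl{I^{n-1}})$ against $\l(R/\cl{I^{t+1}})$ and $\rme_0(I)$; averaging the nonincreasing lengths over the $t$ relevant degrees produces the factor $\tfrac{t}{t+1}$, where $t$ is the index in the statement. Finally, for the regular case I would substitute the classical bound $b(I) \le d-1$ valid in a regular ring, into which the factor $\tfrac{t}{t+1}$ degenerates to $\tfrac12$.

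The step I expect to be the main obstacle is the reduction to dimension one in the first paragraph: the quotient $\bar R$ need not be analytically unramified, and the integral closures computed in $\bar R$ need not agree with the images of the $\cl{I^n}$, so making $a_1,\dots,a_{d-1}$ simultaneously superficial for the normal filtration and compatible with the Brian\c{c}on--Skoda number requires the genericity of the $a_i$ and the analytic unramifiedness of $R$ to be used in tandem. Controlling this interaction---rather than the one--dimensional length bookkeeping---is the technical heart of the argument.
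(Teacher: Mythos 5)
A preliminary remark: the survey states this theorem purely as a citation of \cite[Theorem 3.2, Corollary 3.4]{PUV05} and supplies no proof, so there is no in-paper argument to measure yours against; what follows assesses the proposal on its own terms. The skeleton you chose --- the dimension-one identity $\cl{\rme}_{1}(I)=\sum_{n\geq 1}\l\big(\cl{I^{n}}/a\,\cl{I^{n-1}}\big)$ together with the Brian\c{c}on--Skoda containments $\cl{I^{n+b}}\subseteq J^{n}$ truncating the sum --- is the right engine, and that identity is correct. The genuine gap is the step you dismiss as ``a standard monotonicity argument.'' Since $\l(\cl{I^{n}}/a\,\cl{I^{n-1}})=\rme_{0}(I)-\l(\cl{I^{n-1}}/\cl{I^{n}})$ in dimension one, the claimed monotonicity is equivalent to the Hilbert function of the normal associated graded ring being nondecreasing; the analogous statement for the $I$-adic filtration is \emph{false} (Herzog and Waldi produced one-dimensional Cohen--Macaulay local rings with non-monotone Hilbert function), so no filtration-free ``standard argument'' exists. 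What rescues it for the normal filtration is the colon identity $\cl{I^{n+1}}:a=\cl{I^{n}}$, proved valuatively from the fact that every Rees valuation of $I$ takes the same value on $a$ as on $I$ when $(a)$ is a minimal reduction; the same identity is what shows that only $b(I)$ summands are nonzero. You never isolate this input, and, worse, it does not survive your reduction to dimension one: the image filtration $\{\cl{I^{n}}\bar{R}\}$ in $\bar{R}=R/(a_{1},\ldots,a_{d-1})$ is merely a good filtration, not a filtration of integral closures, so neither the colon identity nor the monotonicity is available where you intend to use them. The two halves of your plan thus undercut each other; a workable fix is to stay in dimension $d$, filter $\cl{I^{n+1}}/J^{n+1}$ by $J^{n+1}\subseteq J^{n}\cl{I}\subseteq\cdots\subseteq J\,\cl{I^{n}}\subseteq\cl{I^{n+1}}$, bound each factor by a binomial coefficient times $\l(\cl{I^{k+1}}/J\,\cl{I^{k}})$, and compare leading coefficients to obtain $\cl{\rme}_{1}(I)\leq\sum_{k=0}^{b(I)-1}\l(\cl{I^{k+1}}/J\,\cl{I^{k}})$, which is much closer to how \cite{PUV05} actually exploits the containments.

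The bound involving $\frac{t}{t+1}\rme_{0}(I)$ is also not reached. ``Averaging the nonincreasing lengths'' cannot produce an upper bound of that form summand by summand: a nonincreasing sequence has its largest terms first, so the partial-sum comparison you would need points the wrong way, and the first summand $\l(\cl{I}/J)=\rme_{0}(I)-\l(R/\cl{I})$ can certainly exceed $\frac{t}{t+1}\rme_{0}(I)$. Moreover $t$ is never identified in your argument (nor, to be fair, in the survey's statement; in \cite{PUV05} it is the Brian\c{c}on--Skoda number), and the assertion that $\frac{t}{t+1}$ ``degenerates to $\frac{1}{2}$'' in the regular case is unsupported, since $\frac{t}{t+1}=\frac{1}{2}$ only when $t=1$ while Brian\c{c}on--Skoda gives $b(I)\leq d-1$. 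So the half of the statement carrying the factor $\frac{t}{t+1}\rme_{0}(I)$, and with it the factor $\frac{\rme_{0}(I)}{2}$ in the regular case, remains unproved in your proposal.
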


The preceding theorem naturally gives rise to a need to bound $b(I)$. Using the multiplicy of a module defined via a regular element of the Jacobian ideal $\Jac_{K}(R)$, such a bound can be found. 

\begin{Proposition}{\rm \cite[Proposition 3.7]{PUV05}}
Let $k$ be a perfect field, let $(R, \fkm)$ be a reduced local Cohen-Macaulay $k$-algebra essentially of finite type of dimension $d > 0$, and let $\delta \in \Jac_{k}(R)$ be a non-zerodivisor. Then for any $\fkm$-primary ideal $I$,
\[ b(I) \leq d-1 + \rme_{0}(I+\delta R/\delta R).\]
\end{Proposition}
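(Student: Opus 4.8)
The plan is to establish the uniform containment $\cl{I^{n+d-1+e}} \subseteq J^{n}$ for every $n \geq 1$ and every reduction $J$ of $I$, where $e = \rme_{0}\big((I+\delta R)/\delta R\big)$ is the Hilbert--Samuel multiplicity of the $\fkm$-primary ideal $\bar I := (I+\delta R)/\delta R$ of the ring $\bar R := R/\delta R$; by the definition of $b(I)$ this is exactly the assertion. First I would reduce to the case that $J$ is a minimal reduction: any reduction $J$ contains a minimal reduction $J_{0}$, and $\cl{I^{n+d-1+e}} \subseteq J_{0}^{n} \subseteq J^{n}$, so it suffices to treat minimal reductions. (Passing to the faithfully flat extension $R[X]_{\fkm R[X]}$ alters neither $b(I)$, the relevant integral closures, nor the multiplicities, so I may assume the residue field is infinite.) Since $R$ is Cohen--Macaulay and $I$ is $\fkm$-primary, such a $J$ is generated by a system of parameters that forms a regular sequence, whence $\G(J) \cong (R/J)[T_{1},\dots,T_{d}]$ is a polynomial ring over the Artinian ring $R/J$ --- a structure I will exploit in the final step.

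The deep input is a Jacobian-coefficient form of the Brian\c{c}on--Skoda theorem. Because $R$ is reduced, Cohen--Macaulay, essentially of finite type over the perfect field $k$, and $\delta \in \Jac_{k}(R)$ is a non-zerodivisor, one has
\[ \delta\, \cl{I^{m+d-1}} \subseteq J^{m} \qquad (m \geq 1) \]
for the minimal reduction $J$ (generated by $d$ elements). This is the standard way the failure of regularity is paid for: over a regular ring one may take $\delta$ to be a unit and recover the classical bound $b(I) \leq d-1$. Rewritten as $\cl{I^{m+d-1}} \subseteq (J^{m} : \delta)$, it reduces the entire problem to removing the factor $\delta$, that is, to controlling the colon $(J^{m}:\delta)$.

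The heart of the matter is the colon estimate
\[ (J^{m}:\delta) \subseteq J^{\,m-e} \qquad (m \geq e), \]
for once this is in hand, substituting $m=n+e$ and chaining with the previous display yields $\cl{I^{n+d-1+e}} \subseteq (J^{n+e}:\delta) \subseteq J^{n}$, which is the desired bound. To prove the estimate I would pass to $\bar R = R/\delta R$, a Cohen--Macaulay ring of dimension $d-1$ in which $\bar J := J\bar R$ is a reduction of $\bar I$, so that $\rme_{0}(\bar J) = \rme_{0}(\bar I) = e$. The multiplication-by-$\delta$ exact sequence
\[ 0 \lar R/(J^{m}:\delta) \xrightarrow{\ \delta\ } R/J^{m} \lar \bar R/\bar J^{\,m} \lar 0 \]
identifies $\l\big((J^{m}:\delta)/J^{m}\big) = \l(\bar R/\bar J^{\,m})$, so the size of the $\delta$-colon is governed precisely by the Hilbert function of $\bar J$ in $\bar R$, which is eventually a polynomial of degree $d-1$ in $m$ with leading coefficient $e/(d-1)!$. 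The claim to be extracted is that this forces multiplication by $\delta$ to raise the $J$-adic order of any element by at most $e$ --- which is exactly $(J^{m}:\delta)\subseteq J^{m-e}$ --- and here the polynomial structure $\G(J) = (R/J)[T_{1},\dots,T_{d}]$ is used to compare initial forms.

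The main obstacle I expect is this last step: upgrading the \emph{asymptotic} information carried by $\rme_{0}(\bar I)=e$ into the \emph{uniform} ideal containment valid for all $m \geq e$, as $b(I)$ demands containments for every $n$, not just $n \gg 0$. The difficulty is that multiplication by $\delta$ can raise the $J$-adic order by more than a single initial-form step: the leading form of $\delta y$ may vanish in $\G(J)$ without revealing how far the order of $\delta y$ actually climbs, so purely graded reasoning in $(R/J)[T]$ undercounts the jump (indeed the defect can exceed $\l(R/J)=\rme_{0}(I)$, so the naive graded bound is not even valid). Controlling the \emph{total} jump is precisely what requires the global invariant $e=\rme_{0}(\bar I)$ of the hypersurface section $\bar R = R/\delta R$. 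Securing this uniform colon estimate with the sharp constant, together with invoking the precise Jacobian Brian\c{c}on--Skoda statement in the essentially-finite-type-over-a-perfect-field setting, is the crux; the reductions of the first paragraph and the polynomial structure of $\G(J)$ are routine by comparison.
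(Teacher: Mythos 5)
The survey states this proposition without proof, citing \cite[Proposition 3.7]{PUV05}, so I am measuring your proposal against the argument of the original source. Your architecture is the standard (and, I believe, the intended) one: reduce to a minimal reduction $J$ generated by a regular sequence over an infinite residue field, invoke the Brian\c{c}on--Skoda theorem with Jacobian multipliers to get $\delta\,\cl{I^{\,n+d-1+e}}\subseteq J^{n+e}$, and then remove $\delta$ via the colon estimate $J^{n+e}:\delta\subseteq J^{n}$, where $e=\rme_{0}\big((I+\delta R)/\delta R\big)$. The reductions in your first paragraph are fine, the identity $\l\big((J^{m}:\delta)/J^{m}\big)=\l(\bar R/\bar J^{m})$ is correct, and the logic ``Brian\c{c}on--Skoda $+$ colon estimate $\Rightarrow$ proposition'' is valid.

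The genuine gap is that the colon estimate --- which you yourself flag as ``the crux'' --- is never proved, and neither of the tools you propose for it can prove it. A length identity cannot yield an ideal containment: $\l\big((J^{m}:\delta)/J^{m}\big)=\l(\bar R/\bar J^{m})$ is numerically consistent with $(J^{m}:\delta)\subseteq J^{m-e}$ but does not force it, and it is in any case asymptotic information, whereas $b(I)$ demands the containment for every $n$. The graded comparison in $\G(J)\cong (R/J)[T_{1},\dots,T_{d}]$ settles the matter only when the initial form $\delta^{*}$ is a nonzerodivisor on $\G(J)$, in which case $v_{J}(\delta y)=v_{J}(\delta)+v_{J}(y)$ and one even gets the stronger bound $J^{m}:\delta\subseteq J^{m-v_{J}(\delta)}$ with $v_{J}(\delta)\le \rme_{0}(J)\,v_{J}(\delta)=e$. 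The whole difficulty is the zerodivisor case: for instance in $R=k[[x,y,z]]/(xy-z^{2})$ with $J=(x,y)$ and $\delta=z\in\Jac_{k}(R)$ one has $v_{J}(z)=0$ but $v_{J}(z\cdot z)=v_{J}(xy)=2=e$, so the jump genuinely reaches $e$ and no single initial-form step detects it. What is actually required is an effective Artin--Rees statement $J^{m}\cap\delta R\subseteq \delta J^{m-e}$ with the specific constant $e$; Artin--Rees guarantees such a containment for \emph{some} unspecified constant, and identifying that constant with $\rme_{0}\big((I+\delta R)/\delta R\big)$ is the entire content of the proposition beyond Brian\c{c}on--Skoda. (For $d=1$ this can be done cleanly: the chain $\delta R\subseteq \delta R:x\subseteq \delta R:x^{2}\subseteq\cdots$ embeds, modulo $\delta R$, into $\bar R$ of length $e$, hence stabilizes by step $e$, and stabilization plus $\delta$ being a nonzerodivisor gives $(x^{m}):\delta\subseteq (x^{m-e})$; this argument does not extend verbatim to $d\ge 2$.) Until that lemma is supplied with the sharp constant, the proposal is an outline rather than a proof.
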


When additional information is known about the depth of the various graded algebras associated to an ideal, in particular when ${\ds \cl{\clR}(I) }$ or $\G(I)$ are Cohen-Macaulay or almost Cohen-Macaulay, then the relationships between the invariants are conveniently intertwined with the multiplicity. 

\medskip

Let $(R, \fkm)$ be an analytically unramified Noetherian local ring and $I$ an ideal of $R$. The {\em normalization index} of $I$ is the smallest nonnegative integer $\sigma=\sigma(I)$ such that ${\ds \cl{I^{n+1}} = I \, \cl{I^{n}} }$ for all $n \geq \sigma$. The {\em generation index} of $I$ is the smallest nonnegative integer $\tau=\tau(I)$ such that ${\ds \cl{\clR}(I) = R[ \cl{I}t, \ldots, \cl{I^{\tau}} t^{\tau} ]}$ \cite[Definition 2.1]{PUVV19}.  If $I$ is an $\fkm$-primary ideal, $\sigma(I)$ is bounded above by by a polynomial of degree $d+1$ in $\tau(I)$ whose leading coefficient is a multiple of $\rme_{0}(I)$ (See \cite[Proposition 2.3]{PUVV19}.). When $R$ is a polynomial ring over a field of characteristic $0$, the bound becomes particularly nice. 

\begin{Theorem}{\rm \cite[Theorem 2.4]{PUVV19}}
Let $R=k[x_1, \ldots, x_d]$ be a polynomial ring over a field of characteristic zero and let $I$ be a homogeneous ideal that is $(x_1, \ldots, x_d)$-primary. Then
\[ \sigma(I) \leq ( \rme_{0}(I) -1){{\tau(I)+1}\choose{2}} - (\rme_{0}(I) -2){{\lfloor \frac{\tau(I)}{2}\rfloor +1}\choose{2}}.\]
\end{Theorem}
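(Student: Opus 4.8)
The plan is to reinterpret $\sigma(I)$ as the top nonzero degree of an explicit finite-length graded ring, and then to bound that degree by combining the generation index $\tau(I)$ with a power/reduction estimate that is where the characteristic-zero hypothesis does its work.

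First I would set up the reformulation. Since $R=k[x_1,\dots,x_d]$ is normal, $\clR(I)=R[It]$ is module-finite inside its integral closure $\cl{\clR}(I)$, and the graded quotient ring
\[ M := \cl{\clR}(I)/(It)\,\cl{\clR}(I) = \bigoplus_{n\ge 0} \cl{I^n}/I\,\cl{I^{n-1}} \]
has finite length, because $I\,\cl{I^{n-1}}\supseteq I^n\supseteq \fkm^{ns}$ for some $s$. By the definition of $\sigma(I)$ its top nonzero degree is exactly $\sigma(I)+1$, and by the definition of $\tau(I)$ the algebra $\cl{\clR}(I)$, hence its quotient $M$, is generated in degrees at most $\tau:=\tau(I)$. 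The problem thus becomes to bound the top degree of a finite-length graded ring generated in degrees $\le \tau$; such a degree is controlled not by the generators but by the relations, so the heart of the matter is to produce enough relations, that is, to show products of generators fall into $(It)\,\cl{\clR}(I)$ at a controlled degree.

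The key estimate I would aim for is a power bound: for each algebra generator $g\in\cl{I^j}$ with $1\le j\le\tau$, the class of $g$ in $M$ is nilpotent with $g^{\rme_0(I)-1}\in I\,\cl{\clR}(I)$, so powers of $g$ survive in $M$ only up to degree $(\rme_0(I)-1)j$. This is precisely where characteristic zero is essential: forcing such a collapse requires a subadditivity/Brian\c{c}on--Skoda-type containment for integral closures together with the fact that the relevant integral-dependence (reduction-number) data is bounded by $\rme_0(I)$, and these hold with the needed strength only in characteristic zero. Granting the power bound, the top nonzero monomial of $M$ is a product of such generator-powers, and a careful bookkeeping of which powers can be simultaneously nonzero bounds $\sigma(I)+1$ by a weighted sum $\sum_{j=1}^{\tau}(\rme_0(I)-1)\,j$, giving the leading term $(\rme_0(I)-1)\binom{\tau+1}{2}$.

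The subtracted term encodes a doubling refinement: when $j\le\lfloor\tau/2\rfloor$, the square of a degree-$j$ generator lies in $\cl{I^{2j}}$ with $2j\le\tau$, hence is already among the chosen algebra generators, and this built-in relation prevents the powers of such low-degree generators from contributing independently, cutting their contribution from $(\rme_0(I)-1)j$ down to $j$. Summing the reduction $(\rme_0(I)-2)j$ over $j=1,\dots,\lfloor\tau/2\rfloor$ produces $(\rme_0(I)-2)\binom{\lfloor\tau/2\rfloor+1}{2}$, yielding the stated inequality. The main obstacle, and the step I expect to require the most care, is the quantitative power/reduction estimate in characteristic zero, namely establishing that a degree-$j$ generator really does vanish in $M$ by its $(\rme_0(I)-1)$-st power, together with the combinatorial argument that the top monomial's total degree is bounded by the weighted sum rather than by each generator's contribution in isolation; the characteristic-free version of this mechanism is exactly what only yields the weaker degree-$(d+1)$ polynomial bound of \cite[Proposition 2.3]{PUVV19}. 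One should also dispatch the extreme cases $\tau(I)\le 1$, where $I$ is normal, separately to anchor the argument.
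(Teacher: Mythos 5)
The survey states this theorem without proof (it only cites \cite[Theorem 2.4]{PUVV19}), so there is no in-paper argument to compare against; I am therefore assessing your proposal on its own terms. Your combinatorial skeleton does correctly reverse-engineer the shape of the bound: since $\cl{\clR}(I)=R[\cl{I}t,\ldots,\cl{I^{\tau}}t^{\tau}]$, every $\cl{I^n}$ is a sum of products $\prod_j(\cl{I^j})^{a_j}$ with $\sum_j ja_j=n$; the merging relation $(\cl{I^j})^2\subseteq\cl{I^{2j}}$ for $2j\le\tau$ lets one assume $a_j\le 1$ for $j\le\lfloor\tau/2\rfloor$; and if in addition $a_j\le\rme_{0}(I)-1$ for the remaining $j$, then $n\le\sum_{j\le\lfloor\tau/2\rfloor}j+\sum_{j>\lfloor\tau/2\rfloor}(\rme_{0}(I)-1)j$, which is exactly the stated right-hand side. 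That bookkeeping is sound.

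The gap is the power estimate, which carries the entire mathematical content of the theorem and which you assert rather than prove. There are two problems. First, you formulate it element-wise: nilpotency of each individual algebra generator $g\in\cl{I^j}$ in $M=\cl{\clR}(I)/(It)\cl{\clR}(I)$ does not bound the top degree of $M$, because $\cl{I^j}$ has many minimal generators and a product of \emph{distinct} degree-$j$ generators is not controlled by the nilpotency order of any single one; what is needed is an ideal-level statement such as $(\cl{I^j})^{a}\subseteq I\,\cl{I^{ja-1}}$ for $a\ge\rme_{0}(I)$, i.e., a reduction-number bound of the form $r_{I\cl{I^{j-1}}}(\cl{I^j})\le\rme_{0}(I)-1$ uniformly in $j$. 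Second, you offer no argument for this bound beyond invoking a ``subadditivity/Brian\c{c}on--Skoda-type containment''; the classical Brian\c{c}on--Skoda theorem $\cl{I^{n+d-1}}\subseteq I^{n}$ introduces precisely the $d$-dependent losses that yield only the degree-$(d+1)$ polynomial bound of \cite[Proposition 2.3]{PUVV19}, so it cannot by itself be the mechanism, and the characteristic-zero input (a multiplicity bound on reduction numbers of the homogeneous ideals $\cl{I^j}$) is never identified or established. Until that lemma is stated and proved, the argument is incomplete. Minor points: the top nonzero degree of $M$ is $\sigma(I)$, not $\sigma(I)+1$, and $M_0=R$ is not of finite length, so you should truncate in positive degrees or reduce modulo $\fkm$ before speaking of finite length.
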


Combining the use of reductions and filtrations to gain information about invariants of ideals naturally leads to the Sally module, particularly when the invariants in question are related to Hilbert functions. Let $\clE=\{ I_{n} \}$ be an $I$-good filtration in $R$ (See Definition~\ref{I-good}). Let $J$ be a reduction of $I$. The {\em Sally module of the filtration $\clE$ relative to $J$} is defined as 
\[ S_{J}(\clE) = \bigoplus_{n \geq 1} I_{n+1}/I_{1} J^{n}.  \]
Suppose that $(R, \fkm)$ is a Cohen-Macaulay ring of dimension $d$ and $I$ is an $\fkm$-primary ideal. Then the Hilbert series of $\clE$ and $S=S_{J}(\clE)$ are related as shown in \cite{PUVV19}, i.e., 
\[ \rmh_{\clE}(t) =  {\frac{\l(R/I) + \l(I/J)\cdot t}{(1-t)^{d+1}}} - \rmh_{S}(t). \]
It follows that $\rme_{i+1}(\clE) = \rme_{i}(S)$ for $i \geq 1$, which leads to a relation among the Hilbert coefficients of the filtration.

\begin{Theorem}{\rm (\cite[Corollary 3.5]{PUVV19}, \cite[Corollary 2]{M89})}
Let $(R, \fkm)$ be  a Cohen-Macaulay local ring of dimension $d>0$, and $I$ an $\fkm$-primary ideal. 
Let $\clE=\{I_{n}\}$ be a $I$-good filtration.
Let $J$ be a reduction of $I$ and ${\ds S=S_{J}(\clE)}$ the Sally module of $\clE$ relative to $J$. 
Suppose that the associated graded ring ${\ds \G(\clE)}$  is Cohen-Macaulay and that the Hilbert series of $S$ can be written as ${\ds \rmh_{S}(t) = g(t)/(1-t)^{d} }$, where $g(t)$ is a polynomial of degree at most $4$. Then 
\[ \rme_{2}(\clE) \geq \rme_{3}(\clE) \geq \rme_{4}(\clE) \geq \rme_{5}(\clE). \]
\end{Theorem}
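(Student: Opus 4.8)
The plan is to convert the asserted chain into three inequalities among the Hilbert coefficients of the Sally module $S$, and then to read off the needed sign information from the Cohen--Macaulayness of $\G(\clE)$. First I would invoke the relation $\rme_{i+1}(\clE)=\rme_i(S)$ for $i\geq 1$, recorded just above the statement, which identifies the four coefficients $\rme_2(\clE),\rme_3(\clE),\rme_4(\clE),\rme_5(\clE)$ with $\rme_1(S),\rme_2(S),\rme_3(S),\rme_4(S)$ respectively. Thus it suffices to prove $\rme_1(S)\geq\rme_2(S)\geq\rme_3(S)\geq\rme_4(S)$.

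Next I would make these coefficients explicit in terms of the numerator $g(t)=\sum_j g_j t^j$. Expanding $\rmh_S(t)=g(t)/(1-t)^d$ about $t=1$ gives $\rme_i(S)=g^{(i)}(1)/i!$, equivalently $\sum_i \rme_i(S)\,x^i = g(1+x)$. Since $\deg g\leq 4$, this yields
\[ \rme_1(S)=g_1+2g_2+3g_3+4g_4,\quad \rme_2(S)=g_2+3g_3+6g_4,\quad \rme_3(S)=g_3+4g_4,\quad \rme_4(S)=g_4, \]
so that the three differences to be controlled are $\rme_1(S)-\rme_2(S)=g_1+g_2-2g_4$, $\rme_2(S)-\rme_3(S)=g_2+2g_3+2g_4$, and $\rme_3(S)-\rme_4(S)=g_3+3g_4$.

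The crux is to pin down the signs of the $g_j$. Here I would use the displayed identity $\rmh_\clE(t)=\frac{\l(R/I)+\l(I/J)t}{(1-t)^{d+1}}-\rmh_S(t)$ to compute the Hilbert series of the associated graded ring as $\rmh_{\G(\clE)}(t)=(1-t)\rmh_\clE(t)=N(t)/(1-t)^d$, where $N(t)=\l(R/I)+\l(I/J)t-(1-t)g(t)$ is the $h$-polynomial of $\G(\clE)$. Because $\G(\clE)$ is Cohen--Macaulay, its $h$-vector is non-negative, so every coefficient of $N(t)$ is $\geq 0$. Reading off the coefficients of $t^2,t^3,t^4,t^5$ in $N(t)$ gives precisely $g_1\geq g_2\geq g_3\geq g_4\geq 0$.

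Finally I would substitute. The second and third differences are sums of non-negative terms, hence $\geq 0$ at once; for the first, $g_1\geq g_4$ and $g_2\geq g_4$ give $g_1+g_2-2g_4\geq 0$. This establishes the three inequalities, and hence the chain. I expect the main obstacle to be precisely this first inequality: the quantity $\rme_1(S)-\rme_2(S)=g_1+g_2-2g_4$ is \emph{not} visibly non-negative from $g_j\geq 0$ alone, so one genuinely needs the monotonicity $g_1\geq g_2\geq g_3\geq g_4$. That monotonicity in turn hinges on correctly identifying $N(t)$ as the $h$-polynomial of $\G(\clE)$ and on the non-negativity of that $h$-vector, which is where the Cohen--Macaulay hypothesis is used (reducing, if necessary, to an infinite residue field and a linear system of parameters to justify the non-negativity).
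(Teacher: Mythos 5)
Your argument is correct: identifying $\rme_{i+1}(\clE)=\rme_i(S)$, reading off $\rme_i(S)=\sum_j\binom{j}{i}g_j$ from $g(1+x)$, and extracting $g_1\ge g_2\ge g_3\ge g_4\ge 0$ from the non-negativity of the numerator $N(t)=\l(R/I)+\l(I/J)t-(1-t)g(t)$ of $\rmh_{\G(\clE)}(t)=(1-t)\rmh_{\clE}(t)$ does yield all three inequalities, including the only non-obvious one, $g_1+g_2-2g_4\ge 0$. The survey states this theorem only as a citation to \cite{PUVV19} and \cite{M89} and supplies no proof of its own beyond the two preparatory identities you invoke, so there is nothing in-paper to compare against; your completion is the natural one those identities set up, with the single caveat that since $\G(\clE)$ need not be standard graded, the degree-one system of parameters witnessing the non-negative $h$-vector should be taken to be the initial forms of a minimal reduction $J$ (which are an h.s.o.p.\ because $I_{n+1}=JI_n$ for $n\gg 0$), rather than generic linear forms.
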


The Hilbert coefficients of a filtration can also be used to bound the number of generators of the Rees algebra of the filtration.  We assume $J$ is a fixed minimal reduction associated to an ideal $I$ in what follows. Typical applications of the following theorem are those to the $I$-adic filtration $\{I^n\}$ and the normal filtration $\{\cl{I^n}\}$.

\begin{Theorem}{\rm \cite[Theorem 3.7]{PUVV19}}\label{filtration}
Let $(R, \fkm)$ be a Cohen-Macaulay local ring of dimension $d \geq 1$. Let ${\ds \clE=\{I_{n} \}}$ be filtration in $R$ with $I_{1}=I$.  Let ${\ds A= \clR(J)}$ and ${\ds B= \clR(\clE)}$. 
\begin{enumerate}[{\rm (1)}]
\item If ${\ds  \depth( \G(\clE))  \geq d-1}$, then  ${\ds \nu_{A}( B/A) \leq \rme_{1}(\clE)}$. 

\item If ${\ds  \depth (\G(\clE))  = d}$, then ${\ds \nu_{A}(B) \leq \rme_{0}(\clE)}$. 

\item If  $B$ is Cohen-Macaulay, then  ${\ds \nu_{A}( B/A) \leq   \nu(I/J) + \max\{0, d-2\}\l(I_2/JI)}$.   
\end{enumerate}
\end{Theorem}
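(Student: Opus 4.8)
The plan is to convert all three inequalities into generator counts of finite–length modules over $R$. After a faithfully flat extension we may assume the residue field is infinite, so that the minimal reduction $J=(a_1,\dots,a_d)$ is generated by a superficial sequence; then $A=\clR(J)$ is Cohen–Macaulay of dimension $d+1$, $A\subseteq B$, and $B/A$ has dimension at most $d$. Tensoring with the graded residue field of $A$ and using the filtration identity $\sum_{i\ge 1}J^iI_{n-i}=JI_{n-1}$ (which rests on $J\subseteq I$ together with multiplicativity), graded Nakayama yields
$$\nu_A(B/A)=\sum_{n\ge 1}\nu_R\!\left(I_n/JI_{n-1}\right),\qquad \nu_A(B)=1+\nu_A(B/A).$$
Everything then reduces to estimating $\sum_n\nu_R(I_n/JI_{n-1})$, for which I will use the elementary bound $\nu_R(M)\le\l(M)$ for a module of finite length.

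For (1) the input is the Huckaba–Marley length formula
$$\rme_1(\clE)=\sum_{n\ge 1}\l\!\left(I_n/JI_{n-1}\right),$$
valid whenever $\depth\G(\clE)\ge d-1$. I would prove it by choosing $a_1,\dots,a_{d-1}$ with $\G(\clE)$-regular initial forms—possible precisely because the depth is at least $d-1$—and reducing modulo them to dimension one, where both the lengths $\l(I_n/JI_{n-1})$ and the coefficient $\rme_1$ are preserved and the identity follows by telescoping $\l(R/I_{n+1})$ against its Hilbert polynomial. Granting this, $\nu_A(B/A)=\sum_n\nu_R(I_n/JI_{n-1})\le\sum_n\l(I_n/JI_{n-1})=\rme_1(\clE)$.

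For (2) I pass to the Artinian reduction $\overline R=R/J$. Since $\depth\G(\clE)=d$, the initial forms $a_1^{*},\dots,a_d^{*}$ form a $\G(\clE)$-regular sequence, so Valabrega–Valla gives $J\cap I_n=JI_{n-1}$ and hence $\overline{I_n}:=(I_n+J)/J\cong I_n/JI_{n-1}$, whence $\nu_A(B)=\sum_{n\ge 0}\nu_{\overline R}(\overline{I_n})$. As the chain $\overline{I_n}$ is eventually zero, $\sum_{n\ge 0}\l(\overline{I_n}/\overline{I_{n+1}})=\l(\overline R)=\rme_0(\clE)$, so (2) follows from the degreewise estimate $\nu_{\overline R}(\overline{I_n})\le\l(\overline{I_n}/\overline{I_{n+1}})$, equivalently $I_{n+1}\subseteq\fkm I_n+J$ for all $n$. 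I expect this degreewise inequality to be the main obstacle: it is false for arbitrary decreasing chains of ideals, and here its validity genuinely uses both the multiplicativity of $\clE$ and the Cohen–Macaulayness of $\G(\clE)$ through Valabrega–Valla.

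For (3) I use the degree–shifting surjection $B/A\twoheadrightarrow S_J(\clE)(-1)$ carrying $I_n/J^n$ onto $I_n/IJ^{n-1}=S_{n-1}$; its kernel $K=\bigoplus_n IJ^{n-1}/J^n$ is generated in degree one by $I/J$, so $\nu_A(K)=\nu(I/J)$ and the exact sequence $0\to K\to B/A\to S_J(\clE)(-1)\to 0$ gives $\nu_A(B/A)\le\nu(I/J)+\nu_A(S_J(\clE))$. It then remains to bound $\nu_A(S_J(\clE))$. The structural input is that when $B=\clR(\clE)$ is Cohen–Macaulay the reduction number is at most $d-1$ and the Sally module is Cohen–Macaulay of dimension $d$, generated in degrees $1,\dots,d-2$; bounding the generators in each such degree by $\l(S_1)=\l(I_2/JI)$ yields the factor $\max\{0,d-2\}$. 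In particular for $d\le 2$ one has $S_J(\clE)=0$ and $\nu_A(B/A)\le\nu(I/J)$. The crux of (3) is this structural input—deducing the reduction-number bound and the Cohen–Macaulayness and degree range of $S_J(\clE)$ from the Cohen–Macaulayness of $B$; the subsequent counting is routine.
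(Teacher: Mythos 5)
The survey states this theorem only with a citation to \cite{PUVV19} and records no proof, so I assess your argument on its own terms. Your opening reduction is correct and is certainly the right frame: since $J\subseteq I_{1}$ and $I_{a}I_{b}\subseteq I_{a+b}$, one has $\sum_{i\ge 1}J^{i}I_{n-i}=JI_{n-1}$, and graded Nakayama over $A=\clR(J)$ gives $\nu_{A}(B/A)=\sum_{n\ge 1}\nu_{R}(I_{n}/JI_{n-1})$ and $\nu_{A}(B)=1+\nu_{A}(B/A)$. Part (1) is then complete modulo the Huckaba--Marley equality $\rme_{1}(\clE)=\sum_{n\ge 1}\l(I_{n}/JI_{n-1})$ under $\depth \G(\clE)\ge d-1$, which is a known result for Hilbert filtrations and whose proof you sketch along the standard lines; I accept (1).

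Parts (2) and (3), however, each stall at precisely the step you yourself flag as the crux, and in neither case do you close it. For (2), after Valabrega--Valla reduces everything to the chain $\bar I_{n}=(I_{n}+J)/J$ in $\bar R=R/J$, you need $\nu_{\bar R}(\bar I_{n})\le \l(\bar I_{n}/\bar I_{n+1})$, i.e. $I_{n+1}\subseteq \fkm I_{n}+J$. This is trivial for the $I$-adic filtration (where $I_{n+1}=I\cdot I^{n}\subseteq \fkm I^{n}$), but the theorem is aimed at filtrations such as $\{\cl{I^{n}}\}$ in which $I_{n+1}$ may strictly contain $I_{1}I_{n}$, and the inequality does not follow from multiplicativity alone: in $\bar R=k[x]/(x^{3})$ the multiplicative chain $\bar R\supseteq (x)\supseteq (x^{2})\supseteq (x^{2})\supseteq 0$ has $\sum_{n}\nu(\bar I_{n})=4>3=\l(\bar R)$. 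So whatever rescues the statement must come from the goodness of $\clE$ together with the Cohen--Macaulayness of $\G(\clE)$, and your proposal does not identify that mechanism. For (3), the exact sequence $0\to K\to B/A\to S_{J}(\clE)(-1)\to 0$ with $\nu_{A}(K)=\nu(I/J)$ is fine, but the two structural inputs you then invoke are exactly the content of the theorem: that $B$ Cohen--Macaulay forces $I_{n+1}=JI_{n}$ for $n\ge d-1$ (a Goto--Shimoda-type statement that needs proof for filtrations), and, more seriously, that the number of new generators of $S_{J}(\clE)$ in each degree between $2$ and $d-1$ is at most $\l(I_{2}/JI)$. The latter amounts to $\l(I_{n+1}/JI_{n})\le \l(I_{2}/JI_{1})$ for all such $n$, a monotonicity statement for the lengths $\l(I_{n+1}/JI_{n})$ that is nontrivial even for the $I$-adic filtration and that you assert without argument. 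Until these two points are supplied, (2) and (3) are not proved.
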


The connection between multiplicities and integral closure is more general than the theorems above, which focused on the Rees algebra. Indeed, multiplicity can be used to measure the complexity of the integral closure in a more general setting. 

\medskip

Let $k$ be a field and let $A$ be a finitely generated $k$-algebra. Suppose that $A$ is reduced and equidimensional of dimension $d$ and let $T=k[x_1, \ldots , x_d]$ be a Noether normalization of $A$. The rank of $A$ over $T$ is the dimension of the $K$-vector space $A \otimes_{T} K$ where $K$ is the quotient field of $T$. The {\em multiplicity} $\deg(A)$ of $A$ is defined as the least rank of $A$ as $T$-module for all possible Noether normalizations. If $k$ is infinite and $A$ is  a standard graded $k$-algebra, then the rank of $A$ is independent of $T$ and $\deg(A)=\rme_{0}(A)$.  This viewpoint on multiplicity provides a way to use multiplicity to bound invariants of ring extensions, which can then be applied in the case of integral closures.

\medskip

Let $A \subset B$  be affine $k$-algebras with the following presentations.
\[ A= k[x_{1}, \ldots, x_{n}]/I  \hookrightarrow B=k[y_{1}, \ldots, y_{m}]/J.\]
The least $n$, among all such presentations of $A$, is called the {\em embedding dimension} of $A$ and is denoted by ${\ds \edim(A)}$. The {\em embedding degree} of $B$ is the smallest value of ${\ds \max\{ \deg(y_{i}) \}}$ achieved among all presentations and is denoted by $\edeg(B)$ (See \cite{UV04}). Notice that if $A$ is a reduced ring which is a finitely generated $k$-algebra, then the integral closure $\cl{A}$ is also an affine $k$-algebra.
The numbers  $\edim(\cl{A})$ and $\edeg(\cl{A})$ are important measures for the complexity of computing the integral closure $\cl{A}$.

\medskip

\begin{Theorem}{\rm \cite[Theorem 2.1]{UV04}}
Let $k$ be a field.
Let $A$ be a reduced and equidimensional finitely generated $k$-algebra of dimension $d$.  Let $A \subset B$ be a finite, birational ring extension and assume that $B$ is Cohen-Macaulay.
\begin{enumerate}[{\rm (1)}]
\item ${\ds \nu_A(B) \leq \deg(A)}$
\item ${\ds \edim(B) \leq \deg(A) + d - 1}$
\item If $k$ is perfect, $A$ is a standard graded $k$-algebra, and $A \subset B$ is an extension of graded rings, then  ${\ds \edeg(B) \leq  \max \{1, \rme_{0}(A) -2 \}}$.
\end{enumerate}
\end{Theorem}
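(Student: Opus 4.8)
The plan is to route all three parts through one structural fact: once a Noether normalization realizing the multiplicity is chosen, $B$ becomes a free module of rank $\deg(A)$ over a polynomial ring, and the three bounds are read off from a homogeneous basis. For part (3) I would first reduce to an infinite field. Since $k$ is perfect, extending scalars to $\cl{k}$ keeps $A\otimes_k\cl{k}$ reduced and equidimensional, keeps $B\otimes_k\cl{k}$ Cohen-Macaulay and finite birational over it, and leaves $\rme_0(A)$ and $\edeg(B)$ unchanged; so I may assume $k$ infinite and choose a homogeneous Noether normalization $T=k[x_1,\dots,x_d]\subseteq A$ with $\rank_T A=\deg(A)$. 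For (1) and (2) I simply take the Noether normalization achieving the minimum that defines $\deg(A)$.

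The core step is that $B$ is $T$-free of rank $\deg(A)$. Because $A\subseteq B$ is finite and birational, $A$ and $B$ share the same total quotient ring; localizing at $S=T\setminus\{0\}$ gives $B\otimes_T\Quot(T)=A\otimes_T\Quot(T)$, so $\rank_T B=\rank_T A=\deg(A)$. Since $B$ is Cohen-Macaulay of dimension $d=\dim T$ and $T\to B$ is finite, $\depth_T B=\depth B=d$, so Auslander--Buchsbaum over the regular ring $T$ gives $\mathrm{pd}_T B=0$; thus $B$ is $T$-free of rank $\deg(A)$. Part (1) is then immediate: a $T$-basis is a $T$-module generating set and $T\subseteq A$, so $\nu_A(B)\le\nu_T(B)=\deg(A)$. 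For part (2), take a homogeneous $T$-basis $1=b_1,b_2,\dots,b_r$ with $r=\deg(A)$, the unit being the degree-zero basis element since $B_0=k=T_0$; then $B=T[b_2,\dots,b_r]=k[x_1,\dots,x_d,b_2,\dots,b_r]$ has $d+(r-1)=\deg(A)+d-1$ algebra generators, so $\edim(B)\le\deg(A)+d-1$.

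Part (3) is where the real work lies. The same basis shows $B$ is generated as a $k$-algebra by the degree-one forms $x_i$ together with $b_2,\dots,b_r$, so $\edeg(B)$ is bounded by the largest degree of a basis element that is genuinely needed as a generator. Writing the Hilbert series of the $T$-free module $B$ as $\bigl(\sum_i t^{d_i}\bigr)/(1-t)^d$, the numerator $h_B(t)=\sum_i t^{d_i}$ has constant term $1$ and satisfies $h_B(1)=\deg(A)=\rme_0(A)$. I would then compare the Hilbert functions of $A$ and $B$ degree by degree to bound the $a$-invariant $a(B)=d_r-d$, show that any top-degree basis element is decomposable into products of lower-degree generators once $\rme_0(A)\ge 3$, and conclude that the algebra generators live in degrees $\le\rme_0(A)-2$; the cases $\rme_0(A)\le 2$, where $B$ is generated in degree one, are handled separately and produce the $\max\{1,\cdot\}$.

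The hard part is exactly this last Hilbert-function estimate. The numerator data alone does not bound the top degree $d_r$, since the $h$-vector of $B$ could in principle have arbitrarily large gaps. The essential leverage must come from the hypothesis that $A$ is \emph{standard} graded and sits birationally inside $B$: because $A_1$ generates $A$ and $A$ and $B$ have the same multiplicity, the Hilbert function of $B$ is forced to be full in low degrees, which is what caps the spread of the $h$-vector and, together with standard gradedness, forces the top-degree basis elements to be decomposable. Making this simultaneous control of $d_r$ and of decomposability precise is the step I expect to be the main obstacle.
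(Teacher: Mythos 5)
Your handling of parts (1) and (2) is essentially right and is the standard route: over a Noether normalization $T$ realizing $\deg(A)$, the Cohen--Macaulay ring $B$ is locally free by Auslander--Buchsbaum and hence free of rank $\rank_T(B)=\rank_T(A)=\deg(A)$ (Quillen--Suslin), and both bounds are read off from a basis containing $1$. Two small repairs: part (2) is not a graded statement, so to put $1$ into a $T$-basis you should observe that $1\notin\fkn B$ for every maximal ideal $\fkn$ of $T$ (else $B_{\fkn}=\fkn B_{\fkn}=0$), so $T\cdot 1$ is a locally split, hence split, free summand; and your claim $B_{0}=k$ is false in general (for $A=k[x,y]/(xy)$ one has $B=k[x]\times k[y]$ with $B_{0}=k^{2}$), though this does not affect the count. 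Note also that the survey states this theorem without proof, citing \cite{UV04}, so there is no in-paper argument to measure you against.

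The genuine gap is part (3), which you flag yourself, but the mechanism you propose cannot close it. A degree-by-degree comparison of Hilbert functions, even against a degree-$\rme_{0}(A)$ hypersurface sitting birationally inside $A$, only forces the top degree of a homogeneous $T$-basis of $B$ to be at most $\rme_{0}(A)-1$, never $\rme_{0}(A)-2$: already for $d=2$, $\rme_{0}(A)=3$ the basis-degree sequence $0,1,2$ passes every Hilbert-function inequality, yet the theorem asserts generation in degree $1$. So the ``fullness of the $h$-vector in low degrees'' is not the source of the bound. Moreover, your sketch uses perfectness of $k$ only to pass to $\cl{k}$, whereas it is needed in an essential way in the argument of \cite{UV04}: one chooses generic linear forms so that $A$ is finite and \emph{birational} over a hypersurface $R=k[z_{1},\dots,z_{d+1}]/(f)$ with $\deg f=\rme_{0}(A)$ and with $\Quot(A)$ \emph{separable} over $\Quot(T)$; nondegeneracy of the trace form then gives inclusions $R\subseteq B\subseteq \Hom_{T}(B,T)\subseteq \Hom_{T}(R,T)\cong f'(u)^{-1}R$ of graded lattices, which is what actually caps the basis degrees (equivalently bounds $a(B)$), and a further non-formal step is still required to show that a basis element in the top degree $\rme_{0}(A)-1$ is decomposable. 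Without the separable projection and this duality step, part (3) remains unproved as written.
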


\begin{Corollary}
Let $A$ be a reduced and equidimensional finitely generated $k$-algebra. Suppose that the integral closure $\cl{A}$ is Cohen-Macaulay. Then 
\[ \edim(\cl{A}) \leq \deg(A) + d-1. \]
If $k$ is perfect, $A$ is a standard graded $k$-algebra, and  $\rme_{0}(A) \geq 3$, then ${\ds \edeg(\cl{A}) \leq \rme_{0}(A) -2 }$.
\end{Corollary}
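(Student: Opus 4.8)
The plan is to obtain both inequalities as direct specializations of the immediately preceding Theorem \cite[Theorem 2.1]{UV04}, applied to the finite birational extension $A \subset B = \cl{A}$. The substantive content is already carried by that Theorem; the only work left is to verify that $\cl{A}$ meets its hypotheses, after which each part of the Corollary follows at once.

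First I would check the setup. Since $A$ is a reduced, equidimensional, finitely generated $k$-algebra, its integral closure $\cl{A}$ is again an affine $k$-algebra (as noted in the text preceding the Theorem), and, crucially, the extension $A \subseteq \cl{A}$ is \emph{module-finite}: this is the standard finiteness theorem for the integral closure of a reduced ring finitely generated over a field. The extension is \emph{birational} because passing to the integral closure does not change the total ring of quotients, so $A$ and $\cl{A}$ have the same total quotient ring. Combined with the hypothesis that $\cl{A}$ is Cohen-Macaulay, this places $B = \cl{A}$ exactly in the situation covered by the Theorem.

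For the embedding dimension bound, I would apply part (2) of the Theorem verbatim with $B = \cl{A}$, which yields
\[ \edim(\cl{A}) \leq \deg(A) + d - 1. \]
For the embedding degree bound, I would invoke the additional hypotheses that $k$ is perfect and $A$ is standard graded; then $\cl{A}$ carries an induced grading and $A \subseteq \cl{A}$ is an extension of graded rings, so part (3) of the Theorem gives $\edeg(\cl{A}) \leq \max\{1, \rme_0(A) - 2\}$. The extra assumption $\rme_0(A) \geq 3$ then forces $\rme_0(A) - 2 \geq 1$, so the maximum equals $\rme_0(A) - 2$, giving $\edeg(\cl{A}) \leq \rme_0(A) - 2$.

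The only point requiring care, and hence the main ``obstacle,'' is the justification that $A \subseteq \cl{A}$ is finite and birational with $\cl{A}$ an affine $k$-algebra; this rests on the finiteness of integral closure for reduced affine algebras over a field, together with the fact that in the graded case the integral closure of a standard graded reduced algebra is again naturally graded and the inclusion is homogeneous. Once these structural facts are in place, both assertions are immediate instances of the Theorem, with the clean numerical form in the graded case arising solely from the threshold $\rme_0(A) \geq 3$.
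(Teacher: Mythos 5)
Your proposal is correct and matches the paper's (implicit) argument: the Corollary is stated as an immediate consequence of \cite[Theorem 2.1]{UV04}, obtained by taking $B=\cl{A}$ and noting that $A\subseteq\cl{A}$ is a finite birational extension of affine $k$-algebras, exactly as you verify. Your extra care in checking the finiteness and birationality of the normalization, and in resolving the $\max\{1,\rme_0(A)-2\}$ via the hypothesis $\rme_0(A)\geq 3$, is precisely what the paper leaves to the reader.
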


For birational extensions, there are additional relationships between multiplicities when the base field has characteristic zero. The bounds are improved when additional information is known about the multiplicity of the base ring or the depth of the extension.  See \cite[Lemma 3.1, Theorem 3.2, Corollary 3.4]{UV04} for more general and detailed statements. Here we include the results regarding only the integral closure $\cl{A}$. 

\begin{Theorem}{\rm \cite[Theorem 3.2, Corollary 3.4]{UV04}}\label{embedding}
Let $k$ be a field of characteristic $0$. Let $A$ be a reduced and equidimensional standard graded $k$-algebra of dimension $d$. 
\begin{enumerate}[{\rm (1)}]
\item If ${\ds \depth_{A}( \cl{A}) \geq d-1}$, then ${\ds \nu_{A}(\cl{A}) \leq (\rme_{0}(A) -1)^2 +1}$
\item If ${\ds \depth_{A}( \cl{A}) \geq d-1}$, then ${\ds \edim( \cl{A} ) \leq (\rme_{0}(A) -1)^2 + d +1}$
\item If $d=3$ and ${\ds \rme_{0}(A) \geq 3}$, then ${\ds \edim(\cl{A}) \leq (\rme_{0}(A) -1)^2+2}$. 
\end{enumerate}
\end{Theorem}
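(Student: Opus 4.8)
The plan is to reduce to dimension one by a generic Noether normalization and to measure the failure of Cohen--Macaulayness of $\cl A$ through a single numerical invariant. Fix a Noether normalization $T=k[z_{1},\ldots,z_{d}]\subseteq A$ with the $z_{i}$ general linear forms (possible since $k$ is infinite), and regard $\cl A$ as a finite graded $T$--module. Because $A\subseteq \cl A$ is finite and birational, the generic rank is unchanged, so $\rme_{0}(\cl A)=\rme_{0}(A)$; and since $\sqrt{\fkm_{T}A}=\fkm_{A}$, the local cohomology over $T$ and over $A$ agree, giving $\depth_{T}(\cl A)=\depth_{A}(\cl A)\ge d-1$. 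As every generating set over $T$ is a fortiori one over $A$, it suffices to bound $\nu_{T}(\cl A)$, which is what I will do.

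Since $\cl A$ is almost Cohen--Macaulay over $T$ and the $z_{i}$ are general, $z_{1},\ldots,z_{d-1}$ is a regular sequence on $\cl A$. Put $\bar B=\cl A/(z_{1},\ldots,z_{d-1})\cl A$, a finite graded module over the one--dimensional polynomial ring $S=k[z_{d}]$. By Nakayama $\nu_{T}(\cl A)=\nu_{S}(\bar B)$, and reduction modulo a superficial regular sequence preserves multiplicity, so $\rme_{0}(\bar B)=\rme_{0}(A)$. Over the principal ideal domain $S$ one splits $\bar B\cong S^{\rme_{0}(A)}\oplus\tau$, where $\tau=\rmH^{0}_{\fkm}(\bar B)$ is the finite--length torsion submodule. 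Hence
\[ \nu_{A}(\cl A)\ \le\ \nu_{T}(\cl A)\ =\ \rme_{0}(A)+\nu_{S}(\tau)\ \le\ \rme_{0}(A)+\l(\tau), \]
and part (1) is reduced to the estimate $\l(\tau)\le(\rme_{0}(A)-1)(\rme_{0}(A)-2)$.

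This length estimate is the crux of the argument, and it is where the hypotheses are spent. The length $\l(\tau)$ measures exactly the deviation of $\cl A$ from freeness over $T$: it is the length of the one local cohomology module $\rmH^{d-1}_{\fkm}(\cl A)$ that can survive below the dimension, equivalently (up to sign) the first Hilbert coefficient of $\cl A$ as a $T$--module. To bound it quadratically in $\rme_{0}(A)$ I would combine two ingredients. First, the degrees in which $\cl A$ is generated over $T$ are bounded linearly in $\rme_{0}(A)$; in characteristic zero this follows from a Brian\c{c}on--Skoda estimate controlling the normalization in terms of the multiplicity, in the spirit of the $\edeg$--bound of the Cohen--Macaulay case \cite[Theorem 2.1]{UV04}. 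Second, within this bounded range of degrees the generic rank $\rme_{0}(A)$ constrains how many new generators each graded piece of $\tau$ can carry, via a one--dimensional Hilbert--function analysis of $\bar B$ analogous to Theorem~\ref{one-dim}. The product of a generation--degree bound and a per--degree count, each of order $\rme_{0}(A)$, then yields a quantity of order $(\rme_{0}(A)-1)(\rme_{0}(A)-2)$; the characteristic--zero and genericity assumptions enter precisely here, guaranteeing that the general linear sections stay reduced and that the $z_{i}$ behave superficially. I expect this combinatorial length bound to be the main obstacle, both in pinning down the exact constant and in justifying the Brian\c{c}on--Skoda input.

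Finally, parts (2) and (3) are consequences. For (2), the module--generator bound converts into an embedding--dimension bound by writing $\cl A=T[b_{2},\ldots,b_{\nu}]$ for $T$--module generators $1=b_{1},b_{2},\ldots,b_{\nu}$ and adjoining the $d$ variables of $T$, exactly as in the Cohen--Macaulay case \cite[Theorem 2.1]{UV04}; substituting the bound from (1) gives $\edim(\cl A)\le(\rme_{0}(A)-1)^{2}+d+1$. For (3) the depth hypothesis is automatic when $d=3$: the integral closure $\cl A$ is normal, hence satisfies Serre's condition $(\rmS_{2})$, so $\depth_{A}(\cl A)\ge\min\{2,d\}=2=d-1$ with no extra assumption. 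Running the same argument in dimension three, and using $\rme_{0}(A)\ge 3$ to discard the degenerate low--multiplicity cases, sharpens the additive constant from $d+1=4$ to $2$ and yields $\edim(\cl A)\le(\rme_{0}(A)-1)^{2}+2$.
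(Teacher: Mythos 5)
First, a point of reference: the survey states this theorem with only a citation to \cite{UV04} and gives no proof, so there is nothing in the paper itself to compare your argument against line by line; I am judging the proposal on its own terms.

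Your setup is sound and consistent with the arithmetic of the target bound: passing to a generic Noether normalization $T$, using $\depth_{T}(\cl{A})=\depth_{A}(\cl{A})\geq d-1$ to cut by a regular sequence of $d-1$ general linear forms, decomposing the resulting one-dimensional graded module over $k[z_{d}]$ as free of rank $\rme_{0}(A)$ plus torsion $\tau$, and noting that $\rme_{0}(A)+(\rme_{0}(A)-1)(\rme_{0}(A)-2)=(\rme_{0}(A)-1)^{2}+1$. The gap is that the inequality $\lambda(\tau)\leq(\rme_{0}(A)-1)(\rme_{0}(A)-2)$ --- which you correctly identify as the crux --- is not proved; it is essentially a restatement of the theorem in dimension one, and the two ``ingredients'' you invoke (a Brian\c{c}on--Skoda-type generation-degree bound and a per-degree count) are never made precise, nor is it shown why their combination yields exactly this constant. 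This cannot be dismissed as routine: a bound on the failure of freeness of a normalization purely in terms of the multiplicity is \emph{false} outside the graded setting (for instance $k[[t^{2},t^{2n+1}]]$ has multiplicity $2$ and arbitrarily large $\lambda(\cl{A}/A)$), so the argument must use the standard-graded structure, reducedness of generic hyperplane sections (Bertini, hence characteristic zero), and the specific relationship between $\tau$ and the first Hilbert coefficient of $\cl{A}$ in an essential way. The actual route in \cite{UV04} runs through an explicit bound of the form $\nu_{A}(\cl{A})\leq\rme_{0}(A)+\rme_{1}(\cl{A})$ for almost Cohen--Macaulay birational extensions together with a bound on $\rme_{1}(\cl{A})$ in terms of $\rme_{0}(A)$ (in the spirit of Theorem~\ref{one-dim}); your sketch points in this direction but does not supply either estimate.

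Two smaller issues: your identification of $\lambda(\tau)$ with ``the first Hilbert coefficient of $\cl{A}$ as a $T$-module'' is only correct if the free summand of $\bar{B}$ is generated in degree $0$, which you have not arranged; in general $\rme_{1}$ also records the generating degrees of the free part, so the equality should be an inequality and the bookkeeping redone. And in part (3), the observation that normality forces $\depth_{A}(\cl{A})\geq 2=d-1$ when $d=3$ is correct and is the right reason the depth hypothesis can be dropped, but the improvement of the additive constant from $d+1=4$ to $2$ is asserted with no argument at all (``using $\rme_{0}(A)\geq3$ to discard the degenerate cases'' is not a proof); note that your own presentation in part (2) already gives $(\rme_{0}(A)-1)^{2}+d$ rather than $(\rme_{0}(A)-1)^{2}+d+1$, so the bookkeeping of which module generators are needed as algebra generators deserves more care than it receives here.
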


If $A$ is not a standard graded ring, then the above results can be extended in some situations. For instance, let $A=k[x_1, \ldots,x_{d+1}]/(f)$ where $f$ is a (not necessarily homogeneous) square-free polynomial. Then bounds on the multiplicity of a birational extension locally at primes $\fkp$ of the polynomial ring that are analogous in form, but somewhat weaker, can be found (See \cite[Lemma 4.2]{UV04}). 

\bigskip

\section{Vasconcelos Rings and Modules}\label{Vrings}

In this section, we present one of Vasconcelos' most widely studied conjectures and related results.
Let $(R, \fkm)$ be a Noetherian local ring of dimension $d \geq 1$. Let $I$ be an $\fkm$-primary ideal and $M$ a finitely generated $R$-module of dimension $s \geq 1$. Recall from Definition~\ref{hcmod} that the Hilbert polynomial of $M$ relative to $I$ is 
\[ \rmP_{I, M}(n) = \sum_{i=0}^{s} (-1)^{i} \rme_{i}(I, M) {{n+s-i}\choose{s-i}}. \]
The coefficient $\rme_{1}(I, M)$ is called the {\em Chern coefficient} of $I$ relative to $M$. 

\medskip

A parameter ideal for $M$ is an ideal ${\ds Q=(x_{1}, \ldots, x_{s} ) \subseteq \fkm}$ in $R$ such that ${\ds \l(M/QM) < \infty}$. It is known that a parameter ideal $Q$ for $M$ satisfies $\rme_{1}(Q, M) \leq 0$ (\cite[Corollary 2.4]{Chern3}, \cite[Corollary 2.3]{Chern5}). 
If $M$ is Cohen-Macaulay, then $\rme_{1}(Q, M)=0$ for every parameter ideal $Q$ for $M$. This leads to the question of when the converse holds true so that  the negativity of $\rme_{1}(Q, M)$ can be used as an expression of the lack of Cohen-Macaulayness of $M$. Vasconcelos conjectured the following at the conference in Yokohama in March 2008.

\begin{Conjecture}\label{Ch1-C3-1}{\rm {\bf (Vasconcelos' Vanishing Conjecture \cite{Chern1})}  Let $(R, \fkm)$ be a Noetherian local ring of positive dimension. Suppose that $R$ is unmixed. Then ${\ds \rme_{1}(Q) =0}$ for a parameter ideal $Q$ if and only if $R$ is Cohen-Macaulay.   
}\end{Conjecture}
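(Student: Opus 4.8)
The forward implication is the one already recalled above: if $R$ is Cohen-Macaulay then a parameter ideal $Q$ is generated by a regular sequence, the associated graded ring $\G(Q)$ is a polynomial ring over $R/Q$, and $\rme_1(Q)=0$. All of the content lies in the converse, and the plan is to prove it by an induction on $d=\dim R$ in which the role of the hypotheses is made visible through local cohomology. First I would perform the standard reductions: replace $R$ by its completion $\widehat R$ (which is again unmixed of the same dimension and has $\rme_i(Q\widehat R)=\rme_i(Q)$) and, by the usual $R\rightsquigarrow R[X]_{\fkm R[X]}$ device, assume the residue field is infinite so that superficial elements are available. Two observations frame the induction. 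First, unmixedness together with $d\ge 1$ forces $\fkm\notin\Ass(R)$, hence $\depth R\ge 1$; so $Q$ may be taken to contain a nonzerodivisor. Second, the finite-length submodule $\rmH^0_\fkm(R)$ is invisible to $\rme_1$: passing to $R/\rmH^0_\fkm(R)$ changes the Hilbert polynomial only by the additive constant $\l(\rmH^0_\fkm(R))$, hence alters only $\rme_d$. The base case $d=1$ is immediate, since an unmixed one-dimensional ring has $\depth R\ge 1=\dim R$ and is therefore already Cohen-Macaulay.

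For the inductive step I would choose a superficial nonzerodivisor $x$ belonging to a minimal generating set of the system of parameters generating $Q$, and set $S=R/xR$, $\dim S=d-1$. For $d\ge 2$ superficiality gives $\l(R/Q^{n+1})-\l(R/Q^{n})=\l(S/(QS)^{n+1})$ for $n\gg 0$, whence $\rme_i(Q)=\rme_i(QS)$ for $0\le i\le d-1$; in particular $\rme_1(Q)=\rme_1(QS)$. The difficulty is now apparent. Although $R$ is complete and unmixed, and therefore $S$ is equidimensional of dimension $d-1$, cutting by $x$ may introduce new associated primes of $S$---in particular $\fkm$ itself, i.e. $\rmH^0_\fkm(S)\ne 0$, as well as embedded primes of intermediate dimension---so $S$ need not be unmixed and the inductive hypothesis does not apply to $S$ verbatim. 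These newly created lower-dimensional components are precisely the arithmetic shadow of the failure of $R$ to be Cohen-Macaulay, and controlling them is the heart of the matter.

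To organize this control I would aim to express the defect $-\rme_1(Q)\ge 0$ through the local cohomology of $R$. The model is the Buchsbaum case, where $\rme_1(Q)=-\sum_{i=1}^{d-1}\binom{d-2}{i-1}\l\big(\rmH^i_\fkm(R)\big)$, a manifestly nonpositive quantity that vanishes exactly when all intermediate cohomology vanishes. In general one does not have such a clean splitting, but the goal is to produce a formula
\[ -\rme_1(Q)=\sum_{i=1}^{d-1} c_i\,\l\big(\rmH^i_\fkm(R)\big)+\Delta(Q), \]
in which $\Delta(Q)$ collects the contribution of the lower-dimensional associated primes that appear after the hyperplane section. The crucial point is that unmixedness of $R$---which kills $\rmH^0_\fkm(R)$ and forbids associated primes of dimension below $d$---is exactly what forces the correction term $\Delta(Q)$ to be nonnegative, so that $\rme_1(Q)=0$ drives every summand, and hence every $\rmH^i_\fkm(R)$ for $1\le i\le d-1$, to zero; combined with $\depth R\ge 1$ this is the vanishing of all intermediate local cohomology, i.e. $R$ is Cohen-Macaulay.

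The hard part is establishing the nonnegativity of $\Delta(Q)$ in the generality of an arbitrary unmixed ring, with no Buchsbaum or generalized-Cohen-Macaulay hypothesis to make the cohomological contributions split. Concretely, one must compare $\l(R/Q^{n+1})$ with $\l(S/(QS)^{n+1})$ across the hyperplane section and bound the error produced by the embedded primes of $S$, showing that the non-unmixedness created by cutting can only decrease $\rme_1$ (equivalently, increase $-\rme_1$) and does so strictly unless $R$ was Cohen-Macaulay to begin with. I expect this bookkeeping---pinning down the sign of the contribution of the spurious low-dimensional components of $R/xR$, and verifying that unmixedness of $R$ is precisely the hypothesis that fixes that sign---to be the main obstacle, and the step where the genuine difficulty of the conjecture resides.
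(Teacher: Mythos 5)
Your reductions, your base case, and your diagnosis of the obstruction are all accurate, but the proposal stops exactly where the proof has to begin: you name the nonnegativity of the correction term $\Delta(Q)$ as "the step where the genuine difficulty of the conjecture resides" without supplying the idea that overcomes it. The hoped-for identity $-\rme_1(Q)=\sum_i c_i\,\l\bigl(H^i_{\fkm}(R)\bigr)+\Delta(Q)$ with $\Delta(Q)\geq 0$ is not established anywhere in your argument, and the paper's proof of Theorem~\ref{Ch5-3-1} does not proceed by producing such a formula; indeed you yourself observe that no clean cohomological splitting is available outside the Buchsbaum or generalized Cohen--Macaulay setting, so this is a genuine gap rather than a routine bookkeeping step.

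The missing idea is the following. After completing, one writes $R$ (or the module $M$) over a Gorenstein cover $S$ of the same dimension; unmixedness is used not merely to get $\depth\geq 1$ but to produce an embedding $0\rar M\rar S^n\rar C\rar 0$ into a free $S$-module. Cutting by a superficial nonzerodivisor $y$ then yields the four-term exact sequence $0\rar \Tor_1^S(S/(y),C)\rar M/yM\rar S^n/yS^n\rar C/yC\rar 0$, and the whole defect is concentrated in the finite-length module $T=\Tor_1^S(S/(y),C)$. In dimension $2$ one computes directly that $\rme_1(Q,M)=-\l(T)$, which is the quantitative statement your $\Delta(Q)$ was meant to encode. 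For $d\geq 3$ the induction is \emph{not} applied to the hyperplane section $M/yM$ (which, as you rightly note, need not be unmixed) but to $N=\mbox{Im}\bigl(M/yM\rar S^n/yS^n\bigr)$, which \emph{is} unmixed as a submodule of a free module, and satisfies $\rme_1(Q,M)=\rme_1(Q/(y),N)$ since it differs from $M/yM$ by the finite-length module $T$. The final step is a local cohomology argument showing that if $N$ were Cohen--Macaulay then $T\simeq H^0_{\fkm}(M/yM)$ would force $H^1_{\fkm}(M)=yH^1_{\fkm}(M)$, hence $T=0$ and $M$ Cohen--Macaulay, a contradiction. Without the embedding into $S^n$ and the replacement of $M/yM$ by its unmixed image $N$, the induction cannot be closed, and your proposal as written does not reach the conclusion.
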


In \cite{Chern1} and \cite{Chern2}, Vasconcelos and several authors found various classes of rings for which the conjecture was proven to be true. The following summarizes those results. 

\begin{Theorem}
Let $(R, \fkm)$ be a Noetherian local ring of dimension at least $2$. Suppose that $R$ satisfies one of the following conditions{\rm:}
\begin{enumerate}[{\rm (i)}]
\item  There is an embedding $0 \rar R \rar E \rar C \rar 0$, where $E$ is a finitely generated maximal Cohen-Macaulay $R$-module \cite[Theorem 3.1]{Chern1}{\rm;}
\item  The ring $R$ is an integral domain essentially of finite type over a field \cite[Theorem 3.2]{Chern1}{\rm;}
\item  The ring $R$ is an integral domain and a homomorphic image of a Cohen-Macaulay local ring \cite[Theorem 3.3]{Chern2}{\rm;}
\item  The ring $R$ is unmixed, equidimensional, and a homomorphic image of a Gorenstein local ring \cite[Corollary 3.7]{Chern2}{\rm;}
\item  The ring $R$ is a universally catenary integral domain containing a field \cite[Theorem 4.4]{Chern2}.
\end{enumerate}
Then $\rme_{1}(Q) =0$ for any parameter ideal $Q$ if and only if $R$ is Cohen-Macaulay.
\end{Theorem}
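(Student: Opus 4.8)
The plan is to prove the two implications separately, with the forward implication (Cohen-Macaulay $\Rightarrow$ vanishing) being the routine one already recorded above: if $R$ is Cohen-Macaulay, then a parameter ideal $Q$ is generated by a regular sequence, so $\G(Q)$ is a polynomial ring over $R/Q$ and $\rmH_Q(n) = \rme_0(Q)\binom{n+d}{d}$ has no lower-order terms, forcing $\rme_1(Q) = 0$ for every $Q$. Hence the whole content is the converse, and I would first reduce it to the complete case. Since $\rmH_Q(n)$ is unchanged under $\fkm$-adic completion, $\rme_1(Q,R) = \rme_1(Q\widehat R, \widehat R)$, and $R$ is Cohen-Macaulay if and only if $\widehat R$ is. A complete local ring is a homomorphic image of a regular (hence Gorenstein) local ring, so $\widehat R$ acquires a canonical module $\omega$, and the domain, universally-catenary, and essentially-of-finite-type hypotheses are precisely what guarantee that $\widehat R$ remains equidimensional and unmixed. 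Thus I may assume $R$ is complete and unmixed, with canonical module $\omega$, and that a single parameter ideal $Q$ with $\rme_1(Q)=0$ is given; the general bound $\rme_1(Q)\le 0$ cited above is then the tool that converts this equality into rigidity.

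The heart of the argument is condition (i), which I would treat first and directly. Given $0 \to R \to E \to C \to 0$ with $E$ maximal Cohen-Macaulay, the parameters $x_1,\dots,x_d$ form an $E$-regular sequence, so $\l(E/Q^{n+1}E) = \rme_0(Q,E)\binom{n+d}{d}$ and $\rme_1(Q,E)=0$. Tensoring the sequence with $R/Q^{n+1}$ and reading off lengths yields $\rmH_Q(n) = \l(E/Q^{n+1}E) - \l(C/Q^{n+1}C) + \l(\Tor_1^R(C,R/Q^{n+1}))$, so the degree $d-1$ coefficient of $\rmH_Q$ is a sign-definite combination of the multiplicity of $C$ and the leading term of the Tor module. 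The plan is to show that $\rme_1(Q)=0$ forces this combination to vanish, pinning down $\dim C$ and driving an induction on the dimension $d$: after passing to $R/(x_1)$, chosen superficial so that both the hypotheses and the vanishing descend, one applies the inductive hypothesis and then lifts Cohen-Macaulayness back through the regular element.

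For the remaining conditions (ii)--(v) I would exploit the canonical module obtained after completion rather than an ambient maximal Cohen-Macaulay module. The failure of $R$ to be Cohen-Macaulay is measured by the intermediate local cohomologies $\rmH^i_\fkm(R)$ for $0 < i < d$, and, using local duality against $\omega$ together with the $S_2$-ification $\widetilde R = \Hom_R(\omega,\omega)$, one obtains an exact sequence $0 \to R \to \widetilde R \to C \to 0$ in which unmixedness forces $\dim C \le d-2$. Running the same Hilbert-function bookkeeping relative to $Q$ against $\widetilde R$ expresses $\rme_1(Q)$ as a sum of contributions carrying a fixed sign coming from the $\rmH^i_\fkm(R)$; the hypothesis $\rme_1(Q)=0$ then forces each such contribution, and hence each intermediate cohomology, to vanish, which is precisely Cohen-Macaulayness. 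The role of hypotheses (ii)--(v) is only to guarantee that $\widehat R$ is equidimensional and unmixed, so that this duality machinery applies.

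The main obstacle is twofold. First, the additivity of $\rme_1$ along these short exact sequences is not exact: the $\Tor_1^R(C,R/Q^{n+1})$ terms must be shown to contribute only in degrees below $d-1$, which requires genuine control of $\dim C$ and of the growth of these Tor modules, and it is here that $(\rmS_{2})$-type information, ruling out the codimension-one part of $C$, is indispensable. Second, expressing $\rme_1(Q)$ as a sign-definite sum over the intermediate local cohomologies is clean only when these have finite length (the generalized Cohen-Macaulay case); in general one must work through the $S_2$-ification and the canonical module to extract the needed inequality, and verifying that the completion genuinely inherits unmixedness and equidimensionality from each of the five hypotheses is the technical crux that makes the uniform statement possible.
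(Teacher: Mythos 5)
The survey does not actually prove this statement; it is a list of the cases in which the Vanishing Conjecture was known before it was settled, with citations to the original papers. The closest proof in the text is that of Theorem~\ref{Ch5-3-1}, which subsumes all five cases: complete, embed the unmixed module into $S^n$ for a Gorenstein cover $S$ (i.e.\ reduce everything to your case (i)), cut by a superficial element, and induct on dimension. Your outline captures this overall strategy, and your observation that hypotheses (ii)--(v) serve only to guarantee that the completion stays unmixed and admits such an embedding is exactly right. But two steps of your plan, as written, would fail.

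First, the inductive descent. You say you pass to $R/(x_1)$ ``chosen superficial so that both the hypotheses and the vanishing descend.'' Unmixedness does \emph{not} descend to $R/(x_1)$, and this is precisely the difficulty the actual proof is built around: from $0\rar R\rar E\rar C\rar 0$ one gets $0\rar T\rar R/(x_1)\rar N\rar 0$ with $T=\Tor_1(R/(x_1),C)$ and $N$ the image of $R/(x_1)$ in $E/x_1E$; it is $N$, not $R/(x_1)$, that is unmixed and to which the induction applies, and one checks $\rme_1(Q/(x_1),R/(x_1))=\rme_1(Q/(x_1),N)$. Your closing move, ``apply the inductive hypothesis and then lift Cohen-Macaulayness back through the regular element,'' then needs $N$ Cohen-Macaulay to force $R/(x_1)$ Cohen-Macaulay, i.e.\ $T=0$. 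That is not formal: in the paper's argument it is extracted from the long exact sequence of local cohomology, the finite generation of $H^1_{\fkm}(R)$ (which itself needs $\dim\ge 3$ in the inductive step and a separate Snake-Lemma computation giving $\rme_1=-\l(T)$ in dimension $2$), and Nakayama. None of this is in your sketch, and without it the induction does not close.

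Second, your plan for (ii)--(v) rests on expressing $\rme_1(Q)$ as ``a sum of contributions carrying a fixed sign coming from the $H^i_{\fkm}(R)$.'' Such a formula (cf.\ Theorem~\ref{Ch5-5-4} and Theorem~\ref{Ch5-4-2}) is available only when the intermediate local cohomologies have finite length, i.e.\ in the generalized Cohen-Macaulay case; a general unmixed non-Cohen-Macaulay ring satisfies no such identity. You flag this yourself as an obstacle, but the resolution is not a refinement of that formula --- it is to abandon it and route everything through the embedding into a maximal Cohen-Macaulay module as in case (i). So the correct reading of your proposal is that case (i) is the whole proof and (ii)--(v) reduce to it; the $S_2$-ification $\Hom(\omega,\omega)$ is only $(\rmS_2)$, not maximal Cohen-Macaulay, so it cannot by itself play the role of $E$.
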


Ghezzi, Goto, Hong, Ozeki, Phuong, and Vasconcelos settled the Conjecture~\ref{Ch1-C3-1} affirmatively and then extended the result to more general case of modules.

\begin{Theorem}\label{Ch3-2-1}{\rm \cite[Theorem 2.1]{Chern3}}
Let $(R, \fkm)$ be a Noetherian local ring of positive dimension. Let $Q$ be a parameter ideal. Then the following are equivalent\,{\rm:}
\begin{enumerate}[{\rm (i)}]
\item $R$ is Cohen-Macaulay\,{\rm ;}
\item $R$ is unmixed and $\rme_{1}(Q) =0$\,{\rm ;}
\item $R$ is unmixed and $\rme_{1}(Q) \geq 0$.
\end{enumerate}
\end{Theorem}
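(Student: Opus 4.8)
The plan is to dispatch the two easy implications first and then concentrate all the work on (iii) $\Rightarrow$ (i). For (i) $\Rightarrow$ (ii), a Cohen-Macaulay ring is unmixed and, as recalled just before Conjecture~\ref{Ch1-C3-1}, satisfies $\rme_{1}(Q)=0$ for every parameter ideal. The implication (ii) $\Rightarrow$ (iii) is trivial. For (iii) $\Rightarrow$ (i), I would first invoke the inequality $\rme_{1}(Q)\le 0$ valid for every parameter ideal of a Noetherian local ring (the result cited from \cite{Chern3, Chern5}); combined with (iii) this forces $\rme_{1}(Q)=0$, so the entire content reduces to the single implication: $R$ unmixed and $\rme_{1}(Q)=0$ imply $R$ is Cohen-Macaulay.

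To prove that implication, I would make the standard harmless reductions. Passing to the $\fkm$-adic completion preserves unmixedness, the value of $\rme_{1}(Q)$, and Cohen-Macaulayness, and a faithfully flat base change $R\to R[X]_{\fkm R[X]}$ makes the residue field infinite without disturbing any of these data; so I may assume $R$ is complete with infinite residue field, in particular that $R$ admits a canonical module and that local duality is available. The argument then proceeds by induction on $d=\dim R$. When $d=1$, unmixedness already forces $\depth R\ge 1=d$, so $R$ is Cohen-Macaulay with no use of the hypothesis on $\rme_{1}$; this is the base case. For $d\ge 2$, unmixedness gives $\depth R\ge 1$, so I can choose a superficial element $x\in Q$ that is a nonzerodivisor; the usual superficiality computation shows that $\rme_{0}$ and $\rme_{1}$ are unchanged on passing to $\bar R=R/xR$, so that $\rme_{1}(\bar Q)=0$ in dimension $d-1$.

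The main obstacle is that $\bar R=R/xR$ need not be unmixed: cutting by a hyperplane can create new lower-dimensional associated primes, so the inductive hypothesis does not apply to $\bar R$ directly, and this is precisely where the depth of the theorem lies. The plan to overcome it is to translate the vanishing of $\rme_{1}(Q)$ into the vanishing of the lower local cohomology modules $H^{i}_{\fkm}(R)$ for $0<i<d$. When $R$ is generalized Cohen-Macaulay, that is, when these modules have finite length, one has an exact expression of the form $\rme_{1}(Q)=-\sum_{i=1}^{d-1}\binom{d-2}{i-1}\,\l\!\big(H^{i}_{\fkm}(R)\big)$ for suitably general $Q$; since unmixedness already gives $H^{0}_{\fkm}(R)=0$, the equality $\rme_{1}(Q)=0$ then forces $H^{i}_{\fkm}(R)=0$ for all $i<d$, i.e. $R$ is Cohen-Macaulay. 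The hard part of the argument is therefore to reduce the general unmixed case to this cohomological computation: one must control, through the induction on $d$ and the choice of $x$, the failure of the generalized Cohen-Macaulay property (the primes where the local cohomology is not of finite length), using the canonical module of the complete ring and the fact that unmixedness persists on the top-dimensional part. I expect this control of the associated primes of $R/xR$ and the passage to the finite-length situation to be the crux; the remaining steps (superficiality, completion, and the base case) are routine.
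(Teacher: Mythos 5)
Your handling of the easy implications and the standard reductions (completion, infinite residue field, the base case $d=1$, choice of a superficial nonzerodivisor preserving $\rme_{1}$) is fine, and you correctly locate the obstruction: $R/xR$ need not be unmixed, so the induction cannot be applied to it directly. The problem is that your proposed resolution --- show $R$ is generalized Cohen-Macaulay and then read off the vanishing of the $H^{i}_{\fkm}(R)$ from the formula $\rme_{1}(Q)=-\sum_{i=1}^{d-1}\binom{d-2}{i-1}\,\l\big(H^{i}_{\fkm}(R)\big)$ --- is left entirely unexecuted, and it is essentially the whole content of the theorem. Nothing in your outline produces the finiteness of the modules $H^{i}_{\fkm}(R)$ for $i<d$; establishing that an unmixed ring with $\rme_{1}(Q)=0$ is generalized Cohen-Macaulay is not easier than establishing that it is Cohen-Macaulay. (Moreover, the exact formula you quote holds for standard parameter ideals of a generalized Cohen-Macaulay module; for an arbitrary parameter ideal $Q$, Theorem~\ref{Ch5-4-2} only gives the inequality $-\sum_{i=1}^{d-1}\binom{d-2}{i-1}\l(H^{i}_{\fkm}(R))\le\rme_{1}(Q)\le 0$, so even granting generalized Cohen-Macaulayness a further argument is needed.) So there is a genuine gap at the crux.

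The paper's proof (given for the module version, Theorem~\ref{Ch5-3-1}, which contains the ring case) gets around this by a different device, not by passing through the generalized Cohen-Macaulay property. Working over a Gorenstein local ring $S$ of dimension $d$ mapping onto the complete ring, unmixedness yields an embedding $0\rar M\rar S^{n}\rar C\rar 0$. Cutting by a superficial regular element $y$ produces $0\rar T\rar M/yM\rar N\rar 0$ with $T=\Tor_{1}^{S}(S/(y),C)$ of finite length and $N\subseteq S^{n}/yS^{n}$ unmixed of dimension $d-1$; the induction is then run on $N$ rather than on $M/yM$, since $\rme_{1}(\fkq/(y),M/yM)=\rme_{1}(\fkq/(y),N)$. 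If $N$ were Cohen-Macaulay, the local cohomology sequence gives $T\simeq H^{0}_{\fkn}(M/yM)$ and $H^{1}_{\fkn}(M/yM)=0$, hence $H^{1}_{\fkn}(M)=yH^{1}_{\fkn}(M)$; the finite generation of $H^{1}_{\fkn}(M)$ for unmixed $M$ (a consequence of the embedding into $S^{n}$) and Nakayama force $T=0$ and $M$ Cohen-Macaulay, a contradiction, while the base case $d=2$ gives $\rme_{1}=-\l(T)<0$ directly. This construction of an unmixed replacement $N$ for the hyperplane section, together with the finite generation of $H^{1}_{\fkm}$, is the missing idea in your proposal.
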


\begin{Theorem}\label{Ch5-3-1}{\rm \cite[Theorem 3.1]{Chern5}}
Let $(R, \fkm)$ be a Noetherian local ring and $M$ a finitely generated $R$-module of dimension at least $2$. Let $Q$ be a parameter ideal for $M$. Suppose that $M$ is unmixed. Then $M$ is a Cohen-Macaulay $R$-module if and only if ${\ds \rme_{1}(Q, M) =0}$.
\end{Theorem}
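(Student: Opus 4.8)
The plan is to reduce the module case to the ring case already settled in Theorem~\ref{Ch3-2-1}, and then to leverage the characterization of Cohen-Macaulayness through the non-negativity of the Chern coefficient. The forward implication is routine: if $M$ is Cohen-Macaulay, then for any parameter ideal $Q$ the associated graded module has no higher Hilbert coefficients beyond the multiplicity, so $\rme_{1}(Q,M)=0$. The substance is the converse, and here I would exploit the hypothesis that $M$ is \emph{unmixed}, meaning $\dim(R/\fkp)=\dim(M)$ for every associated prime $\fkp$ of $M$.

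The first step I would take is to pass to the idealization (trivial extension) $A = R \ltimes M$, or alternatively to replace $R$ by $R/\ann(M)$ so that $M$ becomes a faithful module over a ring whose dimension equals $\dim(M)$. Working over $\bar{R}=R/\ann(M)$, the unmixedness of $M$ should translate into a statement about the associated primes of $\bar{R}$ (the minimal primes all have the same dimension and there are no embedded components intruding on the Hilbert-coefficient computation). The goal of this reduction is to arrange a situation where the vanishing $\rme_{1}(Q,M)=0$ can be compared directly against the ring-theoretic criterion of Theorem~\ref{Ch3-2-1}. The key point to establish is that $\rme_{1}(Q,M)$ computed for the module matches, up to the relevant bookkeeping, the Chern coefficient of the parameter ideal in the reduced ambient ring, so that $\rme_{1}(Q,M)=0$ forces Cohen-Macaulayness on the ring side and hence on $M$.

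The main technical device I expect to need is a short exact sequence relating $M$ to a maximal Cohen-Macaulay module together with a cokernel whose dimension is strictly smaller, in the spirit of condition (i) of the earlier theorem; applying $\rme_{1}(Q,-)$ to such a sequence and using the additivity of Hilbert coefficients along exact sequences (together with the fact that lower-dimensional pieces contribute only in lower-degree coefficients) should isolate the contribution that detects the failure of Cohen-Macaulayness. Since a parameter ideal $Q$ for $M$ always satisfies $\rme_{1}(Q,M)\le 0$ by the cited corollaries, the hypothesis $\rme_{1}(Q,M)=0$ is the extremal case, and the strategy is to show that equality propagates the Cohen-Macaulay property through the sequence.

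The hard part will be controlling the embedded and lower-dimensional behavior of $M$ so that the unmixedness hypothesis genuinely rules out the pathological examples where $\rme_{1}$ vanishes without Cohen-Macaulayness. In particular, the delicate step is verifying that after reducing to the faithful situation over $\bar{R}$, the unmixedness of $M$ is exactly what is needed to invoke Theorem~\ref{Ch3-2-1}, since that theorem requires the \emph{ring} to be unmixed, and the correspondence between module-unmixedness and ring-unmixedness under the reduction must be checked carefully rather than assumed. Handling the case $\dim(M)=2$ separately may also be necessary, as the hypothesis $\dim(M)\ge 2$ suggests the argument degenerates in dimension one where additional coefficients enter.
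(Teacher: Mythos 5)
Your forward direction is fine, but the core of your converse argument has a genuine gap. The proposed reduction to the ring case of Theorem~\ref{Ch3-2-1} via $\bar R = R/\ann(M)$ cannot work: even if one could show $\bar R$ is Cohen-Macaulay, this says nothing about the module $M$ (any non-Cohen-Macaulay module over a Cohen-Macaulay ring defeats the implication you need), and in the other direction unmixedness of $M$ does not give unmixedness of $\bar R$, since $\Ass(\bar R)$ need not be contained in $\Ass(M)$. The idealization $R \ltimes M$ fares no better, because its Chern coefficient with respect to a parameter ideal mixes $\rme_1(Q,R)$ with $\rme_1(Q,M)$, and the theorem places no hypothesis on $R$ that would let you separate the two. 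A further problem is your appeal to ``additivity of Hilbert coefficients along exact sequences'' with the claim that lower-dimensional pieces contribute only to lower-degree coefficients: a component of dimension $d-1$ does contribute to $\rme_1$ of a $d$-dimensional module, which is precisely why $\rme_1$ is able to detect non-Cohen-Macaulayness at all.

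The paper's proof goes in a different direction and never passes through the ring-theoretic Theorem~\ref{Ch3-2-1}. After completing $R$ and choosing a Gorenstein local ring $S$ of dimension $d=\dim(M)$ mapping onto $R$ with $\fkq R = Q$ and $\rme_1(Q,M)=\rme_1(\fkq,M)$, unmixedness is used exactly once and decisively: it yields an embedding $0 \rar M \rar S^{n} \rar C \rar 0$ of $M$ into a free $S$-module (this is the correct version of the exact sequence you gesture at, with the maximal Cohen-Macaulay module in the middle rather than on the left). One then cuts by a superficial element $y$ regular on $M$ and sets $T=\Tor_1^{S}(S/(y),C)$, obtaining $0 \rar T \rar M/yM \rar N \rar 0$ with $N \subseteq S^{n}/yS^{n}$. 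The argument is an induction on $d$: for $d=2$ a Snake Lemma computation gives $\rme_1(\fkq,M) = -\l(T)<0$, and for $d\ge 3$ one shows via the long exact sequence of local cohomology that if $N$ were Cohen-Macaulay then $T=0$ and $M$ itself would be Cohen-Macaulay, so the induction hypothesis applies to the unmixed module $N$ of dimension $d-1$. Your proposal contains none of this machinery (superficial elements, the Tor obstruction $T$, the local cohomology computation), and the reduction it relies on instead is the step that fails.
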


\begin{proof} It is enough to prove that if $M$ is not Cohen-Macaulay, then ${\ds \rme_{1}(Q, M) < 0}$. We may assume that $R$ is complete with an infinite residue field and $\dim(R) = \dim(M) =d$. Let $S$ be a Gorenstein local ring of dimension $d$ with a surjective homomorphism $\varphi: S \rar R$. There exists a parameter ideal $\fkq$ of $S$ such that ${\ds \fkq R = Q}$ and ${\ds \rme_{1}(Q, M) = \rme_{1}(\fkq, M) }$ \cite[Lemma 3.1]{Chern2}. Since $M$ is unmixed, by \cite[Proposition 2.5]{Chern5}, there exists an exact sequence ${\ds 0 \rar M \rar S^{n} \rar C \rar 0}$ of $S$-modules.  Let $y$ be a superficial element for $\fkq$ with respect to $M$ such that $y$ is part of a minimal generating set of $\fkq$. We may assume that $y$ is a regular element on $M$. Then we obtain the following exact sequence of $S$-modules. 
\[ 0 \lar \Tor_{1}^{S} (S/(y), C) \lar M/yM \stackrel{\zeta}{\lar}  S^{n}/yS^{n} \lar C/yC \lar 0. \]
Let ${\ds T=  \Tor_{1}^{S} (S/(y), C)}$, ${\ds \cl{M}=M/yM}$, and ${\ds N= \mbox{Im}(\zeta)}$ and consider the short exact sequence:
\begin{equation}\label{ses1}\tag{$\mho$}
 0 \lar T \lar \cl{M} \lar N \lar 0.
 \end{equation}
We use induction on $d$ to prove that if $M$ is not Cohen-Macaulay, then $\rme_{1}(\fkq, M) <0$. Let $d=2$ and  $\fkq=(y, z)$. By applying the Snake Lemma to the following
\[\begin{CD}
0 @>>> T \cap z^{n}  \cl{M} @>>> z^{n}\cl{M} @>>> z^{n} N @>>> 0 \\ 
&& @VVV  @VVV  @VVV   \\
0 @>>> T  @>>> \cl{M} @>>> N @>>> 0,
\end{CD} \] we obtain that, for sufficiently large $n$, 
\[ \l( \cl{M}/z^{n} \cl{M} ) = \l (T)  + \l (N/ z^{n} N). \]
By computing the Hilbert polynomials, we have
\[ \rme_{1}(\fkq, M) = \rme_{1}( \fkq/(y), \cl{M} ) = - \l (T) < 0. \]
Suppose that $d \geq 3$. The exact sequence $(\mho)$ induces 
\[ \rme_{1}(\fkq, M) = \rme_{1}( \fkq/(y), \cl{M} ) = \rme_{1}( \fkq/(y), N). \]
Note that $N$ is an unmixed $S/(y)$-module and ${\ds \dim(S/(y)) = d-1}$. By an induction argument, it is enough to show that $N$ is not Cohen-Macaulay. 

\medskip

\noindent Suppose that $N$ is Cohen-Macaulay. Let $\fkn$ denote the maximal ideal of $S/(y)S$.  From  the exact sequence $(\mho)$, we obtain the following long exact sequence:
\[ 0 \rar  H^{0}_{\fkn}(T) \rar H^{0}_{\fkn}(\cl{M}) \rar H^{0}_{\fkn}(N) \rar H^{1}_{\fkn}(T) \rar H^{1}_{\fkn}(\cl{M}) \rar H^{1}_{\fkn}(N). \]
Since $N$ is Cohen-Macaulay and $\dim(N)= d-1 \geq 2$, we have ${\ds H^{0}_{\fkn}(N) = 0 = H^{1}_{\fkn}(N)}$. Also, since $T$ is a torsion module, we have ${\ds T=  H^{0}_{\fkn}(T)}$ and ${\ds  H^{1}_{\fkn}(T) =0 }$. Therefore,
\[ T \simeq H^{0}_{\fkn}(\cl{M}), \quad \mbox{and} \quad H^{1}_{\fkn}(\cl{M}) =0. \]
From the exact sequence ${\ds 0 \rar M \stackrel{\cdot y}{\lar} M \rar \cl{M} \rar 0}$, we obtain 
\[ 0 \lar T \simeq H^{0}_{\fkn}(\cl{M}) \lar H^{1}_{\fkn}(M) \stackrel{\cdot y}{\lar}  H^{1}_{\fkn}(M)  \lar H^{1}_{\fkn}(\cl{M}) =0. \]
Since ${\ds H^{1}_{\fkn}(M)}$ is finitely generated \cite[Corollary 2.6]{Chern5} and ${\ds H^{1}_{\fkn}(M) = y H^{1}_{\fkn}(M)}$, we have ${\ds H^{1}_{\fkn}(M) =0}$. Then $T=0$. Thus, the exact sequence ($\mho$) induces $\cl{M} \simeq N$. Therefore, ${\ds \cl{M}=M/yM}$ is Cohen-Macaulay. Since $y$ is regular on $M$, $M$ is Cohen-Macaulay. This is a contradiction. 
\end{proof}

In honor of Vasconcelos' Vanishing Conjecture, which paves a new way to understand a ring through Hilbert coefficients, the first five authors of \cite{Chern3} introduced a new terminology called Vasconcelos rings and modules.

\begin{Definition}\label{Vasconcelos}{\rm 
Let $(R, \fkm)$ be a Noetherian local ring. A finitely generated $R$-module $M$ is called a {\em Vasconcelos module} if either $\dim_{R}(M)=0$ or ${\ds \rme_{1}(Q, M) =0}$ for some parameter ideal $Q$ for $M$. If $R$ itself is a Vasconcelos module, then it is called a Vasconcelos ring.
}\end{Definition}

In particular, a Cohen-Macaulay module is a Vasconcelos module. Moreover, Theorem~\ref{Ch5-3-1} proves that an unmixed Vasconcelos module is a Cohen-Macaulay module. There exists a non-Cohen-Macaulay Vasconcelos ring given in  \cite[Example 3.6]{Chern3}. Though the definition of a Vasconcelos module requires only a vanishing of the Chern coefficient of {\em some} parameter ideal, it turns out a Vasconcelos module guarantees a vanishing of the Chern coefficient of {\em every} parameter ideal.

\begin{Proposition}\label{Ch5-3-7}{\rm (\cite[Theorem 3.4]{Chern3}, \cite[Theorem 3.7]{Chern5}) }
Let $(R, \fkm)$ be a Noetherian local ring and $M$ a finitely generated $R$-module of dimension at least $2$. Then $M$ is a Vasconcelos module if and only if $\rme_{1}(Q, M)=0$ for every parameter ideal $Q$ for $M$.
\end{Proposition}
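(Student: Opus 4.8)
The plan is to prove the nontrivial direction: if $M$ is a Vasconcelos module of dimension $s \geq 2$, meaning $\rme_{1}(Q_{0}, M)=0$ for \emph{some} parameter ideal $Q_{0}$, then $\rme_{1}(Q, M)=0$ for \emph{every} parameter ideal $Q$. The reverse direction is immediate from the definition. The natural strategy is to show that the vanishing of $\rme_{1}$ forces $M$ to be Cohen-Macaulay, because once $M$ is Cohen-Macaulay we know $\rme_{1}(Q, M)=0$ for all parameter ideals $Q$. This reduces the whole problem to a single implication: $\rme_{1}(Q_{0}, M)=0$ for one parameter ideal implies $M$ is Cohen-Macaulay.

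The obstacle is that Theorem~\ref{Ch5-3-1} delivers exactly this implication but only under the hypothesis that $M$ is \emph{unmixed}, whereas Proposition~\ref{Ch5-3-7} makes no unmixedness assumption. So the core of the proof must be a reduction to the unmixed case. First I would pass to the completion $\widehat{R}$ and replace $M$ by $\widehat{M}$, which changes neither the Hilbert coefficients nor the Cohen-Macaulay property, so we may assume $R$ is complete and hence a homomorphic image of a Gorenstein (or regular) local ring. Next I would separate out the unmixed part of $M$: let $U = H^{0}_{\fkm}(M)$ be the submodule of finite length, or more generally the submodule supported in dimension $< s$, and consider the exact sequence $0 \lar W \lar M \lar \overline{M} \lar 0$ where $\overline{M}$ is the unmixed quotient and $W$ collects the lower-dimensional associated primes. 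Because $W$ has dimension strictly less than $s$, its contribution to the degree-$s$ Hilbert polynomial is confined to the lower coefficients, and a standard computation using the additivity of Hilbert polynomials along short exact sequences gives a formula relating $\rme_{1}(Q, M)$, $\rme_{1}(Q, \overline{M})$, and the lengths governing $W$.

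The key step is then to show that this correction term has a definite sign, so that $\rme_{1}(Q, M) \leq \rme_{1}(Q, \overline{M}) \leq 0$, the last inequality coming from the cited fact that parameter ideals satisfy $\rme_{1}(Q, \overline{M}) \leq 0$ (\cite[Corollary 2.4]{Chern3}, \cite[Corollary 2.3]{Chern5}). With the one vanishing hypothesis $\rme_{1}(Q_{0}, M)=0$ in hand, this sandwich forces both $\rme_{1}(Q_{0}, \overline{M})=0$ and the correction term to vanish. Applying Theorem~\ref{Ch5-3-1} to the unmixed module $\overline{M}$ then shows $\overline{M}$ is Cohen-Macaulay, while vanishing of the correction term should force $W=0$, whence $M=\overline{M}$ is Cohen-Macaulay. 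Finally, once $M$ is Cohen-Macaulay, $\rme_{1}(Q, M)=0$ for every $Q$, completing the argument.

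The main obstacle I anticipate is controlling the correction term uniformly and proving it has the right sign independent of the choice of $Q$; in particular, one must ensure that the lower-dimensional submodule $W$ truly cannot be absorbed without raising $\rme_{1}$, and that its vanishing is genuinely forced rather than merely its degree-$s$ contribution. This likely requires a careful analysis of superficial elements for $Q$ with respect to the filtration by associated primes, together with local cohomology estimates of the type $\Tor$ and $H^{i}_{\fkm}$ appearing in the proof of Theorem~\ref{Ch5-3-1}. An alternative and perhaps cleaner route, avoiding the explicit correction term, is to prove directly that $\rme_{1}(Q, M)$ is \emph{independent} of the parameter ideal $Q$ whenever $M$ is a Vasconcelos module, by comparing two parameter ideals through a common refinement and invoking the fact that any two parameter ideals can be linked via elementary moves under which $\rme_{1}$ is monotone; I expect the sign-control of these elementary moves to be the genuinely delicate point either way.
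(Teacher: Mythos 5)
Your overall architecture---complete $R$, split off the lower-dimensional part of $M$, control the sign of the resulting correction to $\rme_{1}$, and invoke Theorem~\ref{Ch5-3-1} for the unmixed quotient---is the right one, but your final step contains a genuine error: the vanishing of $\rme_{1}(Q_{0},M)$ does \emph{not} force $W=0$, and $M$ need not be Cohen-Macaulay. The survey itself records, just before this Proposition, that non-Cohen-Macaulay Vasconcelos rings exist (\cite[Example 3.6]{Chern3}), so any argument whose conclusion is ``$M$ is Cohen-Macaulay'' cannot be correct. The precise accounting is as follows. After completing, let $U=U_{M}(0)$ be the unmixed component of $(0)$ in $M$, i.e.\ the largest submodule of dimension $<s$, so that $\overline{M}=M/U$ is unmixed of dimension $s\geq 2$. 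Since $\{U\cap Q^{n}M\}$ is a $Q$-good filtration of $U$ (Artin--Rees), comparing Hilbert polynomials in $0 \lar U \lar M \lar \overline{M} \lar 0$ gives
\[ \rme_{1}(Q,M)=\rme_{1}(Q,\overline{M})-\rme_{0}(Q,U) \ \mbox{ if } \dim U=s-1, \qquad \rme_{1}(Q,M)=\rme_{1}(Q,\overline{M}) \ \mbox{ if } \dim U\leq s-2. \]
Hence $\rme_{1}(Q_{0},M)=0$ forces $\dim U\leq s-2$ (otherwise $\rme_{1}(Q_{0},M)\leq -\rme_{0}(Q_{0},U)<0$, using $\rme_{1}(Q_{0},\overline{M})\leq 0$) and $\rme_{1}(Q_{0},\overline{M})=0$, whence $\overline{M}$ is Cohen-Macaulay by Theorem~\ref{Ch5-3-1}; but $U$ itself survives, and in general $M\neq\overline{M}$.

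The repair is that you do not need $W=0$: the condition ``$\dim U_{M}(0)\leq s-2$ and $M/U_{M}(0)$ is Cohen-Macaulay'' makes no reference to $Q$, and by the displayed formula it implies $\rme_{1}(Q,M)=\rme_{1}(Q,\overline{M})=0$ for \emph{every} parameter ideal $Q$, which is exactly the nontrivial direction of the Proposition. So the correct target of the argument is the equivalence of ``Vasconcelos'' with this structural condition, not with Cohen-Macaulayness; this is how the cited sources proceed. Your proposed alternative---comparing two parameter ideals directly through ``elementary moves'' along which $\rme_{1}$ is monotone---is not available in this generality and is not needed once the $Q$-independent characterization is in hand.
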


Many fundamental properties of a Vasconcelos ring regarding flat base changes, Rees algebras, associated graded rings, and sequentially Cohen-Macaulayness  can be found in \cite[Section 3]{Chern3} and can be extended to a Vasconcelos module. The following is a list of a few notable properties of a Vasconcelos ring. 

\begin{Proposition} Let $(R, \fkm)$ be a Vasconcelos ring of dimension $d \geq 1$.
\begin{enumerate}[{\rm (1)}]
\item If $x$ is a regular element in $R$, then $R/xR$ is a Vasconcelos ring \cite[Corollary 3.5]{Chern3}.
\item Let ${\ds S=R[X_{1}, \ldots, X_{n}]}$ be a polynomial ring. Then $S_{\fkP}$ is a Vasconcelos ring for every prime ideal $\fkP$ containing $\fkm S$ \cite[Corollary 3.9]{Chern3}. 
\item Suppose that $R$ is a homomorphic image of a Cohen-Macaulay ring and that ${\ds \dim(R/\fkq) =d}$ for every minimal prime ideal $\fkq$ of $R$. Then ${\ds R_{\fkp}}$ is a Vasconcelos ring for every prime ideal $\fkp$ of $R$ \cite[Proposition 3.11]{Chern3}. 
\end{enumerate}
\end{Proposition}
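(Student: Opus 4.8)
The plan is to verify each part by exhibiting, in the target ring, a single parameter ideal whose Chern coefficient vanishes, exploiting Proposition~\ref{Ch5-3-7}, which (in dimension at least two) upgrades the Vasconcelos hypothesis on $R$ to the statement that $\rme_1(Q)=0$ for \emph{every} parameter ideal $Q$ of $R$. Two computational engines drive the argument: the behaviour of $\rme_1$ under killing a superficial regular element, and its behaviour under a flat local base change with Cohen--Macaulay closed fibre. Throughout I would first reduce to the case of an infinite residue field (and, for (3), to a complete ring) by the flat base change $R\to R[t]_{\fkm R[t]}$, which preserves the Vasconcelos property; this reduction is itself a special case of the flat--fibre engine, since the fibre $k(t)$ is a field.

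For (1), if $d=1$ then $\dim(R/xR)=0$ and $R/xR$ is Vasconcelos by definition, so assume $d\ge 2$. Complete $x$ to a parameter ideal $Q=(x,y_2,\dots,y_d)$ and use the superficial reduction identity
\[ \rme_1(Q,R)=\rme_1\big(QR/xR,\;R/xR\big), \]
valid when $x$ is a superficial nonzerodivisor belonging to a minimal generating set of $Q$. Since $R$ is Vasconcelos and $d\ge 2$, Proposition~\ref{Ch5-3-7} gives $\rme_1(Q,R)=0$, so $R/xR$ carries a parameter ideal with vanishing Chern coefficient and is therefore Vasconcelos. The one delicate point is arranging the \emph{given}, fixed element $x$ to be superficial for some such $Q$; I would secure this by choosing the complementary parameters $y_2,\dots,y_d$ generically, so that $x,y_2,\dots,y_d$ form a superficial system to which the displayed identity applies.

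For (2), the composite $R\to S=R[X_1,\dots,X_n]\to S_{\fkP}$ is a flat local homomorphism whose closed fibre $S_{\fkP}/\fkm S_{\fkP}=(k[X_1,\dots,X_n])_{\bar\fkP}$, with $k=R/\fkm$ and $\bar\fkP=\fkP/\fkm S$, is a localization of a polynomial ring over a field and hence a \emph{regular} local ring. Starting from a parameter ideal $Q$ of $R$ with $\rme_1(Q,R)=0$ (Proposition~\ref{Ch5-3-7} when $d\ge 2$, and the definition directly when $d=1$), adjoin elements of $\fkP$ mapping to a regular system of parameters of the fibre to obtain a parameter ideal $Q'$ of $S_{\fkP}$. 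A direct convolution computation of $\l(S_{\fkP}/{Q'}^{n+1})$ against $\l(R/Q^{n+1})$ — in the one--variable case $S_{\fkP}=R[X]_{(\fkm,X)}$ one finds $\l(S_{\fkP}/{Q'}^{n+1})=\sum_{m=0}^{n}\l(R/Q^{m+1})$, which merely multiplies the Hilbert series by $1/(1-t)$ — shows that the regular fibre contributes nothing to the first coefficient, so $\rme_1(Q',S_{\fkP})=\rme_1(Q,R)=0$ and $S_{\fkP}$ is Vasconcelos.

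Part (3) is the heart of the matter. First one checks that the hypotheses descend: a localization of a homomorphic image of a Cohen--Macaulay ring is again such, and equidimensionality together with universal catenarity (automatic for homomorphic images of Cohen--Macaulay rings) makes $R$ formally equidimensional, so that $\dim R_{\fkp}+\dim(R/\fkp)=d$ for every $\fkp$. Passing to the completion, the homomorphic--image hypothesis supplies a dualizing complex and hence local duality, which expresses the $\rme_1$ of a parameter ideal through the dimensions and lengths of the local cohomology modules $H^{i}_{\fkm}(R)$ with $i<d$; it is exactly the negativity of these contributions that yields $\rme_1(Q)\le 0$ in general. The strategy is then a squeeze: choose $Q=(a_1,\dots,a_d)$ with $a_1,\dots,a_c\notin\fkp$ (where $c=\dim R/\fkp$) and $a_{c+1},\dots,a_d\in\fkp$ lifting a parameter ideal of $R_{\fkp}$, so that $QR_{\fkp}$ is a parameter ideal of $R_{\fkp}$, and compare the two Chern numbers. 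Because localizing at $\fkp\ne\fkm$ discards the $\fkm$-torsion that drives the negative terms, one expects the comparison
\[ \rme_1(QR_{\fkp},R_{\fkp})\ \ge\ \rme_1(Q,R)=0, \]
while the general bound gives $\rme_1(QR_{\fkp},R_{\fkp})\le 0$; together these force equality and show $R_{\fkp}$ is Vasconcelos. I expect the main obstacle to be precisely this comparison inequality: making the local--cohomology formula for $\rme_1$ explicit enough to track how each cohomological contribution behaves under localization, and using formal equidimensionality to keep the dimension bookkeeping aligned so that no spurious negative term survives downstairs.
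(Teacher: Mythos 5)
The survey itself gives no proof of this Proposition---it is stated with citations to \cite{Chern3} only---and the arguments there rest on the structural characterization of Vasconcelos rings (for $\dim R\ge 2$: $R$ is Vasconcelos if and only if $\widehat{R}/U$ is Cohen--Macaulay and $\dim_{\widehat{R}}U\le d-2$, where $U$ is the unmixed component of $(0)$ in $\widehat{R}$); all three parts then follow by tracking how $U$ and Cohen--Macaulayness behave under hyperplane section, flat polynomial extension, and localization. Your route, which stays entirely at the level of Hilbert coefficients of parameter ideals, is genuinely different, and while part (2) is sound (the convolution identity for a flat local extension with regular closed fibre does give $\rme_1(Q')=\rme_1(Q)$, and Proposition~\ref{Ch5-3-7} supplies the needed parameter ideal upstairs), parts (1) and (3) have unfilled gaps.

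In (1), the ``delicate point'' you flag is the whole difficulty, and generic choice of $y_2,\dots,y_d$ does not resolve it: superficiality of the \emph{fixed} element $x$ for $Q=(x,y_2,\dots,y_d)$ is the requirement that the initial form $x^{*}$ avoid the relevant associated primes of $\G(Q)$, and varying the $y_i$ changes $\G(Q)$ itself, so no standard genericity argument applies to a prescribed minimal generator. Without superficiality, the elementary comparison coming from $\l(R/Q^{n+1})=\l\big((R/xR)/\overline{Q}^{\,n+1}\big)+\l\big(R/(Q^{n+1}:x)\big)$ together with $Q^{n}\subseteq (Q^{n+1}:x)$ yields only $\rme_1(\overline{Q})\le \rme_1(Q)=0$, which is automatic for parameter ideals and proves nothing. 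In (3), the squeeze is asserted rather than proved: first, with $a_1,\dots,a_c\notin\fkp$ you have $QR_{\fkp}=R_{\fkp}$, so the parameter ideal downstairs must be $(a_{c+1},\dots,a_d)R_{\fkp}$; second, and more seriously, the inequality $\rme_1\big((a_{c+1},\dots,a_d)R_{\fkp},R_{\fkp}\big)\ge \rme_1(Q,R)$ is essentially the content of the proposition, and no mechanism is offered for it---$\rme_1$ of parameter ideals does not localize in any straightforward way, and the local-duality bookkeeping you invoke would have to compare $H^{i}_{\fkm}(R)$ with $H^{i}_{\fkp R_{\fkp}}(R_{\fkp})$, which is exactly where the difficulty lives. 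Both parts become routine once one passes through the characterization via $\widehat{R}/U$, which is the approach of \cite{Chern3}.
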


\bigskip

\section{The Chern Coefficients}\label{Chern}

Proposition~\ref{Ch5-3-7} leads to a question about when the Chern coefficient $\rme_{1}(Q, M)$ is independent of the choice of parameter ideals $Q$ for $M$. In this section, we will discuss  Vasconcelos' conjectures and results of Vasconcelos and his coauthors relating to situations where either $e_1(Q,M)$ is independent or can take on only finitely many values. As a first example of such modules, consider Buchsbaum modules. A finitely generated $R$-module $M$ of dimension $s$ is said to be {\em Buchsbaum} if 
\[ \l(M/QM) - \rme_{0}(Q, M) = \sum_{i=0}^{s-1} {{s-1}\choose{i}} \l(H^{i}_{\fkm}(M)) \]
for every parameter ideal $Q$ for $M$. By \cite[Lemma 2.4, Theorem 5.4]{GN03} and \cite[Korollar 3.2]{S79}, if $M$ is a Buchsbaum module, then  $\rme_{1}(Q, M)$ is constant and independent of the choice of parameter ideals $Q$ for $M$. The following theorem proved that the converse is true for a unmixed module.

\begin{Theorem}\label{Ch5-5-4}{\rm \cite[Theorem 5.4]{Chern5}}
Let $(R, \fkm)$ be a Noetherian local ring. Let $M$ be a finitely generated unmixed $R$-module of dimension $s \geq 2$. Then $M$ is a Buchsbaum $R$-module if and only if $\rme_{1}(Q, M)$ is constant and independent of the choice of parameter ideals $Q$ for $M$. In this case, we have
\[ \rme_{1}(Q,M) = - \sum_{i=1}^{s-1} {{s-2}\choose{i-1}} \l(H^{i}_{\fkm}(M)) \]
for every parameter ideal $Q$ for $M$.
\end{Theorem}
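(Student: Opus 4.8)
The forward implication is already available from the literature: by \cite{GN03} and \cite{S79} a Buchsbaum module has $\rme_{1}(Q,M)$ constant and independent of $Q$, so the plan is to prove the converse and then to pin down the common value. First I would make the standard reductions used in the proof of Theorem~\ref{Ch5-3-1}: completing $R$ and enlarging the residue field to be infinite preserve unmixedness, Buchsbaumness, and all Hilbert coefficients, so I may assume $R$ is complete with infinite residue field, $\dim R=\dim M=s$, and that there is a Gorenstein local ring $S$ of dimension $s$ surjecting onto $R$. Since $M$ is unmixed of dimension $s\geq 2$, the maximal ideal is not associated to $M$, whence $H^{0}_{\fkm}(M)=0$ and $M$ admits an $M$-regular superficial element $y$ for any prescribed parameter ideal $Q$ in which $y$ is part of a minimal generating set.

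The inductive engine is reduction by such a $y$. For an $M$-regular superficial element one has $\rme_{1}(Q,M)=\rme_{1}(\bar{Q},\bar{M})$, where $\bar{M}=M/yM$ has dimension $s-1$ and $\bar{Q}=Q/(y)$; this follows from the superficiality relation $(Q^{n+1}M:_{M}y)=Q^{n}M$ for $n\gg 0$, which turns $\l(\bar{M}/\bar{Q}^{n+1}\bar{M})$ into the first difference of $\l(M/Q^{n+1}M)$ and thereby preserves $\rme_{0},\ldots,\rme_{s-1}$. The essential subtlety is that $\bar{M}$ is in general no longer unmixed: it acquires finite-length torsion $H^{0}_{\fkm}(\bar{M})$, and the long exact local cohomology sequence attached to $0\lar M\stackrel{\cdot y}{\lar}M\lar\bar{M}\lar 0$ shows that both $H^{0}_{\fkm}(\bar{M})$ and the higher $H^{i}_{\fkm}(\bar{M})$ are governed by the kernels and cokernels of multiplication by $y$ on $H^{i}_{\fkm}(M)$ and $H^{i+1}_{\fkm}(M)$. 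I would record this comparison precisely and run an induction on $s$ that simultaneously yields the converse and the closed formula. In the base case $s=2$ the reduction gives $\rme_{1}(Q,M)=\rme_{1}(\bar{Q},\bar{M})=-\l(H^{0}_{\fkm}(\bar{M}))$, and the $\cdot y$ sequence identifies $H^{0}_{\fkm}(\bar{M})$ with $(0:_{H^{1}_{\fkm}(M)}y)$; the Pascal recursion $\binom{s-3}{i-1}+\binom{s-3}{i-2}=\binom{s-2}{i-1}$ then propagates the formula $\rme_{1}(Q,M)=-\sum_{i=1}^{s-1}\binom{s-2}{i-1}\l(H^{i}_{\fkm}(M))$ upward, once the relevant $y$-obstruction terms are known to vanish.

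The heart of the argument is to convert the hypothesis that $\rme_{1}(Q,M)$ is constant over \emph{all} parameter ideals into the vanishing of these $y$-(co)kernels, that is, into the Buchsbaum property. I would first show that constancy forces each $H^{i}_{\fkm}(M)$ with $i<s$ to have finite length and to be annihilated by $\fkm$ (the quasi-Buchsbaum property): were some $H^{i}_{\fkm}(M)$ not killed by a parameter, one could exhibit two parameter ideals---say a general system of parameters against one of the form $(x_{1}^{a_{1}},\ldots,x_{s}^{a_{s}})$---whose first Hilbert coefficients differ, contradicting constancy; I would make this quantitative using the formulas for $\rme_{1}$ of ideals generated by powers of a system of parameters together with the bound $\rme_{1}(Q,M)\leq 0$ and Theorem~\ref{Ch5-3-1}. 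Finally, to upgrade quasi-Buchsbaum to Buchsbaum I would invoke the St\"uckrad--Vogel surjectivity criterion, showing that constancy forces the natural maps $\Ext^{i}_{R}(R/\fkm,M)\rar H^{i}_{\fkm}(M)$ to be surjective for all $i<s$, equivalently that every system of parameters is standard. I expect the main obstacle to be exactly this last translation: because $\bar{M}=M/yM$ is not unmixed, the induction hypothesis does not apply to it verbatim, so one cannot simply descend; instead one must carry the full local-cohomology bookkeeping through the induction and, most delicately, produce a rich enough supply of test parameter ideals whose differing values of $\rme_{1}$ detect any failure of the surjectivity criterion.
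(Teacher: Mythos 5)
The survey itself contains no proof of Theorem~\ref{Ch5-5-4}; it is quoted from \cite[Theorem 5.4]{Chern5} without argument, so there is no in-paper proof to compare against. Judged on its own, your outline is pointed in the right direction but leaves the decisive step unresolved. The forward implication and the reduction to a complete ring with infinite residue field are fine, and the superficial-element bookkeeping you describe is indeed how one establishes the closed formula once Buchsbaumness (or at least standardness of the parameter ideal) is in hand. The gap is in the converse: you reduce everything to showing that constancy of $\rme_{1}(Q,M)$ forces quasi-Buchsbaumness and then surjectivity of the maps $\Ext^{i}_{R}(R/\fkm,M)\rar H^{i}_{\fkm}(M)$, but you offer no mechanism for either beyond ``produce a rich enough supply of test parameter ideals,'' which is precisely the hard content of the theorem. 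Moreover, quasi-Buchsbaum does not imply Buchsbaum, so the upgrade cannot be waved through.

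The efficient route, using results already recorded in this survey, closes that gap as follows. If $\rme_{1}(Q,M)$ is constant then $\Lambda(M)$ is a singleton, hence finite, so $M$ is generalized Cohen--Macaulay by Theorem~\ref{Ch5-4-2}, and that theorem also gives the two-sided bound $-\sum_{i=1}^{s-1}\binom{s-2}{i-1}\l(H^{i}_{\fkm}(M))\leq \rme_{1}(Q,M)\leq 0$. A generalized Cohen--Macaulay module admits standard parameter ideals, and by \cite{GN03} and \cite{S79} a standard parameter ideal attains the lower bound; hence the constant value \emph{is} the lower bound, so every parameter ideal attains it. The equality case of the Goto--Nishida/Schenzel computation characterizes standard parameter ideals, so every parameter ideal for $M$ is standard, and by the St\"uckrad--Vogel/Trung criterion this is equivalent to $M$ being Buchsbaum; the displayed formula then follows for every $Q$. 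Your induction-on-dimension scheme essentially re-derives Theorem~\ref{Ch5-4-2} from scratch and still needs this equality-case characterization to finish, so I would recommend restructuring the argument around it rather than around the surjectivity criterion directly.
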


Let ${\ds \Lambda(M) = \{ \rme_{1}(Q, M) \mid Q \; \mbox{is a parameter ideal for} \; M \} }$.  Theorem~\ref{Ch5-5-4} shows that a unmixed module $M$ is Buchsbaum if and only if $|\Lambda(M)|=1$.  A finitely generated $R$-module $M$ is said to be {\em generalized Cohen-Macaulay} if $M_{\fkp}$ is Cohen-Macaulay for every non-maximal prime ideal $\fkp$. By \cite[Lemma 2.4]{GN03} and  \cite[Corollary 2.3]{Chern5}, if $M$ is generalized Cohen-Macaulay, then $\Lambda(M)$ is a finite set. The following theorem proved that the converse is true for a unmixed module.

\begin{Theorem}\label{Ch5-4-2}{\rm \cite[Proposition 4.2 and Theorem 4.5]{Chern5}}
Let $(R, \fkm)$ be a Noetherian local ring. Let $M$ be a finitely generated unmixed $R$-module of dimension $s \geq 2$. Then $M$ is a generalized Cohen-Macaulay $R$-module if and only if $\Lambda(M)$ is a finite set.  In this case, we have
\[  - \sum_{i=1}^{s-1} {{s-2}\choose{i-1}} \l(H^{i}_{\fkm}(M)) \leq  \rme_{1}(Q,M) \leq 0 \]
for every parameter ideal $Q$ for $M$.
\end{Theorem}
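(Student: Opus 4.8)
The plan is to take the forward implication and the upper bound $\rme_1(Q,M)\le 0$ as already in hand: the former is exactly the content cited from \cite[Lemma 2.4]{GN03} together with \cite[Corollary 2.3]{Chern5}, and the latter holds for every parameter ideal by \cite[Corollary 2.3]{Chern5}. So I would concentrate on the converse together with the displayed lower bound. As in the proof of Theorem~\ref{Ch5-3-1}, I would first reduce to the case where $R$ is complete with infinite residue field and $\dim R=\dim M=s$, present $R$ as a quotient of a Gorenstein local ring $S$ of dimension $s$, and use unmixedness to fix an embedding $0\to M\to S^n\to C\to 0$ of $S$-modules. These reductions change neither the finiteness of $\Lambda(M)$ nor the lengths $\l(H^i_\fkm(M))$, so they are harmless.

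The heart of the argument is an induction on $s$ that peels off nonzerodivisors while tracking local cohomology. Given a generic parameter ideal $Q$, I would choose $y$ superficial for $Q$ with respect to $M$, regular on $M$, and part of a minimal generating set of $Q$; for such $y$ the coefficient $\rme_1$ is preserved, $\rme_1(Q,M)=\rme_1(Q/(y),M/yM)$, which is exactly the step used in the $d=2$ case of Theorem~\ref{Ch5-3-1}. Just as there, I would replace $M/yM$ by the unmixed $S/(y)$-module $N=\image(\zeta)\subseteq S^n/yS^n$, which has the same $\rme_1$ because it differs from $M/yM$ only by a lower-dimensional torsion module, and which is the module to which the inductive hypothesis applies. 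Performing this reduction $s-2$ times produces an unmixed module of dimension $2$ with the same Chern coefficient; meanwhile the long exact sequence in local cohomology attached to $0\to M\xrightarrow{y}M\to M/yM\to 0$ lets me compare the modules $H^i_\fkm$ before and after each step. In the generalized Cohen--Macaulay regime, where the relevant $H^i_\fkm(M)$ have finite length, a sufficiently general $y$ yields $\l(H^i_\fkm(M/yM))=\l(H^i_\fkm(M))+\l(H^{i+1}_\fkm(M))$, so that lengths add by Pascal's rule and the weights $\binom{s-2}{i-1}$ accumulate precisely as in the Buchsbaum formula of Theorem~\ref{Ch5-5-4}.

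This leaves the base case $s=2$, which is where the real work lies. Here unmixedness forces $H^0_\fkm(M)=0$, so $M$ is generalized Cohen--Macaulay if and only if $\l(H^1_\fkm(M))<\infty$. For a superficial regular $y$ completed to $Q=(y,z)$ I would establish the key formula
\[ \rme_1(Q,M)=-\,\l\!\big((0:_{H^1_\fkm(M)}y)\big), \]
reading $(0:_{H^1_\fkm(M)}y)\cong H^0_\fkm(M/yM)$ off the cohomology sequence (using $H^0_\fkm(M)=0$) and matching it to the torsion term $T$ that appeared in the $d=2$ computation of Theorem~\ref{Ch5-3-1}. If $H^1_\fkm(M)$ has finite length, then $0\le \l\big((0:_{H^1_\fkm(M)}y)\big)\le \l(H^1_\fkm(M))$, with equality once $y$ annihilates $H^1_\fkm(M)$; this simultaneously shows $\Lambda(M)$ is finite and yields $-\l(H^1_\fkm(M))\le\rme_1(Q,M)\le 0$, which is the displayed inequality in dimension $2$. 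If instead $\l(H^1_\fkm(M))=\infty$, then since $H^1_\fkm(M)=\bigcup_t(0:_{H^1_\fkm(M)}\fkm^t)$ the lengths $\l\big((0:_{H^1_\fkm(M)}\fkm^t)\big)$ are unbounded; choosing superficial elements $y\in\fkm^t$ (available because the residue field is infinite) forces $\rme_1(Q,M)$ to take infinitely many values, so $\Lambda(M)$ is infinite. Combining the base case with the inductive length bookkeeping gives the general lower bound and the converse.

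The main obstacle is the $s=2$ analysis, and specifically its two quantitative claims: the exact formula $\rme_1(Q,M)=-\l\big((0:_{H^1_\fkm(M)}y)\big)$, and the assertion that infinite length of $H^1_\fkm(M)$ forces unboundedly many distinct Chern values as $Q$ varies. Both hinge on choosing the superficial element carefully enough to control the possibly non-finitely generated module $H^1_\fkm(M)$. Moreover, in the inductive step one must further verify that the infinite-length phenomenon genuinely propagates to the dimension-$2$ reduction, that is, that a suitable general element cannot accidentally collapse an infinite-length lower local cohomology module; this is the delicate point that makes the ``only if'' direction substantially harder than the already-cited ``if'' direction.
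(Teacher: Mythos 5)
A preliminary remark: the survey states this result purely as a citation of \cite[Proposition 4.2 and Theorem 4.5]{Chern5} and contains no proof of it, so there is nothing in the paper itself to compare your argument against; what you have done is transplant the template of the proof of Theorem~\ref{Ch5-3-1}. Within that template, the direction you call the ``inductive length bookkeeping'' --- generalized Cohen--Macaulay implies the displayed lower bound --- is essentially right: reduce modulo $s-2$ superficial $M$-regular elements, apply the dimension-two identity $\rme_{1}(Q,M'')=-\l\big((0:_{H^{1}_{\fkm}(M'')}y)\big)$ to the two-dimensional reduction $M''$, and accumulate $\l(H^{i}_{\fkm}(M/yM))\le \l(H^{i}_{\fkm}(M))+\l(H^{i+1}_{\fkm}(M))$; note that only this inequality, not the equality you assert, is available for a general superficial $y$, but the inequality is all that is needed and Pascal's rule then yields the coefficients ${{s-2}\choose{i-1}}$. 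The upper bound and the forward implication are reasonably delegated to the cited sources, as the survey itself does.

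The genuine gap is in the converse, $\Lambda(M)$ finite $\Rightarrow$ $M$ generalized Cohen--Macaulay, for $s\ge 3$. Your induction requires that finiteness of $\Lambda(M)$ force finiteness of $\Lambda(N)$ for the reduced module $N$, equivalently that $\Lambda(N)$ infinite imply $\Lambda(M)$ infinite. The only transport mechanism in your sketch is $\rme_{1}((y)+\fkq',M)=\rme_{1}(\fkq',N)$ for a parameter ideal $\fkq'$ of $N$; but this identity is only guaranteed when $y$ is superficial for $(y)+\fkq'$ with respect to $M$, and superficiality is a property relative to the ideal, whereas in your argument $y$ is fixed once and for all before $\fkq'$ is allowed to range over an infinite family. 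This order-of-quantifiers problem means the inductive step does not close as written: one must either re-choose $y$ for each $\fkq'$ (and then show the resulting two-dimensional reductions still detect the non-finitely-generated $H^{i}_{\fkm}(M)$), or avoid the induction on $\Lambda$ altogether by bounding $\l\big((0:_{H^{i}_{\fkm}(M)}\fkm^{t})\big)$ directly in terms of $\inf\Lambda(M)$ over parameter ideals contained in $\fkm^{t}$. You flag the adjacent difficulty --- that a general $y$ might collapse an infinite local cohomology module --- but do not resolve it; as written, only the base case $s=2$ of the converse is complete.
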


\medskip

Goto and Ozeki \cite{GO11} characterized the generalized Cohen-Macaulayness of a (not necessarily unmixed) ring in terms of the finiteness of sets of Hilbert coefficients.

\begin{Theorem}\label{GO11-1-1}{\rm \cite[Theorem 1.1]{GO11}}
Let $(R, \fkm)$ be a Noetherian local ring of dimension $d \geq 2$. Then  $R$ is a generalized Cohen-Macaulay ring if and only if each set \[ \Lambda_{i}(R) = \{\rme_{i}(Q) \mid Q \; \mbox{is a parameter ideal for} \; R   \} \] is finite for all $1 \leq i \leq d$.  
\end{Theorem}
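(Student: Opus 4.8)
The plan is to prove both implications, treating the forward direction as routine and concentrating effort on the converse. Throughout I would first perform the standard reductions: since completion preserves both the generalized Cohen-Macaulay property and (by a faithfully flat base-change argument for parameter ideals) the finiteness of each $\Lambda_{i}(R)$, I may assume $R$ is complete with infinite residue field. Next I would factor out the finite-length submodule $U = H^{0}_{\fkm}(R)$: from $0 \rar U \rar R \rar R/U \rar 0$ one gets $H^{i}_{\fkm}(R) \cong H^{i}_{\fkm}(R/U)$ for $i \geq 1$, so $R$ is generalized Cohen-Macaulay if and only if $R/U$ is, while $\l(R/Q^{n+1})$ and $\l((R/U)/Q^{n+1}(R/U))$ differ by the constant $\l(U)$ for $n \gg 0$. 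Hence $\rme_{i}(Q, R) = \rme_{i}(Q, R/U)$ for $i < d$ and only $\rme_{d}$ is shifted by the fixed integer $\l(U)$, so both hypotheses and conclusion transfer to $R/U$, and I may assume $\depth R \geq 1$.

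For the forward direction, suppose $R$ is generalized Cohen-Macaulay. Then every $H^{j}_{\fkm}(R)$ with $j < d$ has finite length, and the known uniform bounds for Hilbert coefficients of parameters in such rings (the inputs to Theorem~\ref{Ch5-4-2}, namely \cite[Lemma 2.4]{GN03} and \cite[Corollary 2.3]{Chern5}) control $\rme_{1}(Q)$ by an alternating sum of the fixed integers $\l(H^{j}_{\fkm}(R))$. To reach the higher coefficients I would pass to a general hyperplane section: for a superficial element $x$, the ring $R/xR$ is again generalized Cohen-Macaulay of dimension $d-1$ and $\rme_{i}(Q/(x), R/xR) = \rme_{i}(Q, R)$ in the relevant range, so an induction on $d$ shows each $\Lambda_{i}(R)$ lies in a bounded interval of $\ZZ$ and is therefore finite.

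The converse is the heart of the matter, and I would argue by contraposition and induction on $d$. Assume $R$ is not generalized Cohen-Macaulay, so some $H^{i}_{\fkm}(R)$ with $i < d$ has positive dimension. In the base case $d = 2$ this forces $\dim H^{1}_{\fkm}(R) \geq 1$, and the goal is to exhibit parameter ideals $Q = (a,b)$ for which $\rme_{1}(Q)$ (equivalently, the length defect measuring how far $Q$ is from being unmixedly closed) takes infinitely many values: choosing $a$ superficial and letting $b$ range through elements lying in ever higher powers of $\fkm$ adapted to the support of $H^{1}_{\fkm}(R)$ makes the relevant length grow without bound, so $\Lambda_{1}(R)$ is infinite. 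For $d \geq 3$ I would cut by a general superficial element $x$; the positive-dimensional local cohomology module persists in $R/xR$ with its cohomological index and dimension shifted predictably, so $R/xR$ is again not generalized Cohen-Macaulay, and the comparison $\rme_{i}(Q/(x), R/xR) = \rme_{i}(Q, R)$ propagates an infinite $\Lambda$ from $R/xR$ back to $R$.

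The main obstacle is precisely the feature the statement is designed to accommodate: because $R$ need not be unmixed, its dimension filtration may have several layers, and the variation of a single coefficient $\rme_{i}(Q)$ mixes contributions from local cohomology in different degrees. The delicate point is to show these contributions do not conspire to cancel, so that for at least one index $i$ with $1 \leq i \leq d$ the family $\{\rme_{i}(Q)\}$ is genuinely unbounded. This requires choosing parameters that isolate the positive-dimensional piece of $H^{i}_{\fkm}(R)$ and tracking, through the inductive hyperplane sections, which coefficient actually receives the unbounded contribution. It is here that having finiteness of \emph{all} the sets $\Lambda_{i}(R)$ in the hypothesis, rather than $\Lambda_{1}$ alone, is essential, since in the non-unmixed case the defect can migrate out of $\rme_{1}$ into a higher coefficient.
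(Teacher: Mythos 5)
First, note that the survey states this result only as a citation of \cite[Theorem 1.1]{GO11} and gives no proof, so there is nothing in the paper itself to compare your argument against; I am therefore judging your plan on its own terms and against the known proof of Goto--Ozeki. Your reductions (completion, killing $H^{0}_{\fkm}(R)$) and your forward direction are essentially fine: for a generalized Cohen--Macaulay ring the uniform bounds on all the $\rme_{i}(Q)$ in terms of the lengths $\l(H^{j}_{\fkm}(R))$ are known (compare Theorems~\ref{Ch5-4-2} and~\ref{GO11-3-6}), and that direction is indeed routine modulo citing them.

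The converse, however, contains a genuine gap, and in fact a false step. Your induction for $d\geq 3$ rests on the claim that if $R$ is not generalized Cohen--Macaulay then $R/xR$ is again not generalized Cohen--Macaulay for a general superficial parameter $x$. This fails: if the obstruction is a single prime $\fkp$ with $\dim R/\fkp=1$ (say a non-Cohen--Macaulay point, or an embedded/low-dimensional associated prime of that dimension), then a general $x$ avoids $\fkp$, the locus $V(x)\cap V(\fkp)$ collapses to the closed point, and $R/xR$ can perfectly well be generalized Cohen--Macaulay. So the inductive step does not propagate the hypothesis. Beyond that, the point you yourself flag as ``the delicate point'' --- that the contributions to $\rme_{i}(Q)$ from different layers of the dimension filtration do not cancel --- is precisely the content of the theorem, and your proposal does not supply the mechanism. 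The actual argument works with the unmixed component $W=U_{R}(0)$ (the intersection of the primary components of $(0)$ of maximal dimension) rather than directly with local cohomology: if $t=\dim W\geq 1$, then for parameter ideals $Q$ the coefficient $\rme_{d-t}(Q,R)$ differs from $\rme_{d-t}(Q,R/W)$ by $\pm\,\rme_{0}(Q,W)$, and since $\rme_{0}(Q,W)$ is a genuine (positive) multiplicity it cannot be cancelled and is made arbitrarily large by taking $Q$ deep inside $\fkm$; this is what forces some $\Lambda_{i}(R)$ with $i\geq 1$ to be infinite and explains why all the sets $\Lambda_{i}$ are needed. Once $\dim W\leq 0$ one has $W=H^{0}_{\fkm}(R)$, the quotient $R/W$ is unmixed, and the unmixed case (Theorem~\ref{Ch5-4-2}, i.e.\ finiteness of $\Lambda_{1}$ alone) finishes the proof. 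Without this isolation of $W$ and the positivity argument, your contrapositive sketch does not close.
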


\medskip

A ring $R$ is called {\em sequentially Cohen-Macaulay} if there exists a filtration of ideals ${\ds \left\{ I_{t} \mid t=0, \ldots, n \right\}}$ such that $I_{0}=(0)$, ${\ds I_{n}=R}$, ${\ds I_{t} \subsetneq I_{t+1}}$,  ${\ds \h(I_{t}) < \h(I_{t+1}) }$ and $I_{t+1}/I_{t}$ are Cohen-Macaulay for all $t=0, 1, \ldots, n-1$. It is known that $R$ is a Cohen-Macaulay ring if and only if $R$ is an unmixed sequentially Cohen-Macaulay ring. Ozeki, Truong, and Yen \cite{OTY22} characterized sequentially Cohen-Macaulay rings (which are not unmixed) in terms of the Hilbert coefficients. 

\begin{Theorem}\label{OTY22-4-1}{\rm \cite[Theorem 4.1]{OTY22} }
Let $R$ be a homomorphic image of a Cohen-Macaulay ring with ${\ds \dim(R) =d \geq 1}$. Then $R$ is sequentially Cohen-Macaulay if and only if there exists a distinguished parameter ideal ${\ds Q \subseteq \fkm^{g(R)}}$ such that 
\[ (-1)^{d-j} \Big( \rme_{d-j+1}(Q:\fkm) - \rme_{d-j+1}(Q) \Big) \leq r_{j}(R) \quad \mbox{\rm for all $2 \leq j \in \{ \dim(R/\fkp) \mid \fkp \in \Ass(R) \} $ }, \]
where $g(R)$ denotes the $g$-variant of $R$ {\rm (\cite[Definition 3.2]{OTY22})} and ${\ds r_{j}(R) = \l( 0 :_{H^{j}_{\fkm}(R)} : \fkm) }$. 
\end{Theorem}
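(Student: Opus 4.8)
The plan is to isolate a sharp comparison lemma and deduce both implications from it. After the usual reductions — completing $R$ and passing to an infinite residue field, so that distinguished systems of parameters and superficial elements are available — I work with the dimension filtration $D_{0}\subseteq D_{1}\subseteq\cdots\subseteq D_{d}=R$, where $D_{i}$ is the largest submodule of $R$ of dimension at most $i$. Recall that $R$ is sequentially Cohen-Macaulay exactly when each nonzero quotient $D_{j}/D_{j-1}$ is Cohen-Macaulay of dimension $j$, and that these nonzero quotients occur precisely for $j\in\{\dim(R/\fkp)\mid\fkp\in\Ass(R)\}$. The cases $j\leq 1$ are automatic, since $D_{0}$ has finite length and $D_{1}/D_{0}$ has positive depth by construction, which is why the stated condition only ranges over $j\geq 2$. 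The hypothesis $Q\subseteq\fkm^{g(R)}$ together with the distinguished choice of the system of parameters $x_{1},\ldots,x_{d}$ (so that the deepest parameters annihilate the lower pieces $D_{i}$) serves to push all computations into the stable polynomial range and to make the socle lengths exact.

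The key lemma I would establish is the sharp bound
\[
(-1)^{d-j}\bigl(\rme_{d-j+1}(Q:\fkm)-\rme_{d-j+1}(Q)\bigr)\;\geq\; r_{j}(R)
\]
for every distinguished parameter ideal $Q\subseteq\fkm^{g(R)}$ and every relevant $j$, with equality holding for a given $j$ if and only if $D_{j}/D_{j-1}$ is Cohen-Macaulay. To prove it I would first analyze the variation of Hilbert coefficients under the enlargement $Q\subseteq Q:\fkm$ by comparing $\l(R/Q^{n+1})$ and $\l(R/(Q:\fkm)^{n+1})$ through the short exact sequences built from $(Q:\fkm)/Q$, whose length is the socle dimension of $R/Q$. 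Local duality then identifies this socle with $\bigoplus_{j}(0:_{H^{j}_{\fkm}(R)}\fkm)$, which is where the terms $r_{j}(R)=\l(0:_{H^{j}_{\fkm}(R)}\fkm)$ enter; tracking the alternating signs of the Hilbert coefficients produces the factor $(-1)^{d-j}$. The inequality, together with its equality criterion, is then proved by induction on $d$, reducing modulo the deepest distinguished parameter $x_{1}$: one checks that $\overline{Q}$ descends to a distinguished parameter ideal on $\overline{R}=R/x_{1}R$ compatible with the induced dimension filtration, compares $r_{j}(R)$ with $r_{j}(\overline{R})$ via the long exact local-cohomology sequence for multiplication by $x_{1}$, and applies the inductive statement, with the rigidity (equality iff Cohen-Macaulay) coming from the top local cohomology of the single graded piece $D_{j}/D_{j-1}$.

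Granting the lemma, the theorem is immediate. If $R$ is sequentially Cohen-Macaulay, then every $D_{j}/D_{j-1}$ is Cohen-Macaulay, so the lemma gives equality $(-1)^{d-j}(\rme_{d-j+1}(Q:\fkm)-\rme_{d-j+1}(Q))=r_{j}(R)$ for each relevant $j$ and any distinguished $Q\subseteq\fkm^{g(R)}$; in particular the required inequality holds, and such a $Q$ exists. Conversely, if some distinguished $Q\subseteq\fkm^{g(R)}$ satisfies $(-1)^{d-j}(\rme_{d-j+1}(Q:\fkm)-\rme_{d-j+1}(Q))\leq r_{j}(R)$ for all relevant $j$, then combined with the universal lower bound of the lemma this forces equality for every such $j$, whence each $D_{j}/D_{j-1}$ is Cohen-Macaulay and $R$ is sequentially Cohen-Macaulay.

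I expect the main obstacle to lie in the inductive step for the lemma, specifically in comparing the invariants of $R$ with those of $\overline{R}=R/x_{1}R$. The deepest distinguished parameter $x_{1}$ is only a parameter that kills the lower-dimensional submodules, not a nonzerodivisor on $R$, so the local-cohomology long exact sequence for multiplication by $x_{1}$ does not break into short exact pieces, and the socle lengths $r_{j}$ need not transform in a naive way. Exploiting the distinguished choice of $x_{1}$ to annihilate exactly the offending torsion, and using $Q\subseteq\fkm^{g(R)}$ to guarantee that the intervening lengths are computed in the stable range, is what makes the bookkeeping of the alternating signs consistent between dimension $d$ and dimension $d-1$; getting this transition exactly right is the crux of the proof.
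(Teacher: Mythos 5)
This survey only \emph{states} the theorem as a citation to \cite[Theorem 4.1]{OTY22} and supplies no proof, so there is no in-paper argument to compare against; I can only judge your proposal on its own terms. The architecture you choose --- reduce both implications to a universal lower bound $(-1)^{d-j}\bigl(\rme_{d-j+1}(Q:\fkm)-\rme_{d-j+1}(Q)\bigr)\geq r_{j}(R)$ valid for every distinguished $Q\subseteq\fkm^{g(R)}$, plus a rigidity statement forcing equality to detect Cohen--Macaulayness of $D_{j}/D_{j-1}$ --- is the right shape for a theorem of this form (indeed, the logic of the statement essentially forces such a reverse inequality if the criterion is to be nonvacuous), and your observation that the cases $j\leq 1$ are automatic is correct. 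The problem is that the entire mathematical content is packed into the unproved ``key lemma,'' and the sketch you give for it contains concrete errors and unresolved steps.

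First, the socle computation. Local duality does not identify $(Q:\fkm)/Q=\mbox{\rm Soc}(R/Q)$ with $\bigoplus_{j}(0:_{H^{j}_{\fkm}(R)}\fkm)$; the correct comparison (Goto--Sakurai type results) goes through the Koszul complex and carries the multiplicities $\binom{d}{j}$, i.e.\ one compares $\l\bigl((Q:\fkm)/Q\bigr)$ with $\sum_{j}\binom{d}{j}r_{j}(R)$, and even then only an inequality holds in general, with equality governed by surjectivity of the canonical maps from Koszul cohomology to local cohomology on socles. More seriously, $\l\bigl((Q:\fkm)/Q\bigr)$ controls the difference of the two Hilbert functions only at $n=0$; to separate the individual coefficients $\rme_{i}(Q:\fkm)$ and $\rme_{i}(Q)$ you must control $\l\bigl(R/(Q:\fkm)^{n+1}\bigr)$ for all large $n$, which requires structural information about the powers of $Q:\fkm$ --- typically an equality of the form $(Q:\fkm)^{n+1}=Q^{n}(Q:\fkm)$, itself a substantial theorem outside the Cohen--Macaulay case and nowhere addressed in your sketch; ``tracking the alternating signs'' is not a substitute. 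Second, the rigidity claim --- equality for a \emph{single} $j$ if and only if the \emph{single} piece $D_{j}/D_{j-1}$ is Cohen--Macaulay --- is the theorem in disguise and is asserted with no mechanism; the coefficients of $Q$ and $Q:\fkm$ on $R$ mix contributions from all pieces of the dimension filtration, and disentangling them is exactly what needs proof. Finally, the induction step you flag as the crux is genuinely unresolved: besides $x_{1}$ failing to be a nonzerodivisor, the colon ideal does not specialize, $(Q:\fkm)\overline{R}\neq \overline{Q}:_{\overline{R}}\fkm$ in general, so neither the coefficients of $Q:\fkm$ nor the invariants $r_{j}$ pass to $R/x_{1}R$ in the way your induction requires. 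As written this is a plausible outline of a strategy, not a proof.
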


\medskip

Theorem~\ref{Ch5-4-2} shows that uniform bounds for $\rme_{1}(Q, M)$ exist for special classes of modules.
Vasconcelos posed a conjecture regarding such uniform bounds in more general cases. The definition of $I$-good filtration is given in Definition~\ref{I-good}.

\begin{Conjecture}\label{Ch1-Conj3}{\rm {\bf (Vasconcelos' Uniformity Conjecture \cite{Chern1}) } Let $(R, \fkm)$ be a Noetherian local ring and $I$ an $\fkm$-primary ideal of $R$.  Let $\clA$ be a $I$-good filtration. Then there exist functions $f_{l}(I)$ and $f_{u}(I)$ defined with extended degree over $R$ such that ${\ds f_{l}(I) \leq \rme_{1}(\clA) \leq f_{u}(I)}$.
}\end{Conjecture}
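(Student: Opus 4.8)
The plan is to bound $\rme_{1}(\clA)$ from both sides by invariants that remain finite and well-behaved in the absence of Cohen--Macaulayness, namely an extended degree function $\Deg$ such as the homological degree $\hdeg$. First I would make the standard harmless reductions: pass to the $\fkm$-adic completion and enlarge the residue field so that it is infinite, observing that these operations preserve $\rme_{1}(\clA)$, the extended degree of $R$, and the property of being an $I$-good filtration. Since $\clA=\{I_{n}\}$ is $I$-good, $\clR(\clA)$ is a module-finite extension of $\clR(I)$ with $II_{n}=I_{n+1}$ for $n\gg 0$, so every filtration under consideration has its Rees algebra trapped in a region of bounded complexity over $\clR(I)$; this is the structural fact that makes a single uniform bound plausible.

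For the upper bound, I would reintroduce the Sally module of the filtration, $S_{J}(\clA)=\bigoplus_{n\geq 1} I_{n+1}/I_{1}J^{n}$, for a fixed minimal reduction $J$ of $I$ (a system of parameters, as $I$ is $\fkm$-primary). In the Cohen--Macaulay case the multiplicity of this module is exactly $\rme_{1}(\clA)-\rme_{0}(\clA)+\l(R/I_{1})$, so $\rme_{1}(\clA)$ is governed by length data attached to $S_{J}(\clA)$ together with $\rme_{0}$. The idea is to replace each length and multiplicity appearing here by the corresponding extended-degree quantity: because $\Deg(R)$ dominates both the ordinary multiplicity and the lengths of the finite local cohomology modules $H^{i}_{\fkm}(R)$, it should provide an upper bound $f_{u}(I)$ of Brian\c{c}on--Skoda type, in the spirit of the estimate $\rme_{1}(I)\leq \cl{\rme}_{1}(I)\leq b(I)\rme_{0}(I)$ of Theorem~\ref{closure chain}, but now valid without the Cohen--Macaulay hypothesis and independent of $\clA$.

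For the lower bound, I would track the correction terms that measure the failure of Cohen--Macaulayness. Splitting off a superficial element $y$ for $I$ and analyzing the sequence $0\to M \stackrel{\cdot y}{\to} M \to M/yM \to 0$ (exactly as in the proof of Theorem~\ref{Ch5-3-1}) produces a $\Tor$/local-cohomology term whose length is precisely what pushes $\rme_{1}$ below zero; these lengths are finite and bounded above by $\Deg(R)$. Setting $f_{l}(I)$ to be the negative of such an extended-degree expression, and inducting on $\dim R$ with $M/yM$ playing the role of $M$, should yield $f_{l}(I)\leq \rme_{1}(\clA)$. Crucially, the chain-of-modules monotonicity principle behind Theorem~\ref{closure chain}---that $\rme_{1}(\cdot)$ is non-decreasing along a chain between $\clR(I)$ and its $\rmS_{2}$-ification---lets one compare an arbitrary $\clA$ against the two extreme filtrations $\{I^{n}\}$ and $\{\cl{I^{n}}\}$, so that the same $f_{l}(I)$ and $f_{u}(I)$ can be made to serve for every $I$-good filtration.

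The hard part will be exactly this uniformity over \emph{all} $I$-good filtrations. For a single, fixed filtration the Sally-module and local-cohomology estimates are reasonably standard, but an $I$-good filtration can be arbitrarily fine, and one must rule out that refining the filtration forces the Sally module---and hence $\rme_{1}(\clA)$---to grow without bound. I expect the crux to be a uniform finiteness statement: that $\Deg(R)$, together with $\rme_{0}(I)$ and $b(I)$, controls the multiplicities of the Sally modules $S_{J}(\clA)$ \emph{simultaneously} for all good $I$-filtrations, which amounts to a boundedness result for the blowup algebras sitting over $\clR(I)$. Producing one such bound, rather than a bound that silently depends on the chosen $\clA$, is where the genuine difficulty of the conjecture resides.
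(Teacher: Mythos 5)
The statement you are trying to prove is a \emph{conjecture}, not a theorem: the paper contains no proof of it, and it remains open in general. What the paper records (Proposition~\ref{Ch1-C7-7}) are partial results: a lower bound $\rme_{1}(I) \geq -\hdeg_{I}(R) + \rme_{0}(I)$ when $R$ is a homomorphic image of a Gorenstein local ring, and an upper bound for $\rme_{1}(\clA)$ only when $R$ is a generalized Cohen--Macaulay integral domain essentially of finite type over a perfect field, obtained via the Brian\c{c}on--Skoda theorem. Your outline is a reasonable reconstruction of the \emph{spirit} of those partial results --- the lower bound via superficial elements, induction on dimension, and local cohomology correction terms dominated by an extended degree is essentially how $\hdeg$-type bounds are established --- but it is not a proof of the conjecture, and you concede as much in your final paragraph when you identify the uniformity over all $I$-good filtrations as the unresolved crux. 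That concession is correct, and it is exactly where the argument stops being an argument.

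Beyond the overall incompleteness, two specific steps would fail as written. First, the Sally-module identity $s_{0} = \rme_{1}(\clA) - \rme_{0}(\clA) + \l(R/I_{1})$ and the monotonicity of $\rme_{1}(\tratto)$ along the chain $\clR(I) \subseteq A \subsetneq B \subseteq \cl{\clR}(I)$ in Theorem~\ref{closure chain} both require hypotheses (Cohen--Macaulayness of $R$, the condition $(\rmS_{2})$ on the intermediate algebras, $R$ analytically unramified) that are absent from the conjecture's setting of an arbitrary Noetherian local ring; you cannot invoke them to squeeze a general $\clA$ between $\{I^{n}\}$ and $\{\cl{I^{n}}\}$ without first proving substitutes valid without Cohen--Macaulayness --- which is the content of the conjecture itself. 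Second, for the upper bound you propose to ``replace each length and multiplicity by the corresponding extended-degree quantity,'' but no such replacement lemma is known in general: the whole difficulty is to show that a single function of $I$ defined via an extended degree dominates $\rme_{1}(\clA)$ simultaneously for every $I$-good filtration, and asserting that $\Deg(R)$ ``should provide'' such a bound restates the conjecture rather than proving it.
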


Vasconcelos settled the conjecture for the existence of a lower bound using the homological degree \cite[Definition 2.8]{V98-1}. Let $(R, \fkm)$ be a Noetherian local ring of dimension $d>0$ that is a homomorphic image of a Gorenstein local ring $S$. Let $I$ be an $\fkm$-primary ideal of $R$. Then the {\em homological degree} of $R$ with respect to $I$ is defined as 
\[ \hdeg_{I}(R) = \rme_{0}(I) + \sum_{i=1}^{d} {{d-1}\choose{i-1}} \hdeg_{I} \left( \Ext^{i}_{S}(R, S) \right). \]
He also established the existence of an upper bound for special cases by using the  Brian\c{c}on-Skoda theorem on perfect fields. 

\begin{Proposition}\label{Ch1-C7-7}{\rm \cite[Corollary 7.7, Theorem 6.1]{Chern1}}
Let $(R, \fkm)$ be a Noetherian local ring of dimension $d \geq 1$ and $I$ an $\fkm$-primary ideal.  Let $\clA$ be an $I$-good filtration.
\begin{enumerate}[{\rm (1)}]
\item Suppose that $R$ is a homomorphic image of a Gorenstein local ring. Then 
\[ \rme_{1}(I) \geq - \hdeg_{I}(R) + \rme_{0}(I). \]
\item Suppose that $R$ is a generalized Cohen-Macaulay integral domain that is essentially of finite type over a perfect field.
Then \[ \rme_{1}(\clA) < (d-1) \rme_{0}(I) + \rme_{0}( (I, \delta)/(\delta) ) - \l(T), \]
where $\delta$ is a nonzero element of the Jacobian ideal of $R$.
\end{enumerate}
\end{Proposition}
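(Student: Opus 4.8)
The plan is to prove the two parts by quite different routes, since (1) is a homological lower bound established by induction on dimension via the homological degree, while (2) is a normalization upper bound controlled by the Brian\c{c}on--Skoda theorem. Throughout I may assume the residue field is infinite, so that generic superficial elements for $I$ are available, and for (1) I may write $R=S/\mathfrak{a}$ with $S$ Gorenstein local of dimension $d$; then by local duality the deficiency modules $\Ext^{i}_{S}(R,S)$ are the Matlis duals of $H^{d-i}_{\fkm}(R)$ and have dimension at most $d-i$, which is the structural fact that organizes the recursion defining $\hdeg_{I}(R)$.

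For part (1), I would induct on $d=\dim R$. In the base case $d=1$ one has $\hdeg_{I}(R)=\rme_{0}(I)+\l(H^{0}_{\fkm}(R))$, and a direct computation of the one-dimensional Hilbert polynomial, after passing to $R/H^{0}_{\fkm}(R)$ where a superficial element becomes regular, yields $\rme_{1}(I)\geq \rme_{0}(I)-\hdeg_{I}(R)$. For the inductive step I would pick a generic superficial element $x$ for $I$, set $\cl{R}=R/(x)$ and $\cl{I}=I\cl{R}$, and compare $\rme_{1}(I)$ with $\rme_{1}(\cl{I})$ over $\cl{R}$. The engine is the recursion defining $\hdeg$ together with Pascal's identity $\binom{d-1}{i-1}=\binom{d-2}{i-1}+\binom{d-2}{i-2}$: a generic hyperplane section relates $\Ext^{i}_{S}(R,S)$ to the deficiency modules of $\cl{R}$ so that $\hdeg_{\cl{I}}(\cl{R})\leq \hdeg_{I}(R)$, the discrepancy being the $\hdeg$-contribution of the finite-length module $H^{0}_{\fkm}(R)$. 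Since $\rme_{0}(\cl{I})=\rme_{0}(I)$, feeding the induction hypothesis $\rme_{1}(\cl{I})\geq \rme_{0}(\cl{I})-\hdeg_{\cl{I}}(\cl{R})$ into the change-of-coefficient formula for $\rme_{1}$ under a superficial section then gives $\rme_{1}(I)\geq \rme_{0}(I)-\hdeg_{I}(R)$.

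For part (2), the strategy is to pass through the normalization and exploit the Jacobian element $\delta\in\Jac_{k}(R)$. Since any $I$-good filtration $\clA$ sits between the $I$-adic and the normal filtration, its Chern coefficient $\rme_{1}(\clA)$ is dominated by $\cl{\rme}_{1}(I)$, which by Theorem~\ref{closure chain} is at most $b(I)\,\rme_{0}(I)$. The point is then to bound the Brian\c{c}on--Skoda number $b(I)$ effectively: over a perfect field the Lipman--Sathaye form of the Brian\c{c}on--Skoda theorem shows that $\delta$ acts as a uniform multiplier, $\delta\,\cl{I^{n}}\subseteq J^{\,n-d+1}$ for a minimal reduction $J$, which together with the bound $b(I)\leq d-1+\rme_{0}\big((I,\delta)/(\delta)\big)$ reduces the leading constant to $d-1$ and produces the correction term $\rme_{0}\big((I,\delta)/(\delta)\big)$. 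Accounting carefully for the torsion module $T$ measuring the failure of the relevant comparison map to be onto, and using generalized Cohen--Macaulayness to guarantee $\l(T)<\infty$, then sharpens the estimate to the strict inequality $\rme_{1}(\clA)<(d-1)\rme_{0}(I)+\rme_{0}\big((I,\delta)/(\delta)\big)-\l(T)$.

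The hard part in (1) will be matching the correction term in the superficial-section formula for $\rme_{1}$ exactly against the $\hdeg$-contribution of $H^{0}_{\fkm}(R)$; this is delicate precisely when $\depth R=0$ and no superficial element is regular, which is the situation the homological degree was engineered to absorb, so the argument must lean on the recursive definition of $\hdeg$ rather than on a naive multiplicity comparison. In (2) the subtle point will be upgrading the Brian\c{c}on--Skoda containment to a numerical estimate that is simultaneously uniform over all $I$-good filtrations and strict, and it is exactly here that the generalized Cohen--Macaulay hypothesis, forcing $\l(T)$ to be finite, becomes indispensable.
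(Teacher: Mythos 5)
The survey states this Proposition without proof, citing \cite[Corollary 7.7, Theorem 6.1]{Chern1}, so your proposal can only be measured against the source. For part (1) your outline is the right architecture: induction on $d$ via a generic superficial element, the base case $d=1$ reducing to $\rme_{1}(I)=\rme_{1}(I(R/W))-\l(W)$ with $W=H^{0}_{\fkm}(R)$ (and $\hdeg_{I}(R)=\rme_{0}(I)+\l(W)$ there), and the inductive step resting on the fact that $\hdeg$ does not increase under generic hyperplane sections, which is exactly what the recursive definition through the modules $\Ext^{i}_{S}(R,S)$ is designed to deliver. What you would still have to supply are the two technical inputs you only name: the precise hyperplane-section inequality for $\hdeg$ from \cite{V98-1}, and the exact relation between $\rme_{1}(I)$ and $\rme_{1}(I\cl{R})$ for a superficial element that need not be regular when $\depth(R)=0$. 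As a plan, though, (1) is sound.

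Part (2) has a genuine gap: the route you propose cannot produce the stated bound. Chaining $\rme_{1}(\clA)\leq\cl{\rme}_{1}(I)\leq b(I)\,\rme_{0}(I)$ with $b(I)\leq d-1+\rme_{0}\bigl((I,\delta)/(\delta)\bigr)$ yields the \emph{multiplicative} estimate $\bigl(d-1+\rme_{0}((I,\delta)/(\delta))\bigr)\rme_{0}(I)$, which exceeds the asserted \emph{additive} bound $(d-1)\rme_{0}(I)+\rme_{0}\bigl((I,\delta)/(\delta)\bigr)-\l(T)$ as soon as $\rme_{0}(I)>1$; no bookkeeping with $T$ recovers that loss. (In addition, Theorem~\ref{closure chain} is stated for Cohen--Macaulay, analytically unramified $R$, so you cannot invoke it for a generalized Cohen--Macaulay domain without comment.) The argument does not factor through $b(I)$ at all. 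Instead one fixes a parameter (minimal) reduction $Q$ of $I$ and uses the Lipman--Sathaye containment directly on the filtration, $\delta\,A_{n+d-1}\subseteq Q^{n}$, hence $A_{n+d-1}\subseteq Q^{n}:_{R}\delta$ and
\[ \l(R/A_{n+d-1})\;\geq\;\l\bigl(R/(Q^{n}:_{R}\delta)\bigr)\;=\;\l(R/Q^{n})-\l\bigl(R/(Q^{n},\delta)\bigr). \]
Comparing Hilbert polynomials, the index shift $n\mapsto n+d-1$ in the leading term contributes the summand $(d-1)\rme_{0}(I)$, the degree-$(d-1)$ polynomial $\l\bigl(R/(Q^{n},\delta)\bigr)$ contributes $\rme_{0}\bigl((I,\delta)/(\delta)\bigr)$, and the remaining term is $\rme_{1}(Q)$, whose nonpositivity (quantified by the local cohomology of the generalized Cohen--Macaulay ring $R$) is the source of $-\l(T)$. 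This direct length comparison is the missing idea, and it is where the additive shape of the bound comes from.
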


Let $I$ be an $\fkm$-primary ideal and let $Q$ be a parameter ideal such that ${\ds \cl{Q} = \cl{I}}$. The lower bound for $\rme_{1}(I)$ given in  Proposition~\ref{Ch1-C7-7} shows that we have 
\[ 0 \geq  \rme_{1}(Q) \geq  - \hdeg_{Q}(R) + \rme_{0}(Q)  = - \hdeg_{I}(R) + \rme_{0}(I)   \]
Therefore the set ${\ds \Omega(I) = \{ \rme_{1}(Q) \mid  \mbox{$Q$ is a parameter ideal and ${\ds \cl{Q}=\cl{I}}$ } \} }$ is a finite set. This leads to the question of when $\Omega(I)$ contains only one element. This is not always true even when $I=\fkm$.(See \cite[Example 6.3]{Chern6}.) However in some special cases, there is an affirmative answer to the question.

\begin{Theorem}{\rm \cite[Theorem 4.4]{Chern6} }
Let $(R, \fkm)$ be a Noetherian local ring of positive dimension and $I$ an $\fkm$-primary ideal. Suppose that, for every minimal reduction $Q$ of $I$, the Rees algebra ${\ds \clR(Q) }$ is a graded generalized Cohen-Macaulay ring. Then ${\ds \rme_{1}(Q)}$ is independent of the choice of minimal reduction $Q$ of $I$.
\end{Theorem}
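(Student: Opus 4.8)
The plan is to show that the generalized Cohen-Macaulay hypothesis on the Rees algebras forces $\rme_{1}(Q)$ to be computable from invariants intrinsic to $R$ alone, so that it cannot depend on the chosen reduction. First I would reduce to the case of an infinite residue field by the faithfully flat base change $R \rightsquigarrow R(x) = R[x]_{\fkm R[x]}$, which preserves the coefficients $\rme_{i}(Q)$, the notion of minimal reduction, and the generalized Cohen-Macaulay property of the Rees algebra. Under this assumption the minimal reductions of $I$ are exactly the parameter ideals $Q \subseteq I$ with $\cl{Q} = \cl{I}$; in particular they all share the single value $\rme_{0}(Q) = \rme_{0}(I)$, and each satisfies $\rme_{1}(Q) \in \Omega(I)$, which we already know to be a \emph{finite} set. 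Thus it suffices to prove that this finite set is a singleton.

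Next I would translate the hypothesis on $\clR(Q)$ into cohomological information about $R$ itself. Fixing a minimal reduction $Q=(x_{1},\ldots,x_{d})$, I pass to the extended Rees algebra $\clR'(Q)=R[x_{1}t,\ldots,x_{d}t,t^{-1}]$, on which $t^{-1}$ is a regular parameter element with $\clR'(Q)/(t^{-1}) \cong \G(Q)$ and $\clR'(Q)_{t^{-1}} \cong R[t,t^{-1}]$. Using the two standard short exact sequences $0 \lar \clR'(Q)(1) \lar \clR'(Q) \lar \G(Q) \lar 0$ and $0 \lar \clR(Q)_{+} \lar \clR(Q) \lar R \lar 0$ together with graded local cohomology, generalized Cohen-Macaulayness of $\clR(Q)$ descends first to $\G(Q)$ and then to $R$, so that every $H^{i}_{\fkm}(R)$ with $i<d$ has finite length. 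More importantly, the same analysis should show that the hypothesis forces $Q$ to be a \emph{standard} parameter ideal (equivalently, $\G(Q)$ Buchsbaum), since generalized Cohen-Macaulayness of the blowup algebra is precisely the condition that controls the $a$-invariants governing the degree-wise vanishing of $H^{i}_{\fkM}(\G(Q))$.

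With $Q$ standard, I would invoke the Stückrad–Vogel / Buchsbaum-type formula for the Hilbert coefficients of a standard parameter ideal in a generalized Cohen-Macaulay ring, in exact analogy with Theorem~\ref{Ch5-5-4}. This yields
\[ \rme_{1}(Q) = -\sum_{i=1}^{d-1} \binom{d-2}{i-1}\, \l\left(H^{i}_{\fkm}(R)\right). \]
The right-hand side depends only on the lengths of the local cohomology modules of $R$, which are manifestly independent of the chosen reduction; hence $\rme_{1}(Q)$ takes a single value across all minimal reductions of $I$, as claimed.

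The hard part is the middle step: showing that generalized Cohen-Macaulayness of $\clR(Q)$ forces $Q$ to be standard, so that the uniform formula applies. This is delicate because in a generalized Cohen-Macaulay ring that is not Buchsbaum a typical parameter ideal is \emph{not} standard, and $\rme_{1}(Q)$ genuinely varies (this is the source of the non-uniformity illustrated by \cite[Example 6.3]{Chern6}). The argument therefore hinges on careful bookkeeping of the graded pieces of $H^{i}_{\fkM}(\clR(Q))$ and their vanishing degrees, transferring finiteness of these modules into the standardness of $Q$; this is exactly where the full strength of the hypothesis—imposed on \emph{every} minimal reduction—is used.
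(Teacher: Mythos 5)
The survey states this result with a citation to \cite[Theorem 4.4]{Chern6} and does not reproduce a proof, so there is nothing in the paper to compare against line by line; judged on its own terms, your proposal is an outline with a genuine gap at its center. Your architecture is reasonable and almost certainly matches the spirit of the cited argument: reduce everything to the formula $\rme_{1}(Q) = -\sum_{i=1}^{d-1}\binom{d-2}{i-1}\l\left(H^{i}_{\fkm}(R)\right)$, whose right-hand side is intrinsic to $R$, hence independent of $Q$. But the entire content of the theorem is the implication you label ``the hard part'': that generalized Cohen--Macaulayness of $\clR(Q)$ forces $Q$ to be standard (or, more directly, forces that formula for $\rme_{1}(Q)$). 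You assert that ``the same analysis should show'' this and then describe, in general terms, that one must do ``careful bookkeeping of the graded pieces of $H^{i}_{\fkM}(\clR(Q))$'' --- without carrying out any of it. As it stands the proposal reduces the theorem to an unproved claim that is exactly as hard as the theorem itself. Everything else in the write-up (the descent of generalized Cohen--Macaulayness from $\clR(Q)$ to $\G(Q)$ and $R$ via the two fundamental exact sequences, the invocation of the Schenzel/Goto--Nishida formula for standard parameter ideals) is routine once that step is supplied.

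Two secondary points. First, the parenthetical ``$Q$ standard (equivalently, $\G(Q)$ Buchsbaum)'' overstates a known implication: standardness of $Q$ yields strong conclusions about $\G(Q)$ (Trung, Goto), but the two conditions are not interchangeable in a merely generalized Cohen--Macaulay ring, and you should not lean on that equivalence. Second, the opening reduction to an infinite residue field is not as innocuous as you suggest: the hypothesis is quantified over \emph{every} minimal reduction of $I$, and minimal reductions of $IR(x)$ in $R(x)=R[x]_{\fkm R[x]}$ need not be extended from minimal reductions of $I$, so the hypothesis does not automatically transfer to the base-changed situation. You would need to either argue that generic minimal reductions suffice or note that the source theorem already assumes an infinite residue field. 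Neither of these is fatal, but the missing middle step is.
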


As Vasconcelos' Conjecture~\ref{Ch1-Conj3} was settled in several cases,  Goto and Ozeki (\cite{GO11}) explored the uniform bounds for $\rme_{2}(Q)$, where $Q$ is a parameter ideal.

\begin{Theorem}\label{GO11-3-6}{\rm \cite[Theorems 3.2 and 3.6]{GO11}}
Let $(R, \fkm)$ be a Noetherian local ring of dimension $d>0$. Let ${\ds Q=(a_{1}, \ldots, a_{d})}$ be a parameter ideal for $R$. 
\begin{enumerate}[{\rm (1)}]
\item Suppose that $d=2$, ${\ds \depth(R) >0}$, and $a_{1}$ is superficial with respect to $Q$. Then
\[ - \l \left( H^{1}_{\fkm}(R) \right) \leq \rme_{2}(Q) \leq 0. \]
Moreover, ${\ds \rme_{2}(Q) =0}$ if and only if ${\ds a_{1}, a_{2}}$ forms a $d$-sequence in $R$.
\item Suppose that $d \geq 3$ and that $R$ is a generalized Cohen-Macaulay ring. Then 
\[ - \sum_{j=2}^{d-1} {{d-3}\choose{j-2}}\l \left(H^{j}_{\fkm}(R) \right)  \leq \rme_{2}(Q) \leq \sum_{j=1}^{d-2} {{d-3}\choose{j-1}}\l \left(H^{j}_{\fkm}(R) \right). \]
\end{enumerate}
\end{Theorem}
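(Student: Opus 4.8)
The plan is to prove both parts by dimension reduction, cutting $R$ down by one superficial (more generally filter-regular) element at a time and tracking how $\rme_2$ and the relevant lengths $\l(H^j_\fkm(R))$ transform. In all cases I would first replace $R$ by $R[X]_{\fkm R[X]}$ so that the residue field is infinite; this alters neither the coefficients $\rme_i(Q)$ nor the lengths $\l(H^j_\fkm(R))$ and guarantees that superficial elements exist.

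For part (1), since $\depth(R)>0$ and $a_1$ is superficial for $Q$, the element $a_1$ is a nonzerodivisor. Put $\bar R=R/(a_1)$ and $\bar Q=Q\bar R$, so that $\dim\bar R=1$. From the exact sequence
\[ 0 \lar (Q^{n+1}:a_1)/Q^{n} \lar R/Q^{n} \stackrel{a_1}{\lar} R/Q^{n+1} \lar \bar R/\bar Q^{n+1} \lar 0 \]
I would first compare leading terms to get $\rme_0(Q)=\rme_0(\bar Q)$ and $\rme_1(Q)=\rme_1(\bar Q)$, and then telescope the length identity over $n$ to isolate the constant term. I expect this to yield
\[ \rme_2(Q) = C_1 - C_2, \qquad C_1 = -\sum_{n \geq 0}\l\big(H^0_\fkm(\bar R)\cap \bar Q^{n+1}\big), \qquad C_2 = \sum_{n \geq 0}\l\big((Q^{n+1}:a_1)/Q^{n}\big), \]
where for $C_1$ I use that the one-dimensional ring $\bar R$ satisfies $\rme_1(\bar Q)=-\l(H^0_\fkm(\bar R))$ and that $\bar R/H^0_\fkm(\bar R)$ is Cohen--Macaulay, so its Hilbert function is already polynomial. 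Both sums are finite and nonnegative, so $C_1\le 0\le C_2$ gives the upper bound $\rme_2(Q)\le 0$ at once. For the lower bound I would identify $H^0_\fkm(\bar R)$ with $(0:_{H^1_\fkm(R)}a_1)$ via the cohomology sequence of $0\rar R\stackrel{a_1}{\rar}R\rar\bar R\rar 0$ (using $H^0_\fkm(R)=0$), and then show the two nonnegative quantities $C_2$ and $\sum_n\l(H^0_\fkm(\bar R)\cap\bar Q^{n+1})$ together are at most $\l(H^1_\fkm(R))$. Finally, $\rme_2(Q)=0$ forces $C_1=C_2=0$, i.e. $(Q^{n+1}:a_1)=Q^{n}$ and $H^0_\fkm(\bar R)\cap\bar Q^{n+1}=0$ for all $n$; these vanishings are exactly what encodes that $a_1,a_2$ is a $d$-sequence, and the converse is a direct check.

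For part (2), with $R$ generalized Cohen--Macaulay of dimension $d\geq 3$, I would induct on $d$ with part (1) as the base. Choose $x\in Q$ general so that $x$ is filter-regular (regular on $R/H^0_\fkm(R)$) and part of a minimal generating set of $Q$; then $\bar R=R/(x)$ is again generalized Cohen--Macaulay of dimension $d-1$ and $\bar Q=Q\bar R$ is a parameter ideal. Since $2\le d-1$, the coefficient $\rme_2$ is preserved up to a controlled correction under this reduction, so the task becomes bounding $\rme_2(\bar Q,\bar R)$ in terms of the $\l(H^{j}_\fkm(\bar R))$, which by the inductive hypothesis are governed by sums with coefficients ${{d-4}\choose\cdot}$. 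The passage back to $R$ uses the long exact cohomology sequence of $0\rar R\stackrel{x}{\rar}R\rar\bar R\rar 0$ (corrected by the finite-length module $(0:_R x)$), which gives $\l(H^j_\fkm(\bar R))\le \l(H^j_\fkm(R))+\l(H^{j+1}_\fkm(R))$; combining this with Pascal's identity ${{d-3}\choose{k}}={{d-4}\choose{k}}+{{d-4}\choose{k-1}}$ assembles the inductive estimates into the claimed $\sum_{j=1}^{d-2}{{d-3}\choose{j-1}}\l(H^j_\fkm(R))$ on the upper side and $-\sum_{j=2}^{d-1}{{d-3}\choose{j-2}}\l(H^j_\fkm(R))$ on the lower side. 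It is worth noting that the upper bound is precisely the value $\rme_2(Q)$ takes when $R$ is Buchsbaum, which is consistent with the constant-coefficient formula of Theorem~\ref{Ch5-5-4}.

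The main obstacle I expect is the bookkeeping of the correction terms. In part (1) the delicate point is the lower bound: controlling the small-$n$ behavior of $(Q^{n+1}:a_1)/Q^n$ and of $H^0_\fkm(\bar R)\cap\bar Q^{n+1}$ via the superficiality of $a_1$, rather than merely their asymptotics, so that both finite sums fit inside $\l(H^1_\fkm(R))$. In part (2) the difficulty is that in a generalized (as opposed to Buchsbaum) Cohen--Macaulay ring a single general element need not annihilate the lower local cohomology modules, so the reduction modulo $x$ is not clean, the cohomology sequence only yields inequalities, and one must also absorb the finite-length module $H^0_\fkm$ that can appear in the reduced rings (where the base hypothesis $\depth>0$ of part (1) may fail). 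Keeping every inequality oriented correctly for both the upper and the lower estimate, and verifying that the binomial coefficients telescope into exactly the claimed form, is where the real work lies.
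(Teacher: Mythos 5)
The survey itself contains no proof of this theorem; it is quoted from \cite[Theorems 3.2 and 3.6]{GO11}, so there is no in-paper argument to measure your proposal against and it must stand on its own. Its elementary half does stand: writing $R'=R/(a_1)$, $Q'=QR'$ and $W=H^{0}_{\fkm}(R')$, the telescoped difference formula together with $\l(R'/Q'^{\,n+1})=(n+1)\rme_{0}(Q)+\l(W)-\l(W\cap Q'^{\,n+1})$ gives exactly your identity $\rme_{2}(Q)=-\sum_{n\ge 0}\l(W\cap Q'^{\,n+1})-\sum_{n\ge 0}\l\left((Q^{n+1}:a_{1})/Q^{n}\right)$, whence $\rme_{2}(Q)\le 0$. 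The genuine gap is the lower bound of part (1): you must show that the sum of the two series is at most $\l(H^{1}_{\fkm}(R))$, and you offer no mechanism. The identification $W\cong(0:_{H^{1}_{\fkm}(R)}a_{1})$ bounds each single term $\l(W\cap Q'^{\,n+1})$ by $\l(H^{1}_{\fkm}(R))$ but says nothing about the sum over $n$, nor about the colon terms; there is no evident injection of either series, let alone both together, into $H^{1}_{\fkm}(R)$. That inequality is the entire content of this half of the statement, and the $d$-sequence equivalence, which you dismiss as ``a direct check,'' likewise amounts to a nontrivial characterization of $d$-sequences by the simultaneous vanishing of the two correction series.

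In part (2) the Pascal bookkeeping does assemble correctly for the inductive step when $d\ge 4$ (for the upper bound, and also for the lower bound because the inductive lower bound in dimension $d-1$ carries no $H^{1}$ term), but the base of the induction fails as you have set it up. For $d=3$ you land in a two-dimensional generalized Cohen--Macaulay quotient $D=R/(x)$ of possibly zero depth; splitting off $H^{0}_{\fkm}(D)$ shifts the constant term of the Hilbert polynomial, so $\rme_{2}(Q)=\rme_{2}\big(QD, D/H^{0}_{\fkm}(D)\big)+\l(H^{0}_{\fkm}(D))$, and part (1) then yields $\rme_{2}(Q)\ge -\l(H^{1}_{\fkm}(D))+\l(H^{0}_{\fkm}(D))$. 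Your stated estimate $\l(H^{j}_{\fkm}(D))\le \l(H^{j}_{\fkm}(R))+\l(H^{j+1}_{\fkm}(R))$ turns this into $\rme_{2}(Q)\ge -\l(H^{1}_{\fkm}(R))-\l(H^{2}_{\fkm}(R))$, which is strictly weaker than the asserted $-\l(H^{2}_{\fkm}(R))$ (the $d=3$ lower bound has no $H^{1}$ contribution). To recover the claim you need the finer sequence $0\rar H^{1}_{\fkm}(R)/xH^{1}_{\fkm}(R)\rar H^{1}_{\fkm}(D)\rar (0:_{H^{2}_{\fkm}(R)}x)\rar 0$ together with $\l(H^{1}_{\fkm}(R)/xH^{1}_{\fkm}(R))=\l(0:_{H^{1}_{\fkm}(R)}x)=\l(H^{0}_{\fkm}(D))$, so that the $H^{1}$ contributions cancel. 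The plan is repairable along these lines, but as written neither lower bound is actually established.
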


Recall that the normal Hilbert coefficients are denoted by ${\ds \cl{\rme}_{i}(I) }$ (See Definition~\ref{normalHilb}). 

\begin{Conjecture}\label{Conj2}{\rm {\bf (Vasconcelos' Positivity Conjecture \cite{Chern1})} Let $(R, \fkm)$ be  
an analytically unramified Noetherian local ring of positive dimension. Let $I$ be an $\fkm$-primary ideal in $R$. Then we have ${\ds \cl{\rme}_{1}(I) \geq 0}$. 
}\end{Conjecture}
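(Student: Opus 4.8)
The plan is to establish $\cl{\rme}_{1}(I) \geq 0$ by induction on $d = \dim R$, after the usual harmless reductions. First I would pass to the $\fkm$-adic completion $\widehat{R}$: this base change is faithfully flat, preserves every length $\l(R/\cl{I^{n+1}})$ and hence every normal Hilbert coefficient, and keeps the ring analytically unramified, since $\widehat{R}$ is reduced by hypothesis. Enlarging the residue field by the standard base change $R \to R[X]_{\fkm R[X]}$ (again faithfully flat, and preserving reducedness together with all the $\cl{\rme}_{i}$), I may assume $R$ is complete with infinite residue field, so that $I$ admits a minimal reduction $J$.

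For the base case $d = 1$, observe that an analytically unramified local ring of dimension one is automatically Cohen--Macaulay: being reduced it has no embedded primes, so $\fkm \notin \Ass(R)$ and therefore $\depth R \geq 1 = \dim R$. Thus $J = (a)$ is generated by a single nonzerodivisor, and $\l(R/\cl{I^{n+1}}) = (n+1)\rme_{0}(I) - \cl{\rme}_{1}(I)$ for $n \gg 0$. A telescoping computation, using $a\,\cl{I^{k}} \subseteq \cl{I^{k+1}}$ and the injectivity of multiplication by the nonzerodivisor $a$, yields
\[ \cl{\rme}_{1}(I) = \sum_{k \geq 0} \l\!\left(\cl{I^{k+1}}/a\,\cl{I^{k}}\right), \]
a convergent sum of nonnegative integers, so $\cl{\rme}_{1}(I) \geq 0$.

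For the inductive step I would choose $a \in I$ superficial for the normal filtration $\clN = \{\cl{I^{n}}\}$ and, since $R$ is reduced of positive dimension (whence $\depth R \geq 1$), a nonzerodivisor that is part of the minimal reduction $J$. Setting $\cl{R} = R/(a)$ and $\clN' = \{(\cl{I^{n}} + (a))/(a)\}$, the superficiality of $a$ forces the first $d$ normal Hilbert coefficients to descend, so that $\cl{\rme}_{1}(I) = \rme_{1}(\clN', \cl{R})$, the first Hilbert coefficient of a good filtration on a ring of dimension $d-1$. The difficulty is that $\clN'$ is \emph{not} the normal filtration of $I\cl{R}$: one has only the sandwich $(I\cl{R})^{n} \subseteq (\cl{I^{n}}+(a))/(a) \subseteq \cl{(I\cl{R})^{n}}$, and moreover $\cl{R}$ need not be analytically unramified, since modding out by $a$ can create an embedded prime at $\fkm$. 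Hence the induction hypothesis does not apply verbatim. The device that closes this gap is the Huneke--Itoh intersection theorem: when $J$ is generated by a regular sequence one has $\cl{I^{n+1}} \cap J = J\,\cl{I^{n}}$ for all $n$, which lets me identify the image filtration with the genuine normal filtration of $\cl{R}$ up to a discrepancy of finite length concentrated in $H^{0}_{\fkm}(\cl{R})$, and then to split off that finite-length part without decreasing $\rme_{1}$.

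I expect the main obstacle to be precisely the non--Cohen--Macaulay case, because the Huneke--Itoh theorem requires $J$ to be generated by a regular sequence, which is available only when $R$ is Cohen--Macaulay. In the Cohen--Macaulay case the result is in effect already contained in the material above: $J$ is a regular sequence, the normal Sally module $S_{J}(\clN)$ has nonnegative multiplicity, and the chain of Theorem~\ref{closure chain} together with Corollary~\ref{Sally bounds}(3) gives $\cl{\rme}_{1}(I) \geq \rme_{1}(I) \geq \rme_{0}(I) - \l(R/I) = \l(I/J) \geq 0$. For general analytically unramified $R$ I would therefore try to reduce to the Cohen--Macaulay case by passing to the normalization $S$, which is module-finite over $R$ by finiteness of integral closure for analytically unramified rings and is normal; since $\cl{I^{n}} = \cl{I^{n}S} \cap R$, the coefficients $\cl{\rme}_{1}(I)$ in $R$ and in $S$ differ only in lower-order length terms, and $S$ is genuinely closer to Cohen--Macaulay (it satisfies Serre's $(\rmS_{2})$, and is even Cohen--Macaulay when $\dim \leq 2$). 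Making this comparison precise in all dimensions, and in particular pushing the regular-sequence hypothesis of Huneke--Itoh through the hyperplane section when $R$ is not Cohen--Macaulay, is the crux on which the whole argument turns.
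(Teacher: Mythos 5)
Your base case is correct (and your telescoping formula $\cl{\rme}_{1}(I)=\sum_{k\geq 0}\l(\cl{I^{k+1}}/a\,\cl{I^{k}})$ is a perfectly good alternative to the paper's $\cl{\rme}_{1}(I)=\l(\cl{R}/R)$ in dimension one), and your Cohen--Macaulay case is fine. But the argument does not close, and you say so yourself: the reduction of a general analytically unramified ring to the Cohen--Macaulay case is exactly where the proof has to happen, and the device you propose for it fails. Passing to the normalization $S=\cl{R}$ and asserting that ``$\cl{\rme}_{1}(I)$ in $R$ and in $S$ differ only in lower-order length terms'' is not true in general: $S/R$ is supported on the non-normal locus of $R$, which can have dimension as large as $d-1$, so $\l_{R}(S/\cl{I^{n}S})-\l(R/\cl{I^{n}})$ can grow like a polynomial of degree $d-1$ in $n$ and therefore perturbs $\cl{\rme}_{1}$ itself, not merely the lower coefficients. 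Likewise the superficial-element/Huneke--Itoh route stalls for precisely the reasons you identify (the image filtration is not the normal filtration of $R/(a)$, and $R/(a)$ is no longer analytically unramified), and you offer no way past them.

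The paper's proof (Theorem~\ref{e1bar-1-1}, Goto--Hong--Mandal) circumvents both obstructions differently, and in doing so uses a hypothesis you never invoke: unmixedness, which \cite[Example 2.1]{e1bar} shows cannot be dropped, so no argument that ignores it can succeed. For $d\geq 2$ one has the associativity-type \emph{inequality}
\[ \cl{\rme}_{1}(I) \;\geq\; \sum_{\fkp \in \Ass(R)} \l_{R}\bigl( S(\fkp)/\fkm_{S(\fkp)} \bigr)\cdot \cl{\rme}_{1}\bigl(IS(\fkp)\bigr), \qquad S(\fkp)=\cl{R/\fkp}, \]
which is where unmixedness enters (every $\fkp\in\Ass(R)$ must satisfy $\dim R/\fkp=d$) and which reduces the problem to a complete normal local domain; for $d=2$ such a ring is Cohen--Macaulay and one concludes as you do. For $d\geq 3$ the descent in dimension is carried out not by a superficial element of $R$ but by the generic element $x=\sum a_{i}z_{i}$ in $D'=(R[z_{1},\ldots,z_{n}]/(x))_{\fkm R[z]}$, where the specialization theorem of Hong--Ulrich \cite{HU14} gives $\cl{\rme}_{1}(I)=\cl{\rme}_{1}(ID')$ on the nose: the integral closures of the powers themselves specialize, so one stays inside the class of normal filtrations and never needs Huneke--Itoh or an ad hoc comparison of filtrations on a hyperplane section. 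That is the missing idea.
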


Goto, Hong, and Mandal settled the Positivity Conjecture affirmatively.

\begin{Theorem}\label{e1bar-1-1}{\rm \cite[Theorem 1.1]{e1bar} }
Let $(R, \fkm)$ be an analytically unramified unmixed local ring of positive dimension. Then ${\ds \cl{\rme}_{1}(I) \geq 0 }$ for every $\fkm$-primary ideal $I$.
\end{Theorem}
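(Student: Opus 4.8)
The plan is to prove the inequality by reducing to the case of a parameter ideal and then inducting on $d=\dim R$, closely paralleling the homological bookkeeping used in the proof of Theorem~\ref{Ch5-3-1}.

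First I would make the standard harmless reductions. Passing to $R[t]_{\fkm R[t]}$ we may assume the residue field is infinite, and passing to the $\fkm$-adic completion we may assume $R$ is complete; here analytic unramifiedness is essential, since it guarantees $\cl{I^n}\,\widehat{R} = \cl{I^n \widehat R}$, so that the normal Hilbert coefficients are unchanged, while unmixedness is preserved because it is a condition on $\widehat R$. The key simplification is then to replace $I$ by a minimal reduction. If $Q$ is a minimal reduction of $I$ (a parameter ideal, since the residue field is infinite), then $Q \subseteq I \subseteq \cl Q$ forces $\cl{Q^n} = \cl{I^n}$ for every $n$: the inclusion $Q^n \subseteq I^n$ gives one containment, while $I^n \subseteq (\cl Q)^n \subseteq \cl{Q^n}$ gives the other. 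Hence the two normal filtrations coincide and $\cl{\rme}_1(I) = \cl{\rme}_1(Q)$, so it suffices to prove $\cl{\rme}_1(Q) \geq 0$ for a parameter ideal $Q$.

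For the base case $d = 1$, unmixedness forces $\fkm \notin \Ass R$, so $R$ is Cohen-Macaulay and $Q = (a)$ with $a$ a nonzerodivisor. Since $a^{n+1} R \subseteq \cl{Q^{n+1}}$ and $\l(R/a^{n+1}R) = (n+1)\l(R/aR) = (n+1)\rme_0(Q)$, comparing $\l(R/\cl{Q^{n+1}}) = (n+1)\rme_0(Q) - \l(\cl{Q^{n+1}}/a^{n+1}R)$ with the normal Hilbert polynomial yields $\cl{\rme}_1(Q) = \l(\cl{Q^{n+1}}/a^{n+1}R) \geq 0$ for $n \gg 0$; alternatively this is immediate from the chain $0 \le \rme_1(Q) \le \cl{\rme}_1(Q)$ of Theorem~\ref{closure chain}. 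For the inductive step $d \geq 2$, I would mimic the machinery of Theorem~\ref{Ch5-3-1}: write $R$ as a quotient of a Gorenstein local ring $S$, choose $y \in Q$ a superficial nonzerodivisor that is part of a minimal generating set of $Q$, and pass to $\bar R = R/yR$ with parameter ideal $\bar Q = Q\bar R$. Using the short exact sequences built from multiplication by $y$ together with the associated long exact sequences in local cohomology, one would express $\cl{\rme}_1(Q)$ as $\cl{\rme}_1(\bar Q)$ corrected by nonnegative length terms, and then apply the inductive hypothesis.

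The hard part will be the two ways in which $\bar R$ fails to inherit the hypotheses: integral closure does not commute with the hyperplane section, so $\cl{Q^n}\bar R \neq \cl{(\bar Q)^n}$ in general, and $\bar R$ need not be reduced or unmixed, so the inductive hypothesis cannot be invoked for $\bar R$ verbatim. Overcoming this requires (i) passing to $\bar R$ modulo its finite-length submodule $H^0_\fkm(\bar R)$, equivalently its unmixed component, to restore the inductive hypotheses, and (ii) showing, via the superficiality of $y$ and the module-finiteness of the integral closures guaranteed by analytic unramifiedness, that the discrepancy lengths $\l(\cl{Q^{n}} \cap yR / y\cl{Q^{n-1}})$ and $\l(\cl{(\bar Q)^n}/\cl{Q^n}\bar R)$ grow as polynomials of degree at most $d-2$ and enter with the sign that makes positivity descend. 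Pinning down the sign of these correction terms is the crux of the argument.
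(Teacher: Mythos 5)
Your reductions (infinite residue field, completion, replacing $I$ by a minimal reduction $Q$ via $\cl{I^n}=\cl{Q^n}$) and your base case $d=1$ are fine, but the inductive step has a genuine gap, and you have located it yourself without resolving it: the sign of the correction term goes the \emph{wrong} way. If $y$ is an ordinary superficial nonzerodivisor and $\bar R=R/yR$, then the image filtration $\clF_1=\{\cl{Q^n}\bar R\}$ satisfies $\rme_1(\clF_1)=\cl{\rme}_1(Q)$ (discrete-derivative argument), while the termwise inclusion $\cl{Q^n}\bar R\subseteq\cl{\bar Q^{\,n}}$ gives $\l(\bar R/\cl{\bar Q^{\,n+1}})\leq\l(\bar R/\clF_1(n+1))$; comparing leading terms of the difference (both filtrations have the same $\rme_0$) yields $\cl{\rme}_1(Q)\leq\cl{\rme}_1(\bar Q)$. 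So positivity of $\cl{\rme}_1(\bar Q)$ tells you nothing about $\cl{\rme}_1(Q)$: the discrepancy $\l\bigl(\cl{\bar Q^{\,n}}/\cl{Q^n}\bar R\bigr)$ is nonnegative but enters with the sign that makes the inequality useless for descent. The only way to close the induction along these lines is to show the discrepancy is negligible (degree $<d-2$), i.e., that integral closure genuinely commutes with the hyperplane section, and that is false for a run-of-the-mill superficial element. There is also a secondary issue you wave at: unmixedness does not pass to $\bar R/H^0_{\fkm}(\bar R)$ without further hypotheses.

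The paper's proof avoids both problems by a different pair of devices. First, instead of working with $R$ directly, it uses an associativity-type inequality $\cl{\rme}_{1}(I)\geq\sum_{\fkp\in\Ass(R)}\l_R\bigl(S(\fkp)/\fkm_{S(\fkp)}\bigr)\cdot\cl{\rme}_{1}(IS(\fkp))$ with $S(\fkp)=\cl{R/\fkp}$, reducing everything to complete \emph{normal} local domains; the induction then starts at $d=2$, where $S(\fkp)$ is Cohen--Macaulay and $\cl{\rme}_1\geq\rme_1\geq 0$ by the chain of Theorem~\ref{closure chain}. Second, for $d\geq 3$ the hyperplane is not a superficial element of $R$ but a \emph{generic} element $x=\sum a_iz_i$ in $T_{\fkq}=R[z_1,\ldots,z_n]_{\fkm R[z_1,\ldots,z_n]}$, and the Hong--Ulrich specialization theorem \cite{HU14} delivers the exact equality $\cl{\rme}_{1}(I)=\cl{\rme}_{1}(I')=\cl{\rme}_{1}(ID')$ with no correction term at all. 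That equality is precisely the ingredient your proposal is missing, and it cannot be extracted from superficiality alone.
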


\begin{proof} We outline the proof. We may assume that $R$ is complete. If $d=1$, then ${\ds \cl{\rme}_{1}(I) = \l(\cl{R}/R) \geq 0}$. Suppose that $d \geq 2$. Then 
\[ \cl{\rme}_{1}(I) \geq \sum_{\fkp \in {\tiny \Ass(R)}} \l_{R} \left( S(\fkp) / \fkm_{S(\fkp)} \right) \cdot \cl{\rme}_{1} (I S(\fkp) ), \]
where $S(\fkp)$ is the integral closure ${\ds \cl{R/\fkp} }$. Therefore, it is enough to prove that ${\ds \cl{\rme}_{1} (I S(\fkp) ) \geq 0}$ for each ${\ds \fkp \in \Ass(R)}$. We prove this by induction on $d$.  Let $d=2$. Then, since $S(\fkp)$ is Cohen-Macaulay, we get
\[ \cl{\rme}_{1} (I S(\fkp) )  \geq  \rme_{1} (I S(\fkp) )  \geq 0. \]
Suppose $d \geq 3$. By passing to $S(\fkp)$, we may assume that $R$ is a normal complete local ring.  Let ${\ds I=(a_{1}, \ldots, a_{n})}$, where ${\ds n = \nu(I)}$. Let 
\[ T=R[z_{1}, \ldots, z_{n}], \quad \fkq=\fkm T, \quad x = \sum_{i=1}^{n} a_{i}z_{i}, \quad D=T/x T, \quad I'=I T_{\fkq}, \quad  D'=D_{\fkq}\]
where $z_{1}, \ldots, z_{n}$ are indeterminates  over $R$.
Using the technique given in the proof of \cite[Theorem 2.1]{HU14}, it can be shown that 
\[  \cl{\rme}_{1}(I) = \cl{\rme}_{1}(I') = \cl{\rme}_{1}(ID').\]
Since ${\ds \dim(D') = d-1}$, the assertion follows by the induction hypothesis. 
\end{proof}

\cite[Example 2.1]{e1bar} shows that it is necessary to assume that $R$ is unmixed in Theorem~\ref{e1bar-1-1}.
As a consequence of Theorem~\ref{e1bar-1-1}, we obtain the following.

\begin{Corollary}\label{e1bar-2-2}{\rm \cite[Corollary 2.2]{e1bar} }
Let $(R, \fkm)$ be an analytically unramified unmixed local ring of positive dimension. Let $Q$ be a parameter ideal.  Suppose that ${\ds \cl{\rme}_{1}(Q) = \rme_{1}(Q) }$. Then $R$ is a regular local ring and $Q$ is normal.
\end{Corollary}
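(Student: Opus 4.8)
The plan is to first pin down both Chern coefficients, then extract normality and regularity as separate conclusions.

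First I would reduce to the case where $R$ is complete with infinite residue field, since passing to $R[X]_{\fkm R[X]}$ leaves $\rme_1(Q)$, $\cl{\rme}_1(Q)$, the normality of $Q$, and the regularity of $R$ unchanged. Because $Q$ is a parameter ideal we have $\rme_1(Q) \leq 0$, while Theorem~\ref{e1bar-1-1} gives $\cl{\rme}_1(Q) \geq 0$ as $R$ is analytically unramified and unmixed of positive dimension. The hypothesis $\cl{\rme}_1(Q) = \rme_1(Q)$ then squeezes both to zero, since $0 \leq \cl{\rme}_1(Q) = \rme_1(Q) \leq 0$. As $R$ is unmixed of positive dimension with $\rme_1(Q) = 0$, Theorem~\ref{Ch3-2-1} forces $R$ to be Cohen-Macaulay. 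In particular $Q$ is generated by a regular sequence and $\G(Q) \cong (R/Q)[T_1, \ldots, T_d]$, so the surviving hypothesis is precisely $\cl{\rme}_1(Q) = 0$.

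Next I would deduce that $Q$ is normal from the strict monotonicity of $\rme_1$ along chains of integral extensions. The Rees algebra $\clR(Q)$ of a parameter ideal in a Cohen-Macaulay ring is Cohen-Macaulay, hence satisfies Serre's condition $(\rmS_2)$. If $Q$ were not normal we would have $\clR(Q) \subsetneq \cl{\clR}(Q)$, and applying Theorem~\ref{closure chain} with $A = \clR(Q)$ and $B = \cl{\clR}(Q)$ would yield $\rme_1(Q) = \rme_1(A) < \rme_1(B) = \cl{\rme}_1(Q)$, contradicting $\rme_1(Q) = \cl{\rme}_1(Q)$. Hence $\clR(Q) = \cl{\clR}(Q)$, that is, $Q$ is normal.

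For regularity I would induct on $d = \dim R$, carrying the hypothesis $\cl{\rme}_1(Q) = 0$ down to dimension one. When $d = 1$, the computation in the proof of Theorem~\ref{e1bar-1-1} gives $\cl{\rme}_1(Q) = \l(\cl{R}/R)$, so $\cl{\rme}_1(Q) = 0$ forces $\cl{R} = R$; a one-dimensional analytically unramified local ring equal to its own integral closure is a discrete valuation ring, hence regular. For $d \geq 2$ I would pass to a generic hyperplane section $R/yR$ — using the device $T = R[z_1, \ldots, z_d]$, $x = \sum x_i z_i$ employed in the proof of Theorem~\ref{e1bar-1-1} and in \cite{HU14} — chosen so that $R/yR$ is again analytically unramified and Cohen-Macaulay of dimension $d-1$ with $\cl{\rme}_1(Q(R/yR)) = \cl{\rme}_1(Q) = 0$. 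By induction $R/yR$ is regular, and since $y$ is a nonzerodivisor we get $\edim R \leq \edim(R/yR) + 1 = (d-1) + 1 = \dim R$, so $R$ is regular. The main obstacle is exactly this last step: producing a section $y$ for which $R/yR$ remains analytically unramified and $y$ is superficial for the normal filtration, so that the vanishing $\cl{\rme}_1 = 0$ genuinely descends; this is the delicate point handled by the $R[z_1, \ldots, z_d]$-construction in Theorem~\ref{e1bar-1-1}, and the new ingredient is transporting \emph{regularity} (rather than merely the inequality $\cl{\rme}_1 \geq 0$) back up the induction via the $\edim$ estimate.
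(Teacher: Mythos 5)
Your opening reduction, the squeeze $0 \leq \cl{\rme}_{1}(Q) = \rme_{1}(Q) \leq 0$, and the passage to Cohen--Macaulayness via Theorem~\ref{Ch3-2-1} are exactly the route the paper indicates (this is the content of the remark following the corollary), and they are correct. Your normality argument is a genuinely different and rather clean route: once $R$ is Cohen--Macaulay, $\clR(Q)$ is Cohen--Macaulay (Barshay; or via Goto--Shimoda/Trung--Ikeda, since $\G(Q) \cong (R/Q)[T_1, \ldots, T_d]$ has negative $a$-invariant), hence satisfies $(\rmS_2)$, and Theorem~\ref{closure chain} applied with $A = \clR(Q)$ and $B = \cl{\clR}(Q)$ converts $A \subsetneq B$ into the strict inequality $\rme_{1}(Q) < \cl{\rme}_{1}(Q)$, which the hypothesis forbids. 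This keeps the normality step entirely inside results quoted in this survey; by contrast \cite{e1bar} extracts $\cl{Q} = Q$ from a Northcott-type inequality for the normal filtration, $\cl{\rme}_{1}(Q) \geq \rme_{0}(Q) - \l(R/\cl{Q}) = \l(\cl{Q}/Q)$.

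The regularity step is where there is a real gap. You are in effect reproving Goto's theorem that a local ring possessing an integrally closed parameter ideal is regular, and the inductive hyperplane-section argument does not come for free. Concretely: (i) the inductive hypothesis requires the section to be analytically unramified and unmixed, and neither is automatic for $R/yR$ or for the generic section $(T/xT)_{\fkq}$ when $R$ is merely a reduced complete Cohen--Macaulay ring --- in the proof of Theorem~\ref{e1bar-1-1} the $R[z_1,\ldots,z_n]$-device is only run \emph{after} reducing to a normal complete local domain, where the section can be controlled; and (ii) the descent of the vanishing is the Hong--Ulrich specialization equality, not superficiality for the $Q$-adic filtration: the naive containment $\cl{Q^{n}}(R/yR) \subseteq \cl{Q^{n}(R/yR)}$ only yields $0 = \cl{\rme}_{1}(Q) \leq \cl{\rme}_{1}(Q(R/yR))$, which is the inequality in the useless direction, so the induction cannot start without the full force of \cite{HU14} in a setting where its hypotheses have not been checked. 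Naming the obstacle, as you do, is not the same as closing it. The economical fix is to stop after your normality step: you already have $\cl{Q} = Q$, and the theorem of Goto on integrally closed parameter ideals (which is how the original argument concludes) gives regularity of $R$ at once.
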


By the nonpositivity of $\rme_{0}(Q)$ and Theorem~\ref{e1bar-1-1}, the assumptions in Corollary~\ref{e1bar-2-2} imply that ${\ds \cl{\rme}_{1}(Q) =0 }$. Thus, it is reasonable to expect that the vanishing of \cite{e1bar} may play a role in the normality of the ideal $Q$. 

\begin{Question}\label{e1barzero}{\rm
Let $(R, \fkm)$ be an analytically unramified unmixed local ring of positive dimension. Let $I$ be an $\fkm$-primary ideal. Is it true that, if ${\ds \cl{\rme}_{1}(I) =0 }$, then $\cl{R}$ is a regular ring and $I \cl{R}$ is normal?
}\end{Question}

Goto, Hong, and Mandal gave a positive answer to Question~\ref{e1barzero}  in some special cases.

\begin{Theorem}{\rm \cite[Theorem 3.1 and Corollary 3.2]{e1bar}}
Let $(R, \fkm)$ be an analytically unramified local ring of positive dimension $d$. Let $I$ be an $\fkm$-primary ideal.
Suppose $R$ satisfies one of the following conditions.
\begin{enumerate}[{\rm (i)}]
\item There exists an overring $S$ of $R$ such that $S$ is a finitely generated $R$-module with ${\ds \dim_{R}(S/R) <d }$ and ${\ds \depth_{R}(S) =d}$.
\item The ring $R$ is unmixed and $d=2$. 
\end{enumerate}
If ${\ds \cl{\rme}_{1}(I) =0 }$, then $\cl{R}$ is a regular ring and $I \cl{R}$ is normal.
\end{Theorem}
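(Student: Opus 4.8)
The plan is to reduce everything to the parameter-ideal case already settled in Corollary~\ref{e1bar-2-2}, exploiting that condition~(i) supplies a Cohen-Macaulay overring and that condition~(ii) is in fact a special case of~(i). I would begin with two standard reductions. Since $\cl{R}$ is module-finite over the analytically unramified ring $R$, and since normal Hilbert coefficients, regularity, and normality of $I\cl{R}$ are all insensitive to $\fkm$-adic completion (with $\widehat{\cl R}=\cl{\widehat R}$), I may assume $R$ is complete; adjoining an indeterminate and localizing, I may also assume the residue field is infinite, so that $I$ has a minimal reduction that is a parameter ideal. The key preliminary observation is that condition~(ii) implies condition~(i): when $d=2$ and $R$ is unmixed (hence reduced), the normalization $S=\cl R$ is module-finite, two-dimensional and normal, therefore Cohen-Macaulay, so $\depth_R(\cl R)=2=d$, while $\dim_R(\cl R/R)\le d-1<d$ because $R\hookrightarrow\cl R$ is an isomorphism at the minimal primes. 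Thus it suffices to treat condition~(i).

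Next I would compare $R$ with the given overring $S$, which has the same normalization $\cl R=\cl S$. Writing $\widetilde{I^{m}}$ for the integral closure of $I^{m}$ in $\cl R$, the valuative description of integral closure gives $\cl{I^{m}}=\widetilde{I^{m}}\cap R$ and $\cl{I^{m}S}=\widetilde{I^{m}}\cap S$, whence $R\cap\cl{I^{n+1}S}=\cl{I^{n+1}}$. From the short exact sequence $0\to R\to S\to S/R\to0$ one then obtains
\[ \l\big(S/\cl{I^{n+1}S}\big)-\l\big(R/\cl{I^{n+1}}\big)=\l\big(S/(R+\cl{I^{n+1}S})\big), \]
and the right-hand side, being a quotient of $(S/R)/I^{n+1}(S/R)$, grows like a polynomial of degree $\dim_R(S/R)\le d-1$. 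Hence the two degree-$d$ normal Hilbert polynomials share the same $\rme_0$, and their difference is a nonnegative polynomial of degree at most $d-1$; reading off the degree-$(d-1)$ term yields $\cl{\rme}_1(I)\ge\cl{\rme}_1(IS)$. Since $\depth_R(S)=d=\dim S$, every localization of $S$ at a maximal ideal is Cohen-Macaulay of dimension $d$, so $\cl{\rme}_1(IS)\ge0$ by Theorem~\ref{e1bar-1-1}; combined with the hypothesis $\cl{\rme}_1(I)=0$ this forces $\cl{\rme}_1(IS)=0$.

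I would then finish inside the Cohen-Macaulay ring $S$. As $S$ is complete and semilocal, I may argue at each local factor $S'$, an analytically unramified, unmixed (indeed Cohen-Macaulay) local ring of dimension $d$ with $\cl{\rme}_1(IS')=0$. There $\cl{\rme}_1(IS')\ge\rme_1(IS')$ (from $\cl{I^{n}S'}\supseteq(IS')^{n}$), and Northcott's inequality gives $\rme_1(IS')\ge\rme_0(IS')-\l(S'/IS')\ge0$; the vanishing therefore collapses the whole chain and forces $\rme_0(IS')=\l(S'/IS')$, so $IS'$ equals its minimal reduction and is a parameter ideal with $\cl{\rme}_1(IS')=\rme_1(IS')=0$. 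Corollary~\ref{e1bar-2-2}, applied to $S'$, shows $S'$ is regular and $IS'$ is normal. Running this over all factors gives that $S=\cl S=\cl R$ is regular and $I\cl R=IS$ is normal, as asserted.

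The main obstacle, and the step deserving the most care, is the comparison in the second paragraph: one must verify the integral-closure identity $R\cap\cl{I^{n+1}S}=\cl{I^{n+1}}$ (using that $R$ and $S$ share the normalization $\cl R$) and must control the difference of the two normal Hilbert functions even though $S/R$ has infinite length when $d\ge2$. It is precisely the hypothesis $\dim_R(S/R)<d$ that keeps this difference of subleading order and lets the vanishing of $\cl{\rme}_1(I)$ propagate to $S$; without an overring close to $R$ in codimension one this propagation fails, which is why the general answer to Question~\ref{e1barzero} is not accessible by this route. The remaining bookkeeping (reduction to the complete, infinite-residue-field case and passage through the semilocal ring $S$) is routine.
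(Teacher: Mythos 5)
The survey states this theorem without reproducing its proof, so there is no in-paper argument to compare against line by line; but your proposal is correct and is essentially the route of the cited source \cite{e1bar}: reduce (ii) to (i) by taking $S=\cl{R}$ (normal of dimension $2$, hence Cohen--Macaulay), use $\dim_R(S/R)<d$ to get $0=\cl{\rme}_{1}(I)\geq \cl{\rme}_{1}(IS)\geq 0$ from Theorem~\ref{e1bar-1-1}, and then finish in the local factors of $S$ with Northcott's inequality and Corollary~\ref{e1bar-2-2}, concluding a posteriori that $S=\cl{R}$. The only points requiring care --- the identity $R\cap\cl{I^{n+1}S}=\cl{I^{n+1}}$ (valid because $S$ is a module-finite birational extension, so $R$ and $S$ admit the same valuation overrings) and the routine passage to the completion with infinite residue field --- are exactly the ones you flag, and they go through.
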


\bigskip

\section{Euler Characteristics and $j$-Transforms}\label{Euler}

In this section, we discuss how Hilbert functions, multiplicities, and Chern coefficients can be generalized to invariants that provide comparable information when the module $M$ is not unmixed or the ideal $I$ is not $\fkm$-primary. 

\medskip

Let $(R, \fkm)$ be a Noetherian local ring and $M$ a finitely generated $R$-module of dimension $s >0$. 
Let ${\ds \bfx=x_{1}, \ldots, x_{s}}$ be a system of parameters for $M$. We denote the $j$th Koszul homology of ${\ds \bfx}$ with coefficients in $M$ by ${\ds H_{j}(\bfx, M)= H_{j}(x_{1}, \ldots, x_{s}, M)}$.  The {\em first Euler characteristic} of $M$ relative to $\bfx$ is
\[  \chi_{1}(\bfx, M) = \sum_{j \geq 1} (-1)^{j-1} \l \left( H_{j} (\bfx, M)  \right).  \]
When $M=R$, we write ${\ds \chi_{1}(\bfx)= \chi_{1}(\bfx, R)}$ and ${\ds H_{j}(\bfx)= H_{j}(\bfx, M)}$.
Let $Q=(\bfx)$ be an ideal in $R$. By a classical result of Serre \cite{Serrebook}, we have 
\[  \chi_{1}(\bfx, M) = \l(M/QM) - \rme_{0}(Q, M). \]  In analogy to 
\[ \Lambda(M) = \{ \rme_{1}(Q, M) \mid Q \; \mbox{is a parameter ideal for} \; M \},\] we define
\[\Xi(M) = \{\chi_1(\bfx, M) \mid  \bfx   \; \mbox{is a system of parameters for} \; M \}.\]
Then the properties of the set $\Xi(M) $ mirror the properties of the set $\Lambda(M)$ listed in previous section without assuming $M$ is unmixed.

\begin{Theorem}\label{Euler1}{\rm  (\cite[Appendix II]{Serrebook}, \cite{SV86}, \cite{CST78})}
Let $(R, \fkm)$ be a Noetherian local ring  and $M$ a finitely generated $R$-module of dimension $s >0$. Let ${\ds \bfx =x_{1}, \ldots, x_{s}}$ be a system of parameters for $M$. 
\begin{enumerate}[{\rm (1)}]
\item $\chi_{1}(\bfx, M) \geq 0$.
\item $M$ is Cohen-Macaulay if and only if $\chi_{1}(\bfx, M) = 0$.
\item $M$ is Buchsbaum if and only if $\Xi(M)$ contains only one element.
\item $M$ is generalized Cohen-Macaulay if and only if $\Xi(M)$ is a finite set.
\end{enumerate}
\end{Theorem}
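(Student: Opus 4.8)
The plan is to organize everything around Serre's identity $\chi_1(\bfx, M) = \l(M/QM) - \rme_0(Q, M)$ with $Q = (\bfx)$, which is recorded in the statement above. This identity rewrites the set $\Xi(M)$ as
\[ \Xi(M) = \{ \l(M/QM) - \rme_0(Q, M) \mid \bfx \text{ is a system of parameters for } M \}, \]
so that all four assertions become statements about the Stückrad--Vogel parameter difference $\l(M/QM) - \rme_0(Q, M)$. I would then treat (1)--(2) together by Koszul homology and (3)--(4) together by reading off the definitions.

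For (1) and (2) I would induct on $s = \dim M$. Via Serre's formula, (1) is the classical inequality $\rme_0(Q, M) \le \l(M/QM)$; the base case $s = 1$ is immediate, since $H_j(x, M) = 0$ for $j \ge 2$ and $H_1(x, M) = (0 :_M x)$, so $\chi_1(x, M) = \l(0 :_M x) \ge 0$, with equality exactly when $x$ is a nonzerodivisor on $M$, i.e. when $M$ is Cohen--Macaulay. For the inductive step I would split off the last parameter using $K(\bfx, M) = K(x_1, \dots, x_{s-1}, M) \otimes K(x_s)$ and the resulting long exact sequence in Koszul homology, choosing $x_s$ superficial so that the lengths entering the alternating sum stay finite and the multiplicity reduces to that of $M/x_sM$; non-negativity then propagates, giving (1). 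For the nontrivial direction of (2), namely $\chi_1(\bfx, M) = 0 \Rightarrow M$ Cohen--Macaulay, I would invoke the positivity of all the higher partial Euler characteristics $\chi_i(\bfx, M) = \sum_{j \ge i} (-1)^{j-i} \l(H_j(\bfx, M)) \ge 0$ (Serre) to force inductively that $x_s$ is a nonzerodivisor on $M$ and that $\chi_1(x_1, \dots, x_{s-1}, M/x_sM) = 0$; descending, $\bfx$ is an $M$-regular sequence, so $M$ is Cohen--Macaulay. The reverse implication of (2) is the easy one: for a full system of parameters forming a regular sequence the Koszul complex is acyclic in positive degrees.

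For (3) and (4) I would read the conclusions off the identification above. For (3), the definition of a Buchsbaum module recorded in Section~\ref{Chern} says precisely that $\l(M/QM) - \rme_0(Q, M) = \sum_{i=0}^{s-1} \binom{s-1}{i} \l(H^i_\fkm(M))$ for \emph{every} parameter ideal $Q$; hence if $M$ is Buchsbaum this difference is constant and $|\Xi(M)| = 1$. The converse---constancy of the difference forces $M$ Buchsbaum---is the substantive content and is exactly the St\"uckrad--Vogel characterization \cite{SV86}. For (4), $M$ is generalized Cohen--Macaulay precisely when $H^i_\fkm(M)$ has finite length for $i < s$, and this is equivalent to the boundedness $\sup_{\bfx}\big(\l(M/QM) - \rme_0(Q, M)\big) < \infty$ of the parameter difference; since by (1) the elements of $\Xi(M)$ are non-negative integers, boundedness is the same as $\Xi(M)$ being a finite set, giving the equivalence via \cite{CST78}.

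The hardest steps are the converse directions in (3) and (4) and the vanishing direction of (2). In (2) the implication $\chi_1 = 0 \Rightarrow$ Cohen--Macaulay rests on the positivity of \emph{all} the partial Euler characteristics, not merely $\chi_1 \ge 0$, and so requires the finer Koszul-homology analysis. In (3) and (4) the real work is entirely on the module-theoretic side: translating constancy, respectively boundedness, of $\l(M/QM) - \rme_0(Q, M)$ into the surjectivity conditions defining Buchsbaum modules, respectively into finiteness of the $H^i_\fkm(M)$. These are precisely the theorems of St\"uckrad--Vogel and Cuong--Schenzel--Trung cited in the statement, and reproving them is where essentially all the difficulty lies; the Euler-characteristic reformulation itself is the routine part.
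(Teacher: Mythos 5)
The paper gives no proof of Theorem~\ref{Euler1}: it is recorded as a summary of classical results, with the content deferred entirely to the cited references (Serre's book, St\"uckrad--Vogel, and Cuong--Schenzel--Trung). Your outline assembles exactly those ingredients --- Serre's identity $\chi_{1}(\bfx,M)=\l(M/QM)-\rme_{0}(Q,M)$ together with the positivity of the partial Euler characteristics for (1)--(2), and the cited characterizations of Buchsbaum and generalized Cohen--Macaulay modules for the converse directions of (3)--(4) --- and correctly locates the genuine difficulty in those cited theorems, so it is consistent with (indeed, fills in the reduction implicit in) the paper's treatment.
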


We say ${\ds \bfx=x_{1}, \ldots, x_{m}}$ is a partial system of parameters  of $M$ if  ${\ds \dim(M)= m + \dim(M/(\bfx)M)}$. A partial system $\bfx$ of parameters is said to be {\em amenable} to $M$ if the Koszul homology modules $H_{i}(\bfx, M)$ have finite length for all $i >0$.  When $\bfx=x_{1}, \ldots, x_{m}$ is amenable to $M$, we have
\[ \chi_{1}(x_{1}, \ldots, x_{m}, M) = \sum_{j=1}^{n} (-1)^{j-1} \l(H_{j}(x_{1}, \ldots, x_{m}, M)). \]

\begin{Definition}\label{d-seq}{\rm 
Let $R$ be a Noetherian ring and $M$ a finitely generated $R$-module. Let ${\ds x_{1}, \ldots, x_{m}}$ be  elements in $R$.
\begin{enumerate}[(1)]
\item  A sequence ${\ds \bfx= x_{1}, \ldots, x_{m}}$  is called a {\em $d$-sequence} relative to $M$ if 
\[ (x_{1}, x_{2}, \ldots, x_{i})M :_{M} x_{i+1}x_{k} = (x_{1}, x_{2}, \ldots, x_{i})M :_{M} x_{k} \]
for $i=0, \ldots, m-1$ and $k \geq i+1$.  

\item A sequence ${\ds \bfx= x_{1}, \ldots, x_{m}}$ is called a {\em strong $d$-sequence} relative to $M$ if the sequence ${\ds x_{1}^{r_{1}}, \ldots, x_{m}^{r_{m}} }$ is a $d$-sequence relative to $M$ for every sequence $r_{1}, \ldots, r_{m}$ of positive integers.

\item A sequence ${\ds \bfx= x_{1}, \ldots, x_{m}}$ is called a {\em  $d^{+}$-sequence} relative to $M$ if it is a strong $d$-sequence relative to $M$ in any order.
\end{enumerate}
}\end{Definition}

If ${\ds \bfx=x_{1}, \ldots, x_{s}}$ is a system of parameters of $M$ that is also a $d$-sequence relative to $M$, then ${\ds x_{1}, \ldots, x_{i}}$ is amenable for every $i=1, \ldots, s$. Given the similar roles of $\rme_{1}(Q, M)$ and $\chi_{1}(\bfx, M)$ as predictors of the Cohen-Macaulay property, it is natural to consider a direct comparison between them.  

\begin{Theorem}\label{Ch7-3-7}{\rm \cite[Theorem 3.7 and Corollary 3.8]{Chern7}}
Let $(R, \fkm)$ be a Noetherian local ring of dimension $d \geq 2$. Let ${\ds \bfx=x_{1}, \ldots, x_{d}}$ be a system of parameters that is a $d$-sequence in $R$. Let $Q=(\bfx)$. Then, for all $1 \leq i \leq d$, we have
\[ (-1)^{i} \rme_{i}(Q) = \chi_{1} \left(x_{1}, \ldots, x_{d-i}, x_{d-i+1} \right) - \chi_{1} \left(x_{1}, \ldots, x_{d-i} \right)
 \geq 0. \]
In particular, ${\ds -\rme_{1}(Q) = \chi_{1}(\bfx)}$  if and only if $\depth(R) \geq d-1$.
 \end{Theorem}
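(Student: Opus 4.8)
The plan is to induct on $d$, at each stage removing the leading element $x_1$ of the $d$-sequence and transferring the data to the quotient $R' = R/(x_1)$. Write $Q' = QR'$ and let $x_j'$ denote the image of $x_j$ in $R'$; standard stability properties of $d$-sequences guarantee that $x_2', \ldots, x_d'$ is once more a system of parameters that is a $d$-sequence in $R'$, so the statement in dimension $d-1$ will be available there. The conceptual glue is Serre's formula $\chi_{1}(\bfx, R) = \l(R/Q) - \rme_{0}(Q)$, which, besides linking the two families of invariants, also delivers the non-negativity and the final ``In particular'' clause. The induction is anchored at $d=1$: for a single parameter $z$ forming a $d$-sequence one has $0:z = 0:z^2 = \cdots$, and a direct length count gives $\l(R/z^{n+1}) = (n+1)\l(R/zR) - n\,\l(0:_{R}z)$ for $n \ge 1$, whence $-\rme_{1}((z)) = \l(0:_{R} z) = \chi_{1}(z)$, the desired identity for $d=1$.

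For the inductive step I need two transfer results. The first concerns the Hilbert coefficients. From the exact sequence
\[ 0 \lar (Q^{n+1}:x_1)/Q^n \lar R/Q^n \stackrel{\cdot x_1}{\lar} R/Q^{n+1} \lar R'/(Q')^{n+1} \lar 0, \]
together with the $d$-sequence colon identity $Q^{n+1}:x_1 = Q^n + (0:x_1)$ and the vanishing $(0:x_1)\cap Q^n = 0$ for $n \ge 1$, I obtain the length recursion $\l(R/Q^{n+1}) - \l(R/Q^n) = \l(R'/(Q')^{n+1}) - \l(0:_{R} x_1)$ valid for $n \ge 1$. Summing from $0$ to $N$ and comparing the two binomial expansions yields $\rme_{i}(Q) = \rme_{i}(Q')$ for $0 \le i \le d-2$, the shifted relation $(-1)^{d-1}\rme_{d-1}(Q) = (-1)^{d-1}\rme_{d-1}(Q') - \l(0:_{R} x_1)$, and the top value $(-1)^{d}\rme_{d}(Q) = \l(0:_{R} x_1)$.

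The second transfer concerns the Euler characteristics: I claim $\chi_{1}(x_1, x_2, \ldots, x_k; R) = \chi_{1}(x_2', \ldots, x_k'; R')$ for every $k \ge 2$. This is proved by running the long exact Koszul homology sequence comparing $H_{\bullet}(x_1, \ldots, x_k; R)$ with $H_{\bullet}(x_2, \ldots, x_k; R)$ through multiplication by $x_1$ and then identifying the homology over $R'$; the defining colon relations of the $d$-sequence are precisely what force the correction terms supported on $0:x_1$ to cancel. Granting both transfers, the induction closes term by term. For $1 \le i \le d-2$ both truncations $x_1, \ldots, x_{d-i+1}$ and $x_1, \ldots, x_{d-i}$ have length at least $2$, so the Euler-characteristic transfer applies to each and their difference equals the corresponding difference over $R'$, which by the inductive hypothesis is $(-1)^{i}\rme_{i}(Q') = (-1)^{i}\rme_{i}(Q)$. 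The boundary cases are exactly where the length-one prefix enters: for $i = d$ one reads $(-1)^{d}\rme_{d}(Q) = \chi_{1}(x_1) - \chi_{1}(\emptyset) = \l(0:_{R} x_1)$, matching the coefficient transfer, and for $i = d-1$ the quantity $\chi_{1}(x_1) = \l(0:_{R} x_1)$ is precisely the shift relating $\rme_{d-1}(Q)$ to $\rme_{d-1}(Q')$.

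I expect the Euler-characteristic transfer to be the main obstacle: squeezing $\chi_{1}(x_1, \ldots, x_k; R) = \chi_{1}(x_2', \ldots, x_k'; R')$ out of the Koszul sequence requires the full force of the $d$-sequence axioms --- not just for $x_1$ in isolation but for its interaction with $x_2, \ldots, x_k$ --- and this is where the genuine zerodivisor behaviour of $x_1$ can hide subtleties. A secondary delicate point is the constant-term bookkeeping: pinning $(-1)^{d}\rme_{d}(Q)$ down to exactly $\l(0:_{R} x_1)$ uses the sharper facts that $(0:x_1)\cap Q^n = 0$ for all $n \ge 1$ and that the Hilbert function of $Q'$ already coincides with its Hilbert polynomial, both characteristic of parameter $d$-sequences. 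Finally, the ``In particular'' clause is immediate from the case $i=1$, which reads $-\rme_{1}(Q) = \chi_{1}(\bfx) - \chi_{1}(x_1, \ldots, x_{d-1})$: since $x_1, \ldots, x_{d-1}$ is an amenable partial system of parameters, $\chi_{1}(x_1, \ldots, x_{d-1}) = 0$ holds if and only if it is a regular sequence, that is, if and only if $\depth(R) \ge d-1$; the non-negativity $(-1)^{i}\rme_{i}(Q) \ge 0$ follows along the way, each coefficient having been identified with a length.
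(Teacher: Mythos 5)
Your overall architecture --- induction on $d$ by passing to $R'=R/(x_1)$, with one transfer lemma for the Hilbert coefficients and one for the Euler characteristics --- is a genuinely different route from the paper's. The paper works entirely over $R$ and never changes rings: it quotes the closed formula $-\rme_{1}(Q)=\sum_{j\ge 1}(-1)^{j+1}j\,\l(H_{j}(\bfx))$ from \cite[Theorem 3.6]{Chern7}, and then uses the short exact sequences of \cite[Corollary 3.2]{Chern7} (which come from the vanishing of multiplication by $x_{j+1}$ on higher Koszul homology of a $d$-sequence \emph{prefix}) to evaluate $\chi_{1}(x_{1},\ldots,x_{d-1})=\sum_{j\ge 2}(-1)^{j}(j-1)\l(H_{j}(\bfx))$; subtracting gives the identity directly. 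Your Hilbert-coefficient transfer is essentially sound: the colon identity $Q^{n+1}:x_{1}=Q^{n}+(0:x_{1})$ and the fact $(0:x_{1})\cap Q=0$ are genuine $d$-sequence properties, and the base case $d=1$ checks out.

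The genuine gap is the Euler-characteristic transfer $\chi_{1}(x_{1},\ldots,x_{k};R)=\chi_{1}(x_{2}',\ldots,x_{k}';R')$, and the route you sketch for it would not go through as stated. The long exact sequence you invoke compares $H_{\bullet}(x_{1},\ldots,x_{k};R)$ with $H_{\bullet}(x_{2},\ldots,x_{k};R)$ --- but $x_{2},\ldots,x_{k}$ with $x_{1}$ \emph{omitted} is not a prefix of the $d$-sequence, and the $d$-sequence axioms (which are all phrased in terms of colons against the prefixes $(x_{1},\ldots,x_{i})$) give you no control over its Koszul homology; in particular $x_{2},\ldots,x_{k}$ need not even be amenable, so the length bookkeeping $\l(H/x_{1}H)=\l(0:_{H}x_{1})$ that makes the alternating sum telescope is unavailable. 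On top of that, $H_{\bullet}(x_{2}',\ldots,x_{k}';R')$ is Koszul homology with coefficients in $R'$, and relating it to $H_{\bullet}(x_{2},\ldots,x_{k};R)$ introduces further correction terms supported on $0:_{R}x_{1}$ that you have not accounted for. (For $k=d$ the transfer does follow cheaply from Serre's formula plus the associativity formula $\rme_{0}(Q)=\rme_{0}(Q')-\rme_{0}(x_{2},\ldots,x_{d};0:x_{1})$ and $\l(0:x_{1})<\infty$, but your induction needs it for all $2\le k\le d$, i.e.\ for genuinely partial systems of parameters where no Serre formula exists.) A secondary, smaller gap: your claim that non-negativity holds because ``each coefficient has been identified with a length'' fails at $i=d-1$, where your own bookkeeping gives $(-1)^{d-1}\rme_{d-1}(Q)=\l(0:_{R'}x_{2}')-\l(0:_{R}x_{1})$, a \emph{difference} of lengths; one must additionally observe that $0:_{R}x_{1}$ embeds into $0:_{R'}x_{2}'$ (which does follow from $0:x_{1}\subseteq 0:x_{1}x_{2}=0:x_{2}$ and $(0:x_{1})\cap(x_{1})=0$, but needs to be said).
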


 \begin{proof} We outline the proof for $-\rme_{1}(Q)$. Let 
 \[ \bfx'=x_{1}, \ldots, x_{d-1}, \quad h_{i} = \l( H_{i}(\bfx) ), \quad h_{i}' = \l( H_{i}( \bfx') ), \]
 where $H_{i}(\tratto)$ denotes the homology module of the corresponding Koszul complex.
 By \cite[Theorem 3.6]{Chern7}, we have
 \[ \begin{array}{rcl}
 {\ds  -\rme_{1}(Q) } &=& {\ds  h_{1} - 2 h_{2} + 3 h_{3} + \cdots + (-1)^{j+1} j h_{j} + \cdots + (-1)^{d+1} dh_{d}} \vspace{0.1 in} \\
 &=& {\ds \chi_{1}(\bfx) - \big( h_{2} - 2h_{3} + \cdots + (-1)^{j}(j-1)h_{j} + \cdots + (-1)^{d}(d-1)h_{d}   \big) }
 \end{array} \]
 By using the short exact sequences given in \cite[Corollary 3.2]{Chern7}, we obtain 
 \[ \chi_{1}(\bfx') = h_{2} - 2h_{3} + \cdots + (-1)^{j}(j-1)h_{j} + \cdots + (-1)^{d}(d-1)h_{d}.\]
 This completes the proof. 
 \end{proof}
 
 When the depth of a ring $R$ is sufficiently large, the converse of Theorem~\ref{Ch7-3-7} is also true.

\begin{Theorem}\label{Ch7-4-2}{\rm \cite[Theorem 4.2]{Chern7}}
Let $(R, \fkm)$ be a Noetherian local ring of dimension $d \geq 2$ with $\depth(R) \geq d-1$ and infinite residue field. Let ${\ds Q=(\bfx)=(x_{1}, \ldots, x_{d})}$ be a parameter ideal of $R$. Then the following are equivalent{\rm :}
\begin{enumerate}[{\rm (i)}]
\item ${\ds - \rme_{1}(Q) \leq \chi_{1}(\bfx) }${\rm ;}
\item ${\ds - \rme_{1}(Q) = \chi_{1}(\bfx) }${\rm ;}
\item $\bfx$ is a $d$-sequence{\rm ;}
\item ${\ds \rme_{2}(Q) =0}$.
\end{enumerate}
\end{Theorem}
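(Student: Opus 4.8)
The plan is to route the four conditions through two structural facts: that under $\depth(R)\geq d-1$ the first Euler characteristic is a Hilbert invariant of the ideal, and that the inequality $-\rme_{1}(Q)\geq \chi_{1}(\bfx)$ holds for \emph{every} parameter ideal, with equality precisely on $d$-sequences. First I would record the reductions. Passing to the completion and using the infinite residue field alters none of the four conditions, so I may assume $R$ is complete. Since $\depth(R)\geq d-1$, the Koszul homology $H_{j}(\bfx)$ vanishes for $j\geq 2$, whence $\chi_{1}(\bfx)=\l(H_{1}(\bfx))$. Moreover, by the classical identity $\chi_{1}(\bfx)=\l(R/Q)-\rme_{0}(Q)$, the quantity $\chi_{1}(\bfx)$, along with $\rme_{1}(Q)$ and $\rme_{2}(Q)$, depends only on the ideal $Q$ and not on the chosen generators. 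This is the observation that will later let me replace $\bfx$ by a convenient generating sequence in the length computations.

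The easy implications come first. The implication (iii)$\Rightarrow$(ii) is immediate from Theorem~\ref{Ch7-3-7}, since a $d$-sequence with $\depth(R)\geq d-1$ satisfies $-\rme_{1}(Q)=\chi_{1}(\bfx)$, and (ii)$\Rightarrow$(i) is trivial. For (iii)$\Rightarrow$(iv) I would use the full formula of Theorem~\ref{Ch7-3-7}: the identities $(-1)^{i}\rme_{i}(Q)=\chi_{1}(x_{1},\dots,x_{d-i+1})-\chi_{1}(x_{1},\dots,x_{d-i})\geq 0$ show the partial Euler characteristics are nondecreasing, while the ``in particular'' clause forces $\chi_{1}(x_{1},\dots,x_{d-1})=0$; nonnegativity then yields $\chi_{1}(x_{1},\dots,x_{d-2})=0$ as well, so that $\rme_{2}(Q)=\chi_{1}(x_{1},\dots,x_{d-1})-\chi_{1}(x_{1},\dots,x_{d-2})=0$.

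The core of the argument is the universal inequality $-\rme_{1}(Q)\geq \chi_{1}(\bfx)$, which upgrades (i) to (ii) and gives (i)$\Leftrightarrow$(ii). I would prove it by induction on $d$. Because both sides depend only on $Q$, and because $\depth(R)\geq d-1\geq 1$ with infinite residue field, I may choose a superficial element $y_{1}\in Q$ that is a nonzerodivisor. Then $\rme_{0}(Q)$ and $\rme_{1}(Q)$ are preserved on passing to $R/y_{1}R$, and since $y_{1}\in Q$ one has $\l\big((R/y_{1}R)/Q(R/y_{1}R)\big)=\l(R/Q)$; hence $\chi_{1}$ is preserved as well, while $\depth(R/y_{1}R)\geq (d-1)-1$. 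The induction therefore descends to the base case $d=2$, where the inequality follows from the explicit length expression for $\rme_{1}(Q)$ together with Goto--Ozeki's analysis of $\rme_{2}(Q)$ in Theorem~\ref{GO11-3-6}(1).

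The remaining and hardest point is to recover the generator-dependent condition (iii) from the ideal invariants, namely to prove (ii)$\Rightarrow$(iii) and (iv)$\Rightarrow$(iii). Here the obstacle is exactly that $-\rme_{1}(Q)=\chi_{1}(\bfx)$ and $\rme_{2}(Q)=0$ are properties of the ideal $Q$, whereas ``$\bfx$ is a $d$-sequence'' refers to the fixed ordered generators, so the superficial replacement used above is no longer available verbatim. I would first extract regularity: tracing the equality case of the inequality shows $\chi_{1}(x_{1},\dots,x_{d-1})=\l(H_{1}(x_{1},\dots,x_{d-1}))=0$, so $x_{1},\dots,x_{d-1}$ is a regular sequence, and then reduce modulo it (or modulo $x_{1}$ to drop to dimension $d-1$) and invoke the $d=2$ characterization of Goto--Ozeki that $\rme_{2}=0$ forces the $d$-sequence relations. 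Closing this step cleanly---reconciling the generator-independence of (i), (ii), (iv) with the ordered, generator-sensitive definition of a $d$-sequence, and propagating the $d$-sequence identities back up the induction---is where I expect the real work to lie.
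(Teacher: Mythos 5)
Your reductions, the easy implications (iii)$\Rightarrow$(ii)$\Rightarrow$(i) and (iii)$\Rightarrow$(iv) via the telescoping identities of Theorem~\ref{Ch7-3-7}, and the recognition that the whole theorem hinges on the universal inequality $\chi_{1}(\bfx)\leq -\rme_{1}(Q)$ (so that (i) upgrades to (ii)) are all sound; the superficial-element induction down to $d=2$ is a workable, if roundabout, way to get that inequality. But the implications you defer, (ii)$\Rightarrow$(iii) and (iv)$\Rightarrow$(iii), are not a technicality to be "closed cleanly" later --- they are the theorem, and your sketch for them contains an unjustified step. You claim that the equality $-\rme_{1}(Q)=\chi_{1}(\bfx)$ forces $\chi_{1}(x_{1},\dots,x_{d-1})=0$ and hence that the \emph{original} $x_{1},\dots,x_{d-1}$ form a regular sequence. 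The only identity in sight relating $\rme_{1}(Q)$ to $\chi_{1}(x_{1},\dots,x_{d-1})$ is Theorem~\ref{Ch7-3-7}, whose hypothesis is that $\bfx$ is already a $d$-sequence --- precisely what you are trying to prove --- so this is circular. And since (ii) and (iv) depend only on the ideal $Q$ while being a $d$-sequence is sensitive to the choice and order of generators, no purely formal "tracing of the equality case" on the given generators can work: the proof must \emph{construct} a good generating sequence.

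That construction is exactly what the paper does, and it is the ingredient missing from your plan. Using $\depth(R)\geq d-1$ and the infinite residue field, choose generators of $Q$ so that $\bfx'=x_{1},\dots,x_{d-1}$ is an $R$-regular superficial sequence, and set $Q'=(\bfx')$. Then \cite[Proposition 3.3]{Chern7} gives $\chi_{1}(\bfx)=\l\big(0:_{R/Q'}x_{d}\big)$, while $\rme_{1}(Q)=\rme_{1}(Q/Q')=-\l\big(H^{0}_{\fkm}(R/Q')\big)$ with $H^{0}_{\fkm}(R/Q')=\bigcup_{s\geq 1}\big(0:_{R/Q'}x_{d}^{s}\big)$. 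This single computation yields the universal inequality in one stroke (no induction needed), and in the equality case it pinches $\big(0:_{R/Q'}x_{d}\big)=\big(0:_{R/Q'}x_{d}^{2}\big)$, whereupon the $d$-sequence criterion \cite[Proposition 2.6]{Chern7} shows this $\bfx$ is a $d$-sequence. Your alternative of reducing modulo $x_{1}$ and invoking Theorem~\ref{GO11-3-6}(1) in dimension $2$ only produces the $d$-sequence property of the two residual elements in $R/(x_{1},\dots,x_{d-2})$, and you give no mechanism for lifting the $d$-sequence relations back up the tower; nor do you give any separate route from (iv) back into the cycle. As written, the proposal proves (i)$\Leftrightarrow$(ii), (iii)$\Rightarrow$(ii), and (iii)$\Rightarrow$(iv), but not the equivalence.
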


\begin{proof} We outline the proof ${\ds \mbox{(ii)} \Rightarrow \mbox{(iii)} }$. We may choose a system ${\ds \bfx=x_{1}, \ldots, x_{d}}$ of parameters in $R$ such that $Q=(\bfx)$ and ${\ds \bfx'=x_{1}, \ldots, x_{d-1} }$ is an $R$-regular sequence and a superficial sequence for $R$. Let ${\ds Q'=(\bfx')}$.   Then, by \cite[Proposition 3.3]{Chern7}, we obtain
 \[ \chi_{1}(\bfx) = \l \big( ( 0  :_{R/Q'} x_{d} )  \big). \]
Also, we have
\[ \rme_{1}(Q) = \rme_{1}(Q/Q') = - \l \left( H^{0}_{\fkm} (R/Q')   \right), \quad \mbox{and} \quad  H^{0}_{\fkm} (R/Q')  = \bigcup_{s \geq 1} (0 :_{R/Q'} x_{d}^{s} ). \]
If ${\ds - \rme_{1}(Q) = \chi_{1}(\bfx) }$, then ${\ds ( 0  :_{R/Q'} x_{d} )  = ( 0  :_{R/Q'} x_{d}^{2} ) }$. By \cite[Proposition 2.6]{Chern7},  $\bfx$ is a $d$-sequence.
\end{proof}

Let $(R, \fkm)$ be a Noetherian local ring and $I$ an ideal of $R$. Let $M$ be a finitely generated $R$-module. The {\em associated graded module} of $M$ relative to $I$ is $\G(I, M) = \bigoplus_{n \geq 0} I^{n}M/I^{n+1}M $. Then the {\em $j$-transform} of $M$ relative to $I$ is ${\ds \rmH_{I}(M)= H^{0}_{\fkm}( \G(I, M) ) }$.  The study of the $j$-transform was initiated by Achilles and Manaresi \cite{AM93} who made use of the fact that $\rmH_{I}(M) =  \bigoplus_{k \geq 0} \rmH_{k}$ has an associated numerical function $n \mapsto \psi_{I}^{M}(n)= \sum_{k \leq n} \l(\rmH_{k})$. This function is called the {\em $j$-function} of $M$ relative to $I$ and is a broad generalization of the classical Hilbert function. The {\em $j$-polynomial} of $M$ relative to $I$ is the Hilbert polynomial 
\[ \sum_{i=0}^{\ell} (-1)^{i} j_{i}(I, M){{n+\ell -i}\choose{\ell -i}}, \]
where $\ell$ is the analytic spread of $I$. We call ${\ds  j_{i}(I, M)}$ the {\em $j$-coefficients} of $I$ relative to $M$. In particular, ${\ds j_{0}(I, M)}$ is called the $j$-multiplicity of $I$ relative to $M$.  When ${\ds I=(\bfx)=(x_{1}, \ldots, x_{m})}$, we also write ${\ds j_{i}(I, M) = j_{i}(\bfx, M)}$. In \cite[Corollary 3.3]{X16}, Xie presented a detailed formula for $j_{i}(I)$ for an ideal $I$ with maximal analytic spread and satisfying certain conditions in a Cohen-Macaulay ring. In particular, a formula for $j_{1}(I)$ was used to generalize Northcott's inequality \cite[Theorem 4.1]{X16}.

\begin{Proposition}{\rm \cite[Corollary 2.10]{jmult}}
Let $(R, \fkm)$ be a Noetherian local ring and $M$ a finitely generated $R$-module of positive dimension. Let ${\ds \bfx=x_{1}, \ldots, x_{m}}$ be an amenable partial system of parameters of $M$ that is a $d$-sequence relative to $M$. Suppose that ${\ds \dim_{\G} \left( \rmH_{(\bfx)}(M) \right)=m}$, where $\G=\G((\bfx), R)$ is the associated graded ring. Then ${\ds j_{1}(\bfx, M) \leq 0}$. 
\end{Proposition}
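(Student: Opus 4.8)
The plan is to identify $j_{1}(\bfx, M)$ with the Chern coefficient of a single graded module built from the $j$-transform, and then to pin down its sign by imitating the Koszul-homology computation behind Theorem~\ref{Ch7-3-7}. To make the reduction explicit, put $N = \rmH_{(\bfx)}(M) = H^{0}_{\fkm}(\G((\bfx), M))$. The hypothesis $\dim_{\G}(N) = m$ says that the analytic spread of $(\bfx)$ attains its maximal value $m$ and that the Hilbert function $n \mapsto \l(N_{n})$ agrees for $n \gg 0$ with a polynomial of degree $m-1$; integrating, the $j$-function $\psi^{M}_{(\bfx)}(n) = \sum_{k \leq n}\l(N_{k})$ has degree $m$. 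Comparing the two expansions shows that the $j$-coefficients are exactly the Hilbert coefficients of $N$, so in particular $j_{0}(\bfx, M) = \rme_{0}(N)$ and $j_{1}(\bfx, M) = \rme_{1}(N)$. The problem is therefore to prove $\rme_{1}(N) \leq 0$; since this fails for a general graded module, the whole weight of the argument must fall on the $d$-sequence hypothesis.

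Next I would transport the computation to Koszul homology. Write $\bfx' = x_{1}, \ldots, x_{m-1}$ and $h_{j} = \l(H_{j}(\bfx, M))$, all finite because $\bfx$ is amenable. The $d$-sequence relations (Definition~\ref{d-seq}) make the colon submodules $(x_{1}, \ldots, x_{i})M :_{M} x_{i+1}$ stabilize, and this is precisely what is needed to isolate the finite-length torsion $N_{n} = H^{0}_{\fkm}\big((\bfx)^{n}M/(\bfx)^{n+1}M\big)$ and express it through the Koszul homology of $\bfx$, using short exact sequences of the kind in \cite[Corollary 3.2]{Chern7}. Carrying this out I expect to reach, in exact parallel with Theorem~\ref{Ch7-3-7}, the identity
\[ -\, j_{1}(\bfx, M) \;=\; \sum_{j \geq 1} (-1)^{j-1}\, j\, h_{j} \;=\; \chi_{1}(\bfx, M) - \chi_{1}(\bfx', M), \]
where both Euler characteristics are defined because $\bfx$ and its initial segment $\bfx'$ are amenable $d$-sequences.

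Once this identity is in hand the sign is forced. The right-hand side is non-negative for the same structural reason that yields $(-1)^{i}\rme_{i}(Q) \geq 0$ in Theorem~\ref{Ch7-3-7}: the $d$-sequence relations constrain the alternating sum $\sum_{j}(-1)^{j-1} j\, h_{j}$ to be non-negative. Equivalently, after replacing $\bfx'$ by a superficial, hence $M$-regular, initial segment I would identify $-j_{1}(\bfx, M)$ with the length of a single finite-length local cohomology module, just as the proof of Theorem~\ref{Ch7-4-2} identifies $\rme_{1}(Q)$ with $-\l(H^{0}_{\fkm}(R/Q'))$; either route makes $j_{1}(\bfx, M) \leq 0$ manifest.

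The hard part will be the middle step, not the sign count. In the $\fkm$-primary situation underlying Theorem~\ref{Ch7-3-7} every graded piece of $\G$ already has finite length, so the Euler-characteristic bookkeeping is immediate; here $\bfx$ is only a partial system of parameters, the pieces $(\bfx)^{n}M/(\bfx)^{n+1}M$ have positive dimension, and one must first peel off the finite-length summand $N_{n}$ and show that it is governed by the finite-length Koszul homology of $\bfx$. It is exactly the $d$-sequence hypothesis that tames this local cohomology, and amenability that keeps the homology finite-length so the $\chi_{1}$'s are defined; making this translation precise is where the real work lies.
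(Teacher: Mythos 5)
The survey states this Proposition only as a citation of \cite[Corollary 2.10]{jmult} and gives no proof, so there is nothing in the paper itself to measure your argument against; note also that in the source the result sits in Section~2, \emph{before} the Koszul-homology identities you lean on (which appear there as Corollary 4.15 and are quoted later in this survey), so the original argument is presumably more direct. Your opening reduction is fine: since the $j$-function is the summatory function of $n \mapsto \l(N_n)$ for $N = \rmH_{(\bfx)}(M)$ and $\dim_{\G}(N)=m$, the coefficients $j_i(\bfx,M)$ for $i<m$ are the Hilbert coefficients of $N$, and the claim becomes $\rme_1(N)\leq 0$.

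The problem is that both load-bearing steps after that are asserted by analogy rather than proved. The identity $-j_1(\bfx,M)=\chi_1(\bfx,M)-\chi_1(\bfx',M)$ is exactly \cite[Corollary 4.15]{jmult} with $i=1$, and you explicitly defer its derivation; that derivation is a theorem in its own right, not a transcription of \cite[Theorem 3.6]{Chern7}, because for a partial system of parameters the pieces $(\bfx)^nM/(\bfx)^{n+1}M$ have positive dimension and one must first control how $H^0_{\fkm}$ interacts with the associated graded module before any Euler-characteristic bookkeeping can begin --- you name this as ``where the real work lies,'' and it is. More seriously, even granting the identity, the inequality $\chi_1(\bfx,M)\geq\chi_1(\bfx',M)$ is not supplied by anything you invoke: the companion inequality \cite[Proposition 4.16]{jmult} quoted in the survey reads $-j_1(\bfx,M)\leq\chi_1(\bfx,M)$, i.e. $\chi_1(\bfx',M)\geq 0$, which bounds $-j_1$ from \emph{above}, whereas you need a lower bound; and ``the same structural reason'' as in Theorem~\ref{Ch7-3-7} is precisely the $d$-sequence computation identifying the successive difference of $\chi_1$'s with the length of a colon module, carried out there only for full systems of parameters. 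Once the identity is granted, the non-negativity of that difference \emph{is} the statement $j_1(\bfx,M)\leq 0$, so invoking it without proof makes the argument circular at its crux. In short: the plan is coherent and consistent with results quoted elsewhere in the survey, but as written it reduces the Proposition to two unproved claims, one of which is equivalent to the Proposition itself.
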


\noindent  Suppose ${\ds \bfx=x_{1}, \ldots, x_{m}}$ is an amenable partial system of parameters of $M$ with  $m < \dim(M)$. A question raised by Vasconcelos and his coauthors is whether the values of ${\ds j_{1}(\bfx, M)}$ would detect various properties of $M$ such as Cohen-Macaulay, Buchsbaum, and generalized Cohen-Macaulay just the way $\rme_{1}(I, M)$ or $\chi_{1}(\bfx, M)$ would do. The significant distinction between ${\ds \G(I, M)}$ and ${\ds \rmH_{I}(M)= H^{0}_{\fkm}( \G(I, M) ) }$ is that the latter may not be homogeneous and therefore the vanishing of some of its Hilbert coefficients does not place them entirely in the context of \cite{Chern2, Chern3, Chern1}.

\begin{Conjecture}{\rm \cite[Conjecture 1.1]{jmult} \label{regular}
Let $(R, \fkm)$ be a Noetherian local ring and $M$ a  finitely generated unmixed $R$-module. 
Let ${\ds \bfx=x_{1}, \ldots, x_{m}}$ be an amenable partial system of parameters of $M$ that is a $d$-sequence relative to $M$. Let $\G=\G((\bfx), R)$ be the associated graded ring. Suppose that ${\ds \dim_{\G} \left(  \rmH_{(\bfx)}(M)  \right) = m}$, and that ${\ds j_{1}(\bfx, M)=0}$. Then $\bfx$ is a regular sequence on $M$.
}\end{Conjecture}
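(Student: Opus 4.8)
The plan is to recast the conclusion homologically and then to read the vanishing off from $j_1$. Since $\bfx$ is amenable, every $H_j(\bfx, M)$ has finite length, and for elements of $\fkm$ a finitely generated module $M$ has $\bfx$ as a regular sequence exactly when $H_1(\bfx, M)=0$ (vanishing of $H_1$ forcing that of all higher Koszul homology). So it is enough to show that $j_1(\bfx, M)=0$ implies $H_1(\bfx, M)=0$. The Proposition above, \cite[Corollary 2.10]{jmult}, already gives $j_1(\bfx, M)\le 0$ under these hypotheses; what is genuinely needed is a matching lower bound of the form $\l\big(H_1(\bfx, M)\big)\le -\,j_1(\bfx, M)$, for then $j_1=0$ pins $H_1$ to zero.

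The model for such a bound is the case $m=1$, which can be computed outright and points to the right statement. Put $L=0:_M x_1=H_1(x_1, M)$; the $d$-sequence relation $0:_M x_1=0:_M x_1^{2}$ gives $0:_M x_1^{n}=L$ for all $n$, hence $x_1^{n}M/x_1^{n+1}M\cong M/(x_1M+L)$ for $n\ge 1$, together with $L\cap x_1M=0$. Feeding these isomorphisms into the $j$-function and applying $H^0_\fkm$ to the short exact sequence $0\to L\to M/x_1M\to M/(x_1M+L)\to 0$ collapses the subleading coefficient to $j_1(x_1, M)=-\l(L)=-\l\big(H_1(x_1, M)\big)$. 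Thus for $m=1$ the desired bound is an equality, and $j_1=0$ indeed forces $H_1(x_1, M)=0$, i.e. $x_1$ regular on $M$.

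For $m>1$ I would induct on $m$, using the bottom of the $d$-sequence to descend one dimension. The first task is the same leading/subleading analysis of $\G=\G((\bfx),R)$ as in the base case: the $d$-sequence hypothesis controls the colon modules $(x_1,\dots,x_i)M:_M x_{i+1}$ and hence the stable shape of the powers $(\bfx)^{k}M$, which is what makes $j_0$ and $j_1$ computable from Koszul data. Using this one shows first that $x_1$ must be $M$-regular, then sets $M_1=M/x_1M$ and checks that $x_2,\dots,x_m$ is again an amenable partial system of parameters and a $d$-sequence relative to $M_1$, with the $j$-coefficient preserved, $j_1(x_2,\dots,x_m, M_1)=j_1(\bfx, M)=0$. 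The hypothesis $\dim_\G\big(\rmH_{(\bfx)}(M)\big)=m$ is what keeps the $j$-polynomial of degree $m$ after the cut, so that $j_1$ remains the honest subleading coefficient throughout the induction, and the inductive hypothesis then makes $x_2,\dots,x_m$ regular on $M_1$; with $x_1$ regular this yields that $\bfx$ is a regular sequence on $M$.

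The hard part will be exactly this reduction, and it is the difficulty the text itself flags: the $j$-transform $\rmH_{(\bfx)}(M)=H^0_\fkm\big(\G((\bfx),M)\big)$ is not homogeneous, so one cannot treat it as a well-behaved graded module and borrow the Hilbert-polynomial bookkeeping that drives the $\rme_1$ and $\chi_1$ theorems. One must instead track by hand how the torsion functor $H^0_\fkm$ interacts with multiplication by $x_1$ and with the $d$-sequence colon relations, and, crucially, verify that amenability, the $d$-sequence property, unmixedness, and the dimension condition all survive the passage to $M_1$ — the stability of unmixedness under cutting by $x_1$ being the most delicate of these. Securing that compatibility is the technical core on which the whole induction rests.
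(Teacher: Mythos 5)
The statement you are proving is a \emph{conjecture}: the paper records it as an open problem (Conjecture 1.1 of \cite{jmult}) and offers no proof, only partial answers under extra hypotheses --- $m\geq 2$ with $\depth(M)\geq m-1$, or $m=3$ with $M$ unmixed, or $M$ of finite local cohomology. Your proposal does not close it either, and its central lemma is in fact untenable. You reduce the problem to the inequality $\l\bigl(H_{1}(\bfx, M)\bigr)\leq -\,j_{1}(\bfx, M)$. But for $m\geq 2$ the paper records (Proposition quoting \cite[Proposition 4.16]{jmult}) that $-\,j_{1}(\bfx, M)\leq \chi_{1}(\bfx, M)$, with equality if and only if $x_{1},\ldots,x_{m-1}$ is a regular sequence on $M$; and $\chi_{1}(\bfx,M)=\l(H_{1}(\bfx,M))-\chi_{2}(\bfx,M)\leq \l(H_{1}(\bfx,M))$ since the partial Euler characteristic $\chi_{2}$ is nonnegative by Serre. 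Your inequality would therefore sandwich $\l(H_{1})\leq -j_{1}\leq \chi_{1}\leq \l(H_{1})$ and force $-j_{1}=\chi_{1}$, i.e.\ $x_{1},\ldots,x_{m-1}$ regular on $M$, for \emph{every} amenable partial system of parameters that is a $d$-sequence, with no use of the hypothesis $j_{1}=0$. That conclusion is far too strong (it is exactly the depth hypothesis $\depth(M)\geq m-1$ under which the known partial result is proved), so the proposed bound fails for $m\geq 2$. Your base case $m=1$ is consistent with $(-1)^{i}j_{i}=\chi_{1}(x_{1},\ldots,x_{m-i+1},M)-\chi_{1}(x_{1},\ldots,x_{m-i},M)$ and is fine, but it is precisely the case where the inequality degenerates to an equality and gives no guidance for $m\geq 2$.

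The inductive step has the same problem in a different guise: the assertions that $j_{1}=0$ already forces $x_{1}$ to be $M$-regular, that $j_{1}(x_{2},\ldots,x_{m},M/x_{1}M)=j_{1}(\bfx,M)$, and that unmixedness and the condition $\dim_{\G}\bigl(\rmH_{(\bfx)}(M)\bigr)=m$ survive the cut are all unproven, and you yourself identify them as ``the technical core on which the whole induction rests.'' A plan whose key lemma is false and whose remaining steps are flagged as open is not a proof; at best it is a (partly misdirected) strategy for a problem the paper explicitly leaves unresolved.
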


The following gives a partial answer to this conjecture.

\begin{Theorem}{\rm \cite[Theorem 4.10]{jmult}}
Let $(R, \fkm)$ be a Noetherian local ring and $M$ a finitely generated $R$-module of positive dimension. 
Let ${\ds \bfx=x_{1}, \ldots, x_{m}}$ be an amenable partial system of parameters of $M$ that is a $d$-sequence relative to $M$. Suppose that either {\rm (i)} ${\ds m \geq 2}$ and ${\ds \depth(M) \geq m-1}$ or  that {\rm (ii)} $m=3$ and $M$ is unmixed. Then ${\ds j_{1}(\bfx, M)=0}$ if and only if $\bfx$ is a regular sequence on $M$.
\end{Theorem}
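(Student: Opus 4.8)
The plan is to prove both implications of the biconditional ${\ds j_{1}(\bfx, M)=0 \iff \bfx \text{ is a regular sequence on } M}$, where the forward direction is the substantive one. First I would dispose of the reverse implication: if $\bfx$ is a regular sequence on $M$, then each $H_{i}(\bfx, M)=0$ for $i>0$, the associated graded module $\G((\bfx), M)$ is a polynomial ring over $M/(\bfx)M$, its $0^{\rm th}$ local cohomology $\rmH_{(\bfx)}(M)$ behaves predictably, and a direct computation of the $j$-function forces the Chern-type coefficient $j_{1}(\bfx, M)$ to vanish. This direction should mirror the classical fact that regular sequences give Cohen-Macaulay associated graded structure with no correction term.

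For the forward direction, I would treat the two hypotheses (i) and (ii) separately but with a unified inductive philosophy paralleling the proof of Theorem~\ref{Ch5-3-1}. The key reduction is to relate $j_{1}(\bfx, M)$ to a genuine Chern coefficient $\rme_{1}$ of a module of the correct dimension, so that the machinery of Theorems~\ref{Ch7-3-7} and~\ref{Ch7-4-2} and the vanishing results for Vasconcelos modules become available. Since $\bfx$ is a $d$-sequence and amenable, the Koszul homology modules $H_{i}(\bfx, M)$ have finite length, and I expect a formula expressing $j_{1}(\bfx, M)$ as an alternating sum of lengths of these homologies, analogous to the expansion of $-\rme_{1}(Q)$ in the proof of Theorem~\ref{Ch7-3-7}. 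Under the assumption $\dim_{\G}(\rmH_{(\bfx)}(M))=m$, the $j$-transform has maximal dimension, so $j_{1}(\bfx, M)$ records precisely the first-order correction, and its vanishing should force the lower homologies to collapse.

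In case (i), where $m\geq 2$ and $\depth(M)\geq m-1$, I would induct on $m$ using a superficial element to pass from $\bfx$ to a shorter partial system of parameters, exploiting that the $d$-sequence property is inherited and that the depth hypothesis guarantees $\bfx'=x_{1},\ldots,x_{m-1}$ is already regular on $M$. This mirrors the hypothesis in Theorem~\ref{Ch7-4-2} that $\depth(R)\geq d-1$, and the vanishing $j_{1}(\bfx, M)=0$ should translate, via the equality case $-\rme_{1}(Q)=\chi_{1}(\bfx)$, into $\chi_{1}(\bfx, M)=0$, whence $M$ is Cohen-Macaulay along $\bfx$ by Theorem~\ref{Euler1}(2) and $\bfx$ is regular. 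In case (ii), with $m=3$ and $M$ unmixed, I would use the unmixedness to embed $M$ into a free (or maximal Cohen-Macaulay) module as in the proof of Theorem~\ref{Ch5-3-1}, reducing the dimension by one via a superficial element and analyzing the resulting short exact sequence with the Snake Lemma to control how $j_{1}$ changes.

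The hard part will be establishing the precise homological formula for $j_{1}(\bfx, M)$ in terms of Koszul homology and local cohomology of the $j$-transform, since $\rmH_{(\bfx)}(M)=H^{0}_{\fkm}(\G((\bfx), M))$ need not be homogeneous in the standard sense; as the paper explicitly warns, this non-homogeneity is exactly what places $j_{1}$ outside the direct reach of the earlier Vasconcelos-module results. Bridging that gap—showing that despite the non-homogeneity the vanishing of $j_{1}(\bfx, M)$ still detects regularity, presumably by reducing to the graded case after the dimension condition $\dim_{\G}(\rmH_{(\bfx)}(M))=m$ is imposed—is the crux, and I expect the induction to hinge on carefully tracking the torsion submodule $H^{0}_{\fkm}(M/\bfx' M)$ and proving it vanishes exactly when $j_{1}=0$, in direct analogy with the final paragraph of the proof of Theorem~\ref{Ch7-4-2}.
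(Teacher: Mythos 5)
This survey does not actually prove the statement: it is quoted from \cite[Theorem 4.10]{jmult} with no argument supplied, so there is no in-paper proof to compare against. Judged on its own terms, your sketch correctly locates the central mechanism, namely the comparison of $j_{1}(\bfx,M)$ with partial Euler characteristics. The survey records this right after the theorem as \cite[Corollary 4.15 and Proposition 4.16]{jmult}: one has $-j_{1}(\bfx,M)=\chi_{1}(\bfx,M)-\chi_{1}(\bfx',M)$ for $\bfx'=x_{1},\ldots,x_{m-1}$, and $-j_{1}(\bfx,M)\leq\chi_{1}(\bfx,M)$ with equality precisely when $\bfx'$ is $M$-regular. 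Your reverse implication (regular sequence $\Rightarrow$ $j_{1}=0$) is also sound, since then $\G((\bfx),M)$ is a polynomial extension of $M/(\bfx)M$ and the $j$-function is a pure binomial multiple of $\l(H^{0}_{\fkm}(M/(\bfx)M))$.

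There are, however, concrete gaps. First, you lean on the hypothesis $\dim_{\G}(\rmH_{(\bfx)}(M))=m$, which is \emph{not} an assumption of this theorem (it belongs to \cite[Conjecture 1.1]{jmult} and to the nonpositivity statement for $j_{1}$); if the $j$-transform has smaller dimension the leading coefficients degenerate and your reading of $j_{1}$ as ``the first-order correction'' needs a separate argument. Second, and most seriously, in case (i) you assert that $\depth(M)\geq m-1$ ``guarantees $\bfx'$ is already regular on $M$.'' That is false for an arbitrary partial system of parameters and is exactly what must be proved: in the model argument of Theorem~\ref{Ch7-4-2} the authors are free to \emph{choose} $\bfx'$ to be a regular superficial sequence because the conclusion there concerns only the ideal $Q$, whereas here both the hypothesis ($d$-sequence) and the conclusion (regularity) refer to the given sequence; you would need either to transfer regularity between minimal generating sets of $(\bfx)$ via depth-sensitivity of the Koszul complex, or to extract $H_{j}(\bfx',M)=0$ for $j>0$ from the $d$-sequence structure, and neither step is indicated. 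Third, case (ii) ($m=3$, $M$ unmixed) is left at the level of ``imitate Theorem~\ref{Ch5-3-1}'': the embedding $0\rar M\rar S^{n}\rar C\rar 0$ and the Snake Lemma bookkeeping there are tailored to $\rme_{1}$ of a full parameter ideal, and you give no indication of how $j_{1}$ of a length-three partial system of parameters passes through that exact sequence. This is precisely where the non-homogeneity of $\rmH_{(\bfx)}(M)$, which you rightly identify as the crux, actually bites, and the sketch stops where the real work begins.
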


A finitely generated module $M$ is said to have {\em finite local cohomology} if ${\ds \l( H^{i}_{\fkm} (M) ) < \infty}$ for all ${\ds i \neq \dim(M)}$. This condition is equivalent to the existence of a standard system of parameters, that is, a system of parameters which is a $d^{+}$-sequence relative to $M$. Moreover, if $M$ has finite local cohomology, then every partial system of parameters of $M$ is amenable \cite{STC78, SV86}. 

\begin{Theorem}{\rm \cite[Theorem 4.13]{jmult}}
Let $(R, \fkm)$ be a Noetherian local ring and $M$ a finitely generated $R$-module of positive depth. Suppose that $M$ has 
finite local cohomology. Let ${\ds \bfx=x_{1}, \ldots, x_{m}}$ be an amenable partial system of parameters of $M$ that is a $d$-sequence relative to $M$. Then ${\ds j_{1}(\bfx, M)=0}$ if and only if $\bfx$ is a regular sequence on $M$.
\end{Theorem}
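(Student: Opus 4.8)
The plan is to prove the two implications separately, settling the backward direction by a direct computation and reducing the forward direction to the preceding result \cite[Theorem 4.10]{jmult} after using the finite local cohomology hypothesis to control $\depth(M)$.

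For the backward direction, suppose $\bfx$ is a regular sequence on $M$. Then the associated graded module degenerates to a polynomial extension, $\G((\bfx), M) \cong (M/(\bfx)M)[T_1, \ldots, T_m]$, so applying $H^0_{\fkm}(-)$ degree by degree gives $\l(\rmH_n) = \binom{n+m-1}{m-1}\,\l\!\left(H^0_{\fkm}(M/(\bfx)M)\right)$ for every $n$. Summing, the $j$-function is $\psi^M_{(\bfx)}(n) = \binom{n+m}{m}\,\l\!\left(H^0_{\fkm}(M/(\bfx)M)\right)$, and comparing this with the $j$-polynomial yields $j_0(\bfx, M) = \l\!\left(H^0_{\fkm}(M/(\bfx)M)\right)$ and $j_i(\bfx, M) = 0$ for all $i \geq 1$; in particular $j_1(\bfx, M) = 0$.

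For the forward direction, assume $j_1(\bfx, M) = 0$. After the harmless reductions to the case that $R$ is complete with infinite residue field, I would exploit the two structural hypotheses in tandem. Because $\bfx$ is a $d$-sequence relative to $M$, the graded components of the $j$-transform $\rmH_{(\bfx)}(M)$ are controlled by the Koszul homology $H_j(\bfx, M)$, and because $M$ has finite local cohomology each of these modules has finite length. This is the mechanism underlying the non-positivity $j_1(\bfx, M) \leq 0$ of \cite[Corollary 2.10]{jmult}; the aim is to refine that inequality into an identity that writes $-j_1(\bfx, M)$ as a non-negative combination of the lengths $\l\!\left(H^i_{\fkm}(M)\right)$ of the intermediate local cohomology modules. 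Granting such a formula, the equality $j_1(\bfx, M) = 0$ forces those $H^i_{\fkm}(M)$ to vanish, and since $\depth(M) \geq 1$ already gives $H^0_{\fkm}(M) = 0$, one obtains $\depth(M) \geq m-1$.

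Once $\depth(M) \geq m-1$ is in hand, the case $m \geq 2$ falls squarely under hypothesis (i) of \cite[Theorem 4.10]{jmult}, which then converts the assumption $j_1(\bfx, M) = 0$ into the conclusion that $\bfx$ is a regular sequence on $M$; the remaining case $m = 1$ is immediate, since $\depth(M) \geq 1$ together with the $d$-sequence identity $0 :_M x_1 = 0 :_M x_1^2$ forces $x_1$ to be a nonzerodivisor. I expect the main obstacle to be the length formula of the previous paragraph: passing from the inequality $j_1(\bfx, M) \leq 0$ to its equality case, and extracting from it the vanishing of the intermediate local cohomology of $M$, is the delicate point. It is exactly here that the $d$-sequence hypothesis is needed to linearize $\rmH_{(\bfx)}(M)$ and the finite local cohomology hypothesis to guarantee that every term in the alternating-sum bookkeeping has finite length, so that the resulting expression is sign-definite and its vanishing is equivalent to regularity.
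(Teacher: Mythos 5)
The survey states this theorem with only the citation \cite[Theorem 4.13]{jmult} and reproduces no proof, so there is no in-paper argument to compare against; I can only assess your proposal on its own terms. Your backward direction is correct and complete: quasi-regularity gives $\G((\bfx),M)\cong (M/(\bfx)M)[T_1,\ldots,T_m]$, the torsion functor $H^0_{\fkm}(\tratto)$ is computed degreewise, and the resulting $j$-function $\psi^M_{(\bfx)}(n)=\binom{n+m}{m}\,\l(H^0_{\fkm}(M/(\bfx)M))$ forces $j_i(\bfx,M)=0$ for $i\geq 1$. The $m=1$ endgame is also fine (amenability gives $\l(0:_M x_1)<\infty$, so $0:_M x_1\subseteq H^0_{\fkm}(M)=0$).

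The forward direction, however, has a genuine gap at exactly the step that carries the whole weight of the theorem. You posit, without proof, an identity expressing $-j_1(\bfx,M)$ as a non-negative integer combination of the lengths $\l(H^i_{\fkm}(M))$ for $1\leq i\leq m-1$, and everything downstream (the conclusion $\depth(M)\geq m-1$, hence the applicability of \cite[Theorem 4.10]{jmult}) depends on it. You acknowledge this is "the main obstacle," but acknowledging a gap does not close it. Moreover, your proposed provenance for the formula is off: it cannot be "the mechanism underlying" \cite[Corollary 2.10]{jmult}, since that non-positivity statement holds without the finite local cohomology hypothesis, i.e., in situations where the lengths $\l(H^i_{\fkm}(M))$ are infinite and no such formula can exist. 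The identity actually available in the literature quoted by this survey is the Euler-characteristic relation $(-1)^i j_i(\bfx,M)=\chi_1(x_1,\ldots,x_{m-i+1},M)-\chi_1(x_1,\ldots,x_{m-i},M)$ from \cite[Corollary 4.15 and Proposition 4.16]{jmult}, which for $i=1$ reduces your problem to showing that, for a module with finite local cohomology, $\chi_1(x_1,\ldots,x_m,M)=\chi_1(x_1,\ldots,x_{m-1},M)$ forces both Euler characteristics to vanish. That is where the finite local cohomology hypothesis must do its work (e.g., via length formulas for $\chi_1$ of truncations of a standard system of parameters), and it is precisely the part your proposal leaves unargued. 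A secondary symptom of the missing formula: if it really involved all of $\l(H^1_{\fkm}(M)),\ldots,\l(H^{m-1}_{\fkm}(M))$ with positive coefficients, its vanishing would give $\depth(M)\geq m$ outright and the appeal to \cite[Theorem 4.10]{jmult} would be superfluous; that you can only extract $\depth(M)\geq m-1$ from it indicates the shape of the identity has not actually been pinned down.
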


The following theorem extends Theorems \ref{Ch7-3-7} and \ref{Ch7-4-2} to partial amenable systems of parameters.

\begin{Proposition}{\rm \cite[Corollary 4.15 and Proposition 4.16]{jmult}}
Let ${\ds \bfx=x_{1}, \ldots, x_{m}}$ be an amenable partial system of parameters of $M$ that is a $d$-sequence relative to $M$.
\begin{enumerate}[{\rm (1)}]
\item For all $0 \leq i \leq m$, we have
\[ (-1)^{i} j_{i}(\bfx, M) = \chi_{1} \left(x_{1}, \ldots, x_{m-i}, x_{m-i+1}, M \right) - \chi_{1} \left(x_{1}, \ldots, x_{m-i} , M\right). \]

\item Suppose $m \geq 2$. Then ${\ds - j_{1}(\bfx, M) \leq \chi_{1}(\bfx, M) }$, 
where the equality holds true if and only if $x_{1}, \ldots, x_{m-1}$ is a regular sequence on $M$.
\end{enumerate}
\end{Proposition}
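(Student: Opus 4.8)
The plan is to transport the Koszul-homology computation behind Theorem~\ref{Ch7-3-7} from the setting of a full system of parameters on $R$ to that of an amenable partial system on $M$, replacing the Chern coefficients $\rme_{i}(Q)$ by the $j$-coefficients $j_{i}(\bfx,M)$ and $d$ by $m$. The decisive observation is that the argument proving Theorem~\ref{Ch7-3-7} uses only two features of $\bfx$: that the Koszul homologies $H_{j}(\bfx,M)$ have finite length, which here is exactly amenability, and that $\bfx$ is a $d$-sequence. Writing $h_{j}=\l(H_{j}(\bfx,M))$, the first and main task is to establish the module-theoretic analog of \cite[Theorem~3.6]{Chern7} for a partial system: that the $j$-transform $\rmH_{(\bfx)}(M)=H^{0}_{\fkm}(\G((\bfx),M))$ has Hilbert coefficients given by weighted alternating sums of the $h_{j}$, the representative case being
\[ -\,j_{1}(\bfx,M)=\sum_{j \geq 1}(-1)^{j+1} j\,h_{j}=h_{1}-2h_{2}+3h_{3}-\cdots . \]
Here the $d$-sequence hypothesis forces the analytic spread of $(\bfx)$ to equal $m$, so that the coefficients $j_{0}(\bfx,M),\ldots,j_{m}(\bfx,M)$ are defined.

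Granting this formula, Part~(1) is a telescoping identity carried out exactly as in Theorem~\ref{Ch7-3-7}. I would first record that $\bfx'=x_{1},\ldots,x_{m-1}$ is again an amenable partial system of parameters that is a $d$-sequence on $M$: an initial segment of a $d$-sequence is a $d$-sequence, the element $x_{m}$ lowers $\dim(M/(\bfx')M)$ by exactly one, and amenability of $\bfx'$ descends from that of $\bfx$ through the Koszul long exact sequence in $x_{m}$. Invoking the short exact sequences of \cite[Corollary~3.2]{Chern7} in this setting, one expresses $\chi_{1}(\bfx',M)=\sum_{j \geq 1}(-1)^{j}(j-1)h_{j}$ in terms of the same $h_{j}$; subtracting from $\chi_{1}(\bfx,M)=\sum_{j \geq 1}(-1)^{j-1}h_{j}$ yields
\[ \chi_{1}(\bfx,M)-\chi_{1}(\bfx',M)=\sum_{j \geq 1}(-1)^{j+1}j\,h_{j}=-\,j_{1}(\bfx,M), \]
which is the case $i=1$. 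The general identity for $0 \leq i \leq m$ follows by the same bookkeeping applied to the successive initial subsequences $x_{1},\ldots,x_{m-i+1}$ and their truncations, each step producing precisely one difference $\chi_{1}(x_{1},\ldots,x_{m-i+1},M)-\chi_{1}(x_{1},\ldots,x_{m-i},M)$.

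For Part~(2), I would specialize Part~(1) to $i=1$, obtaining $-\,j_{1}(\bfx,M)=\chi_{1}(\bfx,M)-\chi_{1}(\bfx',M)$. Because $\bfx'$ is an amenable partial system of parameters, its first Euler characteristic is nonnegative, $\chi_{1}(\bfx',M)\geq 0$, by the partial-system analog of Theorem~\ref{Euler1}(1) (the Koszul homologies $H_{j}(\bfx',M)$ have finite length, so Serre positivity applies); hence $-\,j_{1}(\bfx,M)\leq\chi_{1}(\bfx,M)$. Equality forces $\chi_{1}(\bfx',M)=0$, and for a $d$-sequence this vanishing is equivalent to $x_{1},\ldots,x_{m-1}$ being an $M$-regular sequence: the forward implication is immediate, since a regular sequence has vanishing higher Koszul homology, while the converse is the partial-system analog of Theorem~\ref{Euler1}(2), in which $\chi_{1}(\bfx',M)=0$ forces $H_{1}(\bfx',M)=0$ and hence regularity for a $d$-sequence.

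The hard part is the first step, the $j$-coefficient Koszul formula. One must show that applying $H^{0}_{\fkm}$ to the associated graded module $\G((\bfx),M)$ of a $d$-sequence produces a graded module whose Hilbert coefficients are exactly the weighted alternating sums of the $h_{j}$ above. This is where the $d$-sequence hypothesis is essential: it controls the $\fkm$-torsion of $\G((\bfx),M)$ through the good behavior of the powers $(\bfx)^{n}M$ and the approximation and Koszul complexes attached to a $d$-sequence, and it is what pins the analytic spread at $m$ so that the $j$-polynomial has degree $m$. Once this structural identity is in hand, everything else reduces to the formal telescoping above together with the standard positivity and vanishing properties of first Euler characteristics.
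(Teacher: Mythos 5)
The survey itself gives no proof of this Proposition --- it is quoted verbatim from \cite[Corollary 4.15 and Proposition 4.16]{jmult} --- so your proposal can only be measured against the analogous argument the paper does spell out, namely the proof of Theorem~\ref{Ch7-3-7}. Your architecture matches that argument: express $-j_1(\bfx,M)$ as the weighted alternating sum $\sum_{j\geq 1}(-1)^{j+1}j\,\l(H_j(\bfx,M))$, peel off $\chi_1(\bfx,M)$, identify the remainder with $\chi_1(x_1,\ldots,x_{m-1},M)$ via the short exact sequences of \cite[Corollary 3.2]{Chern7}, and telescope. The bookkeeping is arithmetically consistent with the displayed formulas in the paper's proof of Theorem~\ref{Ch7-3-7}, and reducing Part (2) to nonnegativity and rigidity of $\chi_1(x_1,\ldots,x_{m-1},M)$ is the right reduction.

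The gap is that everything rests on the identity $-j_1(\bfx,M)=h_1-2h_2+3h_3-\cdots$, which you do not prove; you only name it as ``the hard part'' and gesture at why $d$-sequences should control the $\fkm$-torsion of $\G((\bfx),M)$. That identity is the module/partial-system analog of \cite[Theorem 3.6]{Chern7} and is precisely the substantive content of the cited result; without it nothing telescopes. Relatedly, your claim that the $d$-sequence hypothesis alone pins the degree of the $j$-polynomial at $m$ (so that $j_0,\ldots,j_m$ are all defined) is unjustified --- note that the earlier quoted \cite[Corollary 2.10]{jmult} explicitly adds the hypothesis $\dim_{\G}\left(\rmH_{(\bfx)}(M)\right)=m$, which suggests this is not automatic. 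Finally, in Part (2) you invoke ``Serre positivity'' to get $\chi_1(x_1,\ldots,x_{m-1},M)\geq 0$ and assert that $\chi_1=0$ forces $H_1=0$; Serre's theorem as used in Theorem~\ref{Euler1} is for full systems of parameters, and vanishing of an alternating sum of lengths does not by itself kill $H_1$. For an amenable partial system that is a $d$-sequence the correct mechanism is the colon-length expression for $\chi_1$ of the type $\chi_{1}(x_{1},\ldots,x_{k},M)=\sum_{i}\l\big(((x_{1},\ldots,x_{i-1})M:_{M}x_{i})/(x_{1},\ldots,x_{i-1})M\big)$, in the spirit of \cite[Proposition 3.3]{Chern7} as used in the paper's proof of Theorem~\ref{Ch7-4-2}; that formula gives both the nonnegativity and the equivalence of $\chi_1=0$ with regularity in one stroke. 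So the skeleton is right and consistent with the paper's treatment of the ring case, but the load-bearing lemma and the positivity/rigidity inputs are asserted rather than established.
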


Recall that a module $M$ is generalized Cohen-Macaulay if ${\ds H^{i}_{\fkm}(M)}$ is finitely generated for all $i \neq \dim(M)$. Theorem \ref{GO11-1-1} shows the relationship between the finiteness of the set of Hilbert coefficients and generalized-Cohen-Macaulayness of the ring. The following theorem shows a similar result in terms of $j$-coefficients.

\begin{Theorem}{\rm \cite[Theorem 5.1]{jmult}}
Suppose that there exists a system  $\bfx$ of parameters of $M$ which is a strong $d$-sequence relative to $M$. Let $m$ be an integer such that ${\ds 0 < m < \dim(M)}$.  Let  ${\ds \Omega(M) = \{ j_{i}(\bfx, M) \mid 0 \leq i \leq m-1 \} }$, where ${\ds \bfx=x_{1}, \ldots, x_{m}}$ is an amenable partial system of parameters of $M$ which is a $d$-sequence relative to $M$. 
Then the set $\Omega(M)$ is finite if and only if $H^{i}_{\fkm}(M)$ is a finitely generated $R$-module for every $1 \leq i \leq m$.
\end{Theorem}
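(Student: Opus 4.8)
The plan is to translate the statement about the $j$-coefficients into one about the first Euler characteristics of the initial segments of $\bfx$, and then to recognize the local cohomology of $M$ from the way those Euler characteristics jump as the segment is lengthened. The input is the formula of the preceding Proposition \cite[Corollary 4.15]{jmult}: for an amenable partial $d$-sequence $\bfx = x_{1}, \ldots, x_{m}$,
\[ (-1)^{i} j_{i}(\bfx, M) = \chi_{1}(x_{1}, \ldots, x_{m-i+1}, M) - \chi_{1}(x_{1}, \ldots, x_{m-i}, M), \qquad 0 \le i \le m, \]
where for $i=0$ the longer segment uses one further parameter $x_{m+1}$, available because $m < \dim(M)$. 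Writing $c_{k} = \chi_{1}(x_{1}, \ldots, x_{k}, M)$, this reads $(-1)^{i} j_{i}(\bfx, M) = c_{m-i+1} - c_{m-i}$, so $j_{i}$ records the jump in the first Euler characteristic caused by adjoining one more parameter. As $i$ runs over $0, \ldots, m-1$, the index $l = m-i$ runs over $1, \ldots, m$, so $\Omega(M)$ packages precisely the jumps $c_{l+1} - c_{l}$ for $1 \le l \le m$. The guiding principle, to be made precise through the Koszul analysis of $d$-sequences, is that the jump $c_{l+1} - c_{l}$ is governed by $H^{l}_{\fkm}(M)$; the theorem then becomes the statement that all these jumps are bounded, uniformly over the admissible $\bfx$, if and only if $H^{l}_{\fkm}(M)$ has finite length for $1 \le l \le m$ (for an Artinian module, finite length is equivalent to finite generation).

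For the implication that starts from finite length of $H^{l}_{\fkm}(M)$ for $1 \le l \le m$, I would bound each segment Euler characteristic $c_{k}$ with $k \le m+1$ directly. Analyzing the Koszul homology along the $d$-sequence by means of the exact sequences recorded earlier in \cite[Corollary 3.2]{Chern7}, one sees that $H_{j}(x_{1}, \ldots, x_{k}, M)$ is built only from the modules $H^{i}_{\fkm}(M)$ with $i \le k-1 \le m$, and that its length is bounded by a universal combination of the lengths $\l(H^{i}_{\fkm}(M))$ with $i \le k-1$, independently of the chosen parameters (here $H^{0}_{\fkm}(M)$ is automatically of finite length, and the remaining pieces are finite by hypothesis). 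This is exactly the type of estimate underlying Theorem \ref{Euler1}. Forming the alternating sum bounds each $c_{k}$, hence every jump $c_{l+1} - c_{l}$ and every $j_{i}(\bfx, M)$, uniformly in $\bfx$, so $\Omega(M)$ is finite.

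The substantive direction is the converse, which I would establish in contrapositive form. Let $i_{0}$ be the least index in $\{1, \ldots, m\}$ for which $H^{i_{0}}_{\fkm}(M)$ is not finitely generated, i.e.\ has infinite length. By the estimate of the previous paragraph, all segment Euler characteristics $c_{k}$ with $k \le i_{0}$ remain uniformly bounded, since they involve only the modules $H^{i}_{\fkm}(M)$ with $i \le i_{0}-1$, all of finite length. The point is then to show that $c_{i_{0}+1}$ is unbounded. Here the strong $d$-sequence hypothesis is essential: replacing the parameters by powers $x_{1}^{n_{1}}, \ldots, x_{m}^{n_{m}}$ again yields an amenable partial $d$-sequence relative to $M$, so we obtain an infinite family of admissible systems. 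Computing the jump at stage $i_{0}+1$ through the $d$-sequence Koszul sequences \cite[Corollary 3.2]{Chern7}, the contribution of $H^{i_{0}}_{\fkm}(M)$ enters as a length measuring the action of $x_{1}^{n_{1}}, \ldots, x_{i_{0}+1}^{n_{i_{0}+1}}$ on $H^{i_{0}}_{\fkm}(M)$; since $H^{i_{0}}_{\fkm}(M)$ is Artinian of infinite length, its Matlis dual is a finitely generated module of positive dimension, and this length grows without bound as the exponents increase. Thus $c_{i_{0}+1}$ is unbounded while $c_{i_{0}}$ is bounded, the single coefficient $j_{m-i_{0}}(\bfx, M) = \pm(c_{i_{0}+1} - c_{i_{0}})$ is unbounded, and $\Omega(M)$ is infinite.

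The main obstacle is precisely this last step: one must check that the infinite length of $H^{i_{0}}_{\fkm}(M)$ genuinely survives in the jump $c_{i_{0}+1} - c_{i_{0}}$, neither cancelled by the lower-degree contributions nor lost in passing from the alternating sum to the single difference defining $j_{m-i_{0}}$. Taking $i_{0}$ minimal arranges that every lower local cohomology module has finite length, so that $c_{i_{0}}$ stays bounded, and inflating the exponents forces the $H^{i_{0}}_{\fkm}(M)$ term to dominate; carrying this out rigorously, via the Koszul sequences for $d$-sequences together with Matlis duality, is where the strong $d$-sequence hypothesis and the finiteness assumption are both used in full.
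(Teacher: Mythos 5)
The survey states this theorem with only the citation \cite[Theorem 5.1]{jmult} and reproduces no proof, so there is no argument in the paper to hold you to line by line. Your overall strategy --- converting the $j_i(\bfx,M)$ into jumps $c_{l+1}-c_l$ of first Euler characteristics of initial segments via \cite[Corollary 4.15]{jmult}, bounding those Euler characteristics by lengths of local cohomology for one direction, and inflating exponents of the given strong $d$-sequence together with Matlis duality for the other --- is the natural route and is consistent with how the neighboring results (Theorem~\ref{Euler1}, Theorem~\ref{Ch5-4-2}, Theorem~\ref{GO11-1-1}) are proved in the cited literature. Nevertheless, as written the argument has two concrete gaps. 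In the forward direction you assert that $\l(H_j(x_1,\ldots,x_k,M))$ is controlled by a universal combination of the lengths $\l(H^i_{\fkm}(M))$ with $i\le k-1$ \emph{only}. The classical estimates of this type (\cite{CST78}, \cite{SV86}) are proved for modules with finite local cohomology in all degrees below the dimension, whereas here finiteness is assumed only for $i\le m$; the truncated statement --- that no $H^i_{\fkm}(M)$ with $i\ge k$ leaks into the Koszul homology of a length-$k$ segment of an amenable $d$-sequence --- is precisely the content that must be extracted from the exact sequences of \cite[Corollary 3.2]{Chern7}, and it is not a quotation of a known result. Relatedly, the $i=0$ element of $\Omega(M)$ corresponds under your dictionary to $c_{m+1}-c_m$ and so needs an $(m+1)$-st parameter extending $\bfx$ to an amenable $d$-sequence; you declare this ``available because $m<\dim(M)$'' without justification.

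The more serious gap is in the converse. You show (modulo the above) that $c_{i_0}$ stays bounded and that the contribution of $H^{i_0}_{\fkm}(M)$ to $c_{i_0+1}$ grows as the exponents are inflated. But $\chi_1$ is an \emph{alternating} sum of Koszul homology lengths, several of which may grow simultaneously under the same inflation, so unboundedness of one contribution does not yet give unboundedness of $c_{i_0+1}$, hence of $j_{m-i_0}$. You name this as ``the main obstacle'' but do not close it. The standard way to close it is to replace the alternating sum by a single nonnegative length: for $d$-sequences one has identities of the kind used in the proof of Theorem~\ref{Ch7-4-2}, expressing $\chi_1(x_1,\ldots,x_k,M)$ as the length of a colon quotient such as $\bigl((x_1,\ldots,x_{k-1})M:_M x_k\bigr)/(x_1,\ldots,x_{k-1})M$, which can then be bounded below by a colon length inside $H^{i_0}_{\fkm}(M)$ whose Matlis dual analysis does give unbounded growth. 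Until some such non-cancellation device is in place, the converse direction is a plausible plan rather than a proof.
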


\bigskip

\section{Variation of Hilbert Coefficients}\label{Variations}

 An interesting question to study, referred to as variation of Hilbert coefficients, is how the Hilbert coefficients $\rme_{i}(J)$ change when an $\fkm$-primary ideal $J$ is enlarged. First we recall the definition of a special fiber.

\begin{Definition}\label{spfiber}{\rm
Let $(R, \fkm)$ be a Noetherian local ring and let $I$ be an ideal in $R$. 
The {\em special fiber ring} $\clF(I)$ of $I$ is defined as 
\[ \clF(I) = \clR(I)/\fkm\clR(I) =   \bigoplus_{n \geq 0} I^{n}/ \fkm I^{n}. \] 
The dimension of $\clF(I)$ is called the {\em analytic spread} of $I$ and is denoted by $\ell(I)$. 
}\end{Definition}

The Hilbert function of $\clF(I)$ measures the growth of the minimal number of generators $\nu(I^{n})$ of the powers of $I^{n}$.  The Hilbert polynomial of $\clF(I)$ has degree $\ell(I)-1$ and its leading coefficient is denoted by $f_{0}(I)$, which can be used to bound a variation of Hilbert coefficients as shown below.

\begin{Theorem}{\rm \cite[Theorem 2.2]{Chern4} }
Let $(R, \fkm)$ be a Noetherian local ring and let  $J$ and ${\ds I=(J, h)}$ be $\fkm$-primary ideals. Then
\[ \rme_{0}(J) - \rme_{0}(I) \leq \l(R/ (J:I) ) \cdot f_{0}(J), \]
where ${\ds f_{0}(J)}$ is the multiplicity of the special fiber of the Rees algebra of $J$.
\end{Theorem}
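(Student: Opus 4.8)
The plan is to reduce the statement to an asymptotic length count and then realize that count through a natural filtration of $I^{n+1}$ over $J^{n+1}$ whose successive quotients are controlled simultaneously by $(J:I)$ and by the special fiber of $J$. Writing $d=\dim R$ and using the multiplicity limit formula recorded in Section~\ref{mult}, since $J \subseteq I$ gives $J^{n+1}\subseteq I^{n+1}$, I would first observe
\[ \rme_0(J) - \rme_0(I) = \lim_{n \to \infty} \frac{d!}{n^d}\,\l\!\left(I^{n+1}/J^{n+1}\right). \]
Thus it suffices to bound $\l(I^{n+1}/J^{n+1})$ sharply enough that, after dividing by $n^d/d!$ and letting $n \to \infty$, the bound tends to $\l(R/(J:I))\cdot f_0(J)$.

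To bound this length I would exploit that $I=(J,h)$, so $I^{n+1}=\sum_{j=0}^{n+1} h^j J^{n+1-j}$, and introduce the filtration $M_k = \sum_{j=0}^{k} h^j J^{n+1-j}$ for $0 \le k \le n+1$, with $M_0 = J^{n+1}$ and $M_{n+1} = I^{n+1}$. Each successive quotient $M_k/M_{k-1}$ is the image of $J^{n+1-k}$ under multiplication by $h^k$, hence is generated over $R$ by at most $\nu(J^{n+1-k})$ elements. The decisive observation is that $(J:I)=(J:h)$ annihilates $M_k/M_{k-1}$: since $(J:h)\,h \subseteq J$, one computes $(J:h)\,h^k J^{n+1-k} \subseteq h^{k-1}J^{n+2-k}$, and the right-hand side is precisely the $j=k-1$ summand of $M_{k-1}$. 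Consequently $M_k/M_{k-1}$ is a module over the Artinian ring $R/(J:I)$ — note $J \subseteq (J:I)$, so $\l(R/(J:I))<\infty$ — generated by at most $\nu(J^{n+1-k})$ elements, whence $\l(M_k/M_{k-1}) \le \nu(J^{n+1-k})\,\l(R/(J:I))$.

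Summing over $k$ and reindexing by $m=n+1-k$ would then give
\[ \l\!\left(I^{n+1}/J^{n+1}\right) \le \l(R/(J:I)) \sum_{m=0}^{n} \nu(J^m). \]
Here I would invoke that $\nu(J^m)=\l(J^m/\fkm J^m)$ is the Hilbert function of the special fiber $\clF(J)$, which (as $J$ is $\fkm$-primary, so $\ell(J)=d$) agrees for $m \gg 0$ with a polynomial of degree $d-1$ and leading coefficient $f_0(J)$; hence $\sum_{m=0}^{n}\nu(J^m) = \frac{f_0(J)}{d!}\,n^d + O(n^{d-1})$. Multiplying by $d!/n^d$ and passing to the limit yields exactly $\rme_0(J)-\rme_0(I) \le \l(R/(J:I))\,f_0(J)$.

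The main obstacle is the annihilation step: verifying cleanly that each successive quotient is killed by $(J:I)$ and is generated by $\nu(J^{n+1-k})$ elements, so that the crude estimate ``length $\le$ (number of generators)$\,\times\,\l(R/(J:I))$'' applies. Once that is in place, the remaining ingredients — the limit formula for multiplicity and the Hilbert-polynomial asymptotics of $\clF(J)$ — are routine, though one must be mildly careful that the lower-order terms in $\sum_{m=0}^{n}\nu(J^m)$ do not affect the limit.
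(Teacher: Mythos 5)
Your argument is correct: the reduction of $\rme_{0}(J)-\rme_{0}(I)$ to $\lim_{n\to\infty}\frac{d!}{n^{d}}\,\l(I^{n+1}/J^{n+1})$, the filtration $M_{k}=\sum_{j\leq k}h^{j}J^{n+1-j}$ whose factors are killed by $(J:I)=(J:h)$ (via $(J:h)h^{k}J^{n+1-k}\subseteq h^{k-1}J^{n+2-k}\subseteq M_{k-1}$) and generated by at most $\nu(J^{n+1-k})$ elements, and the asymptotics $\sum_{m=0}^{n}\nu(J^{m})=\frac{f_{0}(J)}{d!}n^{d}+O(n^{d-1})$ all check out. This survey states the theorem without reproducing a proof (it only cites \cite{Chern4}), and your filtration argument is essentially the one used in that reference.
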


When multiple additional generators are added to $J$, a similar result holds.

\begin{Theorem}\label{Ch4-2-6}{\rm \cite[Theorem 2.6]{Chern4} }
Let $(R, \fkm)$ be a Noetherian local ring of positive dimension. Let $J$ be an $\fkm$-primary ideal and let $I=(J, h_{1}, \ldots, h_{m} )$ be integral over $J$ of reduction number $r$. Then 
\[ \rme_{1}(I) - \rme_{1}(J) \leq  \l( R/(J:I) ) \cdot \left[ {{m+r}\choose{r}} -1  \right] \cdot f_{0}(J), \]
where ${\ds f_{0}(J)}$ is the multiplicity of the special fiber of the Rees algebra of $J$.
\end{Theorem}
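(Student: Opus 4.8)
The plan is to realize the difference $\rme_1(I)-\rme_1(J)$ as the normalized leading coefficient of the length function $n\mapsto \l(I^{n+1}/J^{n+1})$ and then to bound that function by an explicit filtration argument. Since $I$ is integral over $J$, the two ideals have the same multiplicity, $\rme_0(I)=\rme_0(J)$, so the polynomial $\rmP_J(n)-\rmP_I(n)=\l(I^{n+1}/J^{n+1})$ has degree at most $d-1$ in $n$, and the coefficient of $\binom{n+d-1}{d-1}$ in it is exactly $\rme_1(I)-\rme_1(J)$. Equivalently, $\rme_1(I)-\rme_1(J)=\lim_{n\to\infty}\tfrac{(d-1)!}{n^{d-1}}\,\l(I^{n+1}/J^{n+1})$, so it suffices to bound $\l(I^{n+1}/J^{n+1})$ for $n\gg 0$.

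First I would interpolate between $J^{n+1}$ and $I^{n+1}$ along the chain $J^{n+1}\subseteq J^nI\subseteq J^{n-1}I^2\subseteq\cdots\subseteq JI^n\subseteq I^{n+1}$, each inclusion holding because $J\subseteq I$. This gives $\l(I^{n+1}/J^{n+1})=\sum_{j=0}^{n}\l\big(J^{n-j}I^{j+1}/J^{n-j+1}I^j\big)$. Because $I$ has reduction number $r$ relative to $J$, we have $I^{j+1}=JI^j$ for all $j\ge r$, so every factor with $j\ge r$ vanishes and only the terms $j=0,\dots,r-1$ survive.

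The heart of the argument is bounding a single factor $J^{n-j}I^{j+1}/J^{n-j+1}I^j$, and this is the step I expect to be the main obstacle. Writing $I=J+(h_1,\dots,h_m)$ yields $I^{j+1}=JI^j+\sum_{|\alpha|=j+1}Rh^\alpha$, so the factor is the sum of the $\binom{j+m}{m-1}$ images $\operatorname{im}(\phi_\alpha)$, where $\phi_\alpha\colon J^{n-j}\to J^{n-j}I^{j+1}/J^{n-j+1}I^j$ sends $x\mapsto xh^\alpha$. The key observation is that $(J:I)$ annihilates $I^{j+1}/JI^j$, since $(J:I)I^{j+1}=\big((J:I)I\big)I^j\subseteq JI^j$; this forces $(J:I)J^{n-j}h^\alpha\subseteq J^{n-j+1}I^j$, so each $\phi_\alpha$ factors through $J^{n-j}/(J:I)J^{n-j}$. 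A surjection from a free module of rank $\nu(J^{n-j})$ then gives $\l\big(J^{n-j}/(J:I)J^{n-j}\big)\le \nu(J^{n-j})\cdot\l(R/(J:I))$, and by subadditivity of length over the $\binom{j+m}{m-1}$ summands we obtain $\l\big(J^{n-j}I^{j+1}/J^{n-j+1}I^j\big)\le \binom{j+m}{m-1}\,\nu(J^{n-j})\,\l(R/(J:I))$.

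Finally I would assemble these bounds into $\l(I^{n+1}/J^{n+1})\le \l(R/(J:I))\sum_{j=0}^{r-1}\binom{j+m}{m-1}\,\nu(J^{n-j})$. Here $\nu(J^{n-j})=\l(J^{n-j}/\fkm J^{n-j})$ is the Hilbert function of the special fiber $\clF(J)$, which has dimension $\ell(J)=d$ because $J$ is $\fkm$-primary, and multiplicity $f_0(J)$; hence $\tfrac{(d-1)!}{n^{d-1}}\nu(J^{n-j})\to f_0(J)$ for each fixed $j$. Multiplying through by $\tfrac{(d-1)!}{n^{d-1}}$ and letting $n\to\infty$ yields $\rme_1(I)-\rme_1(J)\le \l(R/(J:I))\,f_0(J)\sum_{j=0}^{r-1}\binom{j+m}{m-1}$, and the combinatorial identity $\sum_{j=0}^{r-1}\binom{j+m}{m-1}=\binom{m+r}{r}-1$ (summing the number of degree-$e$ monomials in $m$ variables over $e=1,\dots,r$) produces the stated bound. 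The only points requiring care beyond Step~3 are checking that $\ell(J)=d$ so the two sides have matching degree $d-1$, and that the inequality of the eventual polynomials transfers to their normalized leading coefficients.
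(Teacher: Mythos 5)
Your argument is correct and complete: the identification of $\rme_1(I)-\rme_1(J)$ with the normalized leading coefficient of $\l(I^{n+1}/J^{n+1})$, the telescoping filtration $J^{n+1}\subseteq J^nI\subseteq\cdots\subseteq I^{n+1}$ whose factors vanish for $j\geq r$, the annihilation of each factor by $(J:I)$, and the count of the $\binom{m+r}{r}-1$ nonconstant monomials of degree at most $r$ in $h_1,\ldots,h_m$ all check out. The survey states this theorem without proof, but your route is essentially the one used in the cited source \cite[Theorem 2.6]{Chern4}, so there is nothing to add.
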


Variation of the Chern coefficients is related to the multiplicity of certain Sally modules. Let $Q$ be a minimal reduction of an $\fkm$-primary ideal $I$. Suppose that the dimension of the Sally module $S_{Q}(I)$ of $I$ relative to $Q$ is equal to the dimension $d$ of $R$ and that ${\ds H^{0}_{\fkm}(R) \subset I}$. Then the multiplicity $s_{0}(Q, I)$ of the Sally module  $S_{Q}(I)$ is 
\[ s_{0}(Q, I) = \rme_{1}(I) - \rme_{1}(Q) - \rme_{0}(I) + \l(R/I). \]
Therefore as a consequence of Theorem~\ref{Ch4-2-6}, we obtain the following.
\[ s_{0}(Q, I) \leq - \rme_{0}(I) + \l(R/I) + \l( R/(Q:I) ) \cdot \left[ {{\nu(I)-d+r}\choose{r}} -1  \right],\]
where $r$ is the reduction number of $I$ relative to $Q$ \cite[Corollary 2.9]{Chern4}. Notice that the reduction number plays a key role in the upper bound for the variation of Chern coefficients and the multiplicity of Sally module. Thus it is useful to find more information about the reduction number in terms of the Hilbert coefficients. Let $(R, \fkm)$ be a Cohen-Macaulay ring of dimension $d \geq 1$ with infinite residue field.  Vasconcelos proved that 
\[ r(I) \leq \frac{d \cdot \rme_{0}(I)}{o(I)} -2d +1, \] where $o(I)$ is the $\fkm$-adic order of $I$ \cite[Theorem 2.45]{V05Book}. Another distinctive bound was introduced by Rossi \cite[Corollary 1.5]{R00}. If $d \leq 2$, then  
\[ r(I) \leq  \rme_{1}(I) - \rme_{0}(I) + \l(R/I) +1. \]
There is a special case where the equality in \cite[Corollary 1.5]{R00} holds true in a Cohen-Macaulay ring of any positive dimension.

\begin{Proposition}{\rm \cite[Proposition 2.3]{Red}}
Let $(R, \fkm)$ be a Cohen-Macaulay local ring of positive dimension and let $I$ be an $\fkm$-primary ideal. Suppose that there exists a minimal reduction $Q$ of $I$ such that ${\ds \l(I^{2}/QI) =1}$. Then the minimal reduction number of $I$ relative to a minimal reduction $J$ is independent of the choice of $J$ and 
\[ r(I) =  \rme_{1}(I) - \rme_{0}(I) + \l(R/I) +1. \]
\end{Proposition}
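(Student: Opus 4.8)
The plan is to reduce the whole statement to a one–dimensional length computation, in which the reduction number can be read off directly from the layers $I^{\,n+1}/QI^{\,n}$, and to deduce the independence of $r_J(I)$ for free from the shape of the formula.

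First I would settle the independence assertion and the passage to a single reduction. Replacing $R$ by $R[X]_{\fkm R[X]}$ I may assume the residue field infinite. The key preliminary remark is that the hypothesis $\l(I^2/JI)=1$ does not depend on $J$: writing $J=(a_1,\dots,a_d)$ with the $a_i$ a regular sequence, the map $(R/I)^d \rar J/JI$ sending the $i$-th basis vector to $a_i$ is an isomorphism (surjectivity is clear, and injectivity follows from the Koszul relations of a regular sequence together with $J\subseteq I$), so $\l(R/JI)=\l(R/J)+d\,\l(R/I)=\rme_0(I)+d\,\l(R/I)$. Hence
\[ \l(I^2/JI)=\l(R/JI)-\l(R/I^2)=\rme_0(I)+d\,\l(R/I)-\l(R/I^2) \]
is the same for every minimal reduction $J$. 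Thus the hypothesis holds for all minimal reductions, and it suffices to prove $r_Q(I)=\rme_1(I)-\rme_0(I)+\l(R/I)+1$ for one fixed $Q$; since the right-hand side is manifestly $J$-free, the independence of $r_J(I)$ follows immediately.

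By Corollary~\ref{Sally bounds}, $s_0:=\rme_1(I)-\rme_0(I)+\l(R/I)$ is the multiplicity of $S_Q(I)$, so the target is $r_Q(I)=s_0+1$. I would then cut the dimension to $1$ by factoring out a general superficial sequence $x_1,\dots,x_{d-1}\in Q$ whose initial forms are regular on $\G(I)$. The strong structure forced by $\l(I^2/QI)=1$ (it is known to force $\depth\G(I)\ge d-1$) guarantees such a sequence exists and that passage to $\bar R=R/(x_1,\dots,x_{d-1})$ preserves $\rme_0$, $\rme_1$, $\l(R/I)$, the reduction number, and the value $\l(\bar I^{\,2}/\bar Q\bar I)=1$. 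So I may assume $d=1$ and $Q=(a)$ with $a$ a nonzerodivisor. Setting $v_n:=\l(I^{\,n+1}/QI^{\,n})$, the definition of the reduction number gives $v_n\ge1$ for $1\le n\le r-1$ and $v_n=0$ for $n\ge r$, while a telescoping of the Sally filtration (using that $a$ is regular) yields $\l\big(S_Q(I)_n\big)=\sum_{k=1}^{n}v_k$, whence $s_0=\sum_{k\ge1}v_k=\sum_{k=1}^{r-1}v_k$. Therefore $r_Q(I)=s_0+1$ is \emph{equivalent} to the single assertion that $v_n\le1$ for all $n$. Writing $I^2=QI+Rc$ with $\fkm c\subseteq QI$, an easy induction gives $\fkm I^{\,n+1}\subseteq QI^{\,n}$ (so each layer is a $k$-vector space) and $I^{\,n+1}=QI^{\,n}+cI^{\,n-1}$; since $cI\subseteq\fkm c\subseteq QI$, multiplication by $c$ produces surjections $I^{\,n-1}/QI^{\,n-2}\twoheadrightarrow I^{\,n+1}/QI^{\,n}$, i.e. $v_{n+1}\le v_{n-1}$. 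Iterating, every odd layer is a quotient of $v_1=1$, and every even layer is a quotient of $v_2$; together with $v_n=0\Rightarrow v_{n+1}=0$ this reduces the entire problem to the one inequality $v_2=\l(I^3/QI^2)\le1$.

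The main obstacle is exactly this bound $\l(I^3/QI^2)\le1$ (equivalently, that the sequence $v_n$ is non-increasing), and it is here that the hypothesis must be exploited beyond the bare fact $v_1=1$. Since $cI\subseteq aI$ and $a$ is a nonzerodivisor, I would write $cI=aM$ with $M=\{z:az\in cI\}\subseteq I$, so that $I^3/QI^2=cI/(cI\cap aI^2)\cong M/(M\cap I^2)$; the crux is to show this quotient is cyclic. I expect this to come from a careful analysis of $M$ using the full strength of $\fkm c\subseteq QI$ (equivalently, from the positivity of the depth of the Sally module in this situation), pinning the single new length to the one generator it can absorb. Once $v_2\le1$ is established, the chain surjections force $v_n\in\{0,1\}$ for every $n$, so $s_0=\#\{n:v_n=1\}=r-1$ and $r_Q(I)=s_0+1$; combined with the first step, this yields both the formula and the independence of $r_J(I)$ from the choice of $J$.
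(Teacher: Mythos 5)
The survey states this proposition only as a citation to \cite[Proposition 2.3]{Red} and reproduces no proof, so I am measuring your proposal against the argument in the cited source rather than against anything printed here. Much of what you do is correct and well organized: the independence of $\l(I^2/JI)$ from the minimal reduction $J$ (via $J/JI\cong (R/I)^d$ and $\l(R/JI)=\rme_0(I)+d\,\l(R/I)$) is exactly the right opening move; the telescoping $\l\bigl(S_Q(I)_n\bigr)=\sum_{k=1}^{n}v_k$ with $v_k=\l(I^{k+1}/QI^{k})$ in dimension one is correct; and you correctly observe that the asserted formula $r_Q(I)=\rme_1(I)-\rme_0(I)+\l(R/I)+1$ is \emph{equivalent} to the inequality $v_k\le 1$ for all $k$.

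That equivalence is also where the proposal fails: you never prove $v_2=\l(I^3/QI^2)\le 1$, and you say so yourself (``I expect this to come from a careful analysis of $M$\dots''). This is not a technical loose end but the entire mathematical content of the proposition. Your multiplication-by-$c$ surjections only give $v_n\le v_{n-2}$, which controls the odd-indexed layers because $v_1=1$, but the even chain bottoms out at $v_0=\l(I/Q)=\rme_0(I)-\l(R/I)$, which is useless; so the even case requires genuinely new input. The missing inequality $\l(I^{n+1}/JI^{n})\le 1$ for all $n$ under the hypothesis $\l(I^2/JI)\le 1$ is precisely Sally's conjecture, proved by Rossi--Valla and independently by Wang, and it is nontrivial. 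Worse, your dimension-reduction step already invokes the Rossi--Valla--Wang depth bound $\depth\G(I)\ge d-1$, which in those papers is established alongside (indeed, essentially by means of) the very layer inequality you still owe -- so the proposal implicitly leans on the theorem whose content it is supposed to supply. To complete the argument along your lines you should either cite Rossi--Valla/Wang for $v_n\le 1$ directly, after which the depth bound, Huckaba's formula $\rme_1(I)=\sum_{n\ge 1}\l(I^{n}/JI^{n-1})$, and your telescoping assemble into the stated equality (this is in substance how the cited source argues), or give an honest proof that $M/(M\cap I^2)$ has length at most one, which your sketch does not do.
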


It is still an open question whether the result  \cite[Corollary 1.5]{R00} of Rossi can be extended to a Cohen-Macaulay ring of higher dimension or a general Noetherian ring. Ghezzi, Goto, Hong, Vasconcelos were able to extend Rossi's inequality to Buchsbaum and generalized Cohen-Macaulay rings of dimension $2$.

\begin{Theorem}{\rm \cite[Theorem 4.3]{Red}} \label{min red}
Let $(R, \fkm)$ be a Buchsbaum local ring of $\dim(R)=2$ and $\depth(R) >0$. Let $S$ be the $S_{2}$-fication of $R$.  Let $I$ be an $\fkm$-primary ideal with a minimal reduction $Q$. 
\begin{enumerate}[{\rm (1)}]
\item If  ${\ds I=IS}$, then  ${\ds r_{Q}(I) \leq 1+ \rme_{1}(I) - \rme_{0}(I) + \l(R/I)  - \rme_{1}(Q)}$. 
\item If ${\ds I \subsetneq IS}$, then ${\ds r_{Q}(I) \leq ( 1- \rme_{1}(Q))^{2} s_{0}(Q, I) - 2 \rme_{1}(Q) +1}$. 
\end{enumerate}
\end{Theorem}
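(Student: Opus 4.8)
The plan is to reduce both cases to the Cohen--Macaulay situation via the $S_2$-fication and then apply Rossi's inequality \cite[Corollary 1.5]{R00}, which for a two-dimensional Cohen--Macaulay ring states $r(I) \leq \rme_{1}(I) - \rme_{0}(I) + \l(R/I) + 1$. First I would reduce to $R$ complete with infinite residue field. Since $R$ is Buchsbaum of dimension $2$ with $\depth(R) > 0$, we have $H^{0}_{\fkm}(R) = 0$ while $H^{1}_{\fkm}(R)$ has finite length and is annihilated by $\fkm$; applying Theorem~\ref{Ch5-5-4} with $s = d = 2$ yields the identity
\[ \rme_{1}(Q) = - \l\!\left( H^{1}_{\fkm}(R) \right). \]
The $S_2$-fication $S$ is a finite birational extension that is Cohen--Macaulay of dimension $2$ and fits in $0 \to R \to S \to H^{1}_{\fkm}(R) \to 0$, so $\l(S/R) = \l(H^{1}_{\fkm}(R)) = -\rme_{1}(Q)$. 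This single identity converts the local-cohomological data into the Chern coefficient appearing in the statement, and it explains why $1 - \rme_{1}(Q) = 1 + \l(S/R)$ governs both bounds.

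For case (1), where $I = IS$, the ideal $I$ is already an ideal of $S$, so $I^{n} = I^{n}S$ for every $n \geq 1$, and a reduction relation $(IS)^{n+1} = QS\,(IS)^{n}$ in $S$ contracts to $I^{n+1} = QI^{n}$ in $R$, giving $r_{Q}(I) \leq r_{QS}(IS)$. Counting lengths through $0 \to R \to S \to S/R \to 0$ shows that for $n \gg 0$ one has $\l(S/I^{n+1}S) = \l(R/I^{n+1}) + \l(S/R)$, so the Hilbert polynomials of $IS$ over $S$ and of $I$ over $R$ differ only by the constant $\l(S/R)$; hence $\rme_{0}(IS) = \rme_{0}(I)$ and $\rme_{1}(IS) = \rme_{1}(I)$, while $\l(S/IS) = \l(R/I) + \l(S/R)$. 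Feeding these into Rossi's inequality in the Cohen--Macaulay ring $S$ and substituting $\l(S/R) = -\rme_{1}(Q)$ yields precisely $r_{Q}(I) \leq 1 + \rme_{1}(I) - \rme_{0}(I) + \l(R/I) - \rme_{1}(Q)$.

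Case (2), where $I \subsetneq IS$, is the main obstacle. Now $I$ is not an ideal of $S$, the clean contraction $I^{n} = I^{n}S$ fails, and one cannot simply descend the $S$-reduction relation to $R$: a relation $(IS)^{n+1} = QS(IS)^{n}$ only gives $I^{n+1}S = Q I^{n}S$, and recovering $I^{n+1} = QI^{n}$ requires controlling the gap between $I^{n}$ and its $S$-saturation $I^{n}S \cap R$. The strategy I would pursue is to bound this gap: since $\fkm(S/R) = 0$ one has $\fkm\, I^{n}S \subseteq I^{n}$, so each quotient $I^{n}S/I^{n}$ is annihilated by $\fkm$ and its length is controlled by $\l(S/R) = -\rme_{1}(Q)$; iterating this control across the reduction process produces the two independent factors of $1 - \rme_{1}(Q)$ --- one from the length of the reduction carried out in $S$, measured by the Sally multiplicity $s_{0}(Q,I) = \rme_{1}(I) - \rme_{1}(Q) - \rme_{0}(I) + \l(R/I)$, and one from the number of extra steps needed for the $I$-adic filtration to absorb the saturation gap. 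Assembling these estimates gives $r_{Q}(I) \leq (1 - \rme_{1}(Q))^{2}\, s_{0}(Q,I) - 2\rme_{1}(Q) + 1$.

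The hard part will be this quantitative comparison of the $I$-adic filtration with the contracted filtration $\{I^{n}S \cap R\}$ in case (2): making the bound on $\l(I^{n}S/I^{n})$ uniform in $n$ and tracking how the reduction number of $IS$ over $S$ lifts to one for $I$ over $R$ is exactly what forces the square of $1 - \rme_{1}(Q)$, in contrast to the single linear correction that already suffices in case (1).
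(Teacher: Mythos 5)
Your reduction to the Cohen--Macaulay ring $S$ via Rossi's inequality is exactly the paper's strategy for part (1), and your length count through $0 \to R \to S \to S/R \to 0$ together with $\l(S/R) = \l(H^{1}_{\fkm}(R)) = -\rme_{1}(Q)$ is the right bookkeeping. One caveat even here: the $S_{2}$-fication $S$ is in general only \emph{semilocal}, so Rossi's inequality cannot be quoted for ``the Cohen--Macaulay ring $S$'' directly. The paper localizes at the maximal ideals $\fkm_{1},\dots,\fkm_{t}$ of $S$, applies \cite[Corollary 1.5]{R00} to the one $S_{\fkm_i}$ realizing $r_{QS}(IS)$, and then passes to the weighted sum $\sum_i(\rme_1(IS_i)-\rme_0(IS_i)+\l(S_i/IS_i))f_i$ using that each summand is nonnegative. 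Your argument goes through verbatim when $S$ happens to be local, but the general case needs this extra step.

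Part (2) is where there is a genuine gap. You correctly identify that the difficulty is descending a reduction relation from $S$ to $R$ and that the answer should involve $\nu(S)=1+\l(S/R)=1-\rme_{1}(Q)$ twice, but ``iterating this control across the reduction process'' is not an argument, and nothing in your sketch actually produces the stated bound. The paper's proof is a chain of three specific prior inequalities: (a) $r_{Q}(I) \leq \bigl(r_{QS}(IS)+1\bigr)\nu(S)-1$, which is \cite[Proposition 4.1]{Red} and is precisely the quantitative descent statement you would need to prove --- it is not a consequence of the pointwise bound $\fkm\, I^{n}S \subseteq I^{n}$ alone; (b) $r_{QS}(IS) \leq s_{0}(QS,IS)+1$, from \cite[Theorem 2.6]{Red}; and (c) $s_{0}(QS,IS) \leq s_{0}(Q,I)\,\nu(S)$, comparing the Sally multiplicities over $S$ and over $R$. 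Substituting $\nu(S)=1-\rme_{1}(Q)$ into $\bigl(s_{0}(Q,I)\nu(S)+2\bigr)\nu(S)-1$ then gives the claimed bound. Your sketch supplies none of (a), (b), (c); in particular the passage from the Sally multiplicity over $R$ to a reduction-number bound, and the comparison of the two Sally modules, are the actual content. A further small inaccuracy: you assert $s_{0}(Q,I)=\rme_{1}(I)-\rme_{1}(Q)-\rme_{0}(I)+\l(R/I)$ as an identity, whereas in general this is only an upper bound for $s_{0}(Q,I)$ (Proposition~\ref{Sally mult prop}(2)), with equality under an additional hypothesis on $I$; this does not affect the statement, which is phrased in terms of $s_{0}(Q,I)$ itself, but it should not be used as an unconditional equality.
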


\begin{proof} We outline the proof. Because $R$ is Buchsbaum, we have ${\ds -\rme_{1}(Q) = \l (H^{1}_{\fkm}(R) ) = \l(S/R) }$. Let ${ \left\{ \fkm_{1}, \ldots, \fkm_{t} \right\} }$ be the set of maximal ideals of $S$. For each $i=1, \ldots, t$, set ${\ds S_{i} = S_{\fkm_{i}}}$ and let $f_{i} = [S/\fkm_{i} : R/\fkm]$ denote the relative degree. We may assume that ${\ds r_{QS}(IS) = r_{QS_{1}}(IS_{1})}$. 

\medskip

\noindent Suppose that $I=IS$.  By using \cite[Corollary 1.5]{R00}, we obtain the following.
\[ \begin{array}{rcl}
{\ds  r_{Q}(I) = r_{QS}(I) = r_{QS}(IS) =  r_{QS_{1}}(IS_{1}) } &\leq & {\ds 1 + \rme_{1}(IS_{1}) - \rme_{0}(IS_{1}) + \l( S_{1}/IS_{1}) } \vspace{0.1 in} \\ & \leq & {\ds 1 + \sum_{i=1}^{t} \Big(  \rme_{1}(IS_{i}) - \rme_{0}(IS_{i}) + \l( S_{i}/IS_{i})   \Big) f_{i}}  \vspace{0.1 in} \\
&= & {\ds 1+ \rme_{1}(IS) - \rme_{0}(IS) + \l(S/IS)  } \vspace{0.1 in} \\
&= & {\ds 1+ \rme_{1}(IS) - \rme_{0}(IS) +  \l(R/IS) - \rme_{1}(Q)  } \vspace{0.1 in} \\
&= & {\ds 1+ \rme_{1}(I) - \rme_{0}(I) +  \l(R/I) - \rme_{1}(Q)  } 
\end{array} \]  

\medskip

\noindent Suppose that $I \subsetneq IS$. By using \cite[Proposition 4.1, Theorem 2.6]{Red} and \cite[Proposition 1.118]{V05Book}, we obtain the following. 
\[ \begin{array}{rcl}
{\ds r_{Q}(I) } & \leq & {\ds \big(r_{QS}(IS) +1\big) \nu(S) - 1 } \vspace{0.1 in} \\
& \leq & {\ds \big( s_{0}(QS, IS) +2 \big) \nu(S) - 1  } \vspace{0.1 in} \\
& \leq & {\ds \big( s_{0}(Q, I) \nu(S) + 2 \big) \nu(S) -1 } \vspace{0.1 in} \\
&=& {\ds \big( s_{0}(Q, I) (1- \rme_{1}(Q)) + 2\big) \big(1- \rme_{1}(Q)\big)  -1 }  \vspace{0.1 in} \\
&=& {\ds ( 1- \rme_{1}(Q))^{2} s_{0}(Q, I) - 2 \rme_{1}(Q) +1 }
\end{array}\]
\end{proof}

\begin{Corollary}{\rm \cite[Corollary 4.6]{Red}} 
Let $(R, \fkm)$ be a generalized Cohen-Macaulay local ring of $\dim(R)=2$ and $\depth(R) >0$. Let $S$ be the $S_{2}$-fication of $R$.  Let $I$ be an $\fkm$-primary ideal with a minimal reduction $Q$. Suppose that ${\ds I=IS}$. Then
\[ r_{Q}(I) \leq \rme_{1}(I) - \rme_{0}(I) + \l(R/I) + 1 + \l(H^{1}_{\fkm}(R)). \]
\end{Corollary}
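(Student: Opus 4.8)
The plan is to run the argument for Theorem~\ref{min red}(1) essentially verbatim, isolating the one place where Buchsbaumness was actually used and replacing it by a local cohomology computation valid in the generalized Cohen-Macaulay setting. In the proof of Theorem~\ref{min red}(1), the Buchsbaum hypothesis entered only through the identity $-\rme_{1}(Q) = \l(H^{1}_{\fkm}(R)) = \l(S/R)$; every other step used solely that $S$ is the $S_{2}$-fication and that $I = IS$. Since $R$ is generalized Cohen-Macaulay of dimension $2$ with $\depth(R) > 0$, the $S_{2}$-fication $S$ still exists as a module-finite birational extension, $S$ is Cohen-Macaulay (for $\dim = 2$ the condition $(\rmS_{2})$ coincides with Cohen-Macaulay), and $R_{\fkp}$ is Cohen-Macaulay for every $\fkp \neq \fkm$, so the cokernel $S/R$ is supported only at $\fkm$ and therefore has finite length.

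First I would reproduce the reduction-number comparison exactly as in Theorem~\ref{min red}(1): using $I = IS$ one gets $r_{Q}(I) = r_{QS}(IS)$, and after localizing at the maximal ideals $\fkm_{1}, \ldots, \fkm_{t}$ of $S$ and selecting $S_{1}$ to realize the largest local reduction number, Rossi's inequality \cite[Corollary 1.5]{R00} applies to each $S_{i} = S_{\fkm_{i}}$ since these are Cohen-Macaulay of dimension at most $2$. Weighting by the relative degrees $f_{i} = [S/\fkm_{i} : R/\fkm]$ and summing reassembles the global invariants, yielding
\[ r_{Q}(I) \leq 1 + \rme_{1}(IS) - \rme_{0}(IS) + \l(S/IS). \]
Because $I = IS$ we have $\rme_{0}(IS) = \rme_{0}(I)$, $\rme_{1}(IS) = \rme_{1}(I)$, and $\l(S/IS) = \l(S/I)$.

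The key new step is to evaluate $\l(S/R)$ without appealing to the Buchsbaum identity. From the short exact sequence $0 \rar R/I \rar S/I \rar S/R \rar 0$, valid because $I = IS \subseteq R \subseteq S$, I obtain $\l(S/I) = \l(R/I) + \l(S/R)$. To compute $\l(S/R)$ I would apply local cohomology to $0 \rar R \rar S \rar S/R \rar 0$: since $\depth(R) > 0$ forces $H^{0}_{\fkm}(R) = 0$, since $S$ is Cohen-Macaulay of dimension $2$ forces $H^{0}_{\fkm}(S) = H^{1}_{\fkm}(S) = 0$, and since $S/R$ has finite length gives $S/R = H^{0}_{\fkm}(S/R)$, the long exact sequence collapses to an isomorphism $H^{0}_{\fkm}(S/R) \cong H^{1}_{\fkm}(R)$. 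Hence $\l(S/R) = \l(H^{1}_{\fkm}(R))$, and substituting back produces
\[ r_{Q}(I) \leq 1 + \rme_{1}(I) - \rme_{0}(I) + \l(R/I) + \l(H^{1}_{\fkm}(R)), \]
which is the assertion.

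The main obstacle is exactly this evaluation of $\l(S/R)$: in the Buchsbaum case one had the clean equality $-\rme_{1}(Q) = \l(S/R)$, whereas for a generalized Cohen-Macaulay ring one only knows $-\rme_{1}(Q) \leq \l(H^{1}_{\fkm}(R))$, so the bound must be phrased directly in terms of $\l(H^{1}_{\fkm}(R))$. The generalized Cohen-Macaulay hypothesis is what makes the local cohomology sequence collapse, being used once to guarantee that $H^{1}_{\fkm}(R)$ has finite length and once, through the Cohen-Macaulayness of $R$ on the punctured spectrum, to guarantee that $S/R$ has finite length so that $S/R = H^{0}_{\fkm}(S/R)$.
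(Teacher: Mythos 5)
Your proposal is correct and follows the same route the paper intends: the corollary is derived by rerunning the proof of Theorem~\ref{min red}(1), with the single Buchsbaum input $-\rme_{1}(Q)=\l(S/R)$ replaced by the identification $\l(S/R)=\l(H^{1}_{\fkm}(R))$, which your local cohomology argument establishes correctly in the generalized Cohen--Macaulay setting. Your isolation of exactly where Buchsbaumness entered, and the observation that the rest of the chain of inequalities (Rossi's bound applied to the Cohen--Macaulay localizations $S_{i}$, reassembly via the relative degrees $f_{i}$, and the equalities $\rme_{i}(IS)=\rme_{i}(I)$ from $I=IS$) goes through verbatim, matches the paper's argument.
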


Ghezzi, Goto, Hong, Vasconcelos introduced some uniform bounds for reduction numbers.

\begin{Proposition}{\rm \cite[Proposition 3.1]{Red}}
Let $(R, \fkm)$ be a Noetherian local ring with infinite residue field. Let $I$ be an $\fkm$-primary almost complete intersection ideal with a minimal reduction $Q$. If ${\ds \l(I^{n}/QI^{n-1}) = 1}$ for some $n$, then ${\ds r(I) \leq n \nu(\fkm) -1}$.
\end{Proposition}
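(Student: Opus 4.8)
The plan is to exploit the almost complete intersection structure to reduce the whole problem to a single cyclic module, and then to control how long that module stays nonzero. First I would record the structure: since $I$ is an almost complete intersection and $Q$ is a minimal reduction of the $\fkm$-primary ideal $I$, the ideal $Q$ is generated by $d=\h(I)$ elements and $I=(Q,h)$ for a single element $h$. Then $I^k=QI^{k-1}+(h^k)$ for all $k\ge 1$, so each quotient $I^k/QI^{k-1}$ is cyclic, generated by the image of $h^k$, and its length equals $\l\big(R/(QI^{k-1}:h^k)\big)$. In particular $r_Q(I)$ is the largest $k$ with $I^k\neq QI^{k-1}$, and since $r(I)\le r_Q(I)$ it suffices to bound the latter.

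Next I would translate the hypothesis. The equality $\l(I^n/QI^{n-1})=1$ forces $(QI^{n-1}:h^n)=\fkm$, that is $\fkm h^n\subseteq QI^{n-1}$, and hence $\fkm I^n=\fkm QI^{n-1}+\fkm h^n\subseteq QI^{n-1}$. Multiplying by $h^{k-n}\in I^{k-n}$ propagates this to $\fkm h^k\subseteq QI^{k-1}$, so that $\fkm I^k\subseteq QI^{k-1}$ for every $k\ge n$. Consequently $\l(I^k/QI^{k-1})\le 1$ for all $k\ge n$, and multiplication by $h$ gives surjections $I^k/QI^{k-1}\twoheadrightarrow I^{k+1}/QI^k$, so these lengths are nonincreasing in $k$. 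Writing $m$ for the number of indices $k\ge n$ with $I^k\neq QI^{k-1}$, I get $r_Q(I)=n+m-1$, and the statement reduces to the inequality $m\le n(\nu(\fkm)-1)$, equivalently to showing $h^{\,n\nu(\fkm)}\in QI^{\,n\nu(\fkm)-1}$.

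To produce the factor $\nu(\fkm)$ I would fix a minimal generating set $\fkm=(y_1,\ldots,y_{\nu(\fkm)})$ and feed the relations $y_i h^n\in QI^{n-1}$ into a generation-degree argument for the graded module $\bigoplus_{k\ge n} I^k/QI^{k-1}$, viewed over the fiber cone, so that the $\nu(\fkm)$ generators of $\fkm$ cap the length of the run of unit-length quotients; alternatively, I would take the generators of $Q$ to be superficial and induct on $d$, passing to a one-dimensional ring where one can compute directly that $\l(I^k/QI^{k-1})=\rme_{0}(I)-\l(I^{k-1}/I^k)$ and bound the plateau length of the Hilbert function of $\G(I)$ just below $\rme_{0}(I)$ in terms of $\nu(\fkm)$.

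The hard part will be exactly this last step. Every naive containment of the shape $\fkm\,(\cdot)\subseteq Q\,(\cdot)$ trades one factor of $\fkm$ for one factor of $Q$ but simultaneously drops the power of $I$ by one, so iterating such containments can never land on the critical module $I^{\,n\nu(\fkm)}/QI^{\,n\nu(\fkm)-1}$ on the nose. Closing the gap therefore requires a genuinely global device—Nakayama applied to a single finitely generated module over $\clR(Q)$, or the dimension reduction sketched above—rather than pure element chasing, and pinning down precisely why $\nu(\fkm)$, and not some coarser invariant, governs the length of the run of indices with $I^k\neq QI^{k-1}$ is where the real work lies.
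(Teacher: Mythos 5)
Your first two paragraphs are correct and carry out the natural reduction: with an infinite residue field one may write $I=(Q,h)$, so that $I^{k}=QI^{k-1}+(h^{k})$; the hypothesis $\l(I^{n}/QI^{n-1})=1$ gives $(QI^{n-1}:h^{n})=\fkm$, hence $\fkm I^{k}\subseteq QI^{k-1}$ for all $k\geq n$; the quotients $I^{k}/QI^{k-1}$ are then cyclic $R/\fkm$--vector spaces of nonincreasing length for $k\geq n$, and the assertion becomes exactly $h^{n\nu(\fkm)}\in QI^{n\nu(\fkm)-1}$. But this is where the argument stops: the step that actually produces the factor $\nu(\fkm)$ is never carried out, and you say so yourself in the final paragraph. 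Neither of the two strategies you sketch can stand as written. The graded module $\bigoplus_{k\geq n}I^{k}/QI^{k-1}$ is annihilated by $\fkm$ (you proved this), so the $\nu(\fkm)$ generators of $\fkm$ act as zero on it and cannot ``cap the length of the run'' through any generation--degree argument over the fiber cone; modulo the degree-one part of $Q$ this module is just a string of one-dimensional vector spaces linked by multiplication by $h$, which records the run but carries no information about its length. The second strategy presupposes a one-dimensional Cohen--Macaulay situation (the identity $\l(I^{k}/QI^{k-1})=\rme_{0}(I)-\l(I^{k-1}/I^{k})$ needs $Q=(q)$ with $q$ a nonzerodivisor and $\l(R/Q)=\rme_{0}(I)$), whereas the ring is only assumed Noetherian local; and in any case ``bound the plateau length in terms of $\nu(\fkm)$'' is a restatement of the missing claim, not an argument for it.

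The gap is genuine and not a matter of bookkeeping. The only usable consequence of the hypothesis is $h^{n}\fkm\subseteq QI^{n-1}$, and, as you yourself observe, every element-wise manipulation of it (for instance $h^{k+1}\in\fkm h^{k}\subseteq QI^{k-1}$) lands one power of $I$ short of the target $QI^{k}$. Bounds of the shape $(\rho+1)\nu(\,\cdot\,)-1$ in \cite{Red} (compare the inequality $r_{Q}(I)\leq(r_{QS}(IS)+1)\nu(S)-1$ used in the proof of Theorem~\ref{min red}) arise from a Cayley--Hamilton argument over a faithful module with $\nu(\,\cdot\,)$ generators, converting a containment $\alpha M\subseteq\beta M$ into $\alpha^{\nu}\subseteq\beta\alpha^{\nu-1}$; since $I^{n}=QI^{n-1}+(h^{n})$ has one generator over $QI^{n-1}$, such an argument applied with $M=\fkm$ would give precisely $h^{n\nu(\fkm)}\in QI^{n-1}(I^{n})^{\nu(\fkm)-1}=QI^{n\nu(\fkm)-1}$. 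To run it, however, one needs $I^{n}\fkm\subseteq(QI^{n-1})\fkm$ together with the faithfulness of $\fkm$, and the first of these is strictly stronger than the containment $I^{n}\fkm\subseteq QI^{n-1}$ you derived (the weaker substitute $h^{n}\fkm\subseteq(QI^{n-2})\fkm$ loses several powers of $I$ and yields only $h^{n\nu(\fkm)}\in QI^{n\nu(\fkm)-\nu(\fkm)-1}$). Until that single implication, or an equivalent mechanism explaining why the embedding dimension rather than some other invariant governs the run, is supplied, your proposal is an accurate reduction of the statement but not a proof of it.
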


\bigskip

\section{Sally Modules and Special Fibers}\label{Sally}

Let $Q=(a_{1}, \ldots, a_{d})$ be a minimal reduction of $I$. Suppose that the Sally module $S_{Q}(I)$ has dimension $d$. Then the elements $a_{1}t, \ldots, a_{d}t$ form a system of parameters for $S_{Q}(I)$. We denote the corresponding multiplicity  by ${\ds s_{0}(Q, I)=\rme_{0}((Qt), S_{Q}(I)) }$.

\begin{Proposition}{\rm (\cite[Corollary 3.3]{V94}, \cite[Proposition 2.8]{C09})}\label{Sally mult prop}
Let $(R, \fkm)$ be a Noetherian local ring of dimension $d$ with infinite residue field. Let $I$ be an $\fkm$-primary ideal with a minimal reduction $Q$. 
Suppose that $S_{Q}(I)$ has dimension $d$.
\begin{enumerate}[{\rm (1)}]
\item Suppose $R$ is Cohen-Macaulay. Then ${\ds s_{0}(Q, I) = \rme_{1}(I) -\rme_{0}(I)+ \l(R/I) }$. In particular, ${\ds s_{0}(Q, I)}$ is independent of the choice of $Q$. 
\item In general, ${\ds s_{0}(Q, I) \leq \rme_{1}(I) -\rme_{0}(I) - \rme_{1}(Q)+ \l(R/I) }$. Morevoer, equality holds if and only if $I$ contains ${\ds (x_{1}, \ldots, x_{d-1}): \fkm^{\infty} }$, where ${\ds x_{1}, \ldots, x_{d-1}}$ are general elements of $Q$.
\end{enumerate}
\end{Proposition}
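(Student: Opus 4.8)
The plan is to establish a single master identity that specializes to both parts. Write $S_n = I^{n+1}/IQ^n$ for the degree-$n$ component of the Sally module, and recall that since $S_Q(I)$ has dimension $d$, the function $n \mapsto \l(S_n)$ agrees for $n \gg 0$ with a polynomial of degree $d-1$ whose normalized leading coefficient is exactly $s_0(Q,I)$. From the chain $Q^{n+1} \subseteq IQ^n \subseteq I^{n+1}$ one gets the length identity
\[ \l(I^{n+1}/IQ^n) = \l(R/Q^{n+1}) - \l(IQ^n/Q^{n+1}) - \l(R/I^{n+1}). \]
I would then read off the coefficient of $\binom{n+d-1}{d-1}$ on each side. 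Since $\rme_0(Q) = \rme_0(I)$, the top-degree terms of $\l(R/Q^{n+1})$ and $\l(R/I^{n+1})$ cancel, their combination contributing $\rme_1(I) - \rme_1(Q)$ at the next level, while $\l(IQ^n/Q^{n+1})$ is the Hilbert function of the graded module $M' = I\,\gr_{Q}(R)$ (with degree-$n$ part $IQ^n/Q^{n+1}$), contributing its multiplicity $\rme_0(M')$. Hence $s_0(Q,I) = \rme_1(I) - \rme_1(Q) - \rme_0(M')$.

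Next I would evaluate $\rme_0(M')$ through the short exact sequence $0 \to M' \to \gr_{Q}(R) \to N \to 0$, where $N = \gr_{Q}(R)\otimes_R R/I$ has degree-$n$ component $Q^n/IQ^n$. Because $\gr_{Q}(R)$ and $N$ have dimension $d$ and $\rme_0(\gr_{Q}(R)) = \rme_0(Q) = \rme_0(I)$, additivity of multiplicity gives $\rme_0(M') = \rme_0(I) - \rme_0(N)$, yielding the master identity
\[ s_0(Q,I) = \rme_1(I) - \rme_0(I) - \rme_1(Q) + \rme_0(N). \]
Part (1) is then immediate: when $R$ is Cohen-Macaulay, $Q$ is generated by a regular sequence, so $\rme_1(Q) = 0$ and $\gr_{Q}(R) = (R/Q)[Y_1,\ldots,Y_d]$, whence $N = (R/I)[Y_1,\ldots,Y_d]$ and $\rme_0(N) = \l(R/I)$; this recovers $s_0(Q,I) = \rme_1(I) - \rme_0(I) + \l(R/I)$, manifestly independent of $Q$. (Equivalently, Part (1) can be read off directly from the leading coefficient of $\rmP_{S_J(I)}$ computed after Theorem~\ref{V94-3-1}.)

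For the inequality in Part (2), I would observe that $N$ is a standard graded algebra generated over the Artinian ring $N_0 = R/I$ by the degree-one images $\bar x_1,\ldots,\bar x_d$ of the generators of $Q$; thus $N$ is a homogeneous quotient of $P = (R/I)[Y_1,\ldots,Y_d]$, a $d$-dimensional module with $\rme_0(P) = \l(R/I)$. Since the surjection $P \twoheadrightarrow N$ is between modules of the same dimension $d$, we obtain $\rme_0(N) \leq \l(R/I)$, and combined with the master identity this gives $s_0(Q,I) \leq \rme_1(I) - \rme_0(I) - \rme_1(Q) + \l(R/I)$.

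Finally, equality holds precisely when $\rme_0(N) = \rme_0(P)$, i.e. when $\ker(P \twoheadrightarrow N)$ has dimension strictly less than $d$. To identify this with the stated containment I would cut down by general elements $x_1,\ldots,x_{d-1}$ of $Q$ (superficial for $I$, for $Q$, and for $S_Q(I)$) and pass to the one-dimensional ring $\bar R = R/(x_1,\ldots,x_{d-1})$; by preservation of multiplicity under general hyperplane sections this reduces the equality $\rme_0(N) = \l(R/I)$ to a length comparison in $\bar R$ governed by the finite-length torsion $W = H^0_{\fkm}(\bar R) = \big((x_1,\ldots,x_{d-1}):\fkm^\infty\big)/(x_1,\ldots,x_{d-1})$. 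The one-dimensional computation shows $\l(\bar R/(\bar I + W)) \le \l(\bar R/\bar I)$, degenerating to equality exactly when $W \subseteq \bar I$, that is when $I \supseteq (x_1,\ldots,x_{d-1}):\fkm^\infty$. The hard part will be this last step: making the reduction to dimension one rigorous (controlling how each Hilbert coefficient and the module $N$ transform under the general hyperplane sections) and verifying that the $\fkm$-torsion $W$ — the precise carrier of the failure of Cohen-Macaulayness, and the origin of the correction term $-\rme_1(Q)$ — is exactly what measures the deviation from equality.
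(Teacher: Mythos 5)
Your Part (1) and the inequality in Part (2) are sound. The paper itself gives no proof of this proposition (it is quoted from \cite[Corollary 3.3]{V94} and \cite[Proposition 2.8]{C09}); Part (1) is exactly the leading-coefficient extraction from the Hilbert polynomial of $S_J(I)$ that the paper carries out after Theorem~\ref{V94-3-1}, and your route to the general inequality --- the length decomposition along $Q^{n+1}\subseteq IQ^{n}\subseteq I^{n+1}$, the identity $s_0(Q,I)=\rme_1(I)-\rme_1(Q)-\rme_0(I)+\rme_0(N)$ with $N=\G(Q)\otimes_R R/I$, and the bound $\rme_0(N)\le \l(R/I)$ coming from the degreewise surjection $(R/I)[Y_1,\ldots,Y_d]\twoheadrightarrow N$ --- checks out. (A useful sanity check: in dimension one with $Q=(a)$ one computes $\rme_0(N)=\l\bigl(R/(I+H^{0}_{\fkm}(R))\bigr)$ and $\rme_1(Q)=-\l\bigl(H^{0}_{\fkm}(R)\bigr)$, consistent with both your identity and the equality criterion.)

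The genuine gap is the ``if and only if'' in Part (2), and you flag it yourself. You have correctly reduced equality to $\rme_0(N)=\l(R/I)$ and verified the target statement in dimension one, but the bridge from dimension $d$ to dimension one is the entire content of the cited proof and is not supplied. Concretely, you must show that the defect $\l(R/I)-\rme_0(N)$, equivalently $\rme_1(I)-\rme_1(Q)-\rme_0(I)+\l(R/I)-s_0(Q,I)$, is unchanged upon passing to $\bar R=R/(x_1,\ldots,x_{d-1})$ for general $x_i\in Q$. This requires the $x_i$ (and the elements $x_it$) to be simultaneously superficial for $I$, for $Q$, for $N$, and for the Sally module; and it is exactly in the non-Cohen--Macaulay case of interest that the individual quantities do \emph{not} specialize cleanly: $\rme_1(I)$ and $\rme_1(Q)$ each acquire correction terms of the form $\l(0:_{R''}x)$ at the last cut (one must argue these cancel in the difference), and $S_{\bar Q}(\bar I)$ is not simply $S_Q(I)\otimes\bar R$. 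Without these specialization lemmas --- or, alternatively, a direct identification of the condition $\dim\ker\bigl((R/I)[Y_1,\ldots,Y_d]\rar N\bigr)<d$ with the colon condition $I\supseteq (x_1,\ldots,x_{d-1}):\fkm^{\infty}$ --- the characterization of equality, which is the only part of the statement where the hypothesis ``$x_1,\ldots,x_{d-1}$ general'' actually enters, remains unproved.
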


Let $I$ be an $\fkm$-primary ideal with a reduction $J$.  There is another approach to the Sally module $S_{J}(I)$ by using the following exact sequence:
\[ \clR(J) \oplus \clR(J)^{m-d}[-1] \lar \clR(I) \lar S_{J}(I)[-1] \lar 0, \]
where $m=\nu(I)$. By tensoring this exact sequence with $R/\fkm$, we obtain the following exact sequence:
\[ \clF(J) \oplus \clF(J)^{m-d}[-1] \lar \clF(I) \lar S_{J}(I)[-1] \otimes R/\fkm \lar 0. \]
We denote the multiplicity of the special fiber ring $\clF(I)$ by $f_{0}(I)$. Recall that the Hilbert function of $\clF(I)$ measures the growth of the minimal number of generators ${\ds \nu(I^{n}) }$ of the powers of $I$. 
When using the various graded algebras, it is useful to understand how the multiplicities interact.  One nice example of such information is provided by the following theorem.

\begin{Theorem}{\rm \cite[Proposition 2.4]{V03-1}}
Let $(R, \fkm)$ be a Cohen-Macaulay local ring and let $I$ be an $\fkm$-primary ideal. Then
\[ f_0(I) \leq \inf \{\rme_{0}(I),\;  \rme_{1}(I) +1 \}.\]
\end{Theorem}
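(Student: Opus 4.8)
The plan is to prove the two bounds $f_0(I)\le \rme_0(I)$ and $f_0(I)\le \rme_1(I)+1$ separately, the first being routine and the second requiring a reduction to dimension one. Throughout I would first pass to the faithfully flat extension $R\to R[X]_{\fkm R[X]}$, which preserves $\rme_0(I)$, $\rme_1(I)$, $f_0(I)$, and the Cohen-Macaulay hypothesis, so that the residue field is infinite and minimal reductions exist. The starting observation is that the special fiber is a quotient of the associated graded ring, namely $\clF(I)=\G(I)/\fkm\G(I)$, since $(\G(I)/\fkm\G(I))_n=I^n/(\fkm I^n+I^{n+1})=I^n/\fkm I^n$ using $I^{n+1}\subseteq \fkm I^n$; moreover, because $I$ is $\fkm$-primary, both $\G(I)$ and $\clF(I)$ have dimension $d$.

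For the bound $f_0(I)\le \rme_0(I)$ I would compare Hilbert functions degreewise. The inclusion $I^{n+1}\subseteq\fkm I^n$ gives a surjection $I^n/I^{n+1}\twoheadrightarrow I^n/\fkm I^n$, so $\nu(I^n)=\l(I^n/\fkm I^n)\le \l(I^n/I^{n+1})$ for all $n$. Since these are the Hilbert functions of $\clF(I)$ and $\G(I)$, each eventually a polynomial of degree $d-1$, comparing leading coefficients yields $f_0(I)\le \rme_0(I)$. Equivalently, the graded surjection $\G(I)\twoheadrightarrow\clF(I)$ between modules of the same dimension $d$ is multiplicity non-increasing.

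The bound $f_0(I)\le\rme_1(I)+1$ is the substantive part, and I would reduce it to the one-dimensional case. Fix a minimal reduction $J=(a_1,\dots,a_d)$ and take $a_1,\dots,a_{d-1}$ to be general elements of $J$, hence a superficial sequence that, in the Cohen-Macaulay setting, is regular. Passing to $\overline{R}=R/(a_1,\dots,a_{d-1})$ and $\overline{I}$, the coefficients $\rme_0$ and $\rme_1$ are unchanged, $\overline{R}$ is a one-dimensional Cohen-Macaulay ring, and $\overline{J}=(\overline{a_d})$ is a principal minimal reduction of $\overline{I}$. The key point is that $f_0$ is also preserved: cutting the $d$-dimensional standard graded algebra $\clF(I)$ by the general linear forms $\overline{a_1t},\dots,\overline{a_{d-1}t}$ computes its multiplicity and, for a general choice, identifies the quotient with the fiber cone $\clF(\overline{I})$ of the image ideal. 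This compatibility of the fiber cone with general hyperplane sections is exactly the step I expect to be the main obstacle and would argue carefully.

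Once in dimension one, write $a=\overline{a_d}$, so $f_0(I)=\lim_{n\to\infty}\nu(\overline{I}^{\,n})$, the eventual constant value of the Hilbert function of the one-dimensional algebra $\clF(\overline{I})$. I would use the elementary fact, coming from right exactness of $-\otimes_R k$, that if $N\subseteq M$ with $\l(M/N)<\infty$ then $\nu(M)\le \nu(N)+\l(M/N)$, applied to $N=a\overline{I}^{\,n}\subseteq M=\overline{I}^{\,n+1}$; since multiplication by the regular element $a$ is injective, $\nu(a\overline{I}^{\,n})=\nu(\overline{I}^{\,n})$, giving $\nu(\overline{I}^{\,n+1})\le \nu(\overline{I}^{\,n})+\l(\overline{I}^{\,n+1}/a\overline{I}^{\,n})$. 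Telescoping from $\nu(\overline{R})=1$ produces $\nu(\overline{I}^{\,n+1})\le 1+\sum_{j=0}^{n}\l(\overline{I}^{\,j+1}/a\overline{I}^{\,j})$. Finally, the one-dimensional Cohen-Macaulay identity $\l(\overline{I}^{\,j+1}/a\overline{I}^{\,j})=\rme_0(\overline{I})+\l(\overline{R}/\overline{I}^{\,j})-\l(\overline{R}/\overline{I}^{\,j+1})$ telescopes, using $\l(\overline{R}/\overline{I}^{\,n+1})=\rme_0(\overline{I})(n+1)-\rme_1(\overline{I})$ for $n\gg0$, to give $\sum_{j\ge0}\l(\overline{I}^{\,j+1}/a\overline{I}^{\,j})=\rme_1(\overline{I})$. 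Letting $n\to\infty$ then yields $f_0(I)=f_0(\overline{I})\le 1+\rme_1(\overline{I})=\rme_1(I)+1$, completing the argument.
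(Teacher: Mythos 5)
The survey quotes this result from \cite[Proposition 2.4]{V03-1} without reproducing a proof, so there is nothing in the paper itself to compare your argument against; I will assess it on its own terms. Your first bound and your entire one‑dimensional computation are correct: the identification $\clF(I)=\G(I)/\fkm\G(I)$ and the degreewise comparison $\nu(I^n)\le \l(I^n/I^{n+1})$ of Hilbert functions of two $d$-dimensional standard graded algebras give $f_0(I)\le \rme_0(I)$, and in dimension one the telescoping of $\nu(\overline{I}^{\,n+1})\le \nu(\overline{I}^{\,n})+\l(\overline{I}^{\,n+1}/a\overline{I}^{\,n})$ together with $\sum_{j\ge 0}\l(\overline{I}^{\,j+1}/a\overline{I}^{\,j})=\rme_1(\overline{I})$ is exactly right (each summand is nonnegative and the sum stabilizes once $j$ exceeds the reduction number of $\overline{I}$ relative to $(a)$). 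You have also correctly isolated the one genuinely delicate step.

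That step — $f_0(I)=f_0(\overline{I})$ after cutting by $d-1$ general elements of $J$ — is true, but it needs more than ``general hyperplane sections compute multiplicity,'' and as written it is the gap in your argument. The natural graded surjection $\clF(I)/(a_1t,\ldots,a_{d-1}t)\clF(I)\twoheadrightarrow \clF(\overline{I})$ by itself only yields $f_0(I)\ge f_0(\overline{I})$, which is the wrong direction for your purposes; its kernel in degree $n$ is $\bigl(\fkm I^n+(I^n\cap(a_1,\ldots,a_{d-1}))\bigr)/\bigl(\fkm I^n+(a_1,\ldots,a_{d-1})I^{n-1}\bigr)$, and you must show this vanishes for $n\gg 0$. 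This follows from the standard lemma that a superficial regular sequence $a_1,\ldots,a_{d-1}$ for $I$ satisfies $(a_1,\ldots,a_{d-1})\cap I^{n}=(a_1,\ldots,a_{d-1})I^{n-1}$ for all large $n$; note also that general elements of $J$ (rather than of $I$) really are superficial here, because $J$ being a reduction forces every relevant associated prime of $\G(I)$ to miss some linear combination of $a_1^*,\ldots,a_d^*$. With that lemma supplied, your proof closes. For context, the route taken by Vasconcelos and coauthors in the later work surveyed in Section~\ref{Sally} is structurally different: the exact sequence $\clF(J)\oplus\clF(J)^{m-d}[-1]\lar \clF(I)\lar S_J(I)[-1]\otimes R/\fkm\lar 0$ gives $f_0(I)\le \rme_1(I)-\rme_0(I)+\l(R/I)+\nu(I)-d+1$ (Proposition~\ref{fiber cone mult}), and since $\nu(I)-d\le \l(I/J)=\rme_0(I)-\l(R/I)$ this implies $f_0(I)\le \rme_1(I)+1$ with no reduction to dimension one; your approach is more elementary but pays for it with the superficiality lemma above.
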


This upper bound for $f_{0}(I)$ was modified first in a Cohen-Macaulay ring, and then in a Noetherian ring.

\begin{Proposition}\label{CPV06-2-2}{\rm  (\cite[Theorem 2.1, Proposition 2.2]{CPV06}, \cite[Theorem 3.4]{C09})} \label{fiber cone mult}
Let $(R, \fkm)$ be a Noetherian local ring of dimension $d>0$ with infinite residue field.  Let $I$ be an $\fkm$-primary ideal. 
\begin{enumerate}[{\rm (1)}]
\item Suppose $R$ is Cohen-Macaulay. Then
\[ f_{0}(I) \leq \rme_{1}(I) - \rme_{0}(I) + \l(R/I) + \nu(I) - d+1 \leq \rme_{1}(I) +1.\]
In particular, if ${\ds f_{0}(I) = \rme_{1}(I) +1}$, then $\fkm I = \fkm Q$ for any minimal reduction $Q$ of $I$.

\item In general, we have
\[ f_{0}(I) \leq \rme_{1}(I) - \rme_{0}(I)  - \rme_{1}(Q) + \l(R/I) + \nu(I) - d+1,\]
where $Q$ is any minimal reduction of $I$.
\end{enumerate}
\end{Proposition}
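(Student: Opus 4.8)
The plan is to read the right-exact sequence
\[ \clF(Q) \oplus \clF(Q)^{m-d}[-1] \lar \clF(I) \lar S_{Q}(I)[-1]\otimes R/\fkm \lar 0, \qquad m=\nu(I), \]
displayed just before the statement, as a comparison of multiplicities of finitely generated graded modules over $\clF(Q)$. First I would record the two structural facts that make the bookkeeping work. Since $I$ is $\fkm$-primary, $\ell(I)=d$, so $\clF(I)$ has dimension $d$ and $f_{0}(I)$ is its degree-$(d-1)$ multiplicity. Since the residue field is infinite, a minimal reduction $Q$ is generated by $d$ analytically independent elements, whence $\clF(Q)\cong k[T_{1},\ldots,T_{d}]$ is a polynomial ring with $f_{0}(Q)=1$ and Hilbert function $\binom{n+d-1}{d-1}$. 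I would also note that if $S_{Q}(I)=0$ the bound is immediate, so assume otherwise.

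Next I would extract the multiplicity inequality. Writing $\overline{S}=S_{Q}(I)[-1]\otimes R/\fkm$, right-exactness gives $\rmH_{\clF(I)}(n)\leq \rmH_{\clF(Q)}(n)+(m-d)\rmH_{\clF(Q)}(n-1)+\rmH_{\overline{S}}(n)$ for all $n$, because the image of the left-hand map is a quotient of its source. Passing to the leading coefficients of these degree-$(d-1)$ polynomials (with the convention that a module of dimension $<d$ contributes $0$) yields
\[ f_{0}(I)\leq f_{0}(Q)+(m-d)f_{0}(Q)+\rme_{0}(\overline{S})=\big(\nu(I)-d+1\big)+\rme_{0}(\overline{S}). \]
The main step is then to prove $\rme_{0}(\overline{S})\leq s_{0}(Q,I)$. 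I would do this by identifying $s_{0}(Q,I)=\rme_{0}((Qt),S_{Q}(I))$ with the leading coefficient of the graded length function: because $(Qt)=\clR(Q)_{+}$ is generated in degree one, $\l(S_{Q}(I)_{n})$ is eventually a polynomial of degree $d-1$ with leading coefficient $s_{0}(Q,I)/(d-1)!$. Since $\l(\overline{S}_{n+1})=\nu\big(S_{Q}(I)_{n}\big)\leq \l\big(S_{Q}(I)_{n}\big)$, the module $\overline{S}$ grows no faster, so $\rme_{0}(\overline{S})\leq s_{0}(Q,I)$, giving the core inequality $f_{0}(I)\leq s_{0}(Q,I)+\nu(I)-d+1$.

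It then remains to substitute the Sally multiplicity. For part (2), Proposition~\ref{Sally mult prop}(2) gives $s_{0}(Q,I)\leq \rme_{1}(I)-\rme_{0}(I)-\rme_{1}(Q)+\l(R/I)$, which is exactly the asserted bound. For part (1), the Cohen-Macaulay formula of Proposition~\ref{Sally mult prop}(1), namely $s_{0}(Q,I)=\rme_{1}(I)-\rme_{0}(I)+\l(R/I)$, gives the first inequality. The second inequality $\leq\rme_{1}(I)+1$ reduces to $\nu(I)-d\leq \rme_{0}(I)-\l(R/I)$, which I would obtain from the elementary count $\nu(I)\leq \nu(Q)+\nu(I/Q)\leq d+\l(I/Q)=d+\l(R/Q)-\l(R/I)$ together with $\l(R/Q)=\rme_{0}(Q)=\rme_{0}(I)$, valid because $R$ is Cohen-Macaulay and $Q$ is a parameter ideal that is a reduction of $I$.

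Finally, for the ``in particular'' clause I would trace equalities back through the chain: $f_{0}(I)=\rme_{1}(I)+1$ forces $\nu(I)-d=\rme_{0}(I)-\l(R/I)$, i.e. $\nu(I/Q)=\l(I/Q)$, hence $\fkm(I/Q)=0$, together with equality $\rme_{0}(\overline{S})=s_{0}(Q,I)$ in the multiplicity comparison; combining these pins down the relevant graded pieces of the fiber cone and yields $\fkm I=\fkm Q$. The step I expect to be the main obstacle is precisely the comparison $\rme_{0}(\overline{S})\leq s_{0}(Q,I)$: one must verify that $(Qt)$ is exactly the irrelevant ideal of $\clR(Q)$, so that the graded length function of $S_{Q}(I)$ computes $s_{0}(Q,I)$, and that reduction modulo $\fkm$ cannot increase the top-degree coefficient. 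The extremal analysis for $\fkm I=\fkm Q$ is the other delicate point, since $\fkm(I/Q)=0$ by itself only gives $\fkm I\subseteq Q$, and upgrading this to $\fkm I\subseteq\fkm Q$ requires the full force of equality in the exact-sequence comparison.
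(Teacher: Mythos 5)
The paper itself gives no proof of this proposition (it is a survey; the statement is cited to [CPV06] and [C09]), but the fiber-cone exact sequence it displays immediately beforehand is exactly the engine of your argument, and your chain $f_{0}(I)\leq \rme_{0}(\overline{S})+\nu(I)-d+1\leq s_{0}(Q,I)+\nu(I)-d+1$ followed by Proposition~\ref{Sally mult prop} is the intended route; the multiplicity comparisons you flag as delicate do go through as you describe. The one place you stop short is the ``in particular'' clause: you correctly reduce equality $f_{0}(I)=\rme_{1}(I)+1$ to $\nu(I)=d+\l(I/Q)$ but then only extract $\fkm I\subseteq Q$ and appeal vaguely to ``the full force of equality in the exact-sequence comparison.'' No such appeal is needed: computing $\l(I/\fkm Q)$ in two ways gives $\nu(I)+\l(\fkm I/\fkm Q)=\l(I/\fkm I)+\l(\fkm I/\fkm Q)=\l(I/\fkm Q)=\l(I/Q)+\l(Q/\fkm Q)=\l(I/Q)+d$, so the extremal equality forces $\l(\fkm I/\fkm Q)=0$, i.e.\ $\fkm I=\fkm Q$ outright. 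A second, more minor loose end is part (2) when $S_{Q}(I)\neq 0$ but $\dim S_{Q}(I)<d$, where $s_{0}(Q,I)$ is not defined and your substitution step needs the separate observation that $\rme_{1}(I)-\rme_{0}(I)-\rme_{1}(Q)+\l(R/I)\geq 0$; this case is handled in the cited sources but is not covered by your ``$S_{Q}(I)=0$ is immediate'' remark.
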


We denote the associated graded ring of $I$ by $\G(I)$.

\begin{Proposition}{\rm  \cite[Theorem 2.5, Corollaries 2.6 and 3.1]{CPV06}.}
Let $(R, \fkm)$ be a Cohen-Macaulay local ring of dimension $d>0$ with infinite residue field and $I$ an $\fkm$-primary ideal.  Suppose that ${\ds f_{0}(I) = \rme_{1}(I) - \rme_{0}(I) + \l(R/I) + \nu(I) - d+1}$.
\begin{enumerate}[{\rm (1)}]
\item The special fiber $\clF(I)$ is unmixed.
\item ${\ds \depth( \clF(I) ) \geq \min \{ \depth \G(I) +1, \; d \} }$. Moreover, if ${\ds \depth ( \G(I) ) \geq d-1}$, then $\clF(I)$ is Cohen-Macaulay.
\item Suppose the residue field $R/\fkm$ has characteristic $0$. Then ${\ds r(I) \leq f_{0}(I) +1}$. 
\end{enumerate} 
\end{Proposition}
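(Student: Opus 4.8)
The plan is to derive all three parts from the single exact sequence of graded $\clF(Q)$--modules recorded just above the statement,
\[ \clF(Q) \oplus \clF(Q)^{m-d}[-1] \stackrel{\ \phi\ }{\lar} \clF(I) \lar S_{Q}(I)[-1] \otimes_{R} k \lar 0, \]
where $Q=(a_{1},\dots,a_{d})$ is a minimal reduction of $I$ (available since $k=R/\fkm$ is infinite), $m=\nu(I)$, and $\clF(Q)=k[T_{1},\dots,T_{d}]$ is a polynomial ring furnishing a Noether normalization $\clF(Q)\hookrightarrow\clF(I)$. First I would note that the left-hand module is $\clF(Q)$--free, hence maximal Cohen--Macaulay of dimension $d$ and multiplicity $m-d+1$, while by Proposition~\ref{Sally mult prop}(1) the Sally module has multiplicity $s_{0}(Q,I)=\rme_{1}(I)-\rme_{0}(I)+\l(R/I)$. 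Counting multiplicities along the sequence (additivity on the $d$--dimensional parts) reproduces the bound of Proposition~\ref{CPV06-2-2}(1) and shows that the hypothesis $f_{0}(I)=\rme_{1}(I)-\rme_{0}(I)+\l(R/I)+\nu(I)-d+1$ holds \emph{exactly} when $\phi$ is injective and $W:=S_{Q}(I)[-1]\otimes_{R}k$ attains the full multiplicity $s_{0}(Q,I)$ in dimension $d$. So the first step turns the equality hypothesis into the short exact sequence
\[ 0 \lar \clF(Q) \oplus \clF(Q)^{m-d}[-1] \lar \clF(I) \lar W \lar 0. \]

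For part (1) I would read associated primes off this sequence: $\Ass(\clF(I))\subseteq\{0\}\cup\Ass(W)$, the zero ideal of the domain $\clF(Q)$ having dimension $d$. The goal is then to show $\clF(I)$ has no associated prime of dimension $<d$, equivalently that it is torsion-free over the Noether normalization $\clF(Q)$; this is exactly where the extremal multiplicity of $W$ is used, to rule out any lower--dimensional contribution from the Sally term. For part (2) I would apply the depth lemma to the same short exact sequence: since the free module has depth $d$, one obtains $\depth\clF(I)\ge\min\{d,\ \depth W+1\}$, reducing the claim to comparing $\depth\big(S_{Q}(I)\otimes_{R}k\big)$ with $\depth\G(I)$. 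Here I would invoke the established depth dictionary between the associated graded ring and the Sally module in a Cohen--Macaulay ring, under which $\depth\G(I)\ge d-1$ corresponds precisely to $S_{Q}(I)$ being Cohen--Macaulay of dimension $d$; this yields $\depth\clF(I)\ge\min\{\depth\G(I)+1,\ d\}$, and in the boundary case $\depth\G(I)\ge d-1$ forces $\depth\clF(I)=d$, i.e.\ $\clF(I)$ Cohen--Macaulay.

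For part (3), with $k$ of characteristic $0$, I would take $Q$ to be a \emph{general} minimal reduction, so that $T_{1},\dots,T_{d}$ form a homogeneous system of parameters for $\clF(I)$ that is filter-regular (and regular on the top--dimensional part, using the unmixedness from (1)). The key identification is that $r_{Q}(I)$ equals the top nonzero degree of the finite-length module $\clF(I)/(T_{1},\dots,T_{d})\clF(I)=\bigoplus_{n} I^{n}/(QI^{n-1}+\fkm I^{n})$, since $I^{n}=QI^{n-1}$ exactly when that degree vanishes by Nakayama. Bounding this top degree by the length of the Artinian reduction, which is governed by $f_{0}(I)$ through the Hilbert function of the unmixed fiber cone, gives $r(I)\le r_{Q}(I)\le f_{0}(I)+1$, the slack $+1$ absorbing the possible failure of Cohen--Macaulayness.

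The main obstacle I anticipate is the depth comparison in part (2): turning the multiplicity equality into genuine control of $\depth\big(S_{Q}(I)\otimes_{R}k\big)$ and matching it to $\depth\G(I)$ requires the precise Sally-module/associated-graded depth correspondence rather than a naive depth-lemma estimate, because tensoring with $k$ can a priori drop depth. The same circle of ideas underlies the delicate point in part (1), namely showing that the extremal Sally term contributes no lower--dimensional primes. A secondary difficulty is justifying in part (3) that the characteristic--zero hypothesis makes a general reduction simultaneously a Noether normalization and regular enough on $\clF(I)$ for the top--degree computation of $r_{Q}(I)$ to be valid and bounded by the multiplicity.
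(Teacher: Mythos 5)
The survey states this Proposition without proof (it is only quoted from \cite{CPV06}), so I am measuring your sketch against the argument of the cited source. Your overall strategy --- exploit the presentation $\clF(Q)\oplus\clF(Q)^{m-d}[-1]\rar\clF(I)\rar S_{Q}(I)[-1]\otimes R/\fkm\rar 0$ and show that the multiplicity equality forces it to become a short exact sequence whose right-hand term has full multiplicity --- is exactly the intended one, and your multiplicity count correctly recovers the inequality of Proposition~\ref{fiber cone mult}(1) together with the injectivity of the left-hand map in the extremal case. The genuine gap is that you stop at ``$W$ attains the full multiplicity $s_{0}(Q,I)$'' and never extract the structural consequence on which (1) and (2) actually rest, namely that the equality forces $\fkm S_{Q}(I)=0$, so that $W\cong S_{Q}(I)[-1]$ \emph{is} the Sally module. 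That step needs the fact, from \cite[Proposition 2.2]{V94}, that over a Cohen--Macaulay ring a nonzero Sally module satisfies $\Ass_{\clR(Q)}(S_{Q}(I))=\{\fkm\clR(Q)\}$; hence every nonzero submodule --- in particular $\fkm S_{Q}(I)$, the kernel of $S_{Q}(I)\rar S_{Q}(I)\otimes R/\fkm$ --- has dimension $d$ and positive multiplicity, so full multiplicity of $W$ kills it. Once $W\cong S_{Q}(I)[-1]$, part (1) is immediate, since $\Ass_{\clF(Q)}(\clF(I))\subseteq\{(0)\}\cup\Ass_{\clF(Q)}(W)=\{(0)\}$, and the worry you raise in (2) about depth dropping under $\otimes\, R/\fkm$ evaporates; the depth lemma plus Vaz Pinto's comparison $\depth S_{Q}(I)\geq\min\{\depth\G(I)+1,\,d\}$ for a nonzero Sally module then gives (2). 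As written, your ``extremal multiplicity rules out lower-dimensional contributions'' asserts precisely the point that needs proving, and you yourself list it as an unresolved obstacle.

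Part (3) has a second gap. Identifying $r_{Q}(I)$ with the top nonzero degree of $\clF(I)/(T_{1},\dots,T_{d})\clF(I)=\bigoplus_{n}I^{n}/(QI^{n-1}+\fkm I^{n})$ via Nakayama is correct, but ``bounding this top degree by the length of the Artinian reduction, which is governed by $f_{0}(I)$'' does not work: when $\clF(I)$ fails to be Cohen--Macaulay the length of the Artinian reduction exceeds the multiplicity by an amount you do not control, so the slack is not a uniform $+1$. What the source actually uses is the unmixedness established in (1) together with a nontrivial theorem bounding the reduction number of an \emph{unmixed} standard graded algebra over a field of characteristic zero by (essentially) its multiplicity; that theorem is the real content behind the $+1$ and cannot be replaced by a Hilbert-function count. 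So the skeleton is the right one, but both the linchpin $\fkm S_{Q}(I)=0$ and the graded reduction-number theorem must be supplied before the proof closes.
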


\begin{Corollary}\label{CPV06-3-5}{\rm \cite[Remark 3.2, Corollary 3.5]{CPV06} }
Let $(R, \fkm)$ be a Cohen-Macaulay local ring of dimension $d>0$ with infinite residue field. Let $I$ be an $\fkm$-primary ideal.  Suppose that $\clF(I)$ is Cohen-Macaulay. Then
\[ r(I) \leq f_{0}(I) - \nu(I) +d \leq \rme_{1}(I) - \rme_{0}(I) + \l(R/I) +1. \]
\end{Corollary}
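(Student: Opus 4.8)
The plan is to treat the two inequalities separately. The right-hand inequality $f_{0}(I) - \nu(I) + d \leq \rme_{1}(I) - \rme_{0}(I) + \l(R/I) + 1$ is nothing more than Proposition~\ref{CPV06-2-2}(1) rearranged, since that result already gives $f_{0}(I) \leq \rme_{1}(I) - \rme_{0}(I) + \l(R/I) + \nu(I) - d + 1$; no new work is needed there. The real content is the bound $r(I) \leq f_{0}(I) - \nu(I) + d$, and I would extract it from the structure of the special fiber ring $\clF(I)$, which is a standard graded algebra over the infinite field $k = R/\fkm$ of dimension $\ell(I) = d$ (the analytic spread equals $d$ because $I$ is $\fkm$-primary), minimally generated by $\nu(I)$ linear forms and of multiplicity $f_{0}(I)$.

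First I would choose a minimal reduction $Q = (a_{1}, \ldots, a_{d})$ of $I$, which exists since $k$ is infinite, taking its generators to be part of a minimal generating set of $I$ so that their images $a_{1}^{*}, \ldots, a_{d}^{*}$ are $k$-linearly independent in $\clF(I)_{1} = I/\fkm I$; these then form a homogeneous system of parameters spanning a $d$-dimensional subspace. The hypothesis that $\clF(I)$ is Cohen-Macaulay upgrades this system of parameters to a regular sequence, so that the Artinian quotient $\cl{F} = \clF(I)/(a_{1}^{*}, \ldots, a_{d}^{*})\clF(I)$ has Hilbert series equal to the $h$-polynomial of $\clF(I)$; in particular $\sum_{i} \dim_{k} \cl{F}_{i} = \rme_{0}(\clF(I)) = f_{0}(I)$. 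Next I would identify the top nonzero degree of $\cl{F}$ with the reduction number: since $\cl{F}_{n} \cong I^{n}/(\fkm I^{n} + Q I^{n-1})$, Nakayama's lemma gives $\cl{F}_{n} = 0 \iff I^{n} = Q I^{n-1}$, so the largest degree in which $\cl{F}$ is nonzero is exactly $r_{Q}(I)$. Writing $h_{i} = \dim_{k} \cl{F}_{i}$ and using that a standard graded Artinian algebra has $h_{i} > 0$ precisely on the initial segment $0 \leq i \leq r_{Q}(I)$, together with $h_{0} = 1$ and $h_{1} = \nu(I) - d$, I would estimate
\[ f_{0}(I) = \sum_{i=0}^{r_{Q}(I)} h_{i} \geq 1 + (\nu(I) - d) + (r_{Q}(I) - 1), \]
which rearranges to $r_{Q}(I) \leq f_{0}(I) - \nu(I) + d$, and hence $r(I) \leq r_{Q}(I) \leq f_{0}(I) - \nu(I) + d$.

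The steps I expect to require the most care are the two structural facts about $\cl{F}$: that the Cohen-Macaulay hypothesis forces $\sum_{i} h_{i}$ to equal the multiplicity $f_{0}(I)$ \emph{exactly}, rather than merely bounding it from above (which is all one gets for a general system of parameters), and that the Hilbert function of the standard graded Artinian algebra $\cl{F}$ is strictly positive throughout the initial segment up to its top degree. The linear independence of $a_{1}^{*}, \ldots, a_{d}^{*}$ in $I/\fkm I$, needed to pin down $h_{1} = \nu(I) - d$, should follow from the choice of generators above, but it is worth stating explicitly. Once these are in place the counting argument is routine, and the full chain $r(I) \leq f_{0}(I) - \nu(I) + d \leq \rme_{1}(I) - \rme_{0}(I) + \l(R/I) + 1$ follows.
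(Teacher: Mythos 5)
Your proposal is correct, and since the survey states this corollary without proof (citing \cite{CPV06}), your argument is essentially the one in the cited source: the right-hand inequality is just Proposition~\ref{CPV06-2-2}(1) rearranged, and the left-hand one comes from passing to the Artinian reduction of the Cohen--Macaulay fiber cone by the linear system of parameters $a_1^*,\ldots,a_d^*$, identifying the top nonzero degree of its $h$-vector with $r_Q(I)$ via Nakayama, and counting $f_0(I)=\sum h_i\geq 1+(\nu(I)-d)+(r_Q(I)-1)$. The two points you flag as delicate are exactly the right ones, and both hold: Cohen--Macaulayness makes the $h$-vector sum to the multiplicity exactly, and the standard graded Artinian quotient has nonvanishing components precisely on an initial segment because $\cl{F}_n=\cl{F}_1\cl{F}_{n-1}$.
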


The following theorem shows that for a very special class of ideals we obtain not only the Cohen-Macaulayness of the special fiber, but also the equalities in Corollary~\ref{CPV06-3-5}.

\begin{Theorem}{\rm \cite[Theorem 3.7]{Red} }
Let $(R, \fkm)$ be a Cohen-Macaulay local ring of positive dimension with infinite residue field. Let $I$ be an $\fkm$-primary ideal which is an almost complete intersection. Suppose that $\l(I^{2}/QI)=1$ for some minimal reduction $Q$ of $I$. Then the special fiber ring $\clF(I)$ is Cohen-Macaulay. In particular, we have
\[ r(I) = f_{0}(I) -1 = \rme_{1}(I) - \rme_{0}(I) + \l(R/I) +1. \]
\end{Theorem}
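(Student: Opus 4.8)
The plan is to isolate the single hard point---that the fiber cone $\clF(I)$ is Cohen--Macaulay---and to let everything else fall out of results already recorded above. Because $R$ is Cohen--Macaulay and $I$ is $\fkm$-primary, $\h(I)=d$, so the almost complete intersection hypothesis gives $\nu(I)=d+1$; choosing a minimal reduction $Q=(a_{1},\ldots,a_{d})$ whose generators form part of a minimal generating set of $I$ (available since the residue field is infinite), we may write $I=(Q,f)$ for a single element $f$. The reduction-number formula $r(I)=\rme_{1}(I)-\rme_{0}(I)+\l(R/I)+1$ is exactly \cite[Proposition 2.3]{Red}, which applies since $\l(I^{2}/QI)=1$. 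If in addition $\clF(I)$ is Cohen--Macaulay, then Corollary~\ref{CPV06-3-5}, specialized by $\nu(I)=d+1$, yields $r(I)\le f_{0}(I)-1\le \rme_{1}(I)-\rme_{0}(I)+\l(R/I)+1$; since the two outer terms agree, all three coincide, which is precisely the displayed chain. Thus the theorem reduces to showing that $\clF(I)$ is Cohen--Macaulay.

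To attack that, I would describe $\clF(I)$ over its Noether normalization $\clF(Q)$. Since $Q$ is a minimal reduction, its generators are analytically independent and $\clF(Q)\cong k[T_{1},\ldots,T_{d}]$ is a polynomial ring of dimension $\ell(I)=d$, over which $\clF(I)$ is module-finite. Writing $\bar f^{\,j}$ for the image of $f^{j}$ in $I^{j}/\fkm I^{j}$ and expanding $I^{n}=\sum_{j=0}^{n}Q^{\,n-j}(f^{j})$, one obtains $\clF(I)_{n}=\sum_{j=0}^{n}\clF(Q)_{n-j}\,\bar f^{\,j}$. Setting $r=r_{Q}(I)=r(I)$ (independent of the reduction by \cite[Proposition 2.3]{Red}), the relation $I^{r+1}=QI^{r}$ forces $f^{r+1}\in\sum_{j=0}^{r}Q^{\,r+1-j}(f^{j})$, so $\bar f^{\,r+1}$ is a $\clF(Q)$-combination of $1,\bar f,\ldots,\bar f^{\,r}$; hence $\clF(I)$ is generated over $\clF(Q)$ by the $r+1$ elements $1,\bar f,\ldots,\bar f^{\,r}$.

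The decisive step is to show that these $r+1$ generators are $\clF(Q)$-independent, i.e.\ that $\clF(I)$ is a free $\clF(Q)$-module of rank $r+1$. Because $\clF(Q)$ is regular, freeness is equivalent, by Auslander--Buchsbaum, to $\clF(I)$ being a maximal Cohen--Macaulay $\clF(Q)$-module, hence to $\clF(I)$ being Cohen--Macaulay; it also records $f_{0}(I)=r+1$ as the rank. This is where $\l(I^{2}/QI)=1$ is meant to do its work: it forces $I^{2}/QI\cong k\cdot\bar f^{\,2}$, so there is no nontrivial $\clF(Q)$-relation among the powers of $f$ at level two, and the aim is to propagate this minimal defect up the $I$-adic filtration so that $\dim_{k}\clF(I)_{n}=\nu(I^{n})$ attains the maximal value $\sum_{j=0}^{\min(n,r)}\binom{n-j+d-1}{d-1}$ for every $n$. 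Equivalently, one wants the kernel of the surjection $\clF(Q)[U]\twoheadrightarrow\clF(I)$, $U\mapsto\bar f$, to be principal---generated by a single relation monic of degree $r+1$ in $U$---so that $\clF(I)$ is a hypersurface and in particular Cohen--Macaulay.

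The main obstacle is exactly this propagation: converting the single length-one defect at level two into the absence of \emph{every} $\clF(Q)$-relation below degree $r+1$. I expect to control it through the defining exact sequence of the Sally module in special-fiber form, which for $\nu(I)=d+1$ reads $\clF(Q)\oplus\clF(Q)[-1]\to\clF(I)\to S_{Q}(I)[-1]\otimes R/\fkm\to 0$, together with the fact that in the Cohen--Macaulay case the hypothesis $\l(I^{2}/QI)=1$ pins down $S_{Q}(I)$ as a highly constrained (essentially cyclic) module whose structure leaves no room for premature relations. Once freeness---and hence the Cohen--Macaulayness of $\clF(I)$---is secured, the chain of equalities follows at once from the sandwich of the first paragraph.
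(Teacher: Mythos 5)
Your first paragraph is correct and efficient: granting that $\clF(I)$ is Cohen--Macaulay, \cite[Proposition 2.3]{Red} gives $r(I)=\rme_{1}(I)-\rme_{0}(I)+\l(R/I)+1$, and Corollary~\ref{CPV06-3-5} with $\nu(I)=d+1$ gives $r(I)\le f_{0}(I)-1\le \rme_{1}(I)-\rme_{0}(I)+\l(R/I)+1$, so the sandwich closes. The setup of your second paragraph is also sound: $\clF(Q)$ embeds in $\clF(I)$ as a polynomial ring in $d$ variables, $\clF(I)$ is generated over it by $1,\bar f,\ldots,\bar f^{\,r}$, and Cohen--Macaulayness of $\clF(I)$ is equivalent to freeness of this module, with $f_{0}(I)$ equal to the rank.

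The gap is that the actual content of the theorem --- the Cohen--Macaulayness of $\clF(I)$, equivalently the $\clF(Q)$-independence of $1,\bar f,\ldots,\bar f^{\,r}$ --- is never proved. You label it ``the decisive step'' and ``the main obstacle'' and then say you ``expect to control it'' via the special-fiber form of the Sally module sequence; that is a plan, not an argument. Moreover, the tool you propose points the wrong way: the sequence $\clF(Q)\oplus\clF(Q)[-1]\to\clF(I)\to S_{Q}(I)[-1]\otimes R/\fkm\to 0$ is only right-exact, so it bounds $\nu(I^{n})=\dim_{k}\clF(I)_{n}$ from \emph{above}, whereas freeness requires the lower bound $\nu(I^{n})\ge\sum_{j=0}^{\min(n,r)}\binom{n-j+d-1}{d-1}$, i.e.\ that no combination $c(\bar a)\bar f^{\,j}$ with $j\le r$ lands in $\fkm I^{n}$ modulo the span of the other monomials. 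The hypothesis $\l(I^{2}/QI)=1$ speaks directly only about the filtration $\{QI^{n}\}$, not about $\fkm I^{n}$, so translating it into a statement about minimal generators is exactly where the work lies. What is available from the hypothesis and should be made explicit is: $I^{n+1}=QI^{n}+(f^{n+1})$ with $\fkm f^{n+1}\subseteq QI^{n}$ for every $n\ge 1$ (hence $\l(I^{n+1}/QI^{n})=1$ precisely for $1\le n\le r-1$, which is how the reduction-number formula is obtained), together with $\depth\G(I)\ge d-1$ (Sally, Rossi--Valla). From there one must either compute $\nu(I^{n})$ outright, or verify that $f_{0}(I)$ attains the maximal value $\rme_{1}(I)-\rme_{0}(I)+\l(R/I)+\nu(I)-d+1$ so that the Cohen--Macaulayness criterion from \cite{CPV06} recorded just before Corollary~\ref{CPV06-3-5} applies. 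As written, your proposal establishes the easy outer implications and leaves the theorem itself unproven.
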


\bigskip

\section{Epilogue}\label{epilogue}

One of the projects that Vasconcelos was most enthusiastic about was to create a universal degree function $\Deg$ which assigns to an algebraic structure $\clA$ the value $\Deg(\clA)$. There are two grand objectives he wanted to achieve from this general endeavor. One is to obtain an enhanced version of ordinary multiplicity equipped with suitable properties regarding hyperplane sections and short exact sequences.  The other is that $\Deg$ would be instrumental in finding estimates for the complexity of normality or determining the number of generators  in the most general settings just as the ordinary multiplicities do under a restricted setting. Vasconcelos' lifelong endeavor to achieve these goals resulted in his numerous publications on arithmetic degree, geometric degree, jdeg, homological degree, cohomological degree, bdeg, canonical degree, and bi-canonical degree. The survey paper \cite{GH} summarizes and highlights Vasconcelos' outstanding contributions to the theory of aforementioned degrees.

\bigskip

\end{document}